\numberwithin{equation}{section}
\newtheorem{thm}{Theorem}[section]
\newtheorem{newdef}[thm]{Definition}
\newtheorem{prop}[thm]{Proposition}
\newtheorem{lemma}[thm]{Lemma}
\newtheorem{remark}[thm]{Remark}
\newtheorem{question}[thm]{Question}
\newtheorem{claim}[thm]{Claim}
\newtheorem{cor}[thm]{Corollary}
\newtheorem{example}[thm]{Example}
\let\conjugatet\overline 
\newcommand{\R}{\mathbb{R}}
\newcommand{\C}{\mathbb{C}}
\newcommand{\Z}{\mathbb{Z}}
\newcommand{\M}{\mathcal{M}}
\newcommand{\tR}{{\widetilde{\mathcal{R}}}}
\newcommand{\tM}{{\widetilde{\mathcal{M}}}}
\newcommand{\brill}{\mathcal{B}}
\newcommand{\bk}{{\bf k}}
\newcommand{\bfr}{{\bf r}}
\newcommand{\bfm}{{\bf m}}
\newcommand{\bn}{{\bf n}}
\newcommand{\bq}{{\bf q}}
\newcommand{\bK}{{\bf K}}
\newcommand{\bM}{{\bf M}}
\newcommand{\bv}{{\bf v}}
\newcommand{\bw}{{\bf w}}
\newcommand{\bx}{{\bf x}}
\newcommand{\bX}{{\bf X}}
\newcommand{\by}{{\bf y}}
\newcommand{\bA}{{\bf A}}
\newcommand{\bB}{{\bf B}}
\newcommand{\bC}{{\bf C}}
\newcommand{\bkappa}{{\bf \kappa}}
\newcommand{\abs}[1]{\left\lvert#1\right\rvert}
\newcommand{\D}{\partial}
\newcommand{\eps}{\varepsilon}
\newcommand{\nit}{\noindent}
\newcommand{\nn}{\nonumber}
\definecolor{darkblue}{rgb}{0,0,0.4}
\definecolor{darkgreen}{rgb}{0,0.6,0}
\newcommand\reallywidehat[1]{\arraycolsep=0pt\relax%
\begin{array}{c}
\stretchto{
  \scaleto{
    \scalerel*[\widthof{\ensuremath{#1}}]{\kern-.5pt\bigwedge\kern-.5pt}
    {\rule[-\textheight/2]{1ex}{\textheight}} 
  }{\textheight} %
}{0.5ex}\\           
#1\\                 
\rule{-1ex}{0ex}
\end{array}
}
\title{Spectral band degeneracies of $\frac{\pi}{2}-$rotationally invariant\\ periodic Schr\"{o}dinger operators}
\author{R.T. Keller\thanks{Department of Applied Physics and Applied Mathematics, Columbia University.} 
\and 
J.L. Marzuola\thanks{Department of Mathematics, University of North Carolina, Chapel Hill.}
\and
B. Osting\thanks{Department of Mathematics, University of Utah.}
\and
M.I. Weinstein\thanks{Department of Applied Physics and Applied Mathematics and Department of Mathematics, Columbia University.}
}
\begin{document}

 \pagestyle{myheadings}
 \thispagestyle{plain}
\markboth{ $\frac{\pi}{2}-$invariant Schr\"{o}dinger Operators}{R.T.  Keller, J.L. Marzuola, B. Osting, M.I. Weinstein}
\maketitle

\begin{abstract}
{This article was published in Multiscale Model. Simul., 16(4), 1684--1731 (2017). In this updated arXiv version we correct the statement and proof of Corollary \ref{rho-inv}. Clarifying edits  were also made 
 in the statements of Corollaries \ref{small-eps-disp} and \ref{appF-albega}. }\medskip

The dynamics of waves in periodic media is determined by the band structure of the underlying periodic 
Hamiltonian.
Symmetries of the Hamiltonian can give rise to novel properties of the band structure. Here we consider a class of periodic Schr\"{o}dinger operators, $H_V=-\Delta+V$, where $V$ is periodic with respect to the lattice of translates $\Lambda=\Z^2$. The potential is also assumed to be real-valued, sufficiently regular and such that, with respect to some origin of coordinates, inversion symmetric (even) and invariant under $\pi/2$ rotation.
\begin{enumerate} 
\item We present general conditions ensuring that the band structure of $H_V$ contains dispersion surfaces which touch at multiplicity two eigenvalues at the vertices (high-symmetry quasi-momenta) of the Brillouin zone. Locally, the band structure consists of two intersecting dispersion surfaces described by a normal form which is $\pi/2-$rotationally invariant, and to leading order homogeneous of degree two. Furthermore,  the effective dynamics of wave-packets, which are spectrally concentrated near high-symmetry quasi-momenta, is given by a system of coupled  Schr\"odinger equations with indefinite effective mass tensor.  
\item For small amplitude potentials, $\eps V$ with $\eps$ small or weak coupling, certain distinguished Fourier coefficients of the potential control which of the low-lying dispersion 
surfaces (first four) of $H^\eps=H_{_{\eps V}}$ intersect and have the above local behavior.
\item The existence of quadratically touching dispersion surfaces with the above properties persists for all real $\eps$,
 without restriction on the size of $\eps$, except for $\eps$ in a discrete set. 
\end{enumerate}
Our results apply to periodic superpositions of spatially localized ``atomic potentials'' centered on the square  ($\Z^2$) and  Lieb lattices.  We show, in particular, that the well-known conical plus flat-band structure of the 3 dispersion surfaces of the Lieb lattice  tight-binding model  does not persist in the corresponding Schr\"odinger operator with finite depth potential wells. Finally, we corroborate our  analytical results with extensive numerical simulations.  The present results are the  $\Z^2-$ analogue of results obtained for conical degenerate points (Dirac points) in the band structure for honeycomb structures. 
\end{abstract}

\clearpage

\section{Introduction}\label{sec:intro}
The dynamics of waves in periodic media are determined by the band structure of the underlying 
Hamiltonian; see, for example, \cite{Ashcroft-Mermin:76,Joannopoulos,RS4,Kuchment:12,Kuchment:16}.  
Symmetries of the underlying Hamiltonian give rise to novel properties of the band structure. An important example is the band structure of the single electron model of graphene and its artificial analogues. Here,  $H_V=-\Delta+V$, with $V$ a real-valued potential with the symmetries of a hexagonal tiling of the plane. It is well-known that the band structure contains Dirac points, conical singularities at the intersections of dispersion surfaces which occur at the vertices (high-symmetry quasi-momenta) of the hexagonal Brillouin zone; see, for example, \cite{RMP-Graphene:09,FW:12,FLW-CPAM:17,BC:18}.
A consequence is the massless Dirac dynamics of wave-packets (quasi-particles) which evolve from initial data which are  spectrally localized near Dirac points \cite{RMP-Graphene:09,FW-CMP:14}. 

In this article, we consider a class of periodic Schr\"odinger operators on $\R^2$,  whose underlying period lattice is $\Z^2$ and such that the potential is real, inversion symmetric, and invariant under $\pi/2$ rotation. We call such potentials {\it admissible}; see Definition \ref{def:sq-pot}.    The class of potentials to which our results apply includes those which are superpositions of localized potentials (say potential wells or potential barriers) centered on a discrete structure with the appropriate symmetries. Two such examples are illustrated in  Figure \ref{fig:Lattices}; the square lattice (left) and the Lieb lattice (right) are displayed together with corresponding choices of fundamental cells.
We call these two types of potentials {\it square lattice potentials} and {\it Lieb lattice potentials}; see Examples~\ref{square-pot} and \ref{lieb-pot} and the potentials in Figures \ref{figure11}-\ref{figure10}.

 Our goal is to study symmetry-induced characteristics in the band structure of such operators and to explore these in the context of the above two examples. %
 The present results are the  $\Z^2-$ analogue of results obtained
 in honeycomb structures  \cite{Colin-de-Verdiere:91,Grushin:09,FW:12,FLW-MAMS:17,Lee:16,BC:18}. Our proofs make use of the framework developed in \cite{FW:12,FLW-MAMS:17}.\medskip
 
For the class of potentials we consider, the nature of band degeneracies at high-symmetry quasi-momenta is described by 
two intersecting dispersion surfaces which are locally characterized by a normal form which is $\pi/2-$rotationally invariant, and to leading order homogeneous of degree two. A consequence is that the dynamics of wave-packets, which are spectrally concentrated near such high-symmetry quasi-momenta, is given by an effective system of coupled linear time-dependent Schr\"odinger equations with indefinite effective mass tensor.  This is in contrast to the case of honeycomb structures where the band degeneracies at the high symmetry points are conical (so-called {\it Dirac points}) and the effective dynamics is given by a time-dependent system of Dirac equations.

\subsection{A quick review of Floquet-Bloch theory;\  \cite{RS4,Kuchment:12,Kuchment:16,Eastham:74,Joannopoulos}}\label{FBquick}

Consider the periodic Schr\"odinger operator, $H_V=-\Delta +V$, where $V$ is real-valued and periodic with respect to a lattice $\Lambda=\Z\bv_1\oplus\Z\bv_2$;  for all $\bx\in\R^2$ and $\bv\in\Lambda$, we have $V(\bx+\bv)=V(\bx)$. 
Introduce the dual lattice, $\Lambda^*=\Z\bk_1\oplus\Z\bk_2$, 
such that $\bk_l\cdot\bv_m=2\pi\delta_{lm}$, and  spaces of $\Lambda-$ periodic and $\bk-$ pseudo-periodic functions:
\[L^2(\R^2/\Lambda)\ =\ \Big\{  f\in L^2_{\rm loc}(\R^2)\ :\  f(\bx+\bv)=f(\bx)\quad \textrm{almost everywhere in $\bx$, for all $\bv\in\Lambda$ }\ \Big\}
\]
and 
\[
L^2_\bk\ =\ \Big\{  f\in L^2_{\rm loc}(\R^2)\ :\ e^{-i\bk\cdot\bx}f(\bx)\in L^2(\R^2/\Lambda)\ \Big\} .
\]
Functions $f\in L^2_\bk$ satisfy $f(\bx+\bv)=e^{i\bk\cdot\bx}f(\bx)$ almost everywhere in $\bx$ for all $\bv\in\Lambda$. 
 The inner product on $L^2(\R^2/\Lambda)$ is given by $\left\langle f,g\right\rangle_{L^2(\R^2/\Lambda)}=\int_\Omega \overline{f}g$, where 
 $\Omega\subset\R^2$ is a period cell. Since $f, g\in L^2_\bk$ implies that $\overline{f}g\in L^1(\R^2/\Lambda)$, the same expression defines an inner product on $L^2_\bk$ .

Floquet-Bloch states are solutions of the self-adjoint $\bk-$pseudo-periodic eigenvalue problem:
\begin{equation}
H_V\Phi=\mu\Phi,\quad \Phi\in L^2_\bk\ .
\label{k-evp}
\end{equation}
 Since this boundary condition satisfied by $f\in L^2_\bk$ is invariant under $\bk\mapsto\bk+\widetilde\bk$, 
 for any $\widetilde\bk\in\Lambda^*$, we may restrict  to $\bk$ varying over a fundamental period cell in the dual variable. This {\it Brillouin zone}, $\mathcal{B}$, is often taken to be the set of all points in $\bk\in\R^2$ which are closer to the origin
 than to any other point in $\Lambda^*$. For the case of the square lattice, $\Lambda=\Z^2$
  \begin{equation}\label{brillouin}
  \brill = \{ \bk = (k^{(1)}, k^{(2)})\colon -\pi \leq k^{(1)} \leq \pi, -\pi \leq k^{(2)} \leq \pi\}\ ;
\end{equation}
see Figure \ref{fig:squareBZ}.
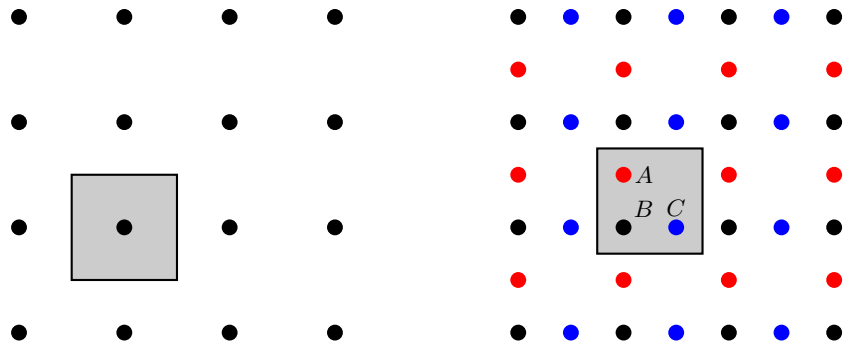
\begin{figure}
\begin{center}
\begin{tikzpicture}[thick,scale=0.7]
\tikzset{minimum size=6pt}
    \draw[fill=black!20!white] (1,1) -- (1,3) -- (3,3) -- (3,1) -- cycle;   

    \draw (0,0) node[circle,fill,inner sep=1pt] (A){};
    \draw (2,0) node[circle,fill,inner sep=1pt] (B){};
    \draw (4,0) node[circle,fill,inner sep=1pt] (C){};
    \draw (6,0) node[circle,fill,inner sep=1pt] (D){};
      
    \draw (0,2) node[circle,fill,inner sep=1pt] (E){};
    \draw (2,2) node[circle,fill,inner sep=1pt] (F){};
    \draw (4,2) node[circle,fill,inner sep=1pt] (G){};
    \draw (6,2) node[circle,fill,inner sep=1pt] (H){};

    \draw (0,4) node[circle,fill,inner sep=1pt] (I){};
    \draw (2,4) node[circle,fill,inner sep=1pt] (J){};
    \draw (4,4) node[circle,fill,inner sep=1pt] (K){};
    \draw (6,4) node[circle,fill,inner sep=1pt] (L){};

    \draw (0,6) node[circle,fill,inner sep=1pt] (M){};
    \draw (2,6) node[circle,fill,inner sep=1pt] (N){};
    \draw (4,6) node[circle,fill,inner sep=1pt] (O){};
    \draw (6,6) node[circle,fill,inner sep=1pt] (P){};
    

        
\end{tikzpicture}
\qquad  \qquad \qquad 
\begin{tikzpicture}[thick,scale=0.7]
\tikzset{minimum size=6pt}
    \draw[fill=black!20!white] (1.5,1.5) -- (1.5,3.5) -- (3.5,3.5) -- (3.5,1.5) -- cycle;   

    \draw (0,0) node[circle,fill,inner sep=1pt] (A){};
    \draw (2,0) node[circle,fill,inner sep=1pt] (B){};
    \draw (4,0) node[circle,fill,inner sep=1pt] (C){};
    \draw (6,0) node[circle,fill,inner sep=1pt] (D){};
      
    \draw (0,2) node[circle,fill,inner sep=1pt] (E){};
    \draw (2,2) node[circle,fill,inner sep=1pt,label={[label distance=-.11cm]above right:{\small $B$}}] (F){};
    \draw (4,2) node[circle,fill,inner sep=1pt] (G){};
    \draw (6,2) node[circle,fill,inner sep=1pt] (H){};

    \draw (0,4) node[circle,fill,inner sep=1pt] (I){};
    \draw (2,4) node[circle,fill,inner sep=1pt] (J){};
    \draw (4,4) node[circle,fill,inner sep=1pt] (K){};
    \draw (6,4) node[circle,fill,inner sep=1pt] (L){};

    \draw (0,6) node[circle,fill,inner sep=1pt] (M){};
    \draw (2,6) node[circle,fill,inner sep=1pt] (N){};
    \draw (4,6) node[circle,fill,inner sep=1pt] (O){};
    \draw (6,6) node[circle,fill,inner sep=1pt] (P){};
    

    
    \draw (1,0) node[circle,fill=blue,inner sep=1pt] (A2){};
    \draw (3,0) node[circle,fill=blue,inner sep=1pt] (B2){};
    \draw (5,0) node[circle,fill=blue,inner sep=1pt] (C2){};

    \draw (1,2) node[circle,fill=blue,inner sep=1pt] (E2){};
    \draw (3,2) node[circle,fill=blue,inner sep=1pt,label={[label distance=-.1cm]above:{\small $C$}}] (F2){};
    \draw (5,2) node[circle,fill=blue,inner sep=1pt] (G2){};

    \draw (1,4) node[circle,fill=blue,inner sep=1pt] (I2){};
    \draw (3,4) node[circle,fill=blue,inner sep=1pt] (J2){};
    \draw (5,4) node[circle,fill=blue,inner sep=1pt] (K2){};

    \draw (1,6) node[circle,fill=blue,inner sep=1pt] (M2){};
    \draw (3,6) node[circle,fill=blue,inner sep=1pt] (N2){};
    \draw (5,6) node[circle,fill=blue,inner sep=1pt] (O2){};

    \draw (0,1) node[circle,fill=red,inner sep=1pt] (A3){};
    \draw (2,1) node[circle,fill=red,inner sep=1pt] (B3){};
    \draw (4,1) node[circle,fill=red,inner sep=1pt] (C3){};
    \draw (6,1) node[circle,fill=red,inner sep=1pt] (D3){};
      
    \draw (0,3) node[circle,fill=red,inner sep=1pt] (E3){};
    \draw (2,3) node[circle,fill=red,inner sep=1pt,label={[label distance=-.1cm]right:{\small $A$}}] (F3){};
    \draw (4,3) node[circle,fill=red,inner sep=1pt] (G3){};
    \draw (6,3) node[circle,fill=red,inner sep=1pt] (H3){};

    \draw (0,5) node[circle,fill=red,inner sep=1pt] (I3){};
    \draw (2,5) node[circle,fill=red,inner sep=1pt] (J3){};
    \draw (4,5) node[circle,fill=red,inner sep=1pt] (K3){};
    \draw (6,5) node[circle,fill=red,inner sep=1pt] (L3){};
    
\end{tikzpicture}
\caption{ An illustration of the square {\bf (left)} and Lieb  {\bf(right)}  lattices with corresponding fundamental cells, $\Omega$,  (shaded). The Lieb lattice is the union of $3$ interpenetrating sublattices, labeled $A$ (red), $B$ (black), and $C$ (blue). } \label{fig:Lattices}
  \end{center}
\end{figure}

An alternative formulation is to  write $\Phi(\bx)=e^{i\bk\cdot\bx}\phi(\bx)$ and seek, for all $\bk\in\mathcal{B}$, solutions of the self-adjoint periodic  eigenvalue problem:
\begin{equation}
 H_V(\bk)\phi(\bx)\equiv \left(\ -(\nabla_\bx+i\bk)^2\ +\ V(\bx)\ \right)\phi\ =\ \mu \phi(\bx),\qquad \phi\in L^2(\R^2/\Lambda).
 \label{per-evp}
 \end{equation}
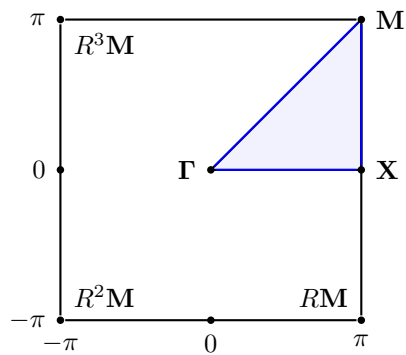
\begin{figure}[t!]
\begin{center}
\begin{tikzpicture}[thick,scale=1.0]
\tikzset{minimum size=1pt}
    \draw [fill=blue!5!white] (2,2) -- (4,2) -- (4,4) -- (2,2);        

    \draw (0,0) node[circle,fill,inner sep=1pt,label=below:{$-\pi$},label=left:{$-\pi$},label=above right:{$R^2{\bf M}$}] (A){};
    \draw (2,0) node[circle,fill,inner sep=1pt,label=below:{$0$}] (B){};
    \draw (4,0) node[circle,fill,inner sep=1pt,label=below:{$\pi$},label=above left:{$R{\bf M}$}] (C){};
      
    \draw (0,2) node[circle,fill,inner sep=1pt,label=left:{$0$}] (E){};
    \draw (2,2) node[circle,fill,inner sep=1pt,label=left:{${\bf \Gamma}$}] (F){};
    \draw (4,2) node[circle,fill,inner sep=1pt,label=right:{${\bf X}$}] (G){};

    \draw (0,4) node[circle,fill,inner sep=1pt,label=left:{$\pi$},label=below right:{$R^3{\bf M}$}] (I){};
    \draw (4,4) node[circle,fill,inner sep=1pt,label=right:{${\bf M}$}] (K){};
    
    \draw (A) -- (C) -- (K) -- (I) --(A) ;   
    \draw [blue] (F) -- (G) -- (K) -- (F);        
\end{tikzpicture}
\end{center}
\caption{Brillouin zone, $\brill$, for the square lattice $\Gamma = \mathbb Z^2$. The points ${\bf \Gamma}=(0,0)$, ${\bf M} = (\pi,\pi)$, and ${\bf X} = (\pi,0)$ are labeled, along with successive $\pi/2$ rotations of ${\bf M}$ by $R$; see \eqref{Rdef}. An irreducible Brillouin zone is shaded in blue. In later figures, the graphs of dispersion relation are plotted along the cyclic path ${\bf \Gamma} \to {\bf X} \to {\bf M} \to {\bf \Gamma}$. }
\label{fig:squareBZ}
\end{figure}
 For each $\bk\in\mathcal{B}$, the self-adjoint elliptic eigenvalue problem \eqref{per-evp} has a discrete set of eigenpairs $(\mu_b(\bk),\phi_b(\bx;\bk))$, $b=1,2,3,\dots$, where the eigenvalues may be listed with multiplicity, in order:
 \[ \mu_1(\bk)\le\mu_2(\bk)\le\cdots\le\mu_b(\bk)\le\cdots\ ,\]
 where $\{\phi_b(\bx;\bk)\}_{b\ge1}$  can be taken to be a complete orthonormal sequence in $L^2(\R^2/\Lambda)$.  Moreover, the family of states $\Phi_b(\bx;\bk)=e^{i\bk\cdot\bx}\phi_b(\bx;\bk)$ where $b\ge1$ and $\bk\in\mathcal{B}$ are complete in $L^2(\R^2)$. The eigenvalue mappings $\mu_b\colon \mathcal{B}\to\R$ are Lipschitz continuous \cite{avron-simon:78,FW-CMP:14}
 and are called {\it dispersion relations}, and their graphs are called  {\it dispersion surfaces}. The collection of all dispersion relations   is called the {\it band structure} of the periodic Schr\"{o}dinger operator, $H_V$. The spectrum of $H_V$ acting in $L^2(\R^2)$ is the union of the closed real intervals: $\sigma(H_V)=\mu_1(\mathcal{B})\cup \mu_2(\mathcal{B})\cup\cdots \cup \mu_b(\mathcal{B})\cup\cdots$.  
 
 \subsection{Summary of results}\label{sec:summ}

We  summarize our  main results:
\begin{enumerate} 
\item {\it Theorem \ref{quad-disp}}:
\subitem(A)\ We present general conditions on the admissible potential, $V$,  for the following scenario:
 there exists  $\mu_S\in\R$, such that for all vertices, $\bM_\star$  of $\brill$,  $\mu_S$ is an $L^2_{\bM_\star}-$ eigenvalue  of $H_V$ of geometric multiplicity two, with $L^2_\bM-{\rm kernel}(H_V-\mu_S I)={\rm span}\{\Phi_1,\Phi_2\}$. 
\subitem(B)\ There exist dispersion maps $\bk\mapsto\mu_-(\bk)$ and $\bk\mapsto\mu_+(\bk)$, defined for $\bk\in\brill$ with  $\mu_-(\bk)\le \mu_+(\bk)$ and such that for all vertices, $\bM_\star$, of $\brill$
 we have $\mu_\pm(\bM_\star)=\mu_S$; the dispersion surfaces associated with $\mu_-$ and $\mu_+$ touch at the energy / quasi-momentum pairs $(\bM_\star,\mu_S)$. 
\subitem(C)\ The two touching dispersion surfaces are locally described  by a normal form which is $\pi/2-$rotationally invariant and, at leading order in $|\bk-\bM_\star|$, homogeneous of degree two. In particular, for $|\bkappa|=|(\kappa_1,\kappa_2)|=\sqrt{\kappa_1^2+\kappa_2^2}$ small:
  \begin{equation} 
\mu_{\pm}(\bM+\bkappa) - \mu_S = 
  (1-\alpha)|\kappa|^2\ +\ \mathscr{Q}_6(\kappa)\ \pm \sqrt{\Big|\ \gamma(\kappa_{1}^{2} - \kappa_{2}^{2})+ 2\beta \kappa_{1}\kappa_{2}\ \Big|^2\ +\ \mathscr{Q}_8(\kappa)}\ , \label{nf} 
\end{equation}
where $\alpha\in \mathbb{R}$ and $\beta, \gamma \in \mathbb{C}$ are constants, and 
 $\mathscr{Q}_6(\kappa)=\mathcal{O}(|\kappa|^6)$ and $\mathscr{Q}_8(\kappa)=\mathcal{O}(|\kappa|^8)$ are analytic functions of  $\kappa$ and invariant under $\pi/2-$rotation: $(\kappa_1,\kappa_2)\mapsto(-\kappa_2,\kappa_1)$.
\subitem(D)\ Corollary \ref{rho-inv}:\ If  $V$ is, in addition, assumed to be \emph{reflection-invariant} with respect to the diagonal of the fundamental cell, $\Omega$ (see Figure \ref{fig:Lattices}), then  \eqref{nf} reduces to
\begin{align}
\mu_{\pm}(\bM+\bkappa) - \mu_S &= 
(1-\alpha)|\kappa|^2+\mathscr{Q}_6(\kappa)\ \nn\\
&\quad  \pm \sqrt{  \tilde{\gamma}^2(\kappa_1^2-\kappa_2^2)^2 + 4\beta^2 \kappa_{1}^2\kappa_{2}^2\ +\ \mathscr{Q}_8(\kappa) },
\label{nf-ref}\end{align}
 where $\alpha,\beta\in\R$ and $\tilde\gamma\in\R$, and where $\mathscr{Q}_6(\kappa)$ and $\mathscr{Q}_8(\kappa)$ are now also invariant under the reflection: $(\kappa_1,\kappa_2)\mapsto(\kappa_2,\kappa_1)$.

\item {\it Theorem \ref{mu-epsilon-evals}:}\ Consider $H^\eps\equiv H_{\eps V}=-\Delta +\eps V$, where $V$ is admissible and $\eps$ is real. Let $V_{1,1}$ and $V_{1,0}$ denote the $(1,1)$ and $(1,0)$ indexed Fourier coefficients of  $V$ (see \eqref{FS}) and assume the (generically satisfied) non-degeneracy condition: $V_{1,1}\ne V_{1,0}$. Then, for all non-zero and sufficiently small $\eps$,  there are  $2$ dispersion surfaces of $H^\eps$, among the lowest $4$,  that touch at the vertices of $\brill$. In a neighborhood of each vertex,  the local character given in \eqref{nf} or \eqref{nf-ref}. 
\item  
 {\it Theorem \ref{eps-generic}}:\ There exists a discrete set $\widetilde{\mathcal C}\subset\R$,  such that if $\eps\notin\widetilde{\mathcal C}$, then $2$ dispersion surfaces of $H^\eps$ touch at the vertices of $\brill$ with local behavior described by \eqref{nf}. 
The constants  $\alpha^\eps\in\R,$ $\beta^\eps \in \C$, and $\gamma^\eps \in \C$ in Theorem \ref{mu-epsilon-evals}, and 
$\alpha^\eps$ and $\beta^\eps$ in Theorem \ref{eps-generic}, displayed in \eqref{albega}, depend on the degenerate eigenspace, ${\rm span}\{\Phi_1^\eps,\Phi_2^\eps\}$, for quasi-momentum. %
 Hence, the property of quadratically touching dispersion surfaces with local behavior given by \eqref{nf-ref} holds for generic, even arbitrarily large, values of $\eps$.
\item {\it Lieb lattice potentials: tight-binding vs strong binding}: The well-known conical plus flat band structure dispersion surfaces of the 3-band tight-binding model for the Lieb lattice \underline{does not} persist in the Schr\"odinger operator with periodic Lieb lattice potential consisting of potential wells centered on the Lieb lattice; see Section \ref{LiebLattice}.
\item {\it Numerical studies:}\  In Section \ref{numerical} we discuss numerical simulations for various admissible potentials for a full range of coupling parameters, $\eps$ small to $\eps$ large. These include 
 potentials which are superpositions of spatially localized potentials, centered on vertices of the square lattice or the vertices of a  Lieb lattice. Our simulations corroborate our analytical results  and are discussed in this context. 
\item {\it Wave-packet dynamics; Appendix \ref{sec:wave-pkts}:} A multiscale analysis demonstrates that the envelope of the solution of the time-dependent Schr\"{o}dinger equation, $i\D_t\psi(\bx,t)=(-\Delta_\bx+V(\bx))\psi(\bx,t)$, for wave-packet initial data:
$ \psi(\bx,0)=\ C_{10}(\bX)\ \Phi_1(\bx)\ +\ C_{20}(\bX)\ \Phi_2(\bx),\ {\rm where}\ \bX\equiv\delta\bx=(X_1,X_2)$, and $C_{j0}(\bX),\ j=1,2$ are in Schwartz class,
 evolves on large but finite time scales according
to a coupled system of Schr\"{o}dinger equations ($T=\delta^2 t$):
\begin{equation}
 i\frac{\D}{\D  T} C_{p}(\bX,T) =  
 -\sum_{q=1}^2\sum_{r,s=1}^2  
 \frac{\D}{\D X_r}\ \Upsilon^{p,q}_{r,s}\ \frac{\D}{\D X_s}\ C_q(\bX,T)  ,\quad p=1,2.
\label{eff-env}  \end{equation}
Here,  $ \Upsilon^{p,q}_{r,s} $ are the matrix elements of an (indefinite) inverse effective mass tensor;
 see Section \ref{pf-quad-disp}. The branches of the dispersion relation of \eqref{eff-env} are given by the expression in \eqref{nf}. \label{wave-pack-summ}

A derivation of \eqref{eff-env} is presented in Appendix \ref{sec:wave-pkts}. A rigorous proof of the long (finite) time validity would be along the lines of  \cite{allaire} or \cite{FW-CMP:14}, for example.
\end{enumerate}

\subsection{The Lieb lattice; tight-binding versus continuum models, a band structure instability} \label{LiebLattice}

Our original motivation for the present study was to contrast the band structure of the Lieb lattice tight-binding Hamiltonian (see, for example, \foreignlanguage{romanian}{Ni\cb{t}}\u{a} et al in \cite{nita-lieb}, Weeks and Franz in \cite{weeks-lieb}) with that of the corresponding continuum
Schr\"odinger operator with Lieb lattice potential in the high-contrast (strong binding) regime, as studied via simulation in Guzm\'{a}n-Silva et al in \cite{guzman-lieb}. 

\bigskip

\nit{\it  Our results demonstrate that the well-known flat-band plus conical structure near 
 the vertices of $\brill$ does not persist in the regime of strong-binding (finite deep atomic wells at each lattice site) for the continuum Schr\"odinger operator.}
 \bigskip
 
 \nit This is in contrast to the case of honeycomb structures, where the Dirac (conical) points of the tight-binding model of Wallace \cite{Wallace:47,RMP-Graphene:09}  persist in the continuum honeycomb Schr\"odinger operators \cite{FLW-CPAM:17,FW:12}. 
  We now explain this in some detail.
  
The  tight-binding model for the Lieb lattice is given by a Hamiltonian acting on wave functions $\psi\in l^2(\Z^2;\C^3)$;
for each $(m,n)\in\Z^2$, $\psi_{m,n}\in\C^3$ is a vector of $3$ complex amplitudes assigned to the $A, B$ and $C$ sites in the
$(m,n)^{th}$ cell; see Figure \ref{fig:Lattices} (right). This model has $\pi/2-$rotational invariance about any $B-$ site. The detailed setup  is presented in Appendix \ref{tb-lieb}. The band structure  has three band dispersion relations, whose graphs (dispersion surfaces)  intersect
at the vertices of the Brillouin zone, $\brill$. 
Figure \ref{fig:lieb-tb} reveals that the three dispersion surfaces which meet at the vertex, $\bM$, of $\brill$ consists of 
 a constant energy dispersion surface (a {\it flat band}, $\bk\mapsto E_0^{^{\rm TB}}(\bk)\equiv0$), and two other dispersion surfaces, $\bk\mapsto E_\pm^{^{\rm TB}}(\bk)$, which meet conically at $\bM$. An analogous picture applies in a neighborhood of each vertex of $\brill$, as  seen in the left plot of Figure \ref{fig:lieb-tb}. The latter observation follows from symmetry considerations. 
\begin{figure}
  \begin{center}
 \includegraphics[width=0.48\linewidth]{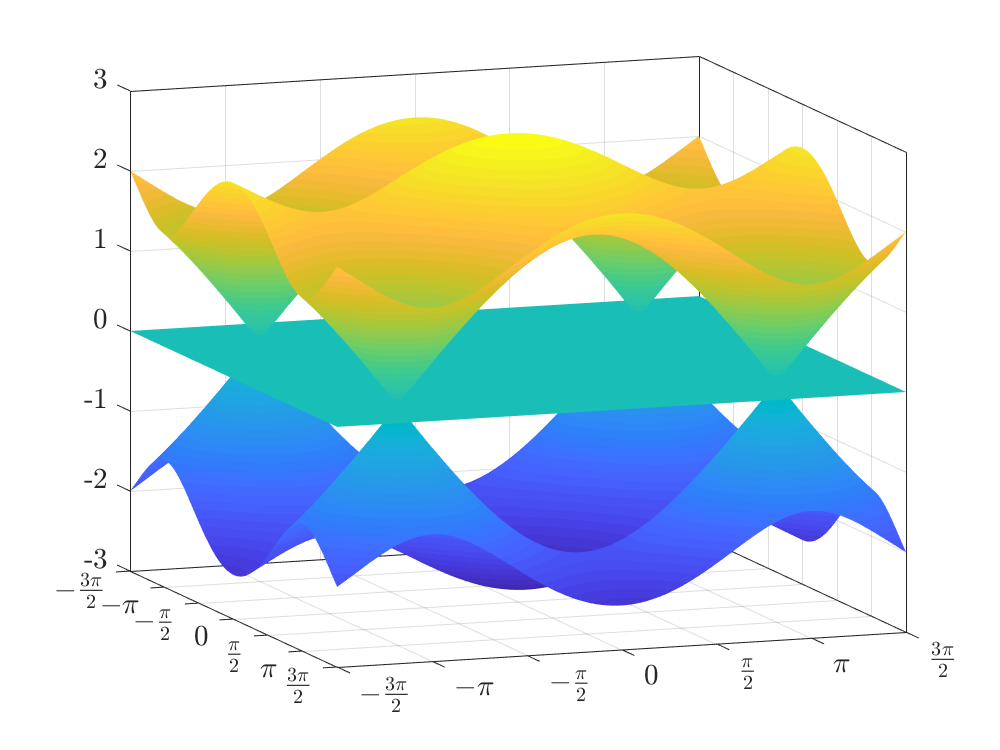}
 \includegraphics[width=0.48\linewidth]{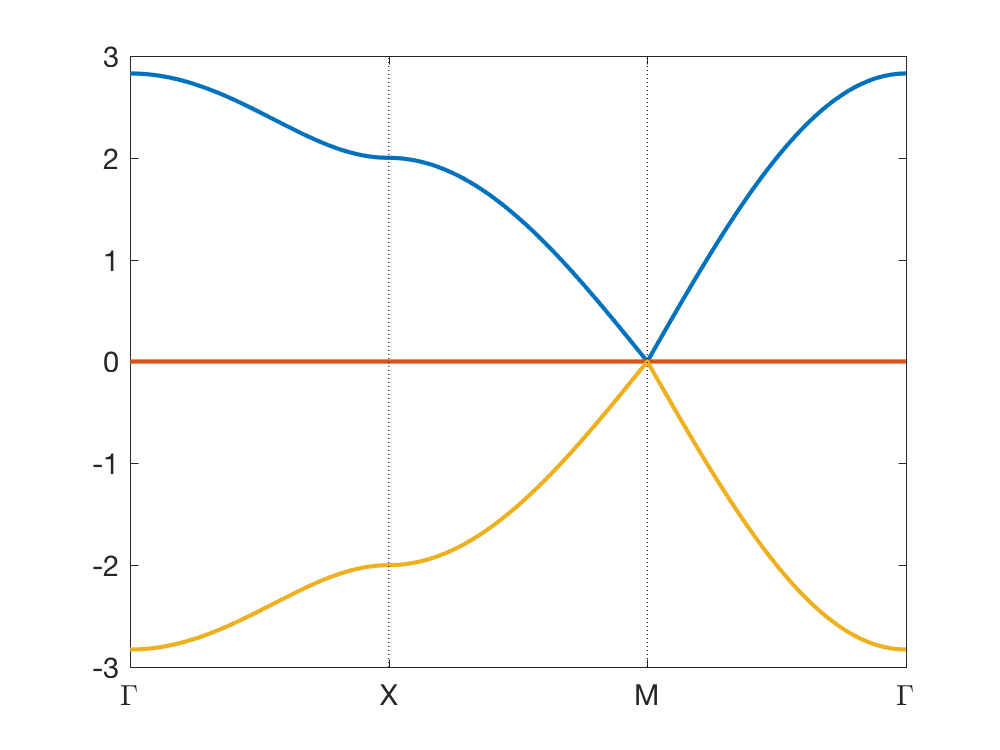}
   \caption{{\bf (left)} Dispersion surfaces of the 3-band tight-binding model for the Lieb lattice.   The Brillouin zone is $\left[-\pi, \pi \right]^2$ {\bf (right)}.  A plot of the dispersion surface along the path ${\bf \Gamma} \to  {\bf X} \to {\bf M} \to {\bf \Gamma}$, as described in Figure \ref{fig:squareBZ}. At the vertices of the Brillouin zone, the dispersion relation has the structure of a conical intersection and a flat band. Our analysis shows that this structure does not persist in the continuum Schr\"odinger operator which limits to the tight-binding model. } \label{fig:lieb-tb}
  \end{center}
\end{figure}

 It is natural to  contrast this behavior with that of the 
  two-band tight-binding model of P.R. Wallace (1947),  introduced in his pioneering work on graphite
   \cite{Wallace:47,RMP-Graphene:09}. In this model, there are two dispersion surfaces which meet conically at the high-symmetry quasi-momenta, located at the vertices of the (hexagonal) Brillouin zone. These conical points are called {\it Dirac points}.

The tight-binding model gives an approximation to the low-lying spectrum of the continuum 
Schr\"{o}dinger operator $-\Delta + \lambda^2 V$,  where $V$ is a superposition of  identical  potential wells, centered at the sites of a honeycomb structure and $\lambda$ is sufficiently large. It was proved in \cite{FLW-CPAM:17} for this {\it strong binding regime} that Dirac points occur at the vertices of the Brillouin zone and that, after a rescaling, the first two dispersion surfaces converge uniformly to those of Wallace's two-band tight-binding model. It had earlier been proved in \cite{FW:12,FLW-MAMS:17} that generic Schr\"{o}dinger operators, for the class of honeycomb lattice potentials, have Dirac points within their band structure and that these Dirac points  persist against small perturbations of the potential which break inversion or complex-conjugate symmetries; see Definitions \ref{time-rev}(ii) and \ref{even}(iii).

\medskip

\begin{question}
\label{tbquest}
\nit Consider a potential $V_L(\bx)$, formed as a superposition of identical deep potential wells centered at the points of the Lieb lattice. Does the local band structure near the vertices of $\brill$  (a flat band plus two conically touching surfaces) of the tight-binding model persist in the band structure of $-\Delta + V_L$, {\it i.e.} does this local structure persist into the strong binding regime? 
\end{question}
\medskip

\nit The answer is \underline{no}, and the precise character of the local band structure  is a consequence of our analysis of $\pi/2-$rotationally invariant potentials. Figure \ref{fig:lieb-curves1} displays the family of 3 curves obtained by sampling 3 dispersion surfaces of $-\Delta+V_L$,  two surfaces that touch at the vertices of $\brill$ and the nearest to these among all others, for a choice of deep atomic potential wells. 
The two curves which intersect are locally described by \eqref{nf}. 
\begin{figure}[t!]
\begin{center}
\includegraphics[width = 2.5in]{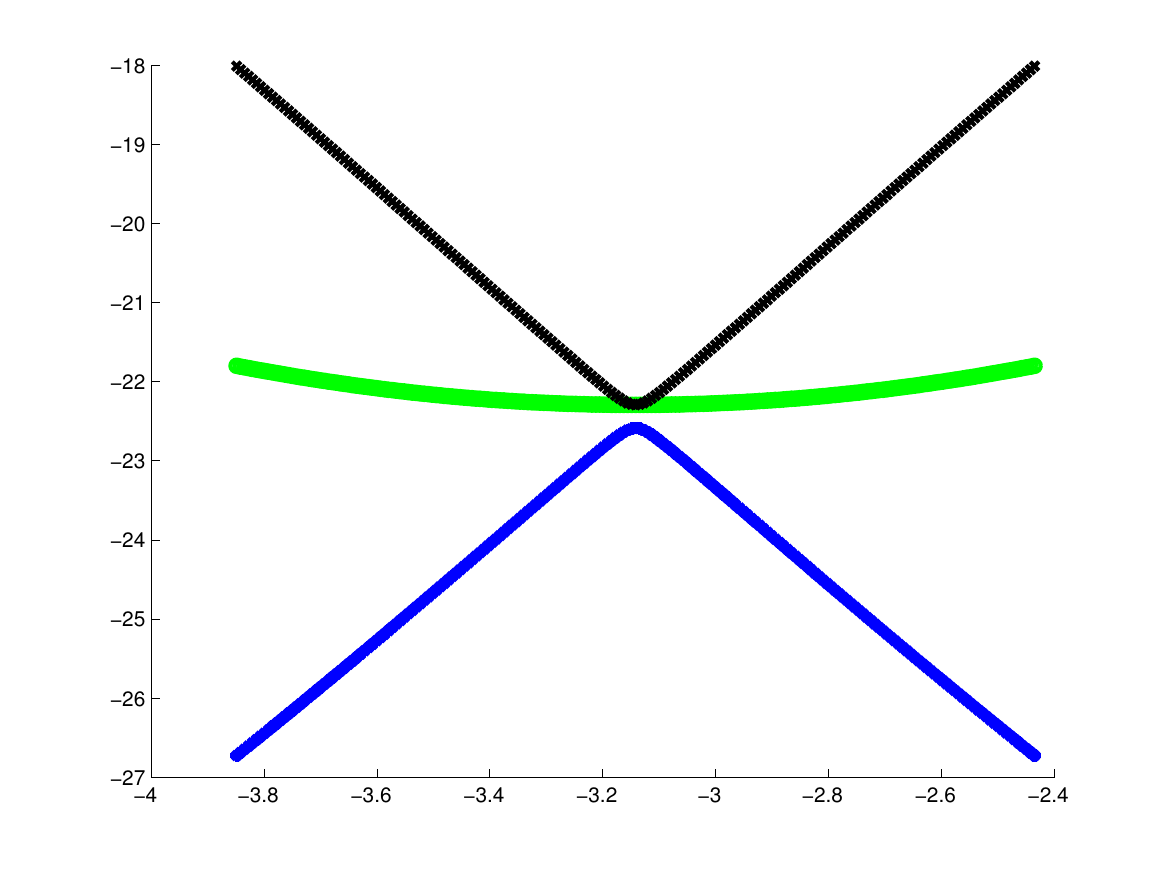}
\qquad \qquad 
\includegraphics[width = 2.5in]{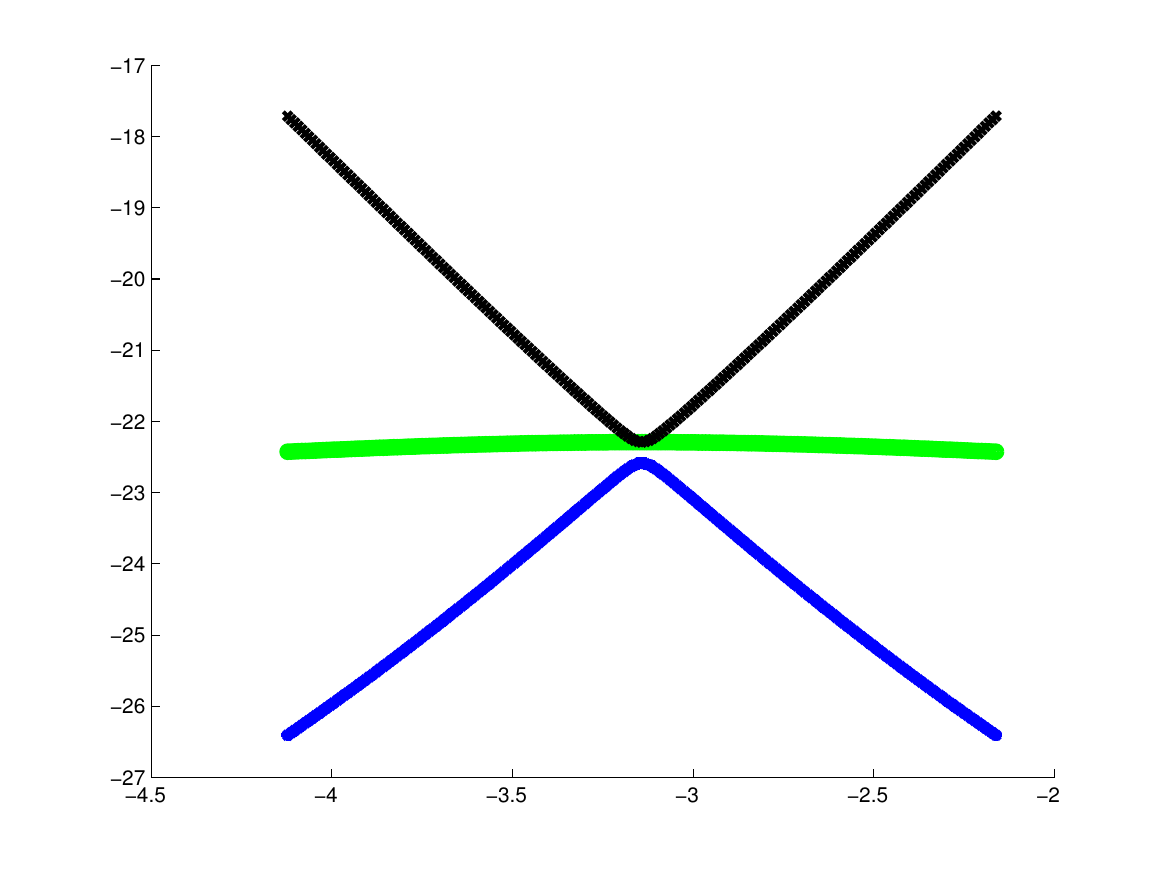}
\caption{Dispersion surfaces for $-\Delta+V_L$  sampled along a two different quasi-momentum segments through the high symmetry quasi-momentum, $\bM$. Here, $V_L$ is the periodic potential whose restriction to the primitive cell is:
$
	\tilde{V}_L (\bx) = -V_0 (e^{-\abs{\bx}^2/\sigma} + e^{-\abs{\bx - (1/2, 0)}^2/\sigma} + e^{-\abs{\bx - (0, 1/2)}^2/\sigma} ) .
$
Segments are of the form  
$\bk = \bM \pm \lambda ( \cos \theta,  \sin \theta )^t$ 
for $\lambda \in (-\lambda_0,\lambda_0)$ with $\theta = \frac{\pi}{4}$ {\bf (left)} and $\theta = \frac{15\pi}{16}$ {\bf (right)}. }
\label{fig:lieb-curves1}
\end{center}
\end{figure}

\begin{remark}\label{allaire}
In \cite[Section 6]{allaire} results on homogenized effective equations were obtained for the dynamics of wave packets, which are spectrally localized near an isolated (quadratic) spectral band edge.
The authors also consider both the cases of simple and degenerate eigenvalues occurring at a band edge, derive a system of coupled Schr\"{o}dinger envelope equations, and remark on the non-genericity of degenerate eigenvalues at a band edge.
Although non-generic in the space of all potentials, the results of this article show in the presence of symmetry,
for example the symmetries of our $\mathcal{P}\circ\mathcal{C}$ and $\pi/2-$rotationally invariant (admissible) potentials, such 
systems are realized in many physical settings of interest.
\end{remark}

\subsection{Outline of the paper}\label{outline}
In Section~\ref{s:2}, we define the class of {\it admissible potentials} considered here, characterize the   Fourier series of such potentials, and develop tools for Fourier analysis in the spaces, $L^2_\bk$ for $\bk\in\brill$. In particular, $L^2_\bM$ has direct sum (orthogonal) decomposition into eigenspaces, $L^2_{\bM,\sigma}$ ($\sigma=\pm1,\pm i$),    
of the $\pi/2-$rotation operator, $\mathcal{R}$, which acts unitarily on $L^2_\bM$.\medskip

\nit In Section~\ref{sec:freeH} we study spectral properties of the free Laplacian, $-\Delta$, in $L^2_\bM$. In particular, we show that $-\Delta$ has four-fold degenerate $L^2_\bM$ eigenvalues (we consider the least energy such), which correspond to four simple eigenvalues, one in each of the invariant subspaces, $L^2_{\bM,\sigma}$ .\medskip

\nit In Section \ref{2fold-touch} we state and prove Theorem  \ref{quad-disp} and Corollary \ref{rho-inv} which give  sufficient conditions for the quadratic touching of two dispersion surfaces at the high-symmetry quasi-momenta, situated at the vertices, $\bM_\star$, of $\brill.$  These results also
display the detailed expansion of the dispersion maps in a neighborhood of these quasi-momenta.\medskip

\nit In Section \ref{sec:eps-small} we verify the hypotheses of Theorems  \ref{quad-disp} and Corollary \ref{rho-inv} for Hamiltonians of the form $H^\eps=-\Delta+\eps V$, where $V$ is admissible, for all $\eps$ real, sufficiently small and non-zero. This then proves the existence of quadratic degeneracies at high-symmetry quasi-momenta as described in Theorem  \ref{quad-disp} and Corollary \ref{rho-inv} for the regime of sufficiently weak-coupling (low-contrast). \medskip
 
 \nit In Section \ref{eps-generic}, we consider $H^\eps=-\Delta+\eps V$, where $\eps$ is real and non-zero, but not restricted by size. Theorem \ref{generic-eps} states that the small $\eps-$results persist
  for all non-zero real $\eps$ except for a possible discrete set of $\eps-$values. The proof, based on arguments in \cite{FW:12,FLW-MAMS:17}, is sketched.  \medskip

\nit Section~\ref{numerical} summarizes a range of  computational experiments  that corroborate our analytical results. \medskip

\nit Appendix \ref{tb-lieb} summarizes the formulation of the tight-binding for the Lieb lattice;
 see Section \ref{LiebLattice}.  \medskip

\nit  Appendix \ref{sec:wave-pkts} summarizes a  multiple scales asymptotic expansion yielding a coupled system of time-dependent Schr\"odinger equations, which govern the evolution of wave-packets, which are spectrally concentrated in a neighborhood of high-symmetry quasi-momenta, $\bM_\star\in\brill$. \medskip
 
\nit Finally, in Appendix \ref{mu-eps-coeff} we provide the details of a calculation of coefficients occurring in the leading order expression for the touching dispersion surfaces in the weak coupling ($\eps$ non-zero and small) regime; see also Corollary \ref{small-eps-disp}.

\subsection{Notation}\label{oft-used} 

We remark on some frequently used notation and conventions.
\begin{enumerate}
\item Repeated indices are understood to be summed unless otherwise specified.
\item For $\bfm\in\Z^2$, $\bfm\vec{\bk}= m_1\bk_1+m_2\bk_2$, where $\bk_1$ and $\bk_2$ are the dual lattice basis vectors 
defined below in \eqref{k12-def}.
\item $\left\langle f,g\right\rangle_{L^2(\R^2/\Lambda)}=\left\langle f,g\right\rangle$; if not indicated
the inner product is understood to be taken in $L^2(\R^2/\Lambda)$. 
 \end{enumerate}

\medskip

\noindent {\bf Acknowledgments:} The authors thank M. Rechtsman and C.L. Fefferman for very stimulating discussions. M. Rechtsman also provided very helpful advice on numerical spectral calculations. M.I. Weinstein would like to thank the Department of Mathematics of Stanford University for its excellent hospitality and environment during their Winter, 2018 quarter, when part of this work was done as Bergman Visiting Professor.
This research was supported in part by the following US National Science Foundations grants: Graduate Research Fellowship Program grants:  DGE-1144155 and DGE 16-44869 (RTK);  DMS-1312874 and NSF CAREER Grant DMS-1352353 (JLM); DMS 16-19755 (BO); DMS-1412560, DMS-1620418 and RNMS Grant \#11-07444 (Ki-Net) (RTK\&MIW). This research was also supported in part by Simons Foundation Math + X Investigator Award \#376319 (MIW).

\section{The Lattice $\Lambda$  and Admissible Potentials} \label{s:2}
\subsection{The Square Lattice}

We begin with the lattice $\Lambda=\Z\bv_1\oplus\Z\bv_2$, where
\begin{equation} 
  \bv_{1} = a\begin{pmatrix}1\\0\end{pmatrix} \text{ and } \; \bv_{2} =a\begin{pmatrix}0\\1\end{pmatrix}.
\label{v12-def}
\end{equation}
The constant, $a$,  is  the lattice constant giving the distance between nearest neighbor sites.
  The dual lattice, $\Lambda^*$, is generated by the vectors 
\begin{equation}
  \bk_{1} = a^{-1} \begin{pmatrix}2\pi\\0\end{pmatrix}\text{ and }  \; \bk_{2} = a^{-1} \begin{pmatrix}0\\2\pi\end{pmatrix}.
\label{k12-def}
\end{equation}
For simplicity, we assume $a=1$. The Brillouin zone, $\brill$, is given in \eqref{brillouin}; see Figure \ref{fig:squareBZ}.
We have the relations $  \bk_{l}\cdot\bv_{m} = 2\pi \delta_{l,m}$, for $l,m=1,2$.

 Let $R$ be the $\pi/2-$clockwise rotation matrix
\begin{equation}
R = \begin{pmatrix} 0 & 1\\ 
-1 & 0 
\end{pmatrix}.
\label{Rdef}\end{equation}
We record the elementary relations:  $R^*\bv_1=\bv_2$  and $R^*\bv_2=-\bv_1$, and 
$R\bk_1=-\bk_2$ and $R\bk_2=\bk_1$. It follows that $R$ and $R^*$ map $\Lambda$ to itself and $\Lambda^*$ to itself. 
Denote the vertices of $\brill$ by: 
\[\bM\equiv\bM_{_{++}}  = (\pi, \pi)^T,\quad  \bM_{_{+-}} = (\pi, -\pi)^T,\quad  \bM_{_{--}} = (-\pi, -\pi)^T,\quad \bM_{_{-+}} =  (-\pi, \pi)^T \ . \]
Then, the set of vertices of $\brill$ is mapped by $R$ to itself, 
\[ \bM_{_{+-}} =R\bM=\bM-\bk_2,\quad \bM_{_{--}}=R^2\bM=\bM-\bk_1-\bk_2,\quad \bM_{_{-+}}=R^3\bM\  =\ \bM-\bk_1\ . \]
Furthermore,  $R$ maps the affine sublattice $\bM+\Lambda^*$ one to one and onto itself. See Figure \ref{fig:squareBZ}. 

\begin{remark}\label{allvertices}
Note that the pseudo-periodic boundary condition associated with  quasi-momenta located at the vertices of $\brill$ are 
the same and correspond to $\bM$-pseudo-periodicity, {\it i.e.} for any choice of $(a,b)\in\{+,-\}$, we have 
$\psi(\bx+\bv;\bM_{ab})=e^{i\bM\cdot\bv}\psi(\bx;\bM_{ab}),\quad \bx\in\R,\ \  \bv\in\Lambda$ 
 Therefore, 
for any $\bM_\star\in \bM+\Lambda$, the space $L^2_{\bM_\star}$ can be identified with $L^2_{\bM}$. Furthermore, 
the local character of dispersion surfaces in a neighborhood of any vertex of $\brill$ determines the local character in a neighborhood of any other vertex of $\brill$.
\end{remark}

\subsection{Admissible Potentials}\label{admissible}

For any function $f$ defined on $\R^2$, we define the $\pi/2-$rotational operator 
\[\mathcal{R}[f](\bx) \equiv f(R^{*}\bx),\] where $R$ is the $\pi/2-$clockwise rotation matrix displayed in \eqref{Rdef}.

 We consider smooth (say $C^\infty$) periodic potentials $V(\bx)=V(x_1,x_2)$ defined on $\R^2$, with fundamental period cell $\Omega=[0,1] \times [0,1]$.
Any such $V$ can be represented as a Fourier series:
 \begin{equation}
 V(\bx)\ =\ \sum_{\bfm\in\Z^2} V_\bfm e^{i\bfm\vec{\bk}\cdot\bx}\ =\ \sum_{(m_1,m_2)\in\Z^2}\ V_{m_1m_2}e^{2\pi i(m_1x_1+m_2x_2)},
\label{FS}\end{equation}
where $V_\bfm=(2\pi)^{-2}\int_\Omega e^{-i\bfm\vec\bk\cdot\bx}V(\bx)d\bx$, and  $\bfm\vec{\bk}= m_1\bk_1+m_2\bk_2$.

%

%
\begin{newdef} [$\mathcal{P}$, $\mathcal{C}$ and $\mathcal{R}$ invariance]\label{invariance}
\begin{enumerate}
\item Given a point $\bx\in\R^2$, its $\pi/2-$counterclockwise rotation about $\bx_c$, denoted $\widehat{\bx_{\mathcal{R}}}$, 
satisfies: $ \widehat{\bx_{\mathcal{R}}}-\bx_c= R^*\left(\bx-\bx_c\right)$. If
 \begin{equation}
  \mathcal{R}[V](\bx)\ \equiv\ V(\widehat{\bx_{\mathcal{R}}})=V(\bx)
  \label{Rdef}\end{equation}
 
  for all $\bx\in\R^2$ we say that $V(\bx)$ is $\mathcal{R}-$invariant, or $\pi/2-$rotationally invariant, with respect to $\bx_c$.
\item Given a point $\bx\in\R^2$, its inversion with respect to $\bx_c$, denoted $\widehat{\bx_{\mathcal{I}}}$, satisfies:
$ \widehat{\bx_{\mathcal{I}}}-\bx_c= -\left(\bx-\bx_c\right)$. If $V(\widehat{\bx_{\mathcal{I}}})=V(\bx)$ for all $\bx\in\R^2$ we say that $V(\bx)$ is $\mathcal{P}$-invariant, parity invariant, or inversion symmetric  with respect to $\bx_c$.

\item We say that $V(\bx)$ is $\mathcal{C}$-invariant or invariant under complex conjugation if $\overline{V(\bx)}=V(\bx)$ for all $\bx\in\R^2$.
\end{enumerate}
\end{newdef}

We shall study potentials which are real-valued, smooth and invariant under  $\mathcal{P}\circ\mathcal{C}$ and $\mathcal{R}-$invariant (invariant under $\pi/2$ rotation). We call such potentials {\it admissible}. 

\begin{newdef}[Admissible potentials]
\label{def:sq-pot}
An admissible potential is a smooth function, $V(\bx)$, defined on $\R^2$ with the following properties.
\begin{enumerate}
\item[i.] $\Lambda$-periodicity:\ $V(\bx+\bv) = V(\bx) \text{ for all } \bx \in \mathbb{R}^{2}$ and $\bv \in \Lambda$;\label{lattice-periodic}
\end{enumerate}
There exists  $\bx_{c} \in \mathbb{R}^{2}$ with respect to which (in the sense of Definition \ref{invariance})
\begin{enumerate}
\item[ii.] $V$ is $\mathcal{C}-$invariant; \label{time-rev}
\item[iii.]   $V$ is $\mathcal{P}-$invariant; and \label{even}
\item[iv.]   $V$ is $\pi/2-$rotationally invariant. \label{R-inv}
\end{enumerate}
\end{newdef}
\nit Throughout this paper we shall assume, with no loss of generality, $\bx_c=0$.

In  some of our results we consider admissible potentials which are also reflection invariant. Such potentials have the full symmetry of the square lattice.  
\begin{newdef}[Reflection Invariance]
\label{def:refinv}
An admissible potential, $V(\bx)$, defined on $\R^2$ is reflection invariant if
$
V(x_1, x_2) = V(x_2, x_1).
$
\end{newdef}

We introduce two basic examples of admissible potentials, each obtained as a sum of translates of 
 a fixed atomic potential:
\begin{example}[Square lattice potential]\label{square-pot}
Let $V(\bx) = \sum_{\bfm \in \mathbb Z^2} V_0( \bx + \bfm )$, where $V_0=V_0(|\bx|)$ is real-valued, radially-symmetric, sufficiently rapidly-decaying ``atomic potential'' .  We call $V(\bx)$  a \emph{ square lattice potential}. It is easily seen that this class of potentials is admissible with $\bx_c=0$. 
\end{example}
\begin{example}[Lieb lattice potential]\label{lieb-pot}
We fix the fundamental period cell to be the square with side-length one. 
Within a fixed cell are three points, labeled $\bA$, $\bB$ and $\bC$; see Figure \ref{fig:Lattices} (right).
The Lieb lattice, $\mathbb{L}$, is the union of three sublattices: $\bA+\Z^2$, $\bB+\Z^2$ and $\bC+\Z^2$. A \emph{ Lieb lattice potential} is given by 
$V(\bx) = \sum_{\bw \in \mathbb L} V_0( \bx + \bw )$, where $V_0=V_0(|\bx|)$ is  real-valued, radially-symmetric and  rapidly-decaying  atomic potential. Lieb lattice potentials are admissible with $\bx_c = 0$.  An example of an atomic Lieb lattice potential is displayed in Figure \ref{figure9}. 
\end{example}

\medskip

The next proposition states that if $V$ is an admissible potential in the sense of Definition \ref{def:sq-pot} then  the operator $H_V$ acting in $L^2_\bk$, has an additional symmetry, for $\bk\in\bM+\Lambda^*$.

\begin{prop}\label{extra-symm}
  Assume $V$ is an admissible potential in the sense of Definition \ref{def:sq-pot}.  Then $H_V=-\Delta+V$ and $\mathcal{R}$ map a dense subspace of $L^{2}_{\bM}$ to itself. Moreover, restricted to this dense subspace of $L^{2}_{\bM}$, the commutator $[H_V, \mathcal{R}] \equiv H_V\mathcal{R}-\mathcal{R}H_V= 0$. In particular, if $\Phi(\bx)$ is a solution of the $\bM-$pseudo-periodic eigenvalue problem for $H_V$ for energy $E$, then $\mathcal{R}[\Phi](\bx)$ is also a solution of  the $\bM-$pseudo-periodic eigenvalue problem for $H_V$ with energy $E$.
  \end{prop}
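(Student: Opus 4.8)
The plan is to verify the three claimed properties in turn: (a) that $\mathcal{R}$ preserves the pseudo-periodicity condition defining $L^2_\bM$, so that $\mathcal{R}$ maps (a dense subspace of) $L^2_\bM$ to itself; (b) that $H_V$ does the same, i.e.\ $H_V$ is (essentially) self-adjoint on $L^2_\bM$; and (c) that $[H_V,\mathcal{R}]=0$ on this common dense domain. The final assertion about eigenfunctions is then immediate from (c): if $H_V\Phi=E\Phi$ with $\Phi\in L^2_\bM$, then $H_V(\mathcal{R}\Phi)=\mathcal{R}(H_V\Phi)=E\,\mathcal{R}\Phi$, and $\mathcal{R}\Phi\in L^2_\bM$ by (a).

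For (a), I would take $\Phi\in L^2_\bM$, so $\Phi(\bx+\bv)=e^{i\bM\cdot\bv}\Phi(\bx)$ for all $\bv\in\Lambda$, and compute $(\mathcal{R}\Phi)(\bx+\bv)=\Phi(R^*(\bx+\bv))=\Phi(R^*\bx+R^*\bv)$. Since $R^*$ maps $\Lambda$ to itself (recall $R^*\bv_1=\bv_2$, $R^*\bv_2=-\bv_1$), we have $R^*\bv\in\Lambda$, so this equals $e^{i\bM\cdot R^*\bv}\Phi(R^*\bx)=e^{i\bM\cdot R^*\bv}(\mathcal{R}\Phi)(\bx)$. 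The key arithmetic point is that $e^{i\bM\cdot R^*\bv}=e^{i\bM\cdot\bv}$ for every $\bv\in\Lambda$: writing $\bv=m_1\bv_1+m_2\bv_2$, one has $\bM\cdot R^*\bv=\bM\cdot(m_1\bv_2-m_2\bv_1)=(m_1-m_2)\pi$ while $\bM\cdot\bv=(m_1+m_2)\pi$, and these differ by $2m_2\pi\in2\pi\Z$. (Equivalently: $R$ fixes the affine sublattice $\bM+\Lambda^*$, which is exactly the statement that the $\bM$-pseudo-periodicity class is $\mathcal{R}$-invariant.) Hence $\mathcal{R}\Phi\in L^2_\bM$, and since $R^*$ is an isometry of $\R^2$, $\mathcal{R}$ is unitary on $L^2_\bM$; it maps a dense subspace such as $C^\infty\cap L^2_\bM$ to itself.

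For (b) and (c), I would work on the dense subspace $D$ of smooth $\bM$-pseudo-periodic functions. That $H_V=-\Delta+V$ maps $D$ into $L^2_\bM$ follows because $-\Delta$ commutes with translations (so preserves pseudo-periodicity) and $V$ is $\Lambda$-periodic, hence multiplication by $V$ preserves each $L^2_\bk$. For the commutator, I would treat the two summands of $H_V$ separately. Since $R^*$ is orthogonal, $\Delta$ is rotation-invariant: $\Delta(f\circ R^*)=(\Delta f)\circ R^*$, i.e.\ $[-\Delta,\mathcal{R}]=0$. For the potential term, $\mathcal{R}[Vf](\bx)=V(R^*\bx)f(R^*\bx)$, whereas $V\,\mathcal{R}[f](\bx)=V(\bx)f(R^*\bx)$; these agree for all $f$ precisely because $V(R^*\bx)=V(\bx)$, which is the $\pi/2$-rotational ($\mathcal{R}$-) invariance of $V$ (Definition \ref{def:sq-pot}(iv), with $\bx_c=0$). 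Adding, $[H_V,\mathcal{R}]=0$ on $D$.

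I do not expect a serious obstacle here; the statement is essentially a bookkeeping verification. The one point requiring a little care is making sure the function spaces are handled correctly — that $\mathcal{R}$ genuinely preserves the $L^2_\bM$ boundary condition (the parity computation $\bM\cdot R^*\bv\equiv\bM\cdot\bv\pmod{2\pi}$ above), and that one identifies a common dense domain on which both operators act and on which the commutator identity is meaningful. If one wanted to phrase (c) at the level of the self-adjoint operators rather than on $D$, one would additionally note that $\mathcal{R}$, being unitary and commuting with $H_V$ on a core, commutes with the spectral projections of $H_V$; but for the eigenfunction conclusion stated in the proposition, the computation on $D$ suffices.
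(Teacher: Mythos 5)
Your proposal is correct and follows essentially the same approach as the paper: verify that $-\Delta$ and multiplication by $V$ each commute with $\mathcal{R}$, and check directly that the $\bM$-pseudo-periodicity condition is preserved because $e^{i\bM\cdot R^*\bv}=e^{i\bM\cdot\bv}$ for $\bv\in\Lambda$. The paper's version of this last step uses $e^{i\bM\cdot R^*\bv}=e^{iR\bM\cdot\bv}=e^{i(\bM-\bk_2)\cdot\bv}=e^{i\bM\cdot\bv}$, which is the same modular arithmetic you perform in coordinates.
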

  \begin{proof}  Note first that $-\Delta$ commutes with rotations and the operator $\Phi\mapsto V\Phi$, 
where $V$ is an admissible potential, commutes with $\pi/2$ rotations. Furthermore, assume $\Phi(\bx)$ is $\bM-$pseudo-periodic and define $\Phi_R(\bx)\equiv \Phi(R^*\bx)$. Then, for all $\bv\in\Lambda=\Z^2$ and all $\bx\in\R^2$:
 $\Phi_{R}(\bx + \bv) = \Phi(R^{*}\bx + R^{*}\bv) = e^{i\bM \cdot R^{*}\bv} \Phi(R^{*}\bx)= e^{iR\bM \cdot \bv}\Phi(R^{*}\bx) = e^{i(\bM -\bk_{2})\cdot \bv} \Phi(R^{*}\bx)  =  e^{i\bM \cdot \bv} \Phi_{R}(\bx)
 $. For more detail, see an analogous result in \cite{FW:12}.
  \end{proof}


\subsection{Fourier series of admissible potentials}\label{FS-adm}
 The following proposition implies constraints on the Fourier coefficients of admissible potentials. 
  \begin{prop}\label{coef-cons}%
Let $V(\bx)$ be in $C^\infty(\R^2/\Lambda)$. Then, 
 for all $\bfm=(m_1,m_2)\in\Z^2$
\begin{enumerate}[i.]
\item $V(-\bx)=V(\bx)\ \implies$\ $V_\bfm= V_{-\bfm},$ \label{minus-inv}
\item $\overline{V(\bx)}=V(\bx)\ \implies$\ $V_\bfm=\overline{V_{-\bfm}},$
\item $\mathcal{R}[V](\bx)=V(\bx)\ \implies$\ $V_{m_1,m_2}= V_{-m_2,m_1}.$ \label{rot-symm-pot}
\end{enumerate}
\end{prop}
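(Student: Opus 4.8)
The three implications all follow from the same mechanism: a symmetry of $V(\bx)$ is imposed, and by applying it term-by-term to the Fourier series \eqref{FS} and using uniqueness of Fourier coefficients (valid for $V\in C^\infty(\R^2/\Lambda)$), one reads off the corresponding relation among the $V_\bfm$. The plan is to write, for each part, the Fourier series of the transformed function and match coefficients.

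For part (i): substitute $\bx\mapsto-\bx$ in \eqref{FS} to get $V(-\bx)=\sum_\bfm V_\bfm e^{-i\bfm\vec\bk\cdot\bx}$, and reindex $\bfm\mapsto-\bfm$ to obtain $V(-\bx)=\sum_\bfm V_{-\bfm}e^{i\bfm\vec\bk\cdot\bx}$. Comparing with the expansion of $V(\bx)$ and invoking uniqueness of Fourier coefficients gives $V_\bfm=V_{-\bfm}$. For part (ii): take complex conjugates in \eqref{FS}, so $\overline{V(\bx)}=\sum_\bfm \overline{V_\bfm}\,e^{-i\bfm\vec\bk\cdot\bx}=\sum_\bfm \overline{V_{-\bfm}}\,e^{i\bfm\vec\bk\cdot\bx}$ after reindexing; matching with $V(\bx)$ yields $V_\bfm=\overline{V_{-\bfm}}$. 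For part (iii): recall $\mathcal{R}[V](\bx)=V(R^*\bx)$, and use the relations recorded after \eqref{Rotdef}, namely $R\bk_1=-\bk_2$, $R\bk_2=\bk_1$; equivalently, writing $\bfm\vec\bk\cdot(R^*\bx)=(R\bfm\vec\bk)\cdot\bx$ — here one must be careful about whether $R$ acts on the coordinate vector or on the index — one finds $\mathcal{R}[V](\bx)=\sum_{m_1,m_2} V_{m_1,m_2}\,e^{i(m_1\bk_1+m_2\bk_2)\cdot R^*\bx}$, and since $\bk_l\cdot R^*\bx=(R\bk_l)\cdot\bx$ with $R\bk_1=-\bk_2$, $R\bk_2=\bk_1$, the exponent becomes $(-m_1\bk_2+m_2\bk_1)\cdot\bx=(m_2\bk_1-m_1\bk_2)\cdot\bx$. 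Thus $\mathcal{R}[V](\bx)=\sum_{m_1,m_2}V_{m_1,m_2}\,e^{i(m_2,-m_1)\vec\bk\cdot\bx}$, and reindexing $(m_1,m_2)\mapsto(-m_2,m_1)$ (a bijection of $\Z^2$) gives $\mathcal{R}[V](\bx)=\sum_{m_1,m_2}V_{-m_2,m_1}\,e^{i(m_1,m_2)\vec\bk\cdot\bx}$. Setting this equal to \eqref{FS} and matching coefficients gives $V_{m_1,m_2}=V_{-m_2,m_1}$.

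The only genuinely delicate step is bookkeeping in part (iii): one must keep straight the distinction between the rotation $R$ acting on spatial coordinates and its induced (transpose/inverse) action on the Fourier index lattice, and make sure the reindexing substitution used at the end is indeed the inverse of the map produced by the change of variables, so that the two index shifts are mutually consistent. Everything else is a routine change of variables plus uniqueness of Fourier series; no convergence subtleties arise because $V$ is assumed $C^\infty$.
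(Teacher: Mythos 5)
Your proof is correct and follows the same route the paper sketches: for parts (i) and (ii) a change of variables plus uniqueness of Fourier coefficients, and for part (iii) the action of $R$ on the dual basis ($R\bk_1=-\bk_2$, $R\bk_2=\bk_1$) followed by reindexing. Your care in distinguishing the induced action on the index lattice from the action of $R^*$ on coordinates, and in checking that the reindexing $(m_1,m_2)\mapsto(-m_2,m_1)$ restores the exponent to $e^{i(m_1,m_2)\vec\bk\cdot\bx}$, is exactly what the paper's terse "makes use of the action of $R$ on the dual lattice basis" leaves implicit.
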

\begin{proof} Parts (i.) and (ii.) are straightforward. Part (iii.) makes use of the action of $R$ on the dual lattice basis
 $\{\bk_1,\bk_2\}$ or equivalently $R^*$ on the lattice basis $\{\bv_1,\bv_2\}$. 
   \end{proof}

  Note that we may iterate the relation in part \ref{rot-symm-pot} of Proposition \ref{coef-cons} to obtain:
  \begin{equation} 
  V_{m_{1},m_{2}} =V_{-m_2,m_1}=V_{-m_{1},-m_{2}}=V_{m_2,-m_1} .
  \label{Vcycle1}
  \end{equation}
  Introduce the mapping  $\tR: \mathbb{Z}^{2} \rightarrow \mathbb{Z}^{2}$ defined $\tR(m_{1},m_{2}) = (-m_{2},m_{1})$ and therefore $\tR^{2}(m_{1},m_{2}) = (-m_{1},-m_{2})$, $\tR^{3}(m_{1},m_{2}) = (m_{2},-m_{1})$, and  $\tR^{4}(m_{1},m_{2}) = (m_{1},m_{2})$. Thus, $\tR^{4}=\tR^0=Id$, and
   \begin{equation}
    V_\bfm =V_{\tR\bfm}=V_{\tR^{2}\bfm}=V_{\tR^{3}\bfm}\ .\ \label{Vcycle}
    \end{equation}
    Note that ${\bf 0}$ is the unique element of the kernel (and fixed point) of $\tR$ and furthermore that every $\bfm\ne0$ lies on
    a non-trivial 4-cycle of $\tR$, the set $\{ (m_{1}, m_{2}), (-m_{2},m_{1}), (-m_{1},-m_{2}), (m_{2},-m_{1})\}$.
    Let $\bfm$ and $\bn$ be elements of $\Z^2\setminus\{\bf 0\}$. We say that $\bfm\sim\bn$ if   $\bfm$ and $\bn$ lie on the same 4-cycle of $\tR$. The relation $\sim$ is an equivalence relation and partitions $\Z^2\setminus\{\bf 0\}$ into equivalence classes,
     $\left(\Z^2\setminus\{\bf 0\}\right) / \sim$. Let $\widetilde{\mathcal{S}}$ denote a set consisting of  one representative element from each equivalence class. 
         
     \begin{prop}\label{V-expan}
    \begin{enumerate}[(a.)]
    \item  Let  $V$ denote an admissible potential in the sense of Definition \ref{def:sq-pot} and let $V_\bfm=V_{m_1,m_2}$, for $\bfm\in\Z^2$, denote its Fourier coefficients; see \eqref{FS}. Then, 
     \begin{equation}
  V(x_1,x_2) = V_{0,0}+ \sum_{(m_1,m_2)\in\widetilde{\mathcal S}} 2V_{m_1,m_2} \left[\ \cos\left(2\pi(m_1x_1+m_2x_2)\right)\ +\ 
   \cos\left(2\pi(m_2x_1-m_1x_2)\right)\ \right].
   \label{Vexpand}
   \end{equation}
  \label{prop:v-exp}
 \item  If $V$ is also reflection invariant (\ $V(x_1,x_2)=V(x_2,x_1)$\ ), then 
       \begin{equation}
  V(x_1,x_2) = V_{0,0}+\sum_{m\in\mathbb{Z}} 2V_{m, m} \left[\ \cos\left(2\pi m (x_1+x_2)\right)\ +\ 
   \cos\left(2\pi m (x_1-x_2)\right)\ \right].
   \label{Vexpand-refl}
   \end{equation} 
  \label{prop:v-refl-exp}
      \end{enumerate}
           \end{prop}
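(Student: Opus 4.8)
The plan is to compute the Fourier series of $V$ directly from the constraints in Propositions \ref{coef-cons} and \ref{V-expan}(a)'s preamble, exploiting the $4$-cycle structure of $\tR$. First I would split the sum $\sum_{\bfm\in\Z^2}V_\bfm e^{2\pi i\bfm\cdot\bx}$ into the $\bfm=\mathbf 0$ term (contributing $V_{0,0}$) and the sum over $\Z^2\setminus\{\mathbf 0\}$, which by the remarks preceding the proposition decomposes into a disjoint union of nontrivial $4$-cycles $\{\bfm,\tR\bfm,\tR^2\bfm,\tR^3\bfm\}$ indexed by representatives $\bfm=(m_1,m_2)\in\widetilde{\mathcal S}$. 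On each such cycle, $\mathcal R$-invariance (via \eqref{Vcycle}) forces the four Fourier coefficients to be equal to the common value $V_{m_1,m_2}$, so the contribution of one cycle is $V_{m_1,m_2}$ times the sum of the four exponentials $e^{2\pi i(m_1x_1+m_2x_2)}$, $e^{2\pi i(-m_2x_1+m_1x_2)}$, $e^{2\pi i(-m_1x_1-m_2x_2)}$, $e^{2\pi i(m_2x_1-m_1x_2)}$.

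Next I would pair the first and third exponentials (they are complex conjugates of each other) to get $2\cos(2\pi(m_1x_1+m_2x_2))$, and pair the second and fourth to get $2\cos(2\pi(-m_2x_1+m_1x_2))=2\cos(2\pi(m_2x_1-m_1x_2))$, using that cosine is even. Here I would note that $V$ being admissible is real-valued (it is $\mathcal C$-invariant), hence $V_{m_1,m_2}=\overline{V_{-m_1,-m_2}}$ by Proposition \ref{coef-cons}(ii), but also $V_{-m_1,-m_2}=V_{m_1,m_2}$ by \eqref{Vcycle} (since $\tR^2\bfm=-\bfm$), so each $V_{m_1,m_2}$ is in fact real; this makes the factor $2V_{m_1,m_2}$ a genuine real coefficient in front of the bracketed sum of cosines, yielding \eqref{Vexpand}. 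One small point to check is that the decomposition is independent of the choice of representative set $\widetilde{\mathcal S}$, which is immediate since \eqref{Vcycle} makes $V_{m_1,m_2}$ constant along each cycle and the bracketed expression $\cos(2\pi(m_1x_1+m_2x_2))+\cos(2\pi(m_2x_1-m_1x_2))$ is also invariant under $(m_1,m_2)\mapsto\tR(m_1,m_2)$.

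For part (b), I would add the hypothesis $V(x_1,x_2)=V(x_2,x_1)$, which by the same argument as in Proposition \ref{coef-cons} gives $V_{m_1,m_2}=V_{m_2,m_1}$ on Fourier coefficients. Combined with the $\tR$-relation $V_{m_1,m_2}=V_{-m_2,m_1}$, this yields $V_{m_1,m_2}=V_{-m_1,m_2}$, and iterating with the reflection again, $V_{m_1,m_2}=V_{m_1,-m_2}$; so $V_\bfm$ depends only on $(|m_1|,|m_2|)$ and, using also $V_{m_1,m_2}=V_{m_2,m_1}$, only on the unordered pair $\{|m_1|,|m_2|\}$. The claimed formula \eqref{Vexpand-refl} only exhibits the ``diagonal'' terms $m_1=m_2=m$; so strictly one either restricts attention to potentials whose Fourier support lies on the diagonal (as is the case for the radial atomic potentials in Examples \ref{square-pot}--\ref{lieb-pot} after the appropriate grouping), or one reads \eqref{Vexpand-refl} as the sub-series coming from diagonal modes. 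Assuming the intended reading, I would take $\bfm=(m,m)$, compute its $\tR$-orbit $\{(m,m),(-m,m),(-m,-m),(m,-m)\}$, and observe that the bracketed sum collapses to $\cos(2\pi m(x_1+x_2))+\cos(2\pi m(x_1-x_2))$ exactly as in \eqref{Vexpand}, with the sum now ranging over $m\in\Z$ (each value $m$ and $-m$ giving the same term, or with the convention that one sums over a half-set of representatives and doubles).

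The main obstacle is bookkeeping rather than anything deep: making sure the $4$-cycle partition is handled without double-counting, that the representative-independence is visible, and — in part (b) — being precise about exactly which Fourier modes survive under reflection symmetry, since \eqref{Vexpand-refl} as written isolates only the diagonal modes and one should either justify that the atomic-potential examples have this property or state the scope of the claim accordingly.
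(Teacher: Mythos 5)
Part (a) is correct and follows essentially the paper's route: decompose the Fourier series into $\tR$-orbits, use the equality of coefficients along each orbit from \eqref{Vcycle}, and convert each orbit sum of exponentials into cosines; the paper averages the whole series with its conjugate, while you pair conjugate exponentials directly, which is equivalent.

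For part (b), your suspicion is warranted, and I want to be explicit about why. The constraint you derived --- reflection combined with the admissibility symmetries forces $V_\bfm$ to depend only on the unordered pair $\{|m_1|,|m_2|\}$ --- is correct, but it is an equality among coefficients, not a vanishing condition, and it does not make off-diagonal modes vanish. In fact \eqref{Vexpand-refl} is false as written: $V(\bx)=\cos(2\pi x_1)+\cos(2\pi x_2)$ is admissible and reflection-invariant, has $V_{0,1}=V_{1,0}=\tfrac12\neq 0$ and no nonzero diagonal coefficients, so \eqref{Vexpand-refl} would reduce it to zero. Your fallback --- that the radial atomic potentials of Examples \ref{square-pot} and \ref{lieb-pot} have Fourier support on the diagonal --- is also incorrect: for radial $V_0$ the lattice-sum coefficients are $\widehat{V_0}(2\pi\bfm)$, which is generically nonzero at $\bfm=(1,0)$. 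You should also be aware that the paper's own proof of (b) has two gaps: the passage from equality of the two reflected series to the per-$(m_1,m_2)$ identity \eqref{ref-cos} is unjustified when the $\tR$-orbit of $(m_2,m_1)$ is a different orbit from that of $(m_1,m_2)$ (the series then cancel by pairing distinct orbits, not term by term); and even granting \eqref{ref-cos}, the reduced equation $\sin(2\pi m_2 x_1)\sin(2\pi m_1 x_2)=\sin(2\pi m_1 x_1)\sin(2\pi m_2 x_2)$ holds whenever $m_1=-m_2$, $m_1=0$, or $m_2=0$, not only when $m_1=m_2$. The honest content of part (b) is the coefficient constraint you found, nothing stronger.
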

  \begin{proof} 
  Expanding $V(\bx)$ in a Fourier series, and using the relations in \eqref{Vcycle}, we obtain:
  \[V(\bx)=V_{\bf 0}+\sum_{\bfm\in\widetilde{\mathcal{S}} }\ 
  V_{\bfm}\ \left(\ e^{i\bfm\vec\bk\cdot\bx}+e^{i(\tR\bfm)\vec{\bk}\cdot\bx}+e^{i(\tR^2\bfm)\vec\bk\cdot\bx}+e^{i(\tR^3\bfm)\vec\bk\cdot\bx}\ \right) .\]
  Adding this expression to its complex conjugate and dividing by two and using that the coefficients $V_\bfm$
   are real (Proposition \ref{coef-cons} (i) and (ii) ) implies:
 \begin{align*}
 V(\bx)&=V_{\bf 0}+\sum_{\bfm\in\widetilde{\mathcal{S}} }\ 
  V_{\bfm}\ \left(\ \cos(\bfm\vec\bk\cdot\bx)+\cos\left(\ (\tR\bfm)\vec{\bk}\cdot\bx\ \right)+\cos\left( ( \tR^2\bfm)\vec\bk\cdot\bx\right)+\cos\left((\tR^3\bfm)\vec\bk\cdot\bx\right)\ \right),
  \end{align*}
  which reduces to the expression in \eqref{Vexpand}. Thus Part (\ref{prop:v-exp}.) is proved.

  Suppose $V$ additionally is reflection invariant in the sense of Definition \ref{def:refinv}. By \eqref{Vexpand} this is equivalent to:
  \begin{align*}
&\sum_{(m_1,m_2)\in\widetilde{\mathcal S}} 2V_{m_1,m_2} \left[\ \cos\left(2\pi(m_2x_1+m_1x_2)\right)\ +\ 
   \cos\left(2\pi(m_2x_2-m_1x_1)\right)\ \right]\\
&= \sum_{(m_1,m_2)\in\widetilde{\mathcal S}} 2V_{m_1,m_2} \left[\ \cos\left(2\pi(m_1x_1+m_2x_2)\right)\ +\ 
   \cos\left(2\pi(m_2x_1-m_1x_2)\right)\ \right]\ .
  \end{align*}
  It follows that for all $x_1, x_2:$
    \begin{equation}\label{ref-cos}
  \cos[2\pi(m_2x_1+m_1x_2)]\ +\ 
   \cos[2\pi(m_2x_2-m_1x_1)]\ -\ \cos[2\pi(m_1x_1+m_2x_2)]\ -\ 
   \cos[2\pi(m_2x_1-m_1x_2)]\ =\ 0.
   \end{equation}
   Using trigonometric identities, \eqref{ref-cos} reduces to:
   \[
  f_1(x_1,x_2) \equiv  \sin(2\pi m_2 x_1) \sin(2\pi m_1 x_2) =    \sin(2\pi m_1 x_1) \sin(2\pi m_2 x_2) \equiv f_2(x_1,x_2).
   \]
This implies that the Fourier coefficients of $f_1$ and $f_2$ match and therefore $m_1 = m_2$. 

  \end{proof}



  \subsection{Fourier analysis in $L_{\bM_{*}}^{2}$} \label{Fourier-M}
  In this subsection, we characterize the Fourier series of functions  $\phi \in L_{\bM}^{2}$.
  Such functions may be expressed in the form $\Phi(\bx) = e^{i\bM \cdot \bx} \phi(\bx)$, where $\phi(\bx)$ is $\Lambda=\Z^2-$periodic. Thus, $\Phi$ has the Fourier representation:
\begin{equation}\label{phi-fs0}
\Phi(\bx) = e^{i\bM\cdot \bx} \sum_{(m_{1}, m_{2}) \in \mathbb{Z}^{2}} c(m_{1}, m_{2})e^{i(m_{1}\bk_{1} + m_{2}\bk_{2})\cdot \bx},
\end{equation}
which we rewrite as
\begin{equation}\label{phi-fs} 
\Phi(\bx)= \sum_{(m_{1}, m_{2}) \in \mathbb{Z}^{2}} c_\Phi(m_{1}, m_{2})e^{i(\bM + m_{1}\bk_{1} + m_{2}\bk_{2})\cdot \bx}\ = \sum_{\bfm\in \mathbb{Z}^{2}} c_\Phi(\bfm)e^{i\bM^{\bfm}\cdot \bx}, 
\end{equation}
where $\bM^\bfm = \bM+\bfm\vec\bk=\bM+ m_{1}\bk_{1} + m_{2} \bk_{2}\in \bM+\Lambda^*$. We  denote the Fourier coefficients of  a specific $\Phi \in L^{2}_{\bM}$ shown in \eqref{phi-fs} as $c_{\Phi}(\bfm)$ or $c(\bfm; \Phi)$.
\medskip
  
Next, observe that the transformation $\mathcal{R}\colon f\mapsto\mathcal{R}[f](\bx)=f(R^*\bx)$  is unitary on $L^{2}_\bk$ and so its eigenvalues lie on the unit circle in $\mathbb{C}$. Furthermore, if $\mathcal{R} \Phi = \sigma \Phi$ and $\Phi \neq 0$, then since $\mathcal{R}^{4} = Id,$ we have that $\Phi = \mathcal{R}^{4}\Phi = \sigma^{4}\Phi$, so that $\sigma^{4} = 1.$ Therefore, $\sigma \in \{+1, -1, +i, -i\}$.
\medskip

This induces a decomposition of  $L^2_\bM$ as an orthogonal sum of eigenspaces of $\mathcal{R}$: 

  \begin{equation}
  L^2_{\bM}\ =\ L^2_{\bM,1}\oplus L^2_{\bM,-1}\oplus L^2_{\bM,i}\oplus L^2_{\bM,-i}\ ,
  \label{decompL2}
  \end{equation}
where
   \begin{equation}
   L^2_{\bM,\sigma}\ =\ \{\ f\in L^2_\bM :\ \mathcal{R}[f]=\sigma f\ \},\quad \sigma\in \{1,-1,i,-i\}.
 \label{L2Msig}  \end{equation}

\begin{remark}\label{L2splits}
The spectral theory of $H$ in $L^{2}_{\bM}$, can be reduced to its independent study in each of summand subspace in the orthogonal sum \eqref{decompL2}.
\end{remark}

Our next goal is to characterize Fourier series of functions in the orthogonal summands $L^2_{\bM,\sigma}$ for $\sigma=\pm1,\pm i$. We first apply $\mathcal{R}$ to $\Phi$, represented as a  Fourier series in \eqref{phi-fs}. Note  that 
 \[ R\bM^\bfm=R(\bM+\bfm\vec\bk)= R\bM+R(m_1\bk_1+m_2\bk_2)=\bM +m_2\bk_1+(-1-m_1)\bk_2=\bM^{m_2,-1-m_1}\ ,\] 
 and define (taking some liberty with notation)
 \[ \mathcal{R}\bfm \equiv \mathcal{R}(m_1,m_2)=(m_2,-1-m_1)\]
 and hence  $\mathcal{R}^{-1}\bfm = \mathcal{R}^{-1}(m_1,m_2)=(-m_2-1,m_1)$ .
The mapping $\mathcal{R}$ acting on $L^2_\bM$ induces a decomposition of $\Z^2$ into orbits of minimal length four:
\begin{equation}
(m_1,m_2)^\mathcal{R}\mapsto (m_2,-1-m_1)^\mathcal{R}\mapsto (-1-m_1,-1-m_2)^\mathcal{R}\mapsto 
(-1-m_2,m_1)^\mathcal{R}\mapsto
(m_1,m_2)
\label{Rcycle}
\end{equation} 
and we write:  $\mathcal{R}^2\bfm=(-1-m_1,-1-m_2)$, 
$\mathcal{R}^3\bfm=(-1-m_2,m_1)$ and $\mathcal{R}^4\bfm=(m_1,m_2)$.

For $\bfm, \bn\in\Z^2$ we write $\bfm\approx\bn$ if $\bfm$ and $\bn$ lie on the same orbit under $\mathcal{R}$.
We denote by $\mathcal{S}$ any set containing exactly one representative from each equivalence class in $\Z^2/\approx$. 

\begin{remark}\label{e-class}
One such equivalence class is $\{(0,0),(0,-1),(-1,-1),(-1,0) \}$ and we choose  $(-1,0)$ as its representative in $\mathcal{S}$. In Section \ref{sec:eps-small} we shall define $\mathcal{S}^\perp=\mathcal{S}\setminus\{(0,-1)\}$ and write 
$\mathcal{S}\equiv\{(-1,0)\}\cup \mathcal{S}^\perp$. 
\end{remark}

In terms of the above notation we have 
\begin{align}
R\bM\ &=\ \bM^{m_2,-1-m_1} =\bM^{\mathcal{R}\bfm} \label{RbM}\\
\mathcal{R}[\Phi](\bx) &=   e^{i\bM\cdot\bx}\ \sum_{(m_1,m_2)\in\Z^2} c_{\Phi}(m_{1}, m_{2})
 e^{i(m_2\bk_1+(-1-m_1)\bk_2) \cdot \bx}\ =\ 
 \sum_{\bfm\in\Z^2} c_\Phi(\bfm)e^{i\bM^{\mathcal{R}\bfm}\cdot\bx}.\nn
\end{align}
Therefore, $c_{\mathcal{R}\phi}(\mathcal{R}\bfm)= c_\phi(\bfm)$.
Note that 
 \begin{align}
 \mathcal{R}^{-1}\bfm &= \mathcal{R}^{3}\bfm=(-m_2-1,m_1),\label{Rm1}\\
   \mathcal{R}^{-2}\bfm &=\mathcal{R}^2\bfm = (-1-m_1,-1-m_2) , \text{ and }\label{Rm2}\\
     \mathcal{R}^{-3}\bfm &=\mathcal{R}\bfm=(m_2,-m_1-1). \label{Rm3}
        \end{align}
Hence,
\begin{align}
c_{\mathcal{R}^j\Phi}(\bfm)=c_\Phi(\mathcal{R}^{4-j}\bfm),\quad j=0,1,2,3. 
\label{cRjm}
\end{align}
The Fourier series of $\Phi \in  L^2_\bk$, satisfying the pseudo-periodic boundary conditions may be expressed as a sum over 4-cycles of $\mathcal{R}$:
\begin{equation}\label{phi-r-decomp}
\begin{split}
  \phi(\bx)
  &= \sum_{\bfm \in \mathcal{S}} c_{\Phi}(\bfm)e^{i\bM^{\bfm} \cdot \bx} + c_{\Phi}(\mathcal{R}\bfm)e^{iR\bM^{\bfm} \cdot \bx} + c_{\Phi}(\mathcal{R}^{2}\bfm)e^{iR^{2}\bM^{\bfm} \cdot \bx} + c_{\Phi}(\mathcal{R}^{3}\bfm)e^{iR^{3}\bM^{\bfm} \cdot \bx}\ .\end{split}
\end{equation}
\medskip

\nit We next study the Fourier representation  \eqref{phi-r-decomp} in the case where $\Phi\in L^2_{\bM,\sigma}$ for $\sigma=\pm 1,\pm i$. 

\begin{prop}\label{Fco-sig}
Let $\Phi\in L^2_\bM$. Then, 
\[\Phi\in L^2_{\bM,\sigma}\ \iff\ c_\Phi(\mathcal{R}^j\bfm)\ =\ \sigma^{4-j}\ c_\Phi(\bfm),\quad  j=0,1,2,3.\]
In particular, 
\begin{align}
\mathcal{R}\Phi &= \Phi\ \iff\ c(\bfm)=c(\mathcal{R}\bfm)=c(\mathcal{R}^2\bfm)=c(\mathcal{R}^3\bfm)\nn\\
\mathcal{R}\Phi &= -\Phi\ \iff\ c(\mathcal{R}\bfm)=-c(\bfm),\quad c(\mathcal{R}^2\bfm)=c(\bfm),\quad c(\mathcal{R}^3\bfm)=-c(\bfm)\nn\\
\mathcal{R}\Phi &= i \Phi\ \iff\ c(\mathcal{R}\bfm)=-ic(\bfm),\quad c(\mathcal{R}^2\bfm)=-c(\bfm),\quad c(\mathcal{R}^3\bfm)=+ic(\bfm)\nn\\
\mathcal{R} \Phi &= -i \Phi\ \iff\ c(\mathcal{R}\bfm)=+ic(\bfm),\quad c(\mathcal{R}^2\bfm)=-c(\bfm),\quad
 c(\mathcal{R}^3\bfm)=-ic(\bfm)\nn. 
\end{align}
\end{prop}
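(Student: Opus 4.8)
The plan is to reduce the operator identity $\mathcal{R}\Phi=\sigma\Phi$ to a recursion on Fourier coefficients via the transformation law \eqref{cRjm}, and then iterate. First I would record the elementary consequence of \eqref{cRjm} with $j=1$: applying $\mathcal{R}$ to the expansion \eqref{phi-fs} and relabeling the summation index by the orbit map $\mathcal{R}\colon(m_1,m_2)\mapsto(m_2,-1-m_1)$ gives $c_{\mathcal{R}\Phi}(\bfm)=c_\Phi(\mathcal{R}^{3}\bfm)=c_\Phi(\mathcal{R}^{-1}\bfm)$, and since Fourier coefficients determine the function, $\mathcal{R}\Phi=\sigma\Phi$ holds if and only if $c_\Phi(\mathcal{R}^{-1}\bfm)=\sigma\,c_\Phi(\bfm)$ for every $\bfm\in\Z^2$; equivalently, replacing $\bfm$ by $\mathcal{R}\bfm$, if and only if $c_\Phi(\bfm)=\sigma\,c_\Phi(\mathcal{R}\bfm)$ for all $\bfm$.

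Next I would iterate this single relation. Applying it successively with $\bfm$ replaced by $\mathcal{R}\bfm$, $\mathcal{R}^{2}\bfm$, $\mathcal{R}^{3}\bfm$ and using $\mathcal{R}^{4}=Id$ on $\Z^2$ (the $4$-cycle structure \eqref{Rcycle}), one obtains $c_\Phi(\mathcal{R}^{j}\bfm)=\sigma^{-j}c_\Phi(\bfm)$ for $j=0,1,2,3$; since $\sigma^{4}=1$ (noted just above the proposition, as a consequence of $\mathcal{R}^{4}=Id$), this is precisely $c_\Phi(\mathcal{R}^{j}\bfm)=\sigma^{4-j}c_\Phi(\bfm)$, which gives the forward implication. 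Conversely, if these relations hold for all $j$, then the case $j=3$ reads $c_\Phi(\mathcal{R}^{3}\bfm)=\sigma\,c_\Phi(\bfm)$, i.e. $c_{\mathcal{R}\Phi}(\bfm)=\sigma\,c_\Phi(\bfm)$ for all $\bfm$, whence $\mathcal{R}\Phi=\sigma\Phi$; the case $\Phi=0$ being trivial, this proves the stated equivalence. The four explicit cases then follow by substituting $\sigma\in\{1,-1,i,-i\}$ and evaluating $\sigma^{4-j}$ for $j=1,2,3$: e.g. $\sigma=i$ gives $(\sigma^{3},\sigma^{2},\sigma^{1})=(-i,-1,i)$ and $\sigma=-i$ gives $(i,-1,-i)$, matching the displayed relations, and likewise for $\sigma=\pm1$.

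The only genuinely error-prone step — and hence the main ``obstacle'' — is the index bookkeeping: $\mathcal{R}$ acts on $\Z^2$ as an \emph{affine} rotation $(m_1,m_2)\mapsto(m_2,-1-m_1)$, the shift by $-1$ reflecting that the base quasi-momentum $\bM$ lies off the dual lattice $\Lambda^{*}$, so that the coefficient law pairs $\mathcal{R}\Phi$ with $\mathcal{R}^{-1}=\mathcal{R}^{3}$ on indices rather than with $\mathcal{R}$. Keeping consistent which of $\mathcal{R}^{j}$ versus $\mathcal{R}^{4-j}$ appears on each side of each identity is the one place where care is needed; everything else is a mechanical unwinding of \eqref{phi-fs}, \eqref{Rcycle} and \eqref{cRjm}, with no further ingredients required.
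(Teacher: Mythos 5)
Your proof is correct and takes essentially the same route as the paper: both hinge on the coefficient transformation law \eqref{cRjm} together with the $4$-cycle structure of $\mathcal{R}$ and $\sigma^4=1$. The only organizational difference is that the paper first iterates the operator relation $\mathcal{R}^j\Phi=\sigma^j\Phi$ and then applies \eqref{cRjm} for each $j$, whereas you apply \eqref{cRjm} once (at $j=1$) to get the single coefficient relation $c_\Phi(\mathcal{R}\bfm)=\sigma^{-1}c_\Phi(\bfm)$ and iterate that on indices — a cosmetic reordering of the same steps.
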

\begin{proof} Suppose $\Phi\in L^2_{\bM,\sigma}$ and  $\mathcal{R}\Phi=\sigma\Phi$. Then, $\mathcal{R}^2\Phi=\sigma^2\Phi$
 and $\mathcal{R}^3\Phi=\sigma^3\Phi$. Correspondingly, $c_{\mathcal{R}^j\Phi}(\bfm)=\sigma^j c_\Phi(\bfm)$ for $j=0,1,2,3$.
 By relations \eqref{cRjm} we have  $c_\Phi(\mathcal{R}^{4-j}\bfm)=c_{\mathcal{R}^j\Phi}(\bfm)=\sigma^j c_\Phi(\bfm)$ for $j=0,1,2,3$.
Replacing $j$ by $4-j$ completes the proof.
\end{proof}

Applying Proposition \ref{Fco-sig} to \eqref{phi-r-decomp} we obtain:

\begin{prop}\label{phi_sig} Let $\sigma \in \{+1,-1,+i,-i\}$. 
 Then,   $\Phi\in L_{\bM,\sigma}^{2}$
 if and only if  there exists $\{c(\bfm)\}_{\bfm\in \mathcal{S}}$ in $l^2(\mathcal{S})$ such that
\begin{equation}
\Phi(\bx) = \sum_{\bfm \in \mathcal{S}}  c(\bfm) \sum_{j=0}^{3} \sigma^{4-j}e^{i R^{j}\bM^{\bfm} \cdot \bx}.
\label{phi-sig-gen}
\end{equation}
In detail, 
\begin{enumerate}
\item $\Phi \in L_{\bM,i}^{2} \iff$ there exists $\{c(\bfm)\}_{\bfm\in \mathcal{S}}\in l^2(\mathcal{S})$ such that
\begin{equation}
\begin{split}
  \Phi(\bx) &= \sum_{\bfm \in \mathcal{S}} c(\bfm) \left( e^{i \bM^{\bfm} \cdot \bx} -i e^{i R\bM^{\bfm} \cdot \bx} - e^{i R^{2} \bM^{\bfm} \cdot \bx} + i e^{i R^{3} \bM^{ \bfm} \cdot \bx}\right).
\end{split}
\end{equation}
\item $\Phi \in L_{\bM,-i}^{2} \iff$ there exists $\{c(\bfm)\}_{\bfm\in \mathcal{S}} \in l^{2}(\mathcal{S})$ such that
\begin{equation}
  \Phi(\bx) = \sum_{\bfm \in \mathcal{S}} c(\bfm) \left( e^{i \bM^{\bfm} \cdot \bx} +i e^{i R\boldsymbol{\bM}^{\bfm} \cdot \bx} - e^{i R^{2} \bM^{\bfm} \cdot \bx} -i e^{i R^{3} \bM^{ \bfm} \cdot \bx}\right).
\end{equation}
\item $\Phi \in L_{\bM,1}^{2} \iff$ there exists $\{c(\bfm)\}_{\bfm\in \mathcal{S}} \in l^{2}(\mathcal{S})$ such that
\begin{equation}
  \Phi(\bx) = \sum_{\bfm \in \mathcal{S}} c(\bfm) \left( e^{i \bM^{\bfm} \cdot \bx} + e^{i R\bM^{\bfm} \cdot \bx} + e^{i R^{2} \bM^{\bfm} \cdot \bx} + e^{i R^{3} \bM^{ \bfm} \cdot \bx}\right).
\end{equation}

\item $\Phi \in L_{\bM,-1}^{2} \iff$ there exists $\{c(\bfm)\}_{\bfm\in S} \in l^{2}(\mathcal{S})$ such that
\begin{equation}
  \Phi(\bx) = \sum_{\bfm \in \mathcal{S}} c(\bfm) \left( e^{i \bM^{\bfm} \cdot \bx} - e^{i R\bM^{\bfm} \cdot \bx} + e^{i R^{2} \bM^{\bfm} \cdot \bx} - e^{i R^{3} \bM^{ \bfm} \cdot \bx}\right).
\end{equation}
\end{enumerate}
Finally, $\mathcal{P}\circ\mathcal{C}$ is a bijection between $L^2_{\bM,i}$ and $L^2_{\bM,-i}$. If $c_\Phi(\bfm),\bfm\in\mathcal{S}$ are the Fourier coefficients of $\Phi\in L^2_{\bM,i}$ , then
  $c_{(\mathcal{P}\circ\mathcal{C})\Phi}(\bfm)=\overline{c_\Phi(\bfm)},\ \bfm\in\mathcal{S}$ are the
  Fourier coefficients of $(\mathcal{P}\circ\mathcal{C})\Phi\in L^2_{\bM,-i}$ .
\end{prop}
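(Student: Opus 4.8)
The plan is to establish the characterization \eqref{phi-sig-gen} by combining Proposition \ref{Fco-sig} with the orbit decomposition \eqref{phi-r-decomp}, and then to deduce the four explicit cases by substituting $\sigma = \pm 1, \pm i$. First I would observe that, by Remark \ref{L2splits} / the unitarity of $\mathcal{R}$ noted above, every $\Phi \in L^2_\bM$ decomposes and it suffices to work within a fixed summand; the content is to read off the Fourier side. Given $\Phi \in L^2_\bM$ with Fourier coefficients $c_\Phi(\bfm)$, $\bfm\in\Z^2$, I would rewrite $\Phi$ as a sum over $\mathcal{R}$-orbits as in \eqref{phi-r-decomp}, grouping the four exponentials $e^{iR^j\bM^\bfm\cdot\bx}$, $j=0,1,2,3$, attached to the representative $\bfm\in\mathcal{S}$. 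The condition $\Phi\in L^2_{\bM,\sigma}$, by Proposition \ref{Fco-sig}, is exactly $c_\Phi(\mathcal{R}^j\bfm) = \sigma^{4-j} c_\Phi(\bfm)$ for $j=0,1,2,3$; substituting these relations into \eqref{phi-r-decomp} and setting $c(\bfm):=c_\Phi(\bfm)$ collapses each orbit-block to $c(\bfm)\sum_{j=0}^3 \sigma^{4-j} e^{iR^j\bM^\bfm\cdot\bx}$, yielding \eqref{phi-sig-gen}. Conversely, any $\{c(\bfm)\}_{\bfm\in\mathcal{S}} \in \ell^2(\mathcal{S})$ defines via \eqref{phi-sig-gen} a function in $L^2_\bM$ whose Fourier coefficients satisfy the Proposition \ref{Fco-sig} relations, hence lies in $L^2_{\bM,\sigma}$; one should note the $\ell^2$-norm equivalence (each orbit contributes four terms of equal modulus, so $\|\Phi\|_{L^2}^2 = 4\sum_{\bfm\in\mathcal{S}}|c(\bfm)|^2$), which is what makes the correspondence with $\ell^2(\mathcal{S})$ exact.

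For the four enumerated cases, I would simply specialize: $\sigma^{4-j}$ for $j=0,1,2,3$ equals $(1,1,1,1)$ when $\sigma=1$; $(1,-1,1,-1)$ when $\sigma=-1$; $(1,-i,-1,i)$ when $\sigma=i$ (using $\sigma^4=1$, $\sigma^3=-i$, $\sigma^2=-1$, $\sigma^1=i$); and $(1,i,-1,-i)$ when $\sigma=-i$. Plugging these coefficient tuples into \eqref{phi-sig-gen} reproduces the four displayed formulas verbatim. This is purely arithmetic bookkeeping once \eqref{phi-sig-gen} is in hand.

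For the final assertion about $\mathcal{P}\circ\mathcal{C}$, I would use that $\mathcal{P}\colon \Phi(\bx)\mapsto\Phi(-\bx)$ and $\mathcal{C}\colon\Phi\mapsto\overline{\Phi}$ commute with each other and, crucially, $\mathcal{P}\circ\mathcal{C}$ commutes with $\mathcal{R}$ up to complex conjugation of the eigenvalue. The cleanest route: compute the action on Fourier coefficients. Since $-\bM^\bfm = \bM^{-\bfm} - 2\bM$ and $2\bM = \bk_1 + \bk_2 \in \Lambda^*$, parity sends the mode index $\bfm$ to $\mathcal{R}^2\bfm$ (one checks $-\bM^\bfm \equiv \bM^{\mathcal{R}^2\bfm}$ using $\mathcal{R}^2(m_1,m_2) = (-1-m_1,-1-m_2)$), so $c_{\mathcal{P}\Phi}(\bfm) = c_\Phi(\mathcal{R}^2\bfm)$; complex conjugation gives $c_{\mathcal{C}\Phi}(\bfm) = \overline{c_\Phi(\bfm)}$ after accounting for the reindexing, and composing yields $c_{(\mathcal{P}\circ\mathcal{C})\Phi}(\bfm) = \overline{c_\Phi(\mathcal{R}^2\bfm)}$. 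Then, if $\Phi\in L^2_{\bM,i}$ so that $c_\Phi(\mathcal{R}^2\bfm) = i^2 c_\Phi(\bfm) = -c_\Phi(\bfm)$, a short check shows the transformed coefficients satisfy the $\sigma=-i$ relations of Proposition \ref{Fco-sig}; the map is an involution up to the identification $\mathcal{S}$, hence a bijection $L^2_{\bM,i}\to L^2_{\bM,-i}$. I would instead phrase this more directly via \eqref{phi-sig-gen}: applying $\mathcal{P}\circ\mathcal{C}$ to the $\sigma=i$ formula, using that $\mathcal{P}$ maps $e^{iR^j\bM^\bfm\cdot\bx}$ to $e^{iR^{j+2}\bM^\bfm\cdot\bx}$ (a reindexing within the orbit, since $R^2\bM \equiv -\bM \bmod \Lambda^*$) and $\mathcal{C}$ conjugates the scalar coefficients $-i \mapsto i$ etc., directly produces the $\sigma=-i$ formula with Fourier coefficients $\overline{c_\Phi(\bfm)}$. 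The claim that these are \emph{the} Fourier coefficients (relative to the representative set $\mathcal{S}$) then follows from the uniqueness of the expansion \eqref{phi-sig-gen}.

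The main obstacle I anticipate is bookkeeping the interaction between the parity map $\mathcal{P}$ and the orbit structure of $\mathcal{R}$ on $\Z^2$: one must verify carefully that $\mathcal{P}$ does not mix distinct $\mathcal{R}$-orbits but acts within each orbit as the shift $j\mapsto j+2$ (equivalently, that $-\bM^\bfm$ and $R^2\bM^\bfm$ represent the same element of $\bM+\Lambda^*$, which uses $2\bM = \bk_1+\bk_2 \in \Lambda^*$), so that the composite $\mathcal{P}\circ\mathcal{C}$ descends to the simple conjugation $c\mapsto\bar c$ on $\ell^2(\mathcal{S})$ while swapping $\sigma = i$ and $\sigma = -i$. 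Everything else — the passage from Proposition \ref{Fco-sig} to \eqref{phi-sig-gen}, and the four specializations — is routine substitution.
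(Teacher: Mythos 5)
Your proposal matches the paper's implicit argument precisely: the paper supplies no separate proof beyond the line ``Applying Proposition \ref{Fco-sig} to \eqref{phi-r-decomp} we obtain,'' and your substitution of $c_\Phi(\mathcal{R}^j\bfm)=\sigma^{4-j}c_\Phi(\bfm)$ into the orbit-grouped sum, followed by the arithmetic specialization to $\sigma=\pm1,\pm i$, is exactly what is meant. One small caution on the $\mathcal{P}\circ\mathcal{C}$ paragraph: your first route computes $c_{\mathcal{P}\Phi}(\bfm)=c_\Phi(\mathcal{R}^2\bfm)$ and $c_{\mathcal{C}\Phi}(\bfm)=\overline{c_\Phi(\mathcal{R}^2\bfm)}$ (each operator reindexes by $\mathcal{R}^2$), so the composite's two reindexings cancel via $\mathcal{R}^4=\mathrm{id}$ and the correct formula is $c_{(\mathcal{P}\circ\mathcal{C})\Phi}(\bfm)=\overline{c_\Phi(\bfm)}$ with \emph{no} residual $\mathcal{R}^2$; the formula $\overline{c_\Phi(\mathcal{R}^2\bfm)}$ you wrote (equal to $-\overline{c_\Phi(\bfm)}$ on $L^2_{\bM,i}$) disagrees by a sign with the Proposition's claim, though it still lands you in $L^2_{\bM,-i}$. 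Your ``more direct'' second route, via the action of $\mathcal{P}$ ($j\mapsto j+2$) and $\mathcal{C}$ on \eqref{phi-sig-gen}, avoids this slip and produces the correct $\overline{c_\Phi(\bfm)}$ outright — the cleanest derivation is in fact one line: $\overline{\Phi(-\bx)}=\sum_{\bfm}\overline{c_\Phi(\bfm)}\,e^{i\bM^\bfm\cdot\bx}$.
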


 \section{$H^{(0)}=-\Delta$ on $L^{2}_{\bM}$: A Four-fold Degenerate Eigenvalue}\label{sec:freeH}

We consider the eigenvalue problem \eqref{k-evp} for the case $V\equiv0$.
Let $H^{(0)}=-\Delta$. 
\begin{equation}\label{H0-eval}
H^{(0)} \Phi^{(0)} = \mu^{(0)}(\bk) \Phi^{(0)}, \; \Phi^{(0)} \in L_{\bk}^{2}.
\end{equation}

Equivalently, take $\Phi^{(0)}(\bx; \bk) = e^{i\bk \cdot \bx}\phi^{(0)}(\bx)$, where $\phi^{(0)}(\bx) \in L^{2}(\mathbb{R}^{2}/\Lambda)$. We have (see \eqref{per-evp})
\begin{equation}\label{H0-eval-p-form}
\begin{split}
&H^{(0)}(\bk) \phi^{(0)} = -(\nabla + i\bk)^{2}\phi^{(0)} = \mu^{(0)}(\bk) \phi^{(0)},\\
& \phi^{(0)}(\bx + \bv) = \phi^{(0)}(\bx), \;\; \bv \in \Lambda.
\end{split}
\end{equation}
For $m_1,m_2 \in \Z$, the eigenvalue problem \eqref{H0-eval-p-form} has solutions of the form:
\[
\phi^{(0)}_{m_{1}, m_{2}}(\bx; \bk) = e^{i(m_{1}\bk_{1} + m_{2}\bk_{2})\cdot \bx},\ \ \bk  \in\brill ,
\]
with corresponding eigenvalues
\[
\mu_{m_{1}, m_{2}}^{(0)}(\bk) = \vert \bk + m_{1}\bk_{1} + m_{2}\bk_{2}\vert^{2}, \ \bk \in \brill .
\]
The dispersion relation for the free Hamiltonian, $H^{(0)}$, is plotted in Figure \ref{fig:free-hamiltonian}.

\nit The following result concerns the spectral problem for the high symmetry quasi-momentum $\bk=\bM$
 (and by Remark \ref{allvertices} all vertices of $\brill$):
\begin{thm}\label{H0-mult4}
Let $\bk=\bM$ and $\sigma=\pm1, \pm i$. Then, 
\begin{enumerate}
\item $\mu^{(0)}_S\equiv |\bM|^2=2\pi^2$ is an $L^2_{\bM}-$eigenvalue of multiplicity four with corresponding four-dimensional eigenspace given by
\begin{equation}
L^2_\bM-\ {\rm Kernel}\left(\ H^{(0)}-\mu^{(0)}_S{\rm Id}\ \right)\ =\  {\rm span}\left\{\ e^{i\bM\cdot\bx},\ \ e^{iR\bM\cdot\bx}, \ \ e^{iR^2\bM\cdot\bx},\ \ e^{iR^3\bM\cdot\bx}\ \right\}.\label{list0}
 \end{equation}
 \label{thm-e-val}
\item $H^{(0)}$ acting in $L^2_{\bM,\sigma}$  has simple eigenvalue $\mu_S^{(0)} = \vert \bM \vert^{2}$ with
corresponding eigenspace: 
\[L^2_{\bM,\sigma}-\ {\rm Kernel}\left(\ H^{(0)}-\mu^{(0)}_S{\rm Id}\ \right)\ 
= {\rm span}\left\{\Phi^{(0)}_\sigma\right\},\]
where $\Phi^{(0)}_\sigma$ are defined as follows.
\begin{align}
\Phi_{+1}^{(0)}(\bx)\ &=\ e^{i\bM\cdot\bx}\ +\ e^{iR\bM\cdot\bx}\ +\ e^{iR^2\bM\cdot\bx}\ +\
 e^{iR^3\bM\cdot\bx}\ \in\  L^2_{\bM,+1}\label{Phi10}\\
 \Phi_{-1}^{(0)}(\bx)\ &=\ e^{i\bM\cdot\bx}\ -\ e^{iR\bM\cdot\bx}\ +\ e^{iR^2\bM\cdot\bx}\ -\
 e^{iR^3\bM\cdot\bx}\ \in\  L^2_{\bM,-1}\label{Phi-10}\\
 \Phi_{+i}^{(0)}(\bx)\ &=\ e^{i\bM\cdot\bx}\ -i\ e^{iR\bM\cdot\bx}\ -\ e^{iR^2\bM\cdot\bx}\ +i\
 e^{iR^3\bM\cdot\bx}\ \in\  L^2_{\bM,+i}\label{Phi+i0}\\
 \Phi_{-i}^{(0)}(\bx)\ &=\ e^{i\bM\cdot\bx}\ +\ i\ e^{iR\bM\cdot\bx}\ -\ e^{iR^2\bM\cdot\bx}\ -i\
 e^{iR^3\bM\cdot\bx}\ \in\  L^2_{\bM,-i}\label{Phi-i0} 
 \end{align}
 Furthermore,
 \begin{align}
&L^2_\bM-\ {\rm Kernel}\left(\ H^{(0)}-\mu^{(0)}_S{\rm Id}\ \right)\nn\\
& \qquad=\  
{\rm span}\left\{\ \Phi_{+1}^{(0)}(\bx)\ \right\}\oplus{\rm span}\left\{\ \Phi_{-1}^{(0)}(\bx)\ \right\}
\oplus{\rm span}\left\{\ \Phi_{+i}^{(0)}(\bx)\ \right\}\oplus{\rm span}\left\{\ \Phi_{-i}^{(0)}(\bx)\ \right\}.\nn
 \end{align}
 \label{list1}
  \item $\mu_S^{(0)}=2\pi^2$ is the lowest eigenvalue of $H^{(0)}$ in $L^{2}_{\bM}$.\label{lowest-eval}
 \end{enumerate}
\end{thm}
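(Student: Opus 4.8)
The plan is to argue entirely from the explicit spectral data of $H^{(0)}=-\Delta$ on $L^2_\bM$ recorded just above the statement, combined with the orthogonal decomposition \eqref{decompL2} into $\mathcal{R}$-eigenspaces. The starting point is that $\{e^{i\bM^{\bfm}\cdot\bx}\}_{\bfm\in\Z^2}$ is a complete orthonormal system in $L^2_\bM$ (this is exactly the Fourier representation \eqref{phi-fs}), and each such function is an eigenfunction of $H^{(0)}$ with eigenvalue $\lvert\bM^{\bfm}\rvert^2=\lvert\bM+m_1\bk_1+m_2\bk_2\rvert^2$. With $\bM=(\pi,\pi)^T$, $\bk_1=(2\pi,0)^T$, $\bk_2=(0,2\pi)^T$, this is $\pi^2\big[(2m_1+1)^2+(2m_2+1)^2\big]$. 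Since $2m_i+1$ is odd, each square is $\ge 1$, so this eigenvalue is $\ge 2\pi^2$, with equality exactly when $(2m_1+1,2m_2+1)\in\{\pm1\}\times\{\pm1\}$, i.e. $\bfm\in\{(0,0),(0,-1),(-1,-1),(-1,0)\}$. This immediately gives Part (3): $\mu^{(0)}_S=2\pi^2$ is the lowest $L^2_\bM$-eigenvalue. A one-line matrix computation with $R$ as in \eqref{Rotdef} shows that the four corresponding wave-vectors $\bM^{\bfm}$ are precisely $\bM,R\bM,R^2\bM,R^3\bM$; by completeness and orthonormality of the Fourier system, the kernel of $H^{(0)}-\mu^{(0)}_S\,\mathrm{Id}$ is exactly the span displayed in \eqref{list0}, of dimension four. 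This is Part (1).

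For Part (2), write $K:=\mathrm{Kernel}\big(H^{(0)}-\mu^{(0)}_S\,\mathrm{Id}\big)$. Since $\mathcal{R}$ commutes with $-\Delta$ (equivalently, Proposition \ref{extra-symm} with $V\equiv 0$), $K$ is $\mathcal{R}$-invariant, so by \eqref{decompL2} it splits as $K=\bigoplus_{\sigma}\big(K\cap L^2_{\bM,\sigma}\big)$. On the four-dimensional space $K$, the unitary $\mathcal{R}$ acts as the cyclic permutation $e^{iR^{j}\bM\cdot\bx}\mapsto e^{iR^{j+1}\bM\cdot\bx}$ (indices mod $4$), because $\mathcal{R}[e^{i\bw\cdot\bx}]=e^{i(R\bw)\cdot\bx}$. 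I would then diagonalize this $4$-cycle: for each fourth root of unity $\sigma$ the vector $\sum_{j=0}^{3}\sigma^{4-j}e^{iR^{j}\bM\cdot\bx}$ is a $\sigma$-eigenvector, and checking the four cases $\sigma=1,-1,i,-i$ identifies these vectors with $\Phi^{(0)}_{+1},\Phi^{(0)}_{-1},\Phi^{(0)}_{+i},\Phi^{(0)}_{-i}$ of \eqref{Phi10}--\eqref{Phi-i0} (this is also the specialization of Proposition \ref{phi_sig}, formula \eqref{phi-sig-gen}, to the single $\mathcal{R}$-orbit lying in $K$). These four vectors are linearly independent (the $4\times 4$ discrete Fourier matrix is invertible), each is nonzero and lies in a distinct summand $L^2_{\bM,\sigma}$, and together they span $K$; since $\dim K=4$ it follows that $K\cap L^2_{\bM,\sigma}=\mathrm{span}\{\Phi^{(0)}_\sigma\}$ is one-dimensional for every $\sigma$, which is precisely the claim, and yields the stated direct-sum decomposition.

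\textbf{Expected main obstacle.} There is no genuine analytic difficulty here; the two points requiring care are purely structural. First, one must invoke completeness of the exponential Fourier system in $L^2_\bM$ so that "multiplicity four" is an equality, not merely a lower bound — otherwise the argument only produces four linearly independent kernel elements. Second, the identification of $\sum_{j}\sigma^{4-j}e^{iR^{j}\bM\cdot\bx}$ with the specific $\Phi^{(0)}_\sigma$ and with the correct subspace $L^2_{\bM,\sigma}$ is a sign-bookkeeping step that should be cross-checked against Proposition \ref{phi_sig}. Everything else is a short direct computation.
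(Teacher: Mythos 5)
Your proof is correct and follows the same route as the paper's: identify the plane-wave eigenbasis $\{e^{i\bM^\bfm\cdot\bx}\}$, compute $|\bM^\bfm|^2$ to pin down exactly the four wave-vectors realizing the minimum $2\pi^2$ (the paper writes the defect as $(2\pi)^2[m_1^2+m_2^2+m_1+m_2]$, you factor it as $\pi^2[(2m_1+1)^2+(2m_2+1)^2]$ --- same quantity), and then obtain Part (2) by diagonalizing the $4$-cycle action of $\mathcal{R}$ on the degenerate kernel, which the paper delegates to Proposition \ref{phi_sig}. Your version is slightly more explicit about completeness of the Fourier system (to get equality, not just a lower bound, on the multiplicity) and about the DFT identification, but these are elaborations of the same argument, not a different one.
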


\begin{proof}
The function  $e^{i\bk\cdot \bx}$ is an $L^2_\bk-$eigenvalue of $-\Delta$ with eigenvalue $|\bk|^2$. 
Since vertices of the Brillouin zone, $\bM, R\bM, R^2\bM$ and $R^3\bM$, are equidistant from the origin and are all equivalent modulo $\Lambda^*$, we have   $\mu^{(0)}_S = |\bM|^2=2\pi^{2}$
 is an $L^2_\bM-$eigenvalue of multiplicity at least four with eigenspace contained in the span of the functions 
 $e^{i\bM\cdot\bx}$, $e^{iR\bM\cdot\bx}$, $e^{iR^2\bM\cdot\bx}$ and $e^{iR^3\bM\cdot\bx}$.
 To show that $\mu^{(0)}_S$  is of multiplicity exactly four, we seek to find $\bfm$ for which $\vert \bM^{\bfm} \vert^{2} = \vert \bM \vert^{2}.$  Using $\bM^{\bfm} = \bM + m_{1}\bk_{1} + m_{2}\bk_{2}$, we have
$
|\bM^\bfm|^2-|\bM|^2=(2\pi)^2\left[m_{1}^{2} + m_{2}^{2} + m_{1} + m_{2}\right] ,
$
which vanishes only if $\bfm = (0,0), (0, -1), (-1, -1)$ or $ (-1, 0).$  These four possibilities correspond to the four vertices of $\brill$.
 Thus, $\mu_S^{(0)}$ is of multiplicity exactly four. This proves part \ref{thm-e-val}. 
Part \ref{list1} is a consequence of Proposition \ref{phi_sig} and its proof.
Part \ref{lowest-eval} holds because $m_{1}^{2} + m_{2}^{2} + m_{1} + m_{2} \geq 1>0$ for $\bfm = (m_{1}, m_{2}) \notin \{(0,0), (0, -1), (-1, -1), (-1, 0)\}$ and therefore 
$|\bM^\bfm|^2\ge |\bM|^2+(2\pi)^2> |\bM|^2$.
 \end{proof}

\begin{figure}[t!]
\begin{center}
\includegraphics[width=.48\textwidth]{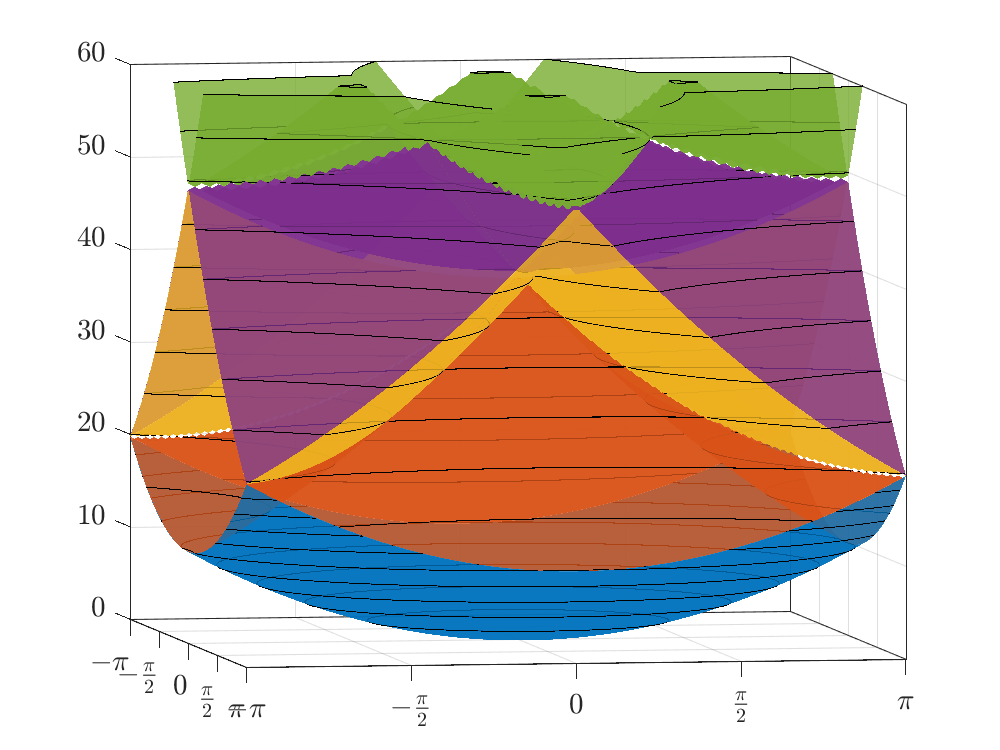}
\includegraphics[width=.48\textwidth]{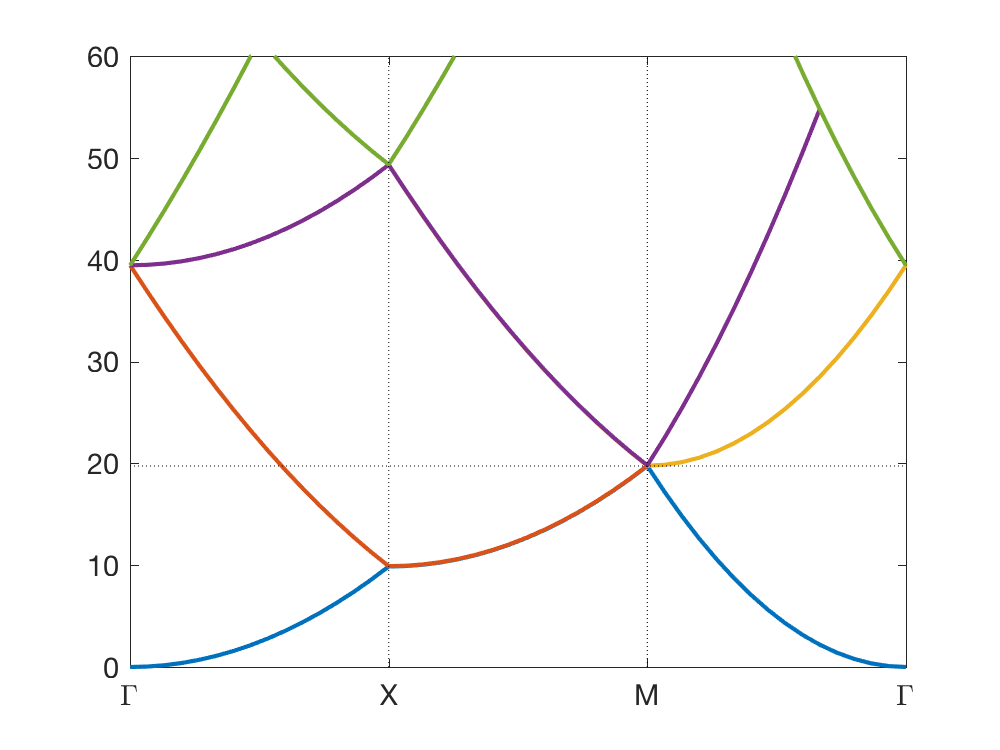}
\end{center}
\caption{ Dispersion surfaces of $H^{(0)}=-\Delta$. {\bf (left)} The first five dispersion surfaces are plotted over the Brillouin zone, $\left[-\pi, \pi \right]^2$. Each surface is plotted using a different color. (The first is blue, the second is red, {\it etc}.) Some level sets of the dispersion surfaces are indicated with black lines. 
{\bf (right)} The same dispersion surfaces are  plotted along the circuit ${\bf \Gamma} \to {\bf X} \to {\bf M} \to {\bf \Gamma}$, displayed in Figure \ref{fig:squareBZ}. 
The colors match the plot on the left. 
As shown in Theorem \ref{H0-mult4}, there is a multiplicity four $L^2_\bM-$eigenvalue $\mu^{(0)}=|\bM|^2=2\pi^2$.  }
\label{fig:free-hamiltonian}
\end{figure}


\section{Two-fold Degenerate $L^{2}_{\bM}$ Eigenvalues Imply Quadratic Touching of Dispersion Surfaces}
\label{2fold-touch}
%
\begin{thm}\label{quad-disp}
Let $H = - \Delta_{\bx} + V(\bx)$, where $V(\bx)$ is an admissible  potential in the sense of Definition \ref{def:sq-pot}.
 Assume that $\mu_\star$ is a two-fold degenerate $L^2_\bM$ eigenvalue of $H$. More specifically,  
\begin{enumerate}
\item[(H1)]\ $H$ has a simple $L^{2}_{\bM, +i}$ eigenvalue $\mu_S $ with corresponding  normalized eigenfunction
 $\Phi_{1}(\bx)=e^{i\bM\cdot\bx}\phi_1(\bx)$.
\item[(H2)]\ $H$ has a simple $L^{2}_{\bM, -i}$ eigenvalue $\mu_S $ with corresponding  normalized eigenfunction
\[\Phi_{2} =\left(\mathcal{P}\circ\mathcal{C}\right)[\Phi_1](\bx)=\overline{\Phi_1(-\bx)}\equiv e^{i\bM\cdot\bx}\phi_2(\bx).
\]
We shall also use the notation  $\Phi_{1}=\Phi_{(+i)}$ and $\Phi_{2}=\Phi_{(-i)}$. 
\item[(H3)]\ $\mu_S $ is neither a  $L^{2}_{\bM, +1}$ nor a $L^{2}_{\bM, -1} $eigenvalue of $H$.
\end{enumerate}

Then, there exist dispersion relations: $\bk\mapsto\mu_\pm(\bk)$ associated with the $L^2_\bk-$eigenvalue problem 
 for $H$, whose local character in a neighborhood of the high symmetry quasi-momentum, $\bM$ (and therefore all vertices of 
  $\brill$), is given by:
  \begin{equation}\label{mu-pert}
\mu_{\pm}(\bM+\bkappa) - \mu_S = 
  (1-\alpha)|\kappa|^2+\mathscr{Q}_6(\kappa)\ \pm \sqrt{\Big|\ \gamma(\kappa_{1}^{2} - \kappa_{2}^{2})+ 2\beta \kappa_{1}\kappa_{2}\ \Big|^2\ +\ \mathscr{Q}_8(\kappa)}\ ,
\end{equation}
for $|\bk-\bM|=\sqrt{\kappa_1^2+\kappa_2^2}$ small. The constants $\alpha\in \mathbb{R}$ and $\beta, \gamma \in \mathbb{C}$ are  inner product expressions which are quadratic in the 
 the entries of $\nabla_{\bx}\Phi_{1}$ and $\nabla_{\bx}\Phi_{2}$; see equations \eqref{albega}.
  The functions $\mathscr{Q}_6(\kappa)=\mathcal{O}(|\kappa|^6)$ and $\mathscr{Q}_8(\kappa)=\mathcal{O}(|\kappa|^8)$ are analytic functions of  $\kappa$ and invariant under $\pi/2$ rotation: $(\kappa_1,\kappa_2)\mapsto(-\kappa_2,\kappa_1)$. 
   
\end{thm}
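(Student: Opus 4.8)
The plan is to carry out Lyapunov--Schmidt / degenerate perturbation theory for the $L^2_\bk$-eigenvalue problem $H(\bk)\phi = \mu\phi$ near $\bk = \bM$, using $\bkappa = \bk - \bM$ as the small parameter. First I would set up the problem on $L^2(\R^2/\Lambda)$ by writing $\bk = \bM + \bkappa$, so that $H(\bM+\bkappa) = H(\bM) + 2i\bkappa\cdot(\nabla+i\bM) + |\bkappa|^2 = H(\bM) + W(\bkappa)$, where $W(\bkappa)$ is a relatively bounded analytic perturbation of order $|\bkappa|$. Let $X_\star = \mathrm{span}\{\Phi_1,\Phi_2\}$ be the two-dimensional degenerate eigenspace at $\mu_S$ guaranteed by (H1)--(H2), let $P$ be the orthogonal projection onto $X_\star$ and $Q = I - P$. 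By (H3), $\mu_S$ is not an eigenvalue on the $\pm1$ sectors, and by simplicity in the $\pm i$ sectors the reduced resolvent $(H(\bM)-\mu_S)^{-1}Q$ is bounded, so the Schur complement reduction applies: for $(\bkappa,\mu)$ near $(0,\mu_S)$ the full eigenvalue problem is equivalent to the $2\times2$ problem $\det\big(\mathcal{M}(\bkappa,\mu) - \mu I\big) = 0$, where $\mathcal{M}(\bkappa,\mu)$ is the analytic (in $\bkappa$, and in $\mu$) effective Hamiltonian obtained by restricting $H(\bM+\bkappa)$ plus the Feshbach correction $-PW Q (Q(H(\bM)-\mu)Q)^{-1} QW P$ to $X_\star$, expressed in the basis $\{\Phi_1,\Phi_2\}$.

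Next I would exploit the symmetries to force the structure of $\mathcal{M}$. The operator $\mathcal{R}$ does not commute with $H(\bM+\bkappa)$ for general $\bkappa$, but it maps the problem at $\bkappa$ to the problem at $R\bkappa = (-\kappa_2,\kappa_1)$; similarly $\mathcal{P}\circ\mathcal{C}$ intertwines the $+i$ and $-i$ sectors and acts as complex conjugation combined with $\bkappa \mapsto -\bkappa$. In the basis $\{\Phi_{(+i)},\Phi_{(-i)}\}$, which are eigenvectors of $\mathcal{R}$ with eigenvalues $i$ and $-i$, these two symmetry constraints plus self-adjointness (Hermiticity of $\mathcal{M}$ when $\mu$ is real) pin down the $\bkappa$-expansion entrywise: the diagonal entries must be real, equal to each other (by $\mathcal{R}$-invariance of $|\Phi_{(\pm i)}|$-type quantities), and functions of $\bkappa$ invariant under $\bkappa\mapsto R\bkappa$ — hence of the form $\mu_S + (1-\alpha)|\bkappa|^2 + \mathscr{Q}_6(\bkappa) + \dots$ with no linear term (the linear term is a diagonal inner product $\langle \Phi_j, \bkappa\cdot(\nabla+i\bM)\Phi_j\rangle$, which vanishes by the $\mathcal{P}\circ\mathcal{C}$ symmetry of each $\Phi_j$, and there is no $\mathcal{R}$-invariant nonzero linear form on $\R^2$); while the off-diagonal entry $\mathcal{M}_{12}$ must transform under $\mathcal{R}$ by the phase $i/(-i) = -1$, i.e. $\mathcal{M}_{12}(R\bkappa) = -\mathcal{M}_{12}(\bkappa)$, which kills the constant and linear terms and forces the leading term to be the unique (up to a complex constant) quadratic form changing sign under $\bkappa\mapsto(-\kappa_2,\kappa_1)$, namely a linear combination of $\kappa_1^2-\kappa_2^2$ and $\kappa_1\kappa_2$: thus $\mathcal{M}_{12}(\bkappa) = \gamma(\kappa_1^2-\kappa_2^2) + 2\beta\kappa_1\kappa_2 + \mathcal{O}(|\bkappa|^6)$. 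The coefficients $\alpha,\beta,\gamma$ are then read off as the claimed quadratic inner-product expressions in $\nabla\Phi_1,\nabla\Phi_2$ (equation \eqref{albega}), with the $\mathcal{O}(|\bkappa|^4)$-and-higher Feshbach corrections swept into the remainders.

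Finally, solving the resulting scalar equation $(\mu - \mathcal{M}_{11})(\mu - \mathcal{M}_{22}) - \mathcal{M}_{12}\mathcal{M}_{21} = 0$ — which, since $\mathcal{M}_{11}=\mathcal{M}_{22}$ is real and $\mathcal{M}_{21}=\overline{\mathcal{M}_{12}}$ for real $\mu$, reads $(\mu - \mathcal{M}_{11})^2 = |\mathcal{M}_{12}|^2$ up to the $\mu$-dependence of the entries — yields $\mu_\pm = \mathcal{M}_{11} \pm |\mathcal{M}_{12}|$. A short implicit-function-theorem argument absorbs the (analytic, higher-order in $\bkappa$) $\mu$-dependence of $\mathcal{M}_{ij}$ into redefinitions of $\mathscr{Q}_6$ and $\mathscr{Q}_8$ — the latter picking up the square of the $\mathcal{O}(|\bkappa|^4)$ correction to $\mathcal{M}_{12}$ plus cross terms, hence genuinely $\mathcal{O}(|\bkappa|^8)$ inside the radical — giving exactly \eqref{mu-pert}; analyticity of the remainders and their $\pi/2$-invariance follow from analyticity of the Feshbach map and the symmetry bookkeeping already done. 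The main obstacle I anticipate is not the reduction itself but the careful symmetry accounting on the effective $2\times2$ matrix: one must verify that $\mathcal{R}$ and $\mathcal{P}\circ\mathcal{C}$ descend to the correct unitary/antiunitary actions on $X_\star$ after the Feshbach projection (the reduced resolvent must be shown to commute appropriately), and then track exactly which monomials in $\bkappa$ survive in each matrix entry to all relevant orders — in particular justifying the clean separation into a $\pi/2$-invariant real diagonal part and an off-diagonal part with the $-1$ transformation law, and confirming that the error terms really are $\mathcal{O}(|\bkappa|^6)$ and $\mathcal{O}(|\bkappa|^8)$ and not larger.
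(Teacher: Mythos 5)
Your reduction (Feshbach/Lyapunov--Schmidt to a $2\times 2$ Hermitian effective matrix on ${\rm span}\{\Phi_1,\Phi_2\}$) and your symmetry classification of its leading quadratic structure coincide with the paper's Sections \ref{reduction}--\ref{quad-kap}. The gap is in your final step, precisely where you flagged difficulty. You assert $\M_{12}(\bkappa)=q(\bkappa)+\mathcal{O}(|\bkappa|^6)$, where $q=\gamma(\kappa_1^2-\kappa_2^2)+2\beta\kappa_1\kappa_2$, citing the equivariance $\M_{12}(R\bkappa)=-\M_{12}(\bkappa)$. But this equivariance, together with $R^2=-I$, only kills \emph{odd}-degree terms: a degree-$4$ polynomial such as $(\kappa_1^2+\kappa_2^2)\,q(\bkappa)$ also picks up $-1$ under $R$, and such contributions arise generically in the second Neumann iterate of the reduced resolvent. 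So symmetry alone yields only $\M_{12}=q+\mathcal{O}(|\bkappa|^4)$. Then $|\M_{12}|^2=|q|^2+2\,\Re(\bar q\,p_4)+\cdots$ contains a degree-$6$ cross term, and the claimed $\mathcal{O}(|\bkappa|^8)$ remainder inside the radical does not follow. Your own closing sentence ("the square of the $\mathcal{O}(|\bkappa|^4)$ correction plus cross terms, hence genuinely $\mathcal{O}(|\bkappa|^8)$") is internally inconsistent: a cross term between degree-$2$ and degree-$4$ quantities is degree $6$, not $8$.

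The paper closes this by working at the level of the \emph{roots}, not the matrix entries. In Section \ref{local-bhv} it writes $D(\nu,\bkappa)=\nu^2-q(\bkappa)q_\natural(\bkappa)+g(\nu,\bkappa)$, uses Rouch\'e's theorem to isolate exactly two roots $\nu_\pm(\bkappa)$ in a disc of radius $\sim\kappa_{\rm max}^2$, and a contour/residue computation (Lemma \ref{Dequiv}) to show that $\nu_++\nu_-$ and $\nu_+\nu_-$ are analytic in $\bkappa$ with explicit remainder bounds, so that $\nu_\pm=c_0(\bkappa)\pm\sqrt{q\,q_\natural+c_1(\bkappa)}$ with $c_0=\mathcal{O}(|\bkappa|^5)$ and $c_1=\mathcal{O}(|\bkappa|^7)$. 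The final promotion to $\mathscr{Q}_6$ and $\mathscr{Q}_8$ then comes from Proposition \ref{near-M-sym}: the \emph{unordered pair} $\{\mu_-(\bM+\bkappa),\mu_+(\bM+\bkappa)\}$ is invariant under $\bkappa\mapsto R\bkappa$, so the analytic remainders $c_0,c_1$ must themselves be $\pi/2$-rotation invariant, which rules out the odd leading terms. This is a symmetry constraint on the root-level functions, not on $\M_{ij}$; it is the step your argument is missing. To complete your proof you would need either this root-level machinery, or an independent reason (which does not follow from the $R$-equivariance of $\M_{12}$) why the degree-$6$ contribution under the radical vanishes.
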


\nit The proof of Theorem \ref{quad-disp} is given in Section \ref{pf-quad-disp}.

\begin{cor}\label{rho-inv}
Assume hypotheses of Theorem \ref{quad-disp}.
 Assume further that with respect to the origin of coordinates, $\bx_c=0$,
we have, in addition, that $V$ is reflection invariant in the sense of Definition \ref{def:refinv}, {\it i.e.} $V(x_1, x_2) = V(x_2, x_1)$. Then, the coefficients $\beta$ and $\gamma$ in \eqref{mu-pert} are  constrained to satisfy: 
 $\beta\in\R$ and $\gamma=i\tilde{\gamma}\in i\R$ and we have:
\begin{align}
\mu_{\pm}(\bM+\bkappa) - \mu_S &= 
(1-\alpha)|\kappa|^2+\mathscr{Q}_6(\kappa)\ \nn\\
&\quad  \pm \sqrt{  \tilde{\gamma}^2(\kappa_1^2-\kappa_2^2)^2 + 4\beta^2 \kappa_{1}^2\kappa_{2}^2\ +\ \mathscr{Q}_8(\kappa) }\ .
\label{mu-rho}\end{align}
Here,  $\mathscr{Q}_6(\kappa)$ and $\mathscr{Q}_8(\kappa)$ are now also invariant under the reflection: $(\kappa_1,\kappa_2)\mapsto(\kappa_2,\kappa_1)$.
\end{cor}
\bigskip

Before presenting the proofs of Theorem \ref{quad-disp} and Corollary \ref{rho-inv}, we state a result on the 
instability or non-persistence of the quadratic degeneracies of Theorem \ref{quad-disp} against a class of real-valued perturbations
which preserve $\Z^2-$periodicity and inversion symmetry, but break $\pi/2-$rotational invariance.

\begin{thm}[Non-persistence of quadratic degeneracy]
\label{deformation}
 Consider $H^\eta = -\Delta + V + \eta W$, where $V$ is admissible. By Theorem \ref{quad-disp},

\nit $\bullet$\quad $H^0$ has an $L^2_{\bM}-$eigenvalue $\mu_S$ of geometric multiplicity two, and \\
\nit $\bullet$\quad  $\mu_S$ has an associated orthonormal basis $\{\Phi_1, \Phi_2\}$ with $\Phi_1 \in L^2_{\bM, i}$ and $\Phi_2(\bx) = \overline{\Phi_1(-\bx)}$.

We introduce a class of perturbations, $W$, consisting of real-valued functions which are $\Z^2-$periodic and  even,  but which do not respect $\pi/2-$rotational invariance, {\it i.e.} $\mathcal{R}[W] \neq W$. In particular, we assume that 
\begin{equation}
\langle \Phi_1, W\Phi_2 \rangle \neq 0\ .
 \label{hyp-non-degen}\end{equation}
\nit  Then, the two-fold degenerate eigenvalue splits into two simple eigenvalues, $\nu_{\pm}$, given by:
\begin{equation}\label{w-pert-eigs}
\nu_{\pm}  = \mu_S + \eta \langle \Phi_1, W \Phi_1 \rangle \pm \eta\abs{ \langle \Phi_1, W \Phi_2 \rangle} + \mathcal{O}(\eta^2).
\end{equation}
\end{thm}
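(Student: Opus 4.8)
The plan is to apply standard Rayleigh--Schrödinger (analytic) degenerate perturbation theory to the family $H^\eta = H^0 + \eta W$ acting on $L^2_{\bM}$, with unperturbed eigenvalue $\mu_S$ of geometric multiplicity two and eigenprojection $P$ onto $\mathrm{span}\{\Phi_1,\Phi_2\}$. First I would note that, since $W$ is real-valued, bounded (smooth and $\Z^2$-periodic), and $H^0$ is self-adjoint with $\mu_S$ an isolated eigenvalue of finite multiplicity in $L^2_{\bM}$ (this follows from discreteness of the spectrum of the elliptic operator $H^0(\bM)$ on the torus), the family $\eta\mapsto H^\eta$ is an analytic family of type (A) in the sense of Kato. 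Hence for $|\eta|$ small there are exactly two eigenvalues $\nu_\pm(\eta)$ of $H^\eta$ near $\mu_S$, each analytic in $\eta$, together with an analytic choice of associated eigenprojection $P(\eta)$ converging to $P$ as $\eta\to 0$.

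The key computational step is the reduction to the $2\times2$ problem: to leading order, $\nu_\pm(\eta)$ are the eigenvalues of $\mu_S I + \eta\, \mathcal{W} + \mathcal{O}(\eta^2)$, where $\mathcal{W}$ is the $2\times 2$ Hermitian matrix with entries $\mathcal{W}_{jk} = \langle \Phi_j, W\Phi_k\rangle$, $j,k\in\{1,2\}$. I would then identify the entries of $\mathcal{W}$ using the symmetry hypotheses on $W$. Since $W$ is real and even, the operator $\Phi\mapsto W\Phi$ commutes with $\mathcal{P}\circ\mathcal{C}$; combined with $\Phi_2 = (\mathcal{P}\circ\mathcal{C})[\Phi_1]$ and the fact that $\mathcal{P}\circ\mathcal{C}$ is antiunitary, this gives $\langle \Phi_2, W\Phi_2\rangle = \overline{\langle (\mathcal{P}\circ\mathcal{C})\Phi_1, (\mathcal{P}\circ\mathcal{C})(W\Phi_1)\rangle} = \langle \Phi_1, W\Phi_1\rangle$, so the two diagonal entries coincide; call the common value $\langle \Phi_1, W\Phi_1\rangle$ (note it is real since $\mathcal{W}$ is Hermitian). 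Writing $w \equiv \langle \Phi_1, W\Phi_2\rangle$ (so $\mathcal{W}_{21} = \overline{w}$), the eigenvalues of $\mathcal{W}$ are $\langle \Phi_1, W\Phi_1\rangle \pm |w|$, which yields \eqref{w-pert-eigs} after multiplying by $\eta$ and collecting the $\mathcal{O}(\eta^2)$ error from the analytic expansion. The non-degeneracy hypothesis \eqref{hyp-non-degen}, $w\neq 0$, ensures the two eigenvalues of $\mathcal{W}$ are distinct, hence $\nu_+(\eta)\neq\nu_-(\eta)$ for small nonzero $\eta$ and each is a simple eigenvalue of $H^\eta$.

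The main obstacle is not any single hard estimate but rather making the reduction to the $2\times 2$ matrix rigorous and correctly bookkeeping the symmetry constraints. Concretely: (i) one must verify that $H^\eta$ genuinely acts on (a dense subspace of) $L^2_{\bM}$ — this is immediate since $W$ is $\Z^2$-periodic, so $\bM$-pseudoperiodicity is preserved — and that the perturbation series converges, for which the Kato analytic perturbation framework suffices; (ii) one must be careful that the antiunitarity of $\mathcal{P}\circ\mathcal{C}$ produces the complex conjugate in the right place when equating the diagonal entries, and that $W$ maps $L^2_{\bM,i}$ to $L^2_{\bM,-i}$ (again because $W$ is real and even, hence anti-commutes appropriately with $\mathcal{R}$ — indeed $\mathcal{R}[W]\neq W$ is allowed, but what matters for the off-diagonal term is only $\mathcal P\circ\mathcal C$ symmetry). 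I would carry out the steps in the order: (1) set up the analytic family and cite Kato for existence of analytic $\nu_\pm(\eta)$; (2) write the first-order reduced matrix $\mathcal{W}$; (3) use $\mathcal{C}$, $\mathcal{P}$ symmetry to show the diagonal entries agree; (4) diagonalize the $2\times 2$ matrix and conclude \eqref{w-pert-eigs}; (5) invoke \eqref{hyp-non-degen} to conclude the eigenvalues are simple for small $\eta\neq 0$.
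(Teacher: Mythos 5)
Your proposal is correct and follows precisely the standard degenerate perturbation theory argument that the paper invokes (the paper omits the proof and refers to Section~9 of \cite{FW:12} and Section~5 of \cite{LWZ:18}): analytic perturbation of the isolated double eigenvalue, reduction to the $2\times2$ matrix $\mathcal{W}_{jk}=\langle\Phi_j,W\Phi_k\rangle$, equality of the diagonal entries via the antiunitary $\mathcal{P}\circ\mathcal{C}$ symmetry and $\Phi_2=(\mathcal{P}\circ\mathcal{C})\Phi_1$, and diagonalization giving $\mu_S+\eta\langle\Phi_1,W\Phi_1\rangle\pm\eta|\langle\Phi_1,W\Phi_2\rangle|+\mathcal{O}(\eta^2)$. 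One small remark: your parenthetical claim that ``$W$ maps $L^2_{\bM,i}$ to $L^2_{\bM,-i}$'' because it ``anti-commutes appropriately with $\mathcal{R}$'' is neither true for a generic real even $W$ nor needed; as you yourself note in the next clause, the only symmetry actually used is $\mathcal{P}\circ\mathcal{C}$ (for the diagonal), while the off-diagonal entry is controlled directly by hypothesis \eqref{hyp-non-degen}, so this stray remark does not affect the argument.
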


\nit We omit the proof of Theorem \ref{deformation}, which follows from degenerate perturbation theory argument; see Section 9 (particularly, Remark 9.2) of \cite{FW:12} and  Section 5 of \cite{LWZ:18}.

\begin{remark}
It is easy to verify that if $W$ is  real-valued, $\Z^2-$periodic, even \underline{and} $\pi/2-$rotationally invariant, then 
$ \langle \Phi_1, W\Phi_2 \rangle = 0$. \end{remark}

\begin{remark}
 We provide an example of such potential $W$ that is even, is \underline{not} $\pi/2-$rotationally invariant and has the property:
$
\langle \Phi_1^\eps , W \Phi_2^\eps \rangle \neq 0.$  We set  
\[W_0(\bx) = 2\cos((\bk_1 + \bk_2)\cdot \bx).\] 
Then, 
$
\mathcal{R}[W_0] =  2\cos((\bk_1 + \bk_2)\cdot R^*\bx) = 2\cos(R(\bk_1 + \bk_2)\cdot \bx)  = 2\cos((\bk_1 - \bk_2)\cdot \bx)  \neq W_0(\bx).
$
We obtain $\langle \Phi_1^\eps , W_0 \Phi_2^\eps \rangle = -2 +\mathcal{O}(\eps)\neq 0$ for $\eps$ small.

Other examples which satisfy the hypotheses of Theorem \ref{deformation}:
$W_{1}(\bx) =2 \cos(\bk_1 \cdot \bx)$,  $W_{2}(\bx) = 2 \cos(\bk_2 \cdot \bx)$  and 
$W_{3}(\bx) =  2 \cos((\bk_1-\bk_2) \cdot \bx)$.
We omit the lengthy but elementary verification.  A numerical illustration of Theorem \ref{deformation} is presented in Figure \ref{deform-w}.
\end{remark}

\begin{figure}[h!]
\begin{center}
\caption{The admissible potential $V$ in Figure \ref{figure9} with the additional perturbation $W = 2\cos((\bk_1+\bk_2)\cdot\bx)$.}\label{deform-w}
\subfloat[Plot of Potential $V + \eta W$]{\includegraphics[width=.5\columnwidth, keepaspectratio]{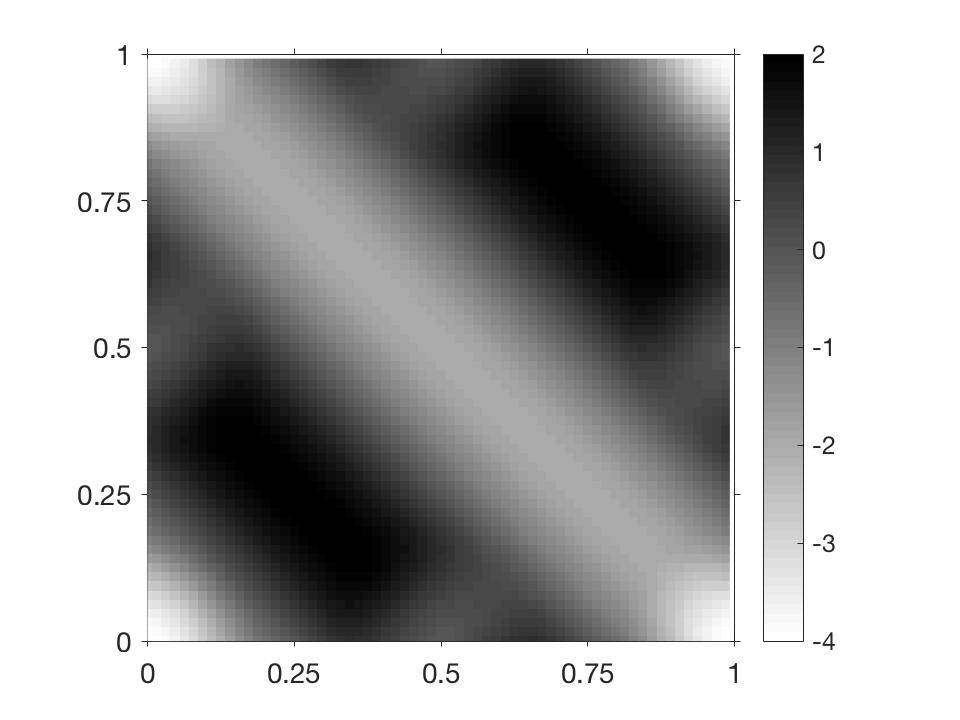}}
\subfloat[Dispersion Curves; Inset Shows Splitting at $\bM$]{\includegraphics[width=.5\columnwidth, keepaspectratio]{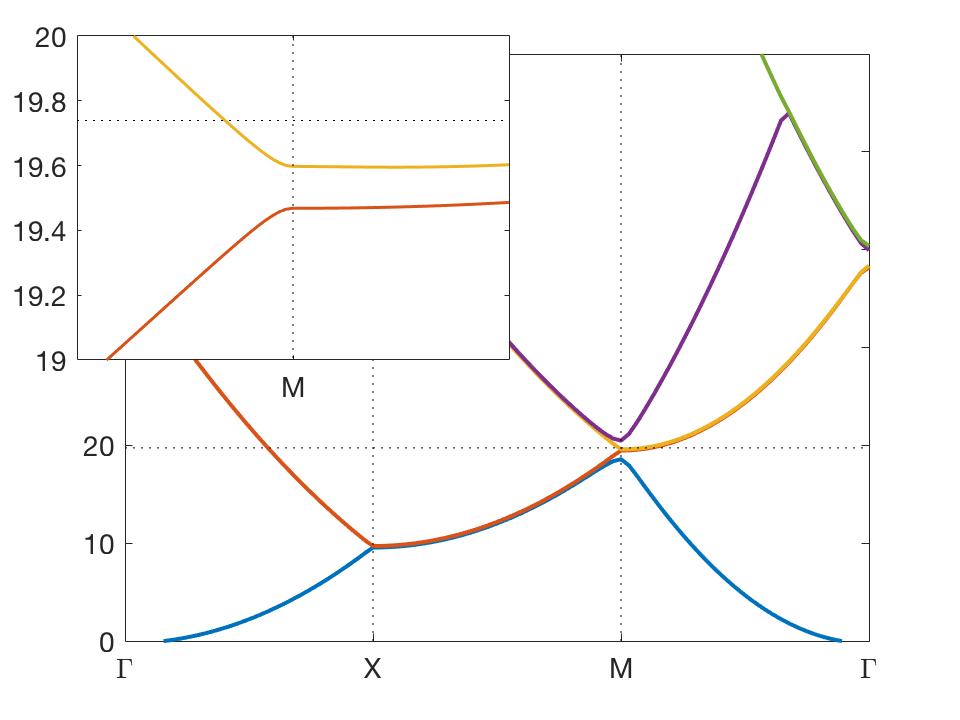}}
\end{center}
\end{figure}

\subsection{Proof of Theorem \ref{quad-disp} on conditions for quadratic degeneracy}\label{pf-quad-disp}

\subsubsection{Reduction to the study of $\det\mathcal{M}(\mu,\bkappa)=0$, for a $2\times2-$matrix-valued analytic function $(\mu,\bkappa)\mapsto\mathcal{M}(\mu,\bkappa)$ in a neighborhood of $(0,{\bf 0})$}\label{reduction}

 The proof follows closely that of Theorem 4.1 of \cite{FW:12}.  For   $\Phi \in L_{\bk}^{2}$ as $\Phi(\bx; \bk) = e^{i \bk \cdot \bx} \phi(\bx; \bk),$ where $\phi(\bx; \bk)$ is $\Lambda-$periodic. Let  $H(\bk) =(-(\nabla_{\bx} + i\bk)^{2} + V(\bx))$.    We  study the eigenvalue problem $H(\bk)\phi(\bx;\bk)=\mu \phi(\bx;\bk)$ for $\bk=\bM + \kappa$ and  $\mid \kappa \mid \ll 1$. In particular,
\begin{equation}\label{pert-evp}
\begin{split}
&\left[ - (\nabla_{\bx} + i (\bM + \kappa) )^{2} + V(\bx) \right] \phi = \mu\ \phi,\\
&\phi(\bx + \bv) = \phi(\bx), \text{ for all } \bv \in \Lambda, \ \ \bx\in\R^2.
\end{split}
\end{equation}
We seek a solution of \eqref{pert-evp}, $\mu=\mu(\bM + \kappa)$ and $\phi=\phi(\bx; \bM + \kappa)$, in the form
\begin{equation}\label{evals-extended} 
  \mu(\bM + \kappa) = \mu_S +  \mu^{(1)}, \hspace{5mm} \phi(\bx; \bM + \kappa) = \phi^{(0)} + \phi^{(1)},
\end{equation}
where 
\[\phi^{(0)}\in {\rm kernel}(H(\bM)-\mu_S I),\ \   \phi^{(1)}\perp {\rm kernel}(H(\bM)-\mu_S I),\]
 and  $\mu^{(1)}$  are to be determined.
Substituting \eqref{evals-extended} into\eqref{pert-evp}, we obtain:
\begin{equation}\label{eqn:pert-sys}
\begin{split}
(H(\bM) - \mu_S I) \phi^{(1)} =& \left( 2i \kappa \cdot (\nabla + i \bM) - \kappa \cdot \kappa + \mu^{(1)} \right)
\ \left(\phi^{(0)}+\phi^{(1)}\right)\ \equiv F^{(1)}. 
\end{split}
\end{equation}
%
The right-hand side of \eqref{eqn:pert-sys} depends on $\phi^{(0)} \in  {\rm kernel}(H(\bM) - \mu_{0}I)$, which, by hypothesis, is expressed for constants $\alpha_1, \alpha_2$ to be determined,
\begin{equation}\label{psi0-exp}
\begin{split}
\phi^{(0)} &= \alpha_1 \phi_{1}(\bx) +  \alpha_2 \phi_{2}(\bx),\\ 
\phi_{j}(\bx) &= e^{-i\bM \cdot \bx}\Phi_{j}(\bx), j = 1, 2,
\end{split}
\end{equation}

We next construct $\phi^{(1)}$. Introduce the orthogonal projections $Q_{\parallel}$  onto the two-dimensional kernel of $H(\bM) - \mu_S I$ and $Q_{\perp} = I - Q_{\parallel}$. Note that:
 $Q_{\parallel} \psi^{(1)}=0$, $Q_{\perp} \psi^{(0)} = 0$, and $Q_{\perp} \psi^{(1)} = \psi^{(1)}$.
Equation \eqref{eqn:pert-sys} is of the form $(H(\bM)-\mu_S I)\phi^{(1)}=F^{(1)}(\alpha_1, \alpha_2, \kappa, \mu^{(1)}, \phi^{(1)} )$ 
 and can be expressed as an equivalent system for  $(\phi^{(1)},\mu^{(1)})$:
 \begin{align}
&(H(\bM) - \mu_S I) \phi^{(1)} = Q_{\perp} F^{(1)}(\alpha_1, \alpha_2, \kappa, \mu^{(1)}, \phi^{(1)} ), \label{Qperp}\\
& 0 = Q_{\parallel} F^{(1)}(\alpha_1, \alpha_2, \kappa, \mu^{(1)}, \phi^{(1)} ).\label{Qpar}
\end{align}
We proceed as in \cite{FW:12}. Introduce the resolvent $\mathscr{R}_{\bM}(\mu_S) = (H(\bM) - \mu_S I)^{-1}$, which  is a bounded linear map from $Q_{\perp}L^{2}(\mathbb{R}^{2}/\Lambda)$ to $Q_{\perp}H^{2}(\mathbb{R}^{2}/\Lambda)$.  Equivalently, 
$\mathscr{R}(\mu_S) = (H - \mu_S I)^{-1}$ is a bounded linear map from $\widetilde{Q}_{\perp}L^{2}_\bM$ to $\widetilde{Q}_{\perp}H^{2}_\bM$, where $\widetilde{Q}$ and $\widetilde{Q}_\perp$ are the orthogonal projections onto $\textrm{span}\{\Phi_1,\Phi_2\}$ and its orthogonal complement. For $\abs{\kappa}$ and $\abs{\mu^{(1)}}$ sufficiently small, we have:
\begin{equation}\label{psi1}
\phi^{(1)} = \alpha_1\ c^{(1)}[\kappa, \mu^{(1)}](\bx) + \alpha_2\ c^{(2)}[\kappa, \mu^{(1)}](\bx),
\end{equation}
where 
\begin{equation}\label{cj}
\begin{split}
c^{(j)}[\bkappa,\mu^{(1)}](\bx) =& \left( I + \mathscr{R}_{\bM}(\mu_S)  Q_{\perp} \left( -2i \kappa  \cdot (\nabla + i \bM) + \kappa \cdot \kappa - \mu^{(1)} \right) \right)^{-1} \\
&\hspace{15mm} \circ  \left( \mathscr{R}_{\bM}(\mu_S) Q_{\perp} \left( 2i \kappa \cdot (\nabla + i \bM)  \right) \phi_j \right).
\end{split}
\end{equation}
where $(\kappa, \mu^{(1)}) \rightarrow c^{(j)}[\kappa, \mu^{(1)}]$ is a smooth mapping from a neighborhood $(0,0) \in \mathbb{R}^{2} \times \mathbb{C}$ into $H^{2}(\mathbb{R}^{2}/\Lambda)$ satisfying the bound
$
  \| c^{(j)} \|_{H^{2}} \leq C(\ |\kappa|^2 +\abs{\mu^{(1)}}\abs{\kappa});  j = 1,2.
$

Substituting \eqref{psi1} into  \eqref{Qpar}, 
we obtain a homogeneous system 
\[
  \mathcal{M}(\mu^{(1)}, \kappa)  \begin{pmatrix} \alpha_1  \\ \alpha_2 \end{pmatrix} = 0\ .
\]
 We therefore have the following characterization of eigenvalues, $\mu=\mu_S+\mu^{(1)}$ 
 for $|\mu^{(1)}|$ small and $\bk=\bM+\kappa$, with $\kappa$ near zero. 
 
 \begin{prop}
 Let $\bk=\bM+\kappa$ with $|\kappa|<\kappa_{\rm max}$ sufficiently small.
 Then, for  with ,  $\mu=\mu_S+\mu^{(1)}$, with $|\mu^{(1)}|$ in a small neighborhood of $0$, is an $L^2_\bM-$eigenvalue if and only if  $\det \mathcal{M}(\mu^{(1)}, \kappa)=0 $.
 \end{prop}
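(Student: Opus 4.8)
\textbf{Proof plan for the characterization $\det\mathcal{M}(\mu^{(1)},\kappa)=0$.}
The plan is to verify that the Lyapunov--Schmidt reduction carried out above is rigorous and equivalent to the original eigenvalue problem, and then to read off the stated criterion. First I would fix $\kappa_{\rm max}$ and a radius $r$ for $\mu^{(1)}$ small enough that the Neumann series defining the operator in \eqref{cj} converges; this is where the bound $\|c^{(j)}\|_{H^2}\le C(|\kappa|^2+|\mu^{(1)}||\kappa|)$ comes from, and it guarantees that $c^{(j)}[\kappa,\mu^{(1)}]$ is a well-defined analytic (indeed smooth) function of $(\kappa,\mu^{(1)})$ into $\widetilde{Q}_\perp H^2_\bM$. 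The key structural point is that solving \eqref{Qperp} for $\phi^{(1)}$ in terms of $(\alpha_1,\alpha_2,\kappa,\mu^{(1)})$ is \emph{invertible}: every solution pair $(\phi,\mu)$ of \eqref{pert-evp} with $\mu$ near $\mu_S$ and $\bk$ near $\bM$ decomposes uniquely as in \eqref{evals-extended} with $\phi^{(1)}$ given by \eqref{psi1}, and conversely any $(\alpha_1,\alpha_2)$ satisfying the reduced equation \eqref{Qpar} produces a genuine eigenpair. Thus \eqref{pert-evp} has a nontrivial solution if and only if the $2\times2$ linear system $\mathcal{M}(\mu^{(1)},\kappa)(\alpha_1,\alpha_2)^T=0$ has a nontrivial solution, which happens precisely when $\det\mathcal{M}(\mu^{(1)},\kappa)=0$.

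The concrete steps, in order: (1) record that $H(\bM)-\mu_S I$ restricted to $\widetilde{Q}_\perp L^2_\bM$ is boundedly invertible (since $\mu_S$ has $L^2_\bM$-eigenspace exactly $\mathrm{span}\{\Phi_1,\Phi_2\}$ by (H1)--(H3), the rest of the spectrum of $H(\bM)$ is bounded away from $\mu_S$), so $\mathscr{R}_\bM(\mu_S)$ is well-defined; (2) apply $\widetilde{Q}_\perp$ to \eqref{eqn:pert-sys} to get \eqref{Qperp}, then solve for $\phi^{(1)}$ by inverting $I+\mathscr{R}_\bM(\mu_S)Q_\perp(-2i\kappa\cdot(\nabla+i\bM)+\kappa\cdot\kappa-\mu^{(1)})$ via Neumann series, valid for $|\kappa|,|\mu^{(1)}|$ small, yielding \eqref{psi1}--\eqref{cj} with the stated $H^2$ bound; (3) substitute \eqref{psi1} into \eqref{Qpar}, i.e.\ pair $F^{(1)}$ against $\Phi_1$ and $\Phi_2$, to obtain the homogeneous $2\times2$ system with coefficient matrix $\mathcal{M}(\mu^{(1)},\kappa)$, whose entries are $\langle\Phi_i,\,(2i\kappa\cdot(\nabla+i\bM)-\kappa\cdot\kappa+\mu^{(1)})(\phi_j+c^{(j)}[\kappa,\mu^{(1)}])\rangle$ and hence analytic in $(\kappa,\mu^{(1)})$ near $(0,\mathbf{0})$; (4) conclude the biconditional: $\mu_S+\mu^{(1)}$ is an $L^2_{\bM+\kappa}$-eigenvalue iff $(\alpha_1,\alpha_2)\ne(0,0)$ exists solving the system iff $\det\mathcal{M}(\mu^{(1)},\kappa)=0$.

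The main obstacle, such as it is, is bookkeeping rather than conceptual: one must be careful that the ``$\Leftarrow$'' direction is genuine, i.e.\ that a nonzero $(\alpha_1,\alpha_2)$ in the kernel of $\mathcal{M}$ really yields a nonzero eigenfunction $\phi=\phi^{(0)}+\phi^{(1)}$ --- this holds because $\phi^{(0)}=\alpha_1\phi_1+\alpha_2\phi_2\ne0$ (as $\{\phi_1,\phi_2\}$ is independent) and $\phi^{(1)}\in\widetilde{Q}_\perp L^2_\bM$ is orthogonal to $\phi^{(0)}$, so $\phi\ne0$ automatically. The only quantitative care needed is choosing $\kappa_{\rm max}$ and the $\mu^{(1)}$-neighborhood uniformly so that the Neumann series converges and no \emph{other} eigenvalue branch of $H(\bM+\kappa)$ enters the window; both follow from the spectral gap at $\mu_S$ in $L^2_\bM$ together with Lipschitz continuity of the band functions. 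Since all of this is the direct analogue of Theorem 4.1 of \cite{FW:12}, I would present steps (1)--(4) compactly and cite \cite{FW:12} for the routine estimates.
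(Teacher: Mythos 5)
Your proposal is correct and follows the same Lyapunov--Schmidt route the paper uses (adapted from Theorem 4.1 of \cite{FW:12}): invert on $\widetilde{Q}_\perp L^2_\bM$ via Neumann series, substitute into the range equation to obtain the $2\times2$ system, and observe that a nontrivial kernel vector of $\mathcal{M}$ yields a genuine nonzero eigenfunction since $\phi^{(0)}\perp\phi^{(1)}$. You also correctly read the statement's ``$L^2_\bM$-eigenvalue'' as ``$L^2_{\bM+\kappa}$-eigenvalue,'' which is what is meant.
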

 The matrix $\mathcal{M}(\mu^{(1)}, \kappa)$ is given by
\begin{equation}\label{Mdef}
\mathcal{M}(\mu^{(1)}, \kappa)  \equiv \mathcal{M}^{(0)}(\mu^{(1)}, \kappa)\ +\ \mathcal{M}^{(1)}(\mu^{(1)}, \kappa),
 \end{equation}
where
 \begin{equation}\label{M0def}
\mathcal{M}^{(0)}(\mu^{(1)}, \kappa)  \equiv 
 \begin{pmatrix} 
  \mu^{(1)} -\kappa \cdot \kappa + \left\langle \Phi_{1}, 2i \kappa \cdot \nabla \Phi_{1} \right\rangle & \left\langle \Phi_{1}, 2i\kappa \cdot  \nabla \Phi_{2} \right\rangle \\ 
 \left\langle \Phi_{2}, 2i\kappa  \cdot \nabla \Phi_{1} \right\rangle &  \mu^{(1)} -\kappa \cdot \kappa + \left\langle \Phi_{2}, 2i \kappa \cdot \nabla \Phi_{2}\right\rangle
 \end{pmatrix} ,
\end{equation}
and
 \begin{equation}\label{M1-def}
\mathcal{M}^{(1)}(\mu^{(1)}, \kappa)  \equiv 
 \begin{pmatrix} 
   \left\langle \Phi_{1}, 2i \kappa \cdot \nabla C^{(1)}(\kappa, \mu^{(1)}) \right\rangle &\left\langle \Phi_{1}, 2i \kappa \cdot \nabla C^{(2)}(\kappa, \mu^{(1)})\right\rangle \\ 
    \left\langle \Phi_{2}, 2i \kappa \cdot \nabla C^{(1)}(\kappa, \mu^{(1)})\right\rangle  & \left\langle \Phi_{2}, 2i \kappa \cdot \nabla C^{(2)}(\kappa, \mu^{(1)}) \right\rangle
    \end{pmatrix} .
\end{equation}
Using the relations
$ (\nabla+i\bM) \phi_{j} = e^{-i \bM \cdot \bx} \nabla_{\bx} e^{i\bM\cdot\bx}\phi_{j}=e^{-i \bM \cdot \bx} \nabla_{\bx} \Phi_{j}$, we have $C^{(j)}[\kappa, \mu^{(1)}](\bx) \equiv e^{i\bM \cdot \bx}c^{(j)}[\kappa, \mu^{(1)}](\bx)$, where $\langle \Phi_{i}, C^{(j)} \rangle = 0$ for $i, j = 1, 2$. 

\begin{remark}
Given real $\mu^{(1)}$ and $\kappa$, the matrices $\mathcal{M}, \ \mathcal{M}^{(0)},$ and $\mathcal{M}^{(1)}$ are Hermitian matrices.
\end{remark}


We conclude this section with an elementary lemma which we use, along with symmetry,  to simplify the matrix entries of 
 $\mathcal{M}(\mu^{(1)}, \kappa) $. We denote $\mathcal{M}_{j_1,j_2}^{(1)}(\mu^{(1)},\bkappa) =   \left\langle \Phi_{j_1}, 2i \kappa \cdot \nabla C^{(j_2)}(\kappa, \mu^{(1)}) \right\rangle .$

  \begin{lemma}\label{rot-lemma} Suppose $f\colon \mathbb{R}^{2} \rightarrow \mathbb{R}^{2}, f \in L^{2}(\Omega)$ satisfies 
  $f(R^*\bx)=f(\bx)$, where $R^{*}$ is the counter-clockwise rotation matrix by $\pi/2$.
 Then,
\begin{align}\label{rot-lemma-result}
\nabla_\bx \mathcal{R}[f](\bx) &= R\ \mathcal{R}[\nabla_\by f](\bx)\ \  \textrm{or}\ \ \D_{x_\alpha}\ \mathcal{R}[f](\bx)=R_{\alpha r}\ \mathcal{R}[\D_{y_r}f](\bx)
\end{align}
 \end{lemma}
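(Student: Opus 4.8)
The plan is to derive \eqref{rot-lemma-result} directly from the chain rule applied to the composition $\bx\mapsto R^{*}\bx\mapsto f(R^{*}\bx)$, using only that $R$ is a rotation, so that $R^{*}=R^{-1}=R^{T}$ by \eqref{Rotdef}. I note at the outset that the $\mathcal{R}$-invariance hypothesis on $f$ is not actually needed for the identity as stated --- it is just the situation in which the lemma is invoked later; the only structural input is differentiability of $f$ (applied componentwise when $f$ is vector-valued), which in every application holds because the functions in question are assembled from $V$ and from $L^{2}_{\bM}$-eigenfunctions of $H$, all smooth by elliptic regularity.

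First I would set $g(\bx)=R^{*}\bx$, so that $\mathcal{R}[f](\bx)=f(g(\bx))$ and $\D_{x_\alpha}g_{r}(\bx)=(R^{*})_{r\alpha}$, and then apply the chain rule:
\begin{equation*}
\D_{x_\alpha}\,\mathcal{R}[f](\bx)=\big(\D_{y_r}f\big)\!\big(R^{*}\bx\big)\,\D_{x_\alpha}g_{r}(\bx)=(R^{*})_{r\alpha}\,\mathcal{R}[\D_{y_r}f](\bx).
\end{equation*}
Next I would use orthogonality of $R$ to rewrite $(R^{*})_{r\alpha}=(R^{*\,T})_{\alpha r}=R_{\alpha r}$, which converts the last display into $\D_{x_\alpha}\mathcal{R}[f](\bx)=R_{\alpha r}\,\mathcal{R}[\D_{y_r}f](\bx)$, equivalently $\nabla_\bx\mathcal{R}[f](\bx)=R\,\mathcal{R}[\nabla_\by f](\bx)$ in vector form. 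If one wishes to allow $f\in L^{2}(\Omega)$ without classical differentiability, the identity is established instead in the distributional sense: pairing $\mathcal{R}[f]$ with a test function and changing variables by the measure-preserving map $\bx\mapsto R^{*}\bx$ moves the derivative onto $f$ and produces precisely the matrix factor $R^{*}$, hence \eqref{rot-lemma-result}.

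The main point, such as it is, is purely bookkeeping: the one place requiring care is to track the transpose/inverse relations among $R$ and $R^{*}$ so that it is $R$ (and not $R^{*}$) that multiplies $\mathcal{R}[\nabla_\by f]$ on the right-hand side, and to keep the summation convention on the repeated index $r$ consistent between the scalar-index and the matrix forms of the statement.
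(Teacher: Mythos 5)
Your proof is correct and takes essentially the same route as the paper's: both consist of the chain rule applied to $\bx\mapsto R^{*}\bx\mapsto f(R^{*}\bx)$, with the single bookkeeping step identifying $\partial y_r/\partial x_\alpha = (R^{*})_{r\alpha}=R_{\alpha r}$. Your observation that the $\mathcal{R}$-invariance hypothesis on $f$ is never used is accurate --- and indeed the paper later applies the lemma (in Proposition \ref{f-gradf} and Lemma \ref{claim:M1entries}) to the eigenfunctions $\Phi_{1},\Phi_{2}\in L^{2}_{\bM,\pm i}$, which satisfy $\mathcal{R}[\Phi_j]=\pm i\,\Phi_j$ rather than $\mathcal{R}[\Phi_j]=\Phi_j$, confirming that only differentiability of $f$ is required.
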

\begin{proof}
Let $\by = R^{*}\bx$ or $\by_{r} = R_{ir}\bx_{i}$. Therefore,  $\frac{\partial \by_{r}}{\partial \bx_{\alpha}} = R_{\alpha r}.$
 Fix $\alpha \in \{1, 2\}$. Then,
\begin{align}
\D_{x_\alpha} \mathcal{R}[f](\bx) = \D_{x_\alpha} f(R^{*}\bx) &= R_{\alpha r} \mathcal{R}[\D_{y_r} f](\bx)\ = \{R \mathcal{R} (\nabla f)\}_{_\alpha}.
\end{align}
\end{proof}

\subsubsection{Symmetry implies $\det\mathcal{M}(\mu^{(1)},\bkappa)$ has no  linear in $\bkappa$ terms for $|\bkappa|\ll1$}\label{no-lin}

\begin{prop}\label{no-linear}
\begin{equation} \mathcal{M}^{(0)}(\mu^{(1)},\bkappa) = \left(\ \mu^{(1)} - \kappa \cdot \kappa\ \right)\ \times\ {\rm I}_{_{2\times2}}
\label{M0-1}\end{equation}
and therefore
\begin{equation} 
\mathcal{M}(\mu^{(1)},\bkappa) = \left(\ \mu^{(1)} - \kappa \cdot \kappa\ \right)\ \times\ {\rm I}_{_{2\times2}}
+ \mathcal{M}^{(1)}(\mu^{(1)},\bkappa) \ .
\label{M-1}
\end{equation}
 \end{prop}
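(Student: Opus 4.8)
The goal is to show that the diagonal entries of $\mathcal{M}^{(0)}$ reduce to $\mu^{(1)}-\kappa\cdot\kappa$ and that the off-diagonal entries vanish identically. The plan is to exploit the $\mathcal{R}$-invariance (from Proposition~\ref{extra-symm}) together with the assignment of $\Phi_1,\Phi_2$ to the eigenspaces $L^2_{\bM,+i}$ and $L^2_{\bM,-i}$ (hypotheses (H1)--(H2)). First I would address the diagonal entries: I claim each inner product $\langle \Phi_j, 2i\kappa\cdot\nabla\Phi_j\rangle$ vanishes. The idea is that $\nabla$ transforms under $\mathcal{R}$ by the rotation matrix $R$ (Lemma~\ref{rot-lemma}), so $\langle \Phi_j, \partial_{x_\alpha}\Phi_j\rangle$ transforms into a linear combination of the two components $\langle \Phi_j, \partial_{x_\beta}\Phi_j\rangle$ with coefficients from $R$. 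Because $\mathcal{R}\Phi_j = \sigma_j\Phi_j$ with $|\sigma_j|=1$ and $\mathcal{R}$ is unitary, the vector $\bigl(\langle\Phi_j,\partial_{x_1}\Phi_j\rangle,\langle\Phi_j,\partial_{x_2}\Phi_j\rangle\bigr)$ is fixed by $R^{t}$ (equivalently by $R$); but $R$ has no nonzero real fixed vector (its only real eigenvalue-1 eigenspace is trivial since eigenvalues are $\pm i$), and since $\Phi_j$ has real inner-product structure forcing this vector to be real (or purely imaginary with a definite sign), it must be zero. I would write this out carefully: apply $\mathcal{R}$, use unitarity $\langle\Phi_j,\partial_{x_\alpha}\Phi_j\rangle = \langle\mathcal{R}\Phi_j,\mathcal{R}\partial_{x_\alpha}\Phi_j\rangle = R_{\alpha r}\langle\Phi_j,\partial_{x_r}\Phi_j\rangle$, conclude the $2$-vector is a $(+1)$-eigenvector of the matrix acting by $R$, hence zero.

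Next I would treat the off-diagonal entry $\langle\Phi_1, 2i\kappa\cdot\nabla\Phi_2\rangle$. Here the key is the $\mathcal{R}$-eigenvalue bookkeeping: $\mathcal{R}\Phi_1 = i\Phi_1$ and $\mathcal{R}\Phi_2 = -i\Phi_2$ by (H1)--(H2). Applying $\mathcal{R}$ and using unitarity plus Lemma~\ref{rot-lemma} gives $\langle\Phi_1,\partial_{x_\alpha}\Phi_2\rangle = \langle\mathcal{R}\Phi_1,\mathcal{R}\partial_{x_\alpha}\Phi_2\rangle = \overline{i}\cdot(-i)\,R_{\alpha r}\langle\Phi_1,\partial_{x_r}\Phi_2\rangle$ — wait, the scalar factor is $\bar\sigma_1\sigma_2 = \overline{i}\cdot(-i) = (-i)(-i) = -1$. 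So the $2$-vector $\bigl(\langle\Phi_1,\partial_{x_1}\Phi_2\rangle,\langle\Phi_1,\partial_{x_2}\Phi_2\rangle\bigr)$ satisfies $w = -R^{t}w$, i.e. it is a $(-1)$-eigenvector of $R$. But $R=\left(\begin{smallmatrix}0&1\\-1&0\end{smallmatrix}\right)$ has eigenvalues $\pm i$ and hence no real $(-1)$-eigenvector, and indeed no complex one either. Therefore this vector is $0$ as well, so both off-diagonal entries of $\mathcal{M}^{(0)}$ vanish. (The symmetric computation handles $\langle\Phi_2,\partial_{x_\alpha}\Phi_1\rangle$, or one simply uses Hermiticity of $\mathcal{M}^{(0)}$.)

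Combining, the only surviving terms in $\mathcal{M}^{(0)}(\mu^{(1)},\bkappa)$ are the scalar $\mu^{(1)} - \kappa\cdot\kappa$ on the diagonal, which is \eqref{M0-1}; then \eqref{M-1} follows immediately from the decomposition \eqref{Mdef}. I would also note that the claim that the relevant $2$-vectors are real (or purely imaginary, forcing the fixed-vector argument to still give zero) uses the $\mathcal{P}\circ\mathcal{C}$ relation $\Phi_2(\bx) = \overline{\Phi_1(-\bx)}$ together with parity of $V$; this is the one place a small auxiliary computation is needed, but it is routine.

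\textbf{Main obstacle.} The conceptual content is entirely in the linear-algebra observation that $R$ (rotation by $\pi/2$) has no nonzero fixed vector and no $(-1)$-eigenvector, so the anticipated main difficulty is bookkeeping rather than analysis: one must be careful about (i) whether the scalar factor coming out of $\mathcal{R}\Phi_j$ is $\sigma_j$ or $\bar\sigma_j$ when it sits on the left slot of the inner product, and (ii) ensuring the relevant $2$-vectors of inner products are genuinely constrained to be real (using parity/$\mathcal{C}$ symmetry) so that "fixed by $R$" truly forces vanishing. These are exactly the points where an error would silently produce a spurious linear term, so I would verify the scalar factors explicitly in each of the three cases (two diagonal, identical by symmetry; one off-diagonal).
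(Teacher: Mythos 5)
Your proof is correct and takes essentially the same approach as the paper, which records this as Proposition \ref{f-gradf}: in both, the $2$-vector $\langle\Phi_{j_1},\nabla\Phi_{j_2}\rangle$ is shown, via the unitarity of $\mathcal{R}$, Lemma \ref{rot-lemma}, and the bookkeeping $\mathcal{R}\Phi_j = i^{2j-1}\Phi_j$, to be an eigenvector of $R^*$ with real eigenvalue $i^{2(j_2-j_1)}\in\{+1,-1\}$, hence zero since $R^*$ has only the eigenvalues $\pm i$. One small remark: the auxiliary reality constraint you anticipate needing via $\mathcal{P}\circ\mathcal{C}$ in the diagonal case is superfluous — $R$ has no $\pm1$-eigenvector even over $\mathbb{C}$, as you yourself correctly note when treating the off-diagonal entry.
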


Recall from the hypotheses of Theorem \ref{quad-disp} that $\Phi_1\in L^2_{\bM,+i}$ and $\Phi_2\in L^2_{\bM,-i}$ and therefore
 $\mathcal{R}[\Phi_q](\bx)=i^{2q-1}\ \Phi_q(\bx)$, $q=1,2$.

\begin{prop}\label{f-gradf} For $j_{1}, j_{2} = 1, 2$,
\[
\langle \Phi_{j_{1}}, \nabla \Phi_{j_{2}} \rangle_{L^{2}(\Omega)} = {\bf 0}.\]
\end{prop}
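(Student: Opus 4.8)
The plan is to exploit the $\mathcal{R}$-equivariance of the gradient (Lemma~\ref{rot-lemma}) together with the fact that $\Phi_1$ and $\Phi_2$ are eigenfunctions of $\mathcal{R}$ with eigenvalues $+i$ and $-i$ respectively. Since $\mathcal{R}$ is unitary on $L^2_\bM$, for any $f,g$ in a suitable dense subspace we have $\langle f,g\rangle = \langle \mathcal{R}f,\mathcal{R}g\rangle$. Applying this with $f=\Phi_{j_1}$ and $g$ a component of $\nabla\Phi_{j_2}$, and using $\nabla\mathcal{R}[\Phi_{j_2}] = R\,\mathcal{R}[\nabla\Phi_{j_2}]$ from Lemma~\ref{rot-lemma} (here $\Phi_{j_2}$ is $\mathcal{R}$-invariant up to the scalar $\sigma_{j_2}\in\{i,-i\}$, so one first pulls the scalar out), I expect to land on a relation of the shape
\[
\langle \Phi_{j_1},\nabla\Phi_{j_2}\rangle \;=\; \overline{\sigma_{j_1}}\,\sigma_{j_2}\; R\,\langle \Phi_{j_1},\nabla\Phi_{j_2}\rangle .
\]

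Now the key arithmetic: for $j_1,j_2\in\{1,2\}$ the scalar $\overline{\sigma_{j_1}}\sigma_{j_2}$ equals either $1$ (when $j_1=j_2$, since $\overline{i}\,i = 1$ and $\overline{(-i)}(-i)=1$) or $-1$ (when $j_1\ne j_2$). So the vector $\bv := \langle\Phi_{j_1},\nabla\Phi_{j_2}\rangle\in\C^2$ satisfies either $\bv = R\bv$ or $\bv = -R\bv$. But $R$ is rotation by $\pi/2$, whose eigenvalues are $\pm i$; in particular neither $+1$ nor $-1$ is an eigenvalue of $R$, so $R\bv=\pm\bv$ forces $\bv=\mathbf{0}$. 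This gives the claim for all four index pairs simultaneously.

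The one point needing care — and the main (minor) obstacle — is bookkeeping with the $\mathcal{R}$-eigenvalue scalars and the placement of complex conjugates: $\mathcal{R}[\Phi_{j_2}] = \sigma_{j_2}\Phi_{j_2}$ must be inserted on the right slot of the inner product (no conjugate) while $\mathcal{R}[\Phi_{j_1}]=\sigma_{j_1}\Phi_{j_1}$ enters the left slot (with conjugate $\overline{\sigma_{j_1}}$), and one must confirm that Lemma~\ref{rot-lemma} applies componentwise to $\mathcal{R}[\Phi_{j_2}]$ even though $\Phi_{j_2}$ itself is only pseudo-periodic, not genuinely $\Lambda$-periodic — this is handled by writing $\Phi_{j_2}=e^{i\bM\cdot\bx}\phi_{j_2}$ and noting the lemma's conclusion is about the transformation of the gradient under the point-rotation $\bx\mapsto R^*\bx$, which is exactly what the unitarity computation uses. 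Once the scalar identity $\bv = \overline{\sigma_{j_1}}\sigma_{j_2} R\bv$ is established, the conclusion is immediate from the spectral theory of $R$, so I would write the argument compactly: state the unitarity identity, invoke Lemma~\ref{rot-lemma}, collect the scalar, and close with the observation that $\pm 1\notin\sigma(R)$. As a consequence, all the diagonal and off-diagonal $\langle\Phi_{j_1},2i\kappa\cdot\nabla\Phi_{j_2}\rangle$ entries of $\mathcal{M}^{(0)}$ vanish, which is precisely what Proposition~\ref{no-linear} records in \eqref{M0-1}.
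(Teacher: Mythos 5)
Your proposal is correct and follows essentially the same route as the paper's proof: use unitarity of $\mathcal{R}$, the $\nabla$-equivariance from Lemma~\ref{rot-lemma}, and the $\mathcal{R}$-eigenvalues $\pm i$ of $\Phi_1,\Phi_2$ to reduce to a fixed-point relation $\bv = \pm R^{T}\bv$, then close by noting $\pm 1\notin\sigma(R)$. The one minor slip is that the bookkeeping actually produces $R^{*}=R^{T}$ rather than $R$ in that relation, but this is immaterial since $R$ and $R^{T}$ share the spectrum $\{\pm i\}$.
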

\begin{proof} Choose $j_{1}, j_{2} \in \{1, 2\}$. Using that  $\mathcal{R}$ is unitary and Lemma \ref{rot-lemma}, we have
\begin{align*}
\langle \Phi_{j_{1}}, \nabla_{\by}\Phi_{j_{2}} \rangle_{_{L^2(\Omega_\by)}} &=
 \langle \mathcal{R}[\Phi_{j_{1}}], \mathcal{R}[\nabla_{\by}\Phi_{j_{2}}] \rangle_{_{L^2(\Omega_\bx)}}
= \langle \mathcal{R}[\Phi_{j_{1}}], R^{*}\nabla_{\bx} \mathcal{R}[\Phi_{j_{2}}]\rangle_{_{L^2(\Omega_\bx)}} 
= R^{*}\langle \mathcal{R}[\Phi_{j_{1}}], \nabla_{\bx} \mathcal{R}[\Phi_{j_{2}}] \rangle_{_{L^2(\Omega_\bx)}}\\
&= R^{*}\langle i^{2j_{1}-1}\Phi_{j_{1}}, \nabla_{\bx} i^{2j_{2}-1}\Phi_{j_{2}}\rangle_{_{L^2(\Omega_\bx)}} =  i^{2(j_{2}-j_{1})}R^{*}\langle \Phi_{j_{1}}, \nabla_{\bx} \Phi_{j_{2}} \rangle_{_{L^2(\Omega_\bx)}}.
\end{align*}
It follows that  either $i^{2(j_{2}-j_{1})}$ is an eigenvalue of $R$ or $\langle \Phi_{j_{1}}, \nabla_{\bx} \Phi_{j_{2}} \rangle={\bf 0}$.
But the eigenvalues of $R$ are $\pm i$, and since $j_2-j_1$ is an integer,  $i^{2(j_{2}-j_{1})}$ is real. We conclude that
$\langle \Phi_{j_{1}}, \nabla_{\bx} \Phi_{j_{2}} \rangle={\bf 0}$ for all $j_1, j_2=1,2$. The proof of Proposition \ref{f-gradf} is complete. 

\begin{remark}\label{interim1}
As observed in Section \ref{reduction}, $\|C^{(j)}(\bkappa,\mu^{(1)})\|_{H^1}\lesssim |\bkappa|+|\bkappa|^2+|\mu^{(1)}|$, we have from Proposition \ref{no-linear} that $\mathcal{M}(\mu^{(1)},\bkappa) = (\mu^{(1)}-|\bkappa|^2)\ I_{2\times2}+\mathcal{O}_{2\times2}(|\bkappa|^2+|\bkappa|^3+|\mu^{(1)}|\ |\bkappa|)$. Therefore, $\det\mathcal{M}(\mu^{(1)},\bkappa)=0$ has solutions $\mu^{(1)}_\pm=\mathcal{O}(|\kappa|^2)$. We next obtain the precise quadratic dependence on $\bkappa$ of  $\mathcal{M}^{(1)}(\mu^{(1)},\bkappa)$ and then give a more precise expansion of solutions to $\det\mathcal{M}(\mu^{(1)},\bkappa)=0$.
\end{remark}


\subsubsection{Quadratic in $\kappa$ terms of $\det\mathcal{M}(\mu^{(1)},\kappa)$ for $|\bkappa|\ll1$}
\label{quad-kap}
 
 We next expand $\mathcal{M}_{j_1,j_2}^{(1)}(\mu^{(1)},\bkappa)$ for $|\bkappa|$ and $|\mu^{(1)}|$ small. Recall first \eqref{cj}, 
the relations listed after \eqref{M1-def}.  Then,   $\widetilde{Q}_{\perp} C^{(j)}[\bkappa,\mu^{(1)}]=C^{(j)}[\bkappa,\mu^{(1)}]$, where 
\begin{align}
C^{(j)}[\bkappa,\mu^{(1)}] =& \left( I + \mathscr{R}(\mu_S)  \widetilde{Q}_{\perp} \left( -2i \kappa  \cdot \nabla  + \kappa \cdot \kappa - \mu^{(1)} \right) \right)^{-1} \circ  \left( \mathscr{R}(\mu_S) \widetilde{Q}_{\perp} \left( 2i \kappa \cdot \nabla \right) \Phi_j \right)\nn\\
&=\ \left(I + \mathcal{O}_{_{L^2\to L^2}}(\mid \kappa \mid + \mid \mu^{(1)} \mid )\right)\circ \left( \mathscr{R}(\mu_S) \widetilde{Q}_{\perp} \left( 2i \kappa \cdot \nabla \right) \Phi_j \right).
\label{Cj-exp}\end{align}
Furthermore, recalling that  $\widetilde{Q}_\perp\partial_{x_l}\Phi_m=\partial_{x_l}\Phi_m$ (Proposition \ref{f-gradf}), we have
\[ \mathcal{M}_{j_1,j_2}^{(1)}(\mu^{(1)},\bkappa)\ =\ 
\left\langle \Phi_{j_1}, 2i \kappa \cdot \nabla C^{(j_2)}(\kappa, \mu^{(1)}) \right\rangle 
=\left\langle \widetilde{Q}_{\perp} 2i \kappa \cdot \nabla \Phi_{j_1}, \widetilde{Q}_{\perp} C^{(j_2)}(\kappa, \mu^{(1)}) \right\rangle\ .
\]
Therefore, by \eqref{Cj-exp} we have for $j_{1}, j_{2} = 1, 2$ and $\kappa\in\R^2$:
\begin{align}
\mathcal{M}_{j_1,j_2}^{(1)}(\mu^{(1)},\bkappa)\ &=\ 
 4 \sum_{l,m=1}^{2} \langle  \widetilde{Q}_{\perp} \partial_{x_l} \Phi_{j_{1}}, \mathscr{R}(\mu_S)  \widetilde{Q}_{\perp}\partial_{x_m} \Phi_{j_{2}}\rangle\ \kappa_{l}\ \kappa_{m}+ \mathcal{O}\left(\ |\kappa|^{3} + |\mu^{(1)}|\  |\kappa|\ \right)\nn\\
&=4 \sum_{l,m=1}^{2} \langle \partial_{x_l} \Phi_{j_{1}}, \mathscr{R}(\mu_S) \partial_{x_m} \Phi_{j_{2}}\rangle\ \kappa_{l}\ \kappa_{m}+  \mathcal{O}\left(\ |\kappa|^{3} + |\mu^{(1)} |\  |\kappa|\ \right)\label{s-last}\\
&\equiv\ 4 \ \kappa^T\ A^{j_1,j_2}\ \kappa\ +\ \mathcal{O}\left(\ |\kappa|^{3} + |\mu^{(1)}|\  |\kappa|\ \right)\ , \label{Aj12_def}
 \end{align}
 where \eqref{Aj12_def} defines the matrix $A^{j_1,j_2}$ with entries:
 
\begin{equation}\label{A-def}
a_{l,m}^{j_{1},j_{2}} = \langle \partial_{x_l} \Phi_{j_{1}}, \mathscr{R}(\mu_S) \partial_{x_m} \Phi_{j_{2}}\rangle,
\end{equation}


We proceed now to use symmetry to deduce the structure of the matrices $A^{j_{1},j_{2}}$. 
\begin{lemma}\label{claim:M1entries} For fixed $j_{1}, j_{2} \in \{1,2\}$, we have the following:
\begin{equation}
R^{T} A^{j_{1},j_{2}}R\ =\ i^{2(j_{2}-j_{1})}A^{j_{1},j_{2}}  ,
\label{Aj1j2}
\end{equation}
where $R$ denotes the $\pi/2-$rotation matrix displayed in \eqref{Rdef}. Therefore, 
\begin{align*}
&j_1=j_2 \equiv j \ \quad \implies\quad R^{T}\ A^{j,j}\ R\ =\ A^{j,j}\\
&j_1\ne j_2\ \quad \implies\quad  R^{T}\ A^{j_1,j_2}\ R\ =\ -A^{j_1,j_2}.
\end{align*}
\end{lemma}
\begin{proof}
We will use Lemma \ref{rot-lemma}, 
$\mathcal{R}[\D_{x_l}f](\bx)=R_{nl}\mathcal{R}[\D_{y_n}f](\bx)$.  Since $\mathcal{R}$ is unitary and commutes with 
$\mathscr{R}(\mu_S)$ , 
\begin{align*}
\kappa^{T} A^{j_{1},j_{2}} \kappa &= \langle\partial_{y_l} \Phi_{j_{1}} , \mathscr{R}(\mu_S) \partial_{y_m} \Phi_{j_{2}}\rangle_{_{L^2(\Omega_\by)}} \kappa_{l}\kappa_{m}\nn\\   
&=  \langle \mathcal{R}[\partial_{y_l} \Phi_{j_{1}}],  \ \mathscr{R}(\mu_S)\ \mathcal{R}[\partial_{y_m} \Phi_{j_{2}}]\rangle_{_{L^2(\Omega_\bx)}}\  \kappa_{l} \kappa_{m} \\
&= \langle R_{nl}  \partial_{x_n}  \mathcal{R}[\Phi_{j_{1}}] ,  \mathscr{R}(\mu_S)\ R_{qm}\mathcal{R}[  \partial_{x_q} \Phi_{j_{2}}]\rangle_{_{L^2(\Omega_\bx)}}\  \kappa_{l} \kappa_{m}   \\
&=    \langle  \partial_{x_n}i^{2j_{1}-1} \Phi_{j_{1}}, \mathscr{R}(\mu_S) \partial_{x_q} i^{2j_{2}-1} \Phi_{j_{2}}\rangle_{_{L^2(\Omega_\bx)}}\ R_{nl} \kappa_{l} R_{qm} \kappa_{m} \\
&=  i^{2(j_{2}-j_{1})}   \langle  \partial_{x_n}\Phi_{j_{1}}  \mathscr{R}(\mu_S) \partial_{x_q} \Phi_{j_{2}}\rangle_{_{L^2(\Omega_\bx)}}
(R\kappa)_{n}  (R\kappa)_{q}\\
&= i^{2(j_{2}-j_{1})} (R\kappa)^{T}  A^{j_{1},j_{2}}  (R\kappa ).
\end{align*}
Since $\kappa$ is arbitrary,  $ A^{j_{1},j_{2}}  =  i^{2(j_{2}-j_{1})} R^T\ A^{j_{1},j_{2}}\ R$.
\end{proof}


\begin{lemma}\label{Astructure} Assume $R$ is the $\pi/2-$rotation matrix, \eqref{Rdef}, and $A=(a_{ij})$. Then, 
\begin{align}
(j_1=j_2)\quad &\  R^T\ A\ R\ =\ A\qquad  \implies\qquad A=\begin{pmatrix}  a_{11} & a_{12}\\ -a_{12} & a_{11}\end{pmatrix}\nn\\
(j_1\ne j_2)\quad &\  R^T\ A\ R\ =\ -A\qquad  \implies\qquad A=\begin{pmatrix}  a_{11} & a_{12}\\ a_{12} & -a_{11}\end{pmatrix}. \nn
\end{align}
\end{lemma}


\begin{claim}\label{claim:pwiseH} Let $A^\dagger$ conjugate-transpose of $A$. Then, 
\[
  (A^{1,1})^{\dagger} = A^{1,1},\; (A^{2,2})^{\dagger} = A^{2,2},\text{ and } (A^{2,1})^{\dagger} = A^{1,2}.
\]
 \end{claim}
\begin{proof} Pick ${j_{1}, j_{2}} \in \{1,2\}$ and $l, m \in \{1, 2\}$. 
\begin{align*} 
(A^{j_{1}, j_{2}})^{\dagger}_{l,m} = (\overline{a^{j_{1},j_{2}}_{m,l}} ) &= \left[\ \overline{\langle \partial_{x_m} \Phi_{j_{1}}, \mathscr{R}(\mu_S)  \partial_{x_l}  \Phi_{j_{2}}\rangle}\ \right]\nn\\ 
&=\ \left[\ \langle \partial_{x_l}  \Phi_{j_{2}},  \mathscr{R}(\mu_S) \partial_{x_m} \Phi_{j_{1}}\rangle\ \right] 
= (a^{j_{2},j_{1}}_{l,m}) = (A^{j_{2}, j_{1}})_{l,m},
\end{align*} 
and therefore $(A^{j_{1}, j_{2}})^{\dagger} = A^{j_{2},j_{1}}$.
\end{proof}

\begin{claim} $A^{1,1} =(A^{2,2})^{T}$. In particular, $a^{1,1}_{11}=a^{2,2}_{11}$.
\end{claim}
\begin{proof}
Recall $\Phi_{1}(\bx) = (\mathcal{P}\circ\mathcal{C})[\Phi_1](\bx)=\conjugatet{\Phi_{2}(-\bx)}$. For  $l, m \in \{1, 2\}$,
using that $\D_{x_l}(\mathcal{P}\circ\mathcal{C})=-(\mathcal{P}\circ\mathcal{C})\D_{y_l}$, we have
\begin{align*}
a^{1,1}_{l,m} 
&= \left\langle \partial_{x_l}  \Phi_{1},  \mathscr{R}(\mu_S)\  \partial_{x_m} \Phi_{1} \right\rangle \\
&= \left\langle \partial_{x_l}  (\mathcal{P}\circ\mathcal{C})[\Phi_2],  \mathscr{R}(\mu_S)\  \partial_{x_m} (\mathcal{P}\circ\mathcal{C})[\Phi_2] \right\rangle\\
&= \left\langle  (\mathcal{P}\circ\mathcal{C})[\partial_{y_l} \Phi_2],  \mathscr{R}(\mu_S)\   (\mathcal{P}\circ\mathcal{C})[\partial_{y_m}\Phi_2] \right\rangle\\
&= \left\langle  \mathcal{C}[\partial_{y_l} \Phi_2],  \mathscr{R}(\mu_S)\  \mathcal{C}[\partial_{y_m}\Phi_2] \right\rangle
\quad (\ =\overline{a^{2,2}_{lm}}\ )\\
&= \left\langle \partial_{y_m} \Phi_2,  \mathscr{R}(\mu_S)\  \partial_{y_l}\Phi_2 \right\rangle\ =\ a^{2,2}_{m,l} .
\end{align*} 
Therefore, $A^{1,1} = \conjugatet{A^{2,2}} = (A^{2,2})^{T}$.
\end{proof}
\medskip

By \eqref{Aj12_def} we have
\begin{equation}
\mathcal{M}^{(1)}(\mu^{(1)},\kappa)\ =\
4\ \begin{pmatrix}
     \kappa^{T}A^{1,1} \kappa  & \kappa^{T}A^{2,1} \kappa \\
     \kappa^{T} A^{1,2} \kappa & \kappa^{T} A^{2,2} \kappa
\end{pmatrix}\ +\ \mathcal{O}_{2\times2}(\ |\kappa|^{3} + |\mu^{(1)}| |\kappa|\ )
\label{M1-expand}
\end{equation}

\noindent Simplifying the leading term in \eqref{M1-expand} we observe, by the above claims, that $a_{1,1}^{1,1} \in \R$ and
 \[A^{1,1} =\begin{pmatrix}
    a_{1,1}^{1,1} & a_{1,2}^{1,1} \\
    -a_{1,2}^{1,1}  & a_{1,1}^{1,1}
\end{pmatrix}; \;\;  A^{1,2} = \overline{A^{2,1}} = \begin{pmatrix}
   a_{1,1}^{1,2} & a_{1,2}^{1,2}  \\
    a_{1,2}^{1,2}  & -a_{1,1}^{1,2}
\end{pmatrix}; \;\;
 A^{2,2} = \begin{pmatrix}
    a_{1,1}^{1,1}& -a_{1,2}^{1,1} \\
    a_{1,2}^{1,1} & a_{1,1}^{1,1}
\end{pmatrix}.\]

Hence, 
\begin{align}\label{M1-matrix}
 \M^{(1)}(\mu^{(1)},\kappa) &= \ \begin{pmatrix}
    \alpha\ (\kappa_1^2+\kappa_2^2)  & \gamma\ (\kappa_{1}^{2} - \kappa_{2}^{2}) + 2\beta\ \kappa_{1}\kappa_{2} \\
    & \\
    \overline{\gamma}\ (\kappa_{1}^{2} - \kappa_{2}^{2}) + 2\overline{\beta}\ \kappa_{1}\kappa_{2}
    & \alpha\ (\kappa_1^2+\kappa_2^2)
\end{pmatrix}
 +\ \mathcal{O}_{_{2\times2}}(\ |\kappa|^{3} + |\mu^{(1)}| |\kappa|\ ), 
\end{align}
where, by \eqref{A-def}, 
\begin{align}
\alpha\ &=\  4a_{1,1}^{1,1}\ =\ 
4\ \langle \partial_{x_1} \Phi_{1}, \mathscr{R}(\mu_S) \partial_{x_1} \Phi_{1}\rangle\ ,\label{albega}\\
 \beta\ &=\ 4a_{1,2}^{1,2}\ =\ 4\ \langle \partial_{x_1} \Phi_{1}, \mathscr{R}(\mu_S) \partial_{x_2} \Phi_{2}\rangle\ ,\nn\\
\gamma\ &=\ 4a_{1,1}^{1,2}\ =\ 4\ \langle \partial_{x_1} \Phi_{1}, \mathscr{R}(\mu_S) \partial_{x_1} \Phi_{2}\rangle .
\nn
\end{align}
\begin{remark} Observe by Claim \eqref{claim:pwiseH}, the terms $\alpha = 4a^{1,1}_{1,1}$ are real.
\end{remark}


\subsection{Symmetries of dispersion maps $\bk\mapsto \mu_\pm(\bk)$ for $\bk$ near a vertex of $\brill$}\label{s-disp}

Before deriving a detailed picture of the local character of dispersion surfaces near vertices of $\brill$, we prove a general result on the structure of dispersion surfaces in a neighborhood of a vertex quasi-momentum of $\brill$ for which the eigenvalue problem has a degenerate eigenvalue. 

Assume that the potential $V$ is admissible in the sense of Definition \ref{def:sq-pot}, where without loss of generality we take 
the centering $\bx_c=0$. 
Suppose that in addition that $V$ is reflection invariant, {\it i.e.} $ V(\bx)=V(x_1,x_2)=V(x_2,x_1)=V(\rho\bx)$, where 
\begin{equation}
\rho\bx= \begin{pmatrix} 0&1\\1&0\end{pmatrix}\ \begin{pmatrix}x_1\\ x_2\end{pmatrix}
= \begin{pmatrix}x_2\\ x_1\end{pmatrix}\ .
\label{rhodef}\end{equation}
To prove  Corollary \ref{mu-rho}  we need to show that $\gamma = 4\ \langle \partial_{x_1} \Phi_{1}, \mathscr{R}(\mu_S) \partial_{x_1} \Phi_{2}\rangle$ (see \eqref{albega}) vanishes.  We first deduce that either $\beta = 4\ \langle \partial_{x_1} \Phi_{1}, \mathscr{R}(\mu_S) \partial_{x_2} \Phi_{2}\rangle $ or $\gamma$ vanishes based on the following symmetry argument; then, we prove that, in fact, $\gamma = 0$ using the properties of $\rho$. We write
\begin{equation}
T_\rho[f] = f(\rho^{\ast} \bx) = f(\rho \bx) =f(x_2, x_1).
\label{Trho}
\end{equation}

\begin{prop}\label{near-M-sym}
 Let $V$ be an admissible potential in the sense of Definition \ref{def:sq-pot}; we take $\bx_c=0$ without any loss of generality. In particular, $V(R^*\bx)=V(\bx)$ and $V(-\bx)=V(\bx)$.  Assume the hypotheses of Theorem \ref{quad-disp} which imply that 
$H_V$ has a degenerate (multiplicity two) $L^2_\bM-$eigenvalue, $\mu_S\in\R$.

Then, for all  $\bk=\bM+\bkappa$ with $0<|\bkappa|<\kappa_0$ sufficiently small there exist two eigenvalues
given by $\mu_\pm(\bM+\bkappa)=\mu_S\pm \mu^{(1)}(\kappa)$. 

\begin{enumerate}
\item For all $0<|\bkappa|<\kappa_0$, we have
\begin{equation}
 \{\mu_-(\bM+\bkappa),\mu_+(\bM+\bkappa)\}\ =\ \{\mu_-(\bM+R\bkappa),\mu_+(\bM+R\bkappa)\}\ .
 \label{nearM-R}
 \end{equation}
 \item Suppose in addition that $V(\rho^*\bx)=V(\rho \bx)$\ (recall $\rho=\rho^*$). Then, for $0<|\bkappa|<\kappa_0$, 
   we have
  \begin{equation}
 \{\mu_-(\bM+\bkappa),\mu_+(\bM+\bkappa)\}\ =\ \{\mu_-(\bM+\rho\bkappa),\mu_+(\bM+\rho\bkappa)\}\ .
 \label{nearM-rho}
 \end{equation}
\end{enumerate}
\end{prop}

\nit{\it Proof of Proposition \ref{near-M-sym}:} Let $(\mu_\kappa,\psi)$ denote an $L^2_{\bM+\kappa}$ eigenpair of $-\Delta+V$, where $\mu_\kappa$ is assumed to be near $\mu_S$. Then,  $\mu_\kappa=\mu_-(\bM+\kappa)$ or 
$\mu_\kappa=\mu_+(\bM+\kappa)$. Consider now $\widetilde{\psi}(\bx)\equiv \mathcal{R}[\psi](\bx)=\psi(R^*\bx)$. Note that 
 $(-\Delta+V)\widetilde\psi=\mu_\kappa\widetilde\psi$ since $\mathcal{R}$ commutes with $-\Delta+V$.  Moreover, for all $\bv\in\Z^2$, we have
  $\widetilde{\psi}(\bx+\bv)=\psi(R^*(\bx+\bv))=\psi(R^*\bx+R^*\bv)=e^{i(\bM+\kappa)\cdot R^*\bv}\widetilde\psi(\bx)=e^{i(R\bM+R\kappa)\cdot\bv}\widetilde\psi(\bx)=e^{i(\bM+R\kappa)\cdot\bv}\widetilde\psi(\bx)$, where we have used that $R\bM\in \bM+\Lambda$. Therefore, $\mu_\kappa$ is 
  a $L^2_{\bM+R\kappa}$ eigenvalue in a neighborhood of $\mu_S$. Hence, 
  $ \{\mu_-(\bM+\bkappa),\mu_+(\bM+\bkappa)\}\subset  \{\mu_-(\bM+R\bkappa),\mu_+(\bM+R\bkappa)\}$. To prove the reverse inclusion, assume $(\hat\mu_{\kappa},\phi)$ is an $L^2_{\bM+R\kappa}$ eigenpair with $\hat\mu_{\kappa}$ near $\mu_S$. 
  Now let $\widetilde{\phi}=\mathcal{R}^3[\phi](\bx)$ and note that $\widetilde{\phi}\in L^2_{\bM+\kappa} $.
Hence $(\hat\mu_{\kappa},\widetilde{\phi})$ is an $L^2_{\bM+\kappa}-$eigenpair of $-\Delta+V$.
   Therefore, 
  $\{\mu_-(\bM+R\bkappa),\mu_+(\bM+R\bkappa)\}
  \subset\{\mu_-(\bM+\bkappa),\mu_+(\bM+\bkappa)\}$ 
  and the proof of Part 1 is complete. 
   The proof of Part 2 is analogous. This completes the proof of Proposition \ref{near-M-sym}.
\subsection{Local behavior of degenerate dispersion surfaces near the $\bM-$point}\label{local-bhv}
 
 We need to study the solutions of $\det(\M(\mu^{(1)}, \bkappa) )=0$ for $\kappa$ in a neighborhood of zero.  Our strategy is based on a general approach used in \cite{FLW-CPAM:17} (Section 13). We extend $\M(\mu^{(1)}, \bkappa)$ to be defined as a matrix-valued analytic function
  of $(\mu^{(1)},\kappa)$ in a neighborhood of the origin in $\C\times\C^2$ and which agrees
   with $\M(\mu^{(1)}, \bkappa)$ as defined above for $(\mu^{(1)},\kappa)\in\R\times\R^2$:

\begin{align}
\M(\mu^{(1)}, \bkappa) = 
\begin{pmatrix}
   m_{11}(\kappa,\mu^{(1)}) & m_{12}(\kappa,\mu^{(1)}) \\
    &\\
    \overline{\ m_{12}(\overline{\kappa},\overline{\mu^{(1)}})\ } &  m_{22}(\kappa,\mu^{(1)})\end{pmatrix} 
. \label{M-ext} \end{align}

Here, 
\begin{align*}
m_{11}(\kappa,\mu^{(1)})\ &=\ (\alpha-1) \mid \kappa \mid^{2}  + \mu^{(1)}\ +\ \mathcal{O}_{_{2\times2}}(\ |\kappa|^{3} + |\mu^{(1)}| |\kappa|\ )\\
m_{12}(\kappa,\mu^{(1)})\ &=\ \gamma(\kappa_{1}^{2} - \kappa_{2}^{2})+ 2\beta \kappa_{1}\kappa_{2}
+\ \mathcal{O}_{_{2\times2}}(\ |\kappa|^{3} + |\mu^{(1)}| |\kappa|\ )\\
m_{21}(\kappa,\mu^{(1)})\ &=\ \overline{\ m_{12}(\overline{\kappa},\overline{\mu^{(1)}})\ }\ =\ 
 \overline{\gamma}(\kappa_{1}^{2} - \kappa_{2}^{2})+ 2\overline{\beta} \kappa_{1}\kappa_{2}\ 
+\ \mathcal{O}_{_{2\times2}}(\ |\kappa|^{3} + |\mu^{(1)}| |\kappa|\ )\\
m_{22}(\kappa,\mu^{(1)})\ &=\ m_{11}(\kappa,\mu^{(1)})
\end{align*}
Note in particular that $\M(\mu^{(1)}, \kappa)$ given by \eqref{M-ext} is Hermitian for real $\mu^{(1)}$ and $\kappa$.

\medskip

We first make the simple change of variables 
\begin{equation}
\nu\ \equiv\ \left(\alpha-1\right)\left(\kappa_1^2+\kappa_2^2\right)\ +\ \mu^{(1)}. \label{nu-def}\end{equation}
Define
\begin{equation}
\tM(\nu,\kappa)=\tM(\nu,\kappa_1,\kappa_2)\equiv \M(\ \nu-\left(\alpha-1\right)\left(\kappa_1^2+\kappa_2^2\right),\kappa_1,\kappa_2\ ),
\label{tM-def}\end{equation}
and study the equivalent equation of $\det(\M(\mu,\kappa))=0$ for $\nu$:
\begin{equation}
\det(\tM(\nu,\kappa))=0.\label{det-tM}
\end{equation}
The entries of $\tM(\nu, \bkappa)$ are analytic functions of $(\nu,\kappa)$ in a neighborhood of the origin
 in $\C_\nu\times\C_\kappa^2$.
The matrix $\tM(\mu,\kappa)$ has the expansion
\begin{align}
\tM(\nu, \bkappa) = \tM_{\rm{app}}(\nu, \bkappa)
+\ \mathcal{O}_{_{2\times2}}(\ |\kappa|^{3} + |\nu|\ |\kappa|\ ) \ ,
\label{tM-expand}\end{align} 
where
\begin{equation} 
\tM_{\rm{app}}(\nu,\bkappa) =
\begin{pmatrix}
    \nu & q(\kappa_1,\kappa_2)  \\
    &\\
    q_\natural(\kappa_1,\kappa_2) &  \nu 
     \end{pmatrix} 
\label{tM-approx}
\end{equation}
and
\begin{align}
q(\kappa_1,\kappa_2)\ &\equiv\ \gamma(\kappa_{1}^{2} - \kappa_{2}^{2})+ 2\beta \kappa_{1}\kappa_{2}
\label{q-def}\\
q_\natural(\kappa_1,\kappa_2)\ &\equiv\ \overline{q(\overline{\kappa_1},\overline{\kappa_2})}=
\overline{\gamma}(\kappa_{1}^{2} - \kappa_{2}^{2})+ 2\overline{\beta} \kappa_{1}\kappa_{2}.
\label{qnat-def}\end{align}
Calculating the determinant of $\tM(\nu,\kappa)$ we obtain:
\begin{align}
D(\nu,\kappa)\ \equiv\ \det\left(\tM(\nu,\kappa)\right)\ =\ \nu^2\ -\ q(\kappa)\ q_\natural(\kappa)\ +\ g(\nu,\kappa)\ ,
\label{D-def}\end{align}
where $g(\nu,\kappa)$ and hence $D(\nu,\kappa)$ are  analytic in a neighborhood of the origin in $\C\times\C^2$. Note also that $g(\nu,\kappa)$ and $\partial_\nu g(\nu,\kappa)$ satisfy the following bounds for $|\kappa|$ and $|\nu|$ small:
\begin{align}
\label{g-bound}
|g(\nu,\kappa)|\ &\le\ C_g\ \left(\ |\kappa|^5\ +\ |\nu|\ |\kappa|^3\ +\ |\nu|^2\ |\kappa|\ \right)\\
|\partial_\nu g(\nu,\kappa)|\ &\le\ C^\prime_g\ \left(\ |\kappa|^3\ +\ |\nu|\ |\kappa|\ \right)
\label{gp-bound}
\end{align}
for some positive constants $C_g$ and $C^\prime_g$.

The problem of finding eigenvalues $\mu=\mu_S+\nu$ near $\mu_S$ for $\bk=\bM+\kappa$ near $\bM$  has been reduced to the study of the solutions to the equation 
\[D(\nu,\kappa)\ =\ 0\]
 for $\kappa$ near $(0,0)\in\R^2_\kappa$. We shall study the roots of $D(\nu,\kappa)$ using Rouch\'e's Theorem. 
 
 Consider $\kappa\in\R^2$ such that $|\kappa|<\kappa_{\rm max}$. We shall eventually take 
 $\kappa_{\rm max}$ to be small. For such $\kappa$ we have
 \[ \Big|\nu^2-q(\kappa)\ q_\natural(\kappa)\Big|\ \ge\ \nu^2-C_{\gamma,\beta}|\kappa_{\rm max}|^2\ ,\]
 where $C_{\gamma,\beta}$ is a positive constant depending only on $\gamma$ and $\beta$.
 Note also that for $\nu$ constrained to the circle $|\nu|=2C_{\gamma,\beta}\kappa_{\rm max}^2$ we have the lower bound:
 \begin{equation}
  \Big|\nu^2-q(\kappa)\ q_\natural(\kappa)\Big|\ \ge\ C_{\gamma,\beta}|\kappa_{\rm max}|^2\ .\label{lowerb}\end{equation} 
 Thus, if $|\nu|=2C_{\gamma,\beta}\kappa_{\rm max}^2$, then 
 \[ |g(\nu,\kappa)|\le C_{\gamma,\beta,g}\kappa_{\rm max}^5\ .\]
Note also that $\ C_{\gamma,\beta}\ \kappa_{\rm max}^2>C_{\gamma,\beta,g}\ \kappa_{\rm max}^5$
  provided $\kappa^3_{\rm max}<C_{\gamma,\beta}/C_{\gamma,\beta,g}$.
Therefore,  if we choose $\kappa_{\rm max}$ to be any  constant satisfying  
\begin{equation}
0<\kappa_{\rm max}<\frac12\left(C_{\gamma,\beta}/C_{\gamma,\beta,g}\right)^{\frac13}\ ,
\label{km-choose}
\end{equation}
 then for $|\kappa|\le\kappa_{\rm max}$ we have:
   \begin{equation}
    |\nu|=2C_{\gamma,\beta}\kappa_{\rm max}^2\quad \implies \quad |g(\nu,\kappa)|\ <\ \Big|\ \nu^2\ -\ q(\kappa)\ q_\natural(\kappa)\ \Big|\ \label{compare}
    \end{equation}
   for all $|\nu|=2C_{\gamma,\beta}\kappa_{\rm max}^2$. Therefore by Rouch\'e's Theorem, the functions 
    \[ \nu^2-q(\kappa)\ q_\natural(\kappa)\quad {\rm and}\quad D(\nu,\kappa)=\nu^2-q(\kappa)\ q_\natural(\kappa)+g(\nu,\kappa)\]
     have the same number of zeros in the disc: $|\nu|<2C_{\gamma,\beta} \kappa_{\rm max}^2$.
     We denote these zeros: $\nu_+(\kappa)$ and $\nu_-(\kappa)$.   For real $\kappa$ these zeros are real by self-adjointness and we have:
          \[ \nu_+(\kappa),\ \nu_-(\kappa)\ \in\ [-C_{\gamma,\beta} \kappa_{\rm max}^2,C_{\gamma,\beta} \kappa_{\rm max}^2]\ ,\ \ |\kappa|<\kappa_{\rm max},\ \kappa\in\R^2.\]
     
Next, observe by a residue calculation that  for $l=1,2$:
\begin{equation}\label{residue}
\left( \nu_+(\kappa) \right)^l\ +\ \left( \nu_-(\kappa) \right)^l\ =\ 
\frac{1}{2\pi i}\ \int_{|\nu|=2C_{\gamma,\beta} \kappa_{\rm max}^2}\ \frac{\nu^l\ \partial_\nu D(\nu,\kappa)}{D(\nu,\kappa)}\ d\nu .
\end{equation} 
Since $\partial_\nu D(\nu,\kappa)\ =\ 2\nu+\partial_\nu g(\nu,\kappa)$, we have
\begin{align}\label{residue2}
&\left( \nu_+(\kappa) \right)^l\ +\ \left( \nu_-(\kappa) \right)^l\ 
\nn\\
&=\frac{1}{2\pi i}\ \int_{|\nu|=2C_{\gamma,\beta} \kappa_{\rm max}^2}\ 
\frac{2\nu^{l+1}}{\nu^2\ -\ q(\kappa)\ q_\natural(\kappa)\ +\ g(\nu,\kappa)}\ d\nu \\
& \qquad +\ \frac{1}{2\pi i}\ \int_{|\nu|=2C_{\gamma,\beta} \kappa_{\rm max}^2}\ 
\frac{\nu^l\partial_\nu g(\nu,\kappa)\ }{\nu^2\ -\ q(\kappa)\ q_\natural(\kappa)\ +\ g(\nu,\kappa)}\ d\nu\nn\\
&=\frac{1}{2\pi i}\ \int_{|\nu|=2C_{\gamma,\beta} \kappa_{\rm max}^2}\ 
\frac{2\nu^{l+1}}{\nu^2\ -\ q(\kappa)\ q_\natural(\kappa)}\ d\nu\nn\\
&\qquad -\ \frac{1}{2\pi i}\ \int_{|\nu|=2C_{\gamma,\beta} \kappa_{\rm max}^2}\ 
\frac{2\nu^{l+1}\ g(\nu,\kappa)}{\left(\nu^2\ -\ q(\kappa)\ q_\natural(\kappa)+g(\nu,\kappa)\right)\cdot
\left(\nu^2\ -\ q(\kappa)\ q_\natural(\kappa)\right) }\ d\nu\nn\\
&\qquad +\ \ \frac{1}{2\pi i}\ \int_{|\nu|=2C_{\gamma,\beta} \kappa_{\rm max}^2}\ 
\frac{\nu^l\partial_\nu g(\nu,\kappa)\ }{\nu^2\ -\ q(\kappa)\ q_\natural(\kappa)\ +\ g(\nu,\kappa)}\ d\nu\nn\\
\end{align} 
We can use the identity, for $r > a$, 
\begin{equation}
\frac{1}{2\pi i}\ \int_{|z|=r}\ \frac{z^{l+1}}{z^2-a^2}\ dz\ =\ 
\begin{cases} 
0 & \textrm{if}\ l=1\\
a^2 & \textrm{if}\ l=2,
\end{cases}
\end{equation}
to evaluate the first integral and bound the remaining two integrals using \eqref{g-bound} and \eqref{gp-bound}. This gives
\begin{equation}\label{residue1}
\left( \nu_+(\kappa) \right)^l\ +\ \left( \nu_-(\kappa) \right)^l\ =\ 
\begin{cases}
\mathcal{O}(\kappa_{\rm max}^4\ |\kappa|)\ \ &\ l=1\\
2q(\kappa)\ q_\natural(\kappa)\ +\ \mathcal{O}(\kappa_{\rm max}^6\ |\kappa|)\ \ &\ l=2
\end{cases}
\end{equation}
for $\kappa_{\rm max}$ sufficiently small. 

Now note
\begin{equation}
\nu_+(\kappa)\cdot\nu_-(\kappa)\ =\ 
\frac12  \left(\ \nu_+(\kappa)  +\  \nu_-(\kappa)\ \right)^2
\ -\ \frac12\left[\ \left( \nu_+(\kappa) \right)^2  +\  \left( \nu_-(\kappa) \right)^2\ \right]. 
\end{equation}
Therefore, $\kappa\mapsto \nu_+(\kappa)\cdot\nu_-(\kappa)$  is analytic in a $\C^2$ neighborhood of $\kappa=0$. Moreover, 
we have 
\begin{align}
\nu_+(\kappa)\cdot\nu_-(\kappa) &=  -\frac12\left(\ 2q(\kappa)\ q_\natural(\kappa)\ +\ \mathcal{O}(\kappa_{\rm max}^6\ |\kappa|) \right)\ +\ \mathcal{O}(\kappa_{\rm max}^8|\kappa|^2)\nn\\
 &=\ -q(\kappa)\ q_\natural(\kappa)+\mathcal{O}(\kappa_{\rm max}^6\ |\kappa|) .
 \label{nu-prod}
\end{align}

Consider now the equation $\left(\nu-\nu_+(\kappa)\right)\ \left(\nu-\nu_-(\kappa)\right)=0$ satisfied
by $\nu=\nu_\pm(\kappa)$. 

\begin{lemma}\label{Dequiv}
 The roots of $D(\nu,\kappa)=0$ in the disc $|\kappa|<2C_{\gamma,\beta} \kappa_{\rm max}^2$ coincide with the roots of the quadratic equation: $\nu^2-t(\kappa)\nu+d(\kappa)=0$, where
 $t(\kappa)$ and $d(\kappa)$ are analytic and 
 \begin{align}
 &t(\kappa)= \nu_+(\kappa)\ +\  \nu_-(\kappa)\  = \mathcal{O}(\kappa_{\rm max}^4\ |\kappa|)\nn\\
 &d(\kappa)= \nu_+(\kappa)\cdot\nu_-(\kappa)\ =\ -q(\kappa)\ q_\natural(\kappa)+ \mathcal{O}(\kappa_{\rm max}^6\ |\kappa|).\nn
 \end{align}
  \end{lemma}
\medskip

Solving for $\nu$ we have, for $|\kappa|<2C_{\gamma,\beta} \kappa_{\rm max}^2$,  two roots: 
\begin{align}
\nu_\pm(\kappa)\ &=\ \frac{1}{2}t(\kappa)\pm\sqrt{-d(\kappa)+\frac14 t^2(\kappa)}\\
&= c_0(\kappa)\pm \sqrt{q(\kappa)\ q_\natural(\kappa)+c_1(\kappa)}\ ,
\end{align}
where $c_0(\kappa)$ and $c_1(\kappa)$ are analytic in $\kappa$ in a $\C^2$ neighborhood of the origin  satisfying:
\begin{equation}
|c_0(\kappa)|\ \lesssim\ \kappa_{\rm max}^4\ |\kappa|,\qquad
 |c_1(\kappa)|\ \lesssim\ \kappa_{\rm max}^6\ |\kappa|
\nn\end{equation}
for all $|\kappa|\le \kappa_{\rm max}$ and $\kappa_{\rm max}$ is any constant satisfying
 \eqref{km-choose}.
 
Since  $\kappa$ can be taken arbitrary and  $|\kappa|=\kappa_{\rm max}$ can be an arbitrarily small  positive number, it follows that the analytic functions $c_0(\kappa)=\mathscr{Q}_5(\kappa)$
 and  $c_1(\kappa)=\mathscr{Q}_7(\kappa)$ which satisfy, for $|\kappa|\to0$:
$\mathscr{Q}_r(\kappa)\ =\ \mathcal{O}(|\kappa|^r)$.

 If we now restrict to real $\kappa=(\kappa_1,\kappa_2)$, then from \eqref{q-def}-\eqref{qnat-def} we have that 
\begin{equation}
q(\kappa)\ q_\natural(\kappa)\ =\ |q(\kappa)|^2\ =\ \Big|\ \gamma(\kappa_{1}^{2} - \kappa_{2}^{2})+ 2\beta \kappa_{1}\kappa_{2}\ \Big|^2\ .
\end{equation}
Therefore, 
$\nu_\pm(\kappa)\ 
= \mathscr{Q}_5(\kappa)\ \pm \sqrt{\Big|\ \gamma(\kappa_{1}^{2} - \kappa_{2}^{2})+ 2\beta \kappa_{1}\kappa_{2}\ \Big|^2\ +\ \mathscr{Q}_7(\kappa)}.$

We finally return to \eqref{nu-def} which relates $\nu$ to our eigenvalue parameter $\mu$:
 $\mu(\bM+\kappa)=(1-\alpha)|\kappa|^2+\nu(\kappa)$.  
  Proposition \ref{near-M-sym} implies constraints, due to symmetry, on  
  the mappings $\bk\mapsto \mu_\pm(\bk)$ for $\bk$ near $\bM$.
Clearly $|q(\kappa)|^2$ is invariant under the $\pi/2$ rotation: $(\kappa_1,\kappa_2)\mapsto(-\kappa_2,\kappa_1)$, and by part (1) of Proposition \ref{near-M-sym} we must have: $c_0(\kappa)=\mathscr{Q}_6(\kappa)$
 and  $c_1(\kappa)=\mathscr{Q}_8(\kappa)$.  We therefore have
 \begin{equation}
 \mu_\pm(\bM+\kappa)\ =
  (1-\alpha)|\kappa|^2+\mathscr{Q}_6(\kappa)\ \pm \sqrt{\Big|\ \gamma(\kappa_{1}^{2} - \kappa_{2}^{2})+ 2\beta \kappa_{1}\kappa_{2}\ \Big|^2\ +\ \mathscr{Q}_8(\kappa)}\ .
 \label{nf-1} \end{equation}
This completes the proof of Theorem \ref{quad-disp}.
\end{proof}

\subsection{Dispersion Surfaces Near $\bM$ for $V$ Admissible and Reflection Invariant; Proof of Corollary \ref{mu-rho}} 
\label{rho-inv-exp}

 In addition to $V$ being admissible in the sense of Definition \ref{def:sq-pot}, we now assume that
 $V(x_1,x_2)$ is reflection invariant about the line $x_1=x_2$. 
 
  \begin{lemma}\label{rho-rot-lemma} Suppose $f\colon \mathbb{R}^{2} \rightarrow \mathbb{R}^{2}, f \in L^{2}(\Omega)$ satisfies 
  $T_\rho[f](\bx)\equiv f(\rho^*\bx)=f(\bx)$, where $\rho=\rho^*$ is the reflection (permutation) matrix mapping $(x_1, x_2) \rightarrow (x_2, x_1)$.
 Then,
\begin{align}\label{rho-rot-lemma-result}
\D_{x_n}\  T_\rho[f](\bx)=\rho_{nm}\ T_\rho[\D_{y_m}f](\bx)\ =\ T_\rho[\rho_{nm}\D_{y_m}f](\bx).
\end{align}
 \end{lemma}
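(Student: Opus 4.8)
The plan is to prove this exactly as Lemma~\ref{rot-lemma} was proved, namely by a direct application of the chain rule; the reflection‑invariance hypothesis $T_\rho[f]=f$ is not actually used in establishing the identity itself (it is recorded only because this is the setting in which the lemma will subsequently be applied, just as in Lemma~\ref{rot-lemma}). First I would set $\by=\rho^*\bx=\rho\bx$, using $\rho=\rho^*$, so that in components $\by_m=\rho_{mi}\bx_i$ and hence $\partial\by_m/\partial\bx_n=\rho_{nm}$, where I am using that $\rho$ is a constant symmetric (permutation) matrix.

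Fixing $n\in\{1,2\}$ and differentiating $T_\rho[f](\bx)=f(\rho^*\bx)$ by the chain rule then gives
\[
\D_{x_n} T_\rho[f](\bx)\ =\ \D_{x_n} f(\rho^*\bx)\ =\ \rho_{nm}\,(\D_{y_m}f)(\rho^*\bx)\ =\ \rho_{nm}\, T_\rho[\D_{y_m}f](\bx),
\]
with the sum over $m$ understood, which is the first asserted equality. For the second equality I would observe that the entries $\rho_{nm}$ are constants independent of $\bx$, so they may be moved inside the operator $T_\rho$, which acts only by precomposition with $\rho^*$; hence $\rho_{nm}\, T_\rho[\D_{y_m}f](\bx)=T_\rho[\rho_{nm}\D_{y_m}f](\bx)$, as claimed in \eqref{rho-rot-lemma-result}.

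The computation is entirely routine and I do not expect any genuine obstacle; the only point requiring a moment's care is the index bookkeeping, in particular keeping track that $\rho$ being symmetric means $\partial\by_m/\partial\bx_n=\rho_{nm}=\rho_{mn}$, which is precisely what is needed to match the form in \eqref{rho-rot-lemma-result}. The real work lies downstream: this lemma, together with the unitarity of $T_\rho$ and the action of $\rho$ on the degenerate eigenspace $\{\Phi_1,\Phi_2\}$, will be combined in the style of Lemma~\ref{claim:M1entries} and the claims following it to force $\gamma=4\,\langle\D_{x_1}\Phi_1,\mathscr{R}(\mu_S)\D_{x_1}\Phi_2\rangle=0$, thereby yielding the reduced normal form of Corollary~\ref{rho-inv}.
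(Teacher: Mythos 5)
Your proof is correct and coincides with the paper's own argument: both set $\by=\rho^{\ast}\bx$, compute $\partial\by_m/\partial x_n=\rho_{nm}$, and apply the chain rule, with the final equality being the trivial observation that the constant entries $\rho_{nm}$ commute with the precomposition operator $T_\rho$. Your aside that the reflection-invariance hypothesis $T_\rho[f]=f$ is not actually used in the derivation (only recorded for context, as in Lemma~\ref{rot-lemma}) is also accurate and matches the paper's treatment.
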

\begin{proof}
Let $\by = \rho^{\ast} \bx$. Then $\by_{m} = \rho_{nm}\bx_{n}$, and furthermore $ \frac{\D\by_{m}}{\D_{\bx_n}} = \rho_{nm}$. Thus,
\begin{align*}
\{\nabla \rho[ f](\bx)\}_{n} &= \frac{\D}{\D{x_n}} f(\rho \bx) = \rho_{nm}\frac{\D}{\D{y_m}}f(\by)\Big|_{\by=\rho^*\bx}=\Big\{\rho[\nabla_{\by}f]\Big\}_{n}\Big|_{\by=\rho^*\bx} =\ 
\Big\{T_\rho [\rho\ \nabla f](\bx) \Big\}_{n}.
\end{align*}
\end{proof}

\begin{claim}\label{L2M-jump}
Let $\sigma \in \{\pm 1, \pm i\}$. If $\psi(\bx)$ solves the Floquet-Bloch eigenvalue problem with $V$ admissible and reflection-invariant, and  $\psi(\bx)\in L^2_{\bM, \sigma}$, then $\tilde{\psi} = \psi(x_{2},x_{1})$ is also a solution where $\tilde{\psi} \in L^2_{\bM, \sigma^{3}}$. That is, $\rho$ maps $L^{2}_{\bM, \sigma} \rightarrow L^{2}_{\bM, \sigma^{3}}.$ 
\end{claim}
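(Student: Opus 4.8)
The plan is to study the reflection operator $T_\rho$ of \eqref{Trho} and establish two facts: (i) $T_\rho$ carries $\bM$-pseudo-periodic eigenfunctions of $H_V$ to $\bM$-pseudo-periodic eigenfunctions of $H_V$ with the same energy, and (ii) $T_\rho$ intertwines the $\pi/2$-rotation $\mathcal{R}$ with its inverse, that is, $\mathcal{R}\circ T_\rho = T_\rho\circ\mathcal{R}^{-1}$ as operators on $L^2_\bM$. Granting these, if $\psi\in L^2_{\bM,\sigma}$ solves the Floquet--Bloch problem, then $\tilde\psi = T_\rho[\psi]$ solves it with the same eigenvalue $\mu_S$ by (i), while by (ii) and $\sigma^4=1$ we get $\mathcal{R}[\tilde\psi] = \mathcal{R}T_\rho[\psi] = T_\rho\mathcal{R}^{-1}[\psi] = \sigma^{-1}T_\rho[\psi] = \sigma^3\tilde\psi$, i.e. $\tilde\psi\in L^2_{\bM,\sigma^3}$, which is exactly the assertion.

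For (i), I would first check that $T_\rho$ maps $L^2_\bM$ into itself: since $\rho=\rho^*$ permutes the two coordinates it maps $\Lambda=\Z^2$ onto itself, and $\rho\bM=\rho(\pi,\pi)^T=(\pi,\pi)^T=\bM$; hence, for $\psi\in L^2_\bM$ and $\bv\in\Lambda$, $\tilde\psi(\bx+\bv)=\psi(\rho^*\bx+\rho^*\bv)=e^{i\bM\cdot\rho^*\bv}\psi(\rho^*\bx)=e^{i\rho\bM\cdot\bv}\tilde\psi(\bx)=e^{i\bM\cdot\bv}\tilde\psi(\bx)$, so $\tilde\psi\in L^2_\bM$. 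Next, $-\Delta$ commutes with the orthogonal change of variables $\bx\mapsto\rho^*\bx$, and since $V$ is reflection invariant, $T_\rho[V\psi]=T_\rho[V]\,T_\rho[\psi]=V\,T_\rho[\psi]$; therefore $T_\rho$ commutes with $H_V=-\Delta+V$ on a dense subspace of $L^2_\bM$ (this is the reflection analogue of Proposition \ref{extra-symm}). Consequently $H_V\tilde\psi=H_VT_\rho\psi=T_\rho H_V\psi=\mu_S\tilde\psi$.

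For (ii), the algebraic input is the dihedral relation $\rho R\rho=R^{-1}$, equivalently $R\rho=\rho R^{-1}$, which one verifies directly from \eqref{Rotdef} and \eqref{rhodef}. Since $f\mapsto f(A^*\bx)$ is a homomorphism in the orthogonal matrix $A$ (because $(A_1A_2)^*=A_2^*A_1^*$), with $\mathcal{R}$ corresponding to $A=R$ and $T_\rho$ to $A=\rho$, this relation translates immediately into $\mathcal{R}\circ T_\rho=T_\rho\circ\mathcal{R}^{-1}$. Alternatively, one computes $\mathcal{R}[T_\rho[f]](\bx)=f(\rho^*R^*\bx)$ and $T_\rho[\mathcal{R}^{-1}[f]](\bx)=f(R\rho^*\bx)$ and notes $\rho^*R^*=R\rho^*$ by the same matrix identity.

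I do not anticipate a genuine obstacle; the only place needing care is bookkeeping of conventions — distinguishing $R$ from $R^*$ and recalling $\mathcal{R}^{-1}=\mathcal{R}^3$ so that $\sigma^{-1}=\sigma^3$ — all routine given the relations already recorded. As a consistency check, the conclusion recovers, at the level of which subspaces are exchanged, the $\mathcal{P}\circ\mathcal{C}$ bijection $L^2_{\bM,i}\leftrightarrow L^2_{\bM,-i}$ of Proposition \ref{phi_sig}: here $\sigma\mapsto\sigma^3$ fixes $L^2_{\bM,\pm1}$ and swaps $L^2_{\bM,\pm i}$ since $i^3=-i$.
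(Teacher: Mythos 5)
Your proof is correct and follows essentially the same route as the paper: both hinge on the dihedral relation between $R$ and $\rho$ (the paper records it as $\rho^*R^*\rho=R$ in \eqref{conj-sym}, you write the equivalent form $\rho R\rho=R^{-1}$) and deduce $\mathcal{R}\tilde\psi=\sigma^3\tilde\psi$ from it. Your write-up is in fact slightly more careful than the paper's: you separately verify that $T_\rho$ preserves $L^2_\bM$ and commutes with $H_V$ (the ``is also a solution'' half of the claim, which the paper leaves implicit by analogy with Proposition~\ref{extra-symm}), and your clean packaging via the operator intertwining $\mathcal{R}\circ T_\rho=T_\rho\circ\mathcal{R}^{-1}$, together with the homomorphism $A\mapsto U_A$ with $U_A[f](\bx)=f(A^*\bx)$, avoids the matrix-ordering slips that appear (and happen to cancel) in the paper's inline chain of equalities.
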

\begin{proof}
First we'll show if $\psi(\bx) \in L^2_{\bM, \sigma}$, then $\tilde{\psi} = \psi(x_{2},x_{1}) \in L^2_{\bM, \sigma^{3}}$.
 Observe that $\rho$ is self-adjoint, and note 
\begin{equation}\label{conj-sym} \rho^{\ast} R^{\ast}\rho = R \end{equation}
since
\begin{align*}
R^{\ast}\rho  = \begin{pmatrix} 0 & -1 \\ 1 & 0 \end{pmatrix} \begin{pmatrix} 0 & 1 \\ 1 & 0 \end{pmatrix}
&= \begin{pmatrix} -1 & 0 \\ 0 & 1 \end{pmatrix}
= \begin{pmatrix} 0 & 1 \\ 1 & 0 \end{pmatrix}\begin{pmatrix} 0 & 1 \\ -1 & 0 \end{pmatrix} = \rho R.
\end{align*}

We seek to show \[\mathcal{R}[\tilde{\psi}] = \sigma^3\tilde{\psi},\] or, equivalently, $\mathcal{R}[\rho[\psi]](\bx) = \sigma^3 \rho[\psi](\bx).$ If that holds, then $\tilde{\psi} \in L^2_{\bM, \sigma^{3}}$.
\begin{align*}
\mathcal{R}[\tilde{\psi}] &= \mathcal{R}[\rho[\psi]](\bx) \\&= \psi(R^{\ast}\rho \bx) \stackrel{\text{\eqref{conj-sym}}}{=} \psi(\rho R\bx) 
= \psi(\rho (R^{\ast})^{3}\bx) \\&= \rho(\psi((R^{\ast})^{3}\bx)) = \rho(\mathcal{R}^{3}\psi(\bx)) \\
&= \sigma^3\rho[\psi](\bx) = \sigma^3\tilde{\psi}.
\end{align*}
Therefore, if  $\psi(\bx) \in L^2_{\bM, \sigma}$, then $\tilde{\psi} = \psi(x_{2},x_{1}) \in L^2_{\bM, \sigma^{3}}$.

\end{proof}

{ Now consider the setting of Theorem \ref{quad-disp}; $\mu_S$ is an eigenvalue of  $H=-\Delta+V$ acting in $L^2_{\bM}$ of multiplicity $2$. In particular, $\mu_S$ is simple $L^2_{\bM,i}$ eigenvalue with corresponding eigenfunction $\Phi_1$, and $\mu_S$ is a simple $L^2_{\bM,-i}$ eigenvalue with corresponding eigenfunction $\Phi_2$, with $\Phi_2(\bx)=\overline{\Phi_1(-\bx)}$. By Claim \ref{L2M-jump}, $\rho\Phi_1\in L^2_{\bM,-i}$
and since $\rho$ commutes with $H$ we have that $\rho\Phi_1$ is an $L^2_{\bM,-i}$ eigenfunction. Thus,
$
  \rho\Phi_1= e^{i\nu}\Phi_2,\ \textrm{for some}\ \nu\in\R$
  or equivalently $\rho e^{-i \frac{\nu}{2} } \Phi_1 = e^{i \frac{\nu}{2} } \Phi_2$.  Hence,
for the case $\nu \ne 0$ replace $\Phi_1$ by $e^{-i\frac{\nu}{2}} \Phi_1$ and $\Phi_2$ by $e^{i\frac{\nu}{2}} \Phi_2$, to obtain the relation
%
%
\begin{equation}
\rho\Phi_1=\Phi_2\ \label{rphi12}
\end{equation}
in all cases.
Recall from \eqref{albega} that 
\begin{align}
\alpha\ &=\  4a_{1,1}^{1,1}\ =\ 
4\ \langle \partial_{x_1} \Phi_{1}, \mathscr{R}(\mu_S) \partial_{x_1} \Phi_{1}\rangle\ ,\label{albega}\\
 \beta\ &=\ 4a_{1,2}^{1,2}\ =\ 4\ \langle \partial_{x_1} \Phi_{1}, \mathscr{R}(\mu_S) \partial_{x_2} \Phi_{2}\rangle\ ,\nn\\
\gamma\ &=\ 4a_{1,1}^{1,2}\ =\ 4\ \langle \partial_{x_1} \Phi_{1}, \mathscr{R}(\mu_S) \partial_{x_1} \Phi_{2}\rangle .
\nn
\end{align}
We have shown that $\alpha\in\R$. Using \eqref{rphi12} we have 
the following constraints on $\beta$ and $\gamma$:

\begin{claim}\label{crossTerms0} Assume  $[\rho,H]=0$. Then, 
\begin{align}
\beta\ &\in\ \R\label{b-Re}\\
\gamma\ &=\  i\tilde{\gamma},\ \quad\tilde{\gamma}\in\R. \label{g-Im}
\end{align}
\end{claim}

\begin{proof}  We first prove \eqref{b-Re}. Since  with $[\rho,H]=0$ and $\rho\D_{x_1}=\D_{x_2}\rho$, we have:
\begin{align*}
\beta\ &=\ 4\ \langle \partial_{x_1} \Phi_{1}, \mathscr{R}(\mu_S) \partial_{x_2} \Phi_{2}\rangle\\
&=\ 4\ \langle  \rho\ \partial_{x_1} \Phi_{1}, \rho \mathscr{R}(\mu_S) \partial_{x_2} \Phi_{2}\rangle\\
&=\ 4\ \langle  \partial_{x_2} \rho\ \Phi_{1},  \mathscr{R}(\mu_S) \partial_{x_1} \rho\ \Phi_{2}\rangle\\
&=\ 4\ \langle  \partial_{x_2} \Phi_{2},  \mathscr{R}(\mu_S) \partial_{x_1}  \Phi_{1}\rangle\\
&=\ 4\ \langle  \mathscr{R}(\mu_S) \partial_{x_2} \Phi_{2},   \partial_{x_1}  \Phi_{1}\rangle\\
&=\ 4\ \overline{\langle  \partial_{x_1}  \Phi_{1}, \mathscr{R}(\mu_S) \partial_{x_2} \Phi_{2}\rangle} = \overline{\beta}.
\end{align*}

To prove \eqref{g-Im}, let $\kappa\in\R^2$ be arbitrary. Using that $\mathcal{R}[\Phi_1]=i  \Phi_1$
 and $\mathcal{R}[\Phi_2]=-i  \Phi_2$, we have for  $j_1, j_2 \in\{1,2\}$:
\begin{align*}
\kappa^{T} A^{j_{1},j_{2}} \kappa &= \langle\partial_{y_l} \Phi_{j_{1}} , \mathscr{R}(\mu_\star) \partial_{y_m} \Phi_{j_{2}}\rangle_{_{L^2(\Omega_\by)}} \kappa_{l}\kappa_{m}\nn\\   
&=  \langle \mathcal{R}(\rho[\partial_{y_l} \Phi_{j_{1}}]),  \ \mathscr{R}(\mu_\star)\ \mathcal{R}(\rho [\partial_{y_m} \Phi_{j_{2}}])\rangle_{_{L^2(\Omega_\bx)}}\  \kappa_{l} \kappa_{m} \\
&= \langle  R_{ns} \rho_{ln}  \partial_{x_s} \mathcal{R}[\rho\Phi_{j_{1}}] ,  \mathscr{R}(\mu_\star) R_{qt} \rho_{mq}  \partial_{x_t}\mathcal{R}[\rho\Phi_{j_{2}}]\rangle_{_{L^2(\Omega_\bx)}}\  \kappa_{l} \kappa_{m}   \\
&=    \langle  \partial_{x_s} i^{2j_{2}-1}\Phi_{j_{2}}, \mathscr{R}(\mu_\star) \partial_{x_q} i^{2j_{1}-1} \Phi_{j_{1}}]\rangle_{_{L^2(\Omega_\bx)}}\ R_{ns}(\rho_{ln} \kappa_l) \ R_{qt} (\rho_{mq} \kappa_{m}) \\
&= i^{2(j_{1}-j_{2})}  \left\langle  \partial_{x_s}\Phi_{j_{2}}, \mathscr{R}(\mu_\star) \partial_{x_t} \Phi_{j_{1}}\right\rangle_{_{L^2(\Omega_\bx)}}\ R_{ns} (\rho \kappa)_{n}   R_{qt} (\rho \kappa)_{q}\ \\
&=  i^{2(j_{1}-j_{2})}   ( R \rho \kappa)^T  A^{j_{2},j_{1}} R \rho \kappa,
\end{align*}
for any choice of pairs $(j_1, j_2)$ with $j_1, j_2 \in \{1,2\}$. Since $\kappa$ is arbitrary, 
\[  A^{j_{1},j_{2}}  = i^{2(j_{1}-j_{2})}  \rho \ R^T A^{j_{2},j_{1}} R \ \rho.\]
For any pair $j_1, j_2 \in \{1,2\}$, let $A^{j_1,j_2}=A = \begin{pmatrix} a & b \\ c & d \end{pmatrix}$. Then, 
\begin{align}
 \ R^T A R \ &= \begin{pmatrix}
    d & -c  \\
    -b  & a
\end{pmatrix}. \end{align}
Consider $j_1 = 1$ and $j_2 = 2$. From the above analysis, 
\[ A\ =\ A^{1,2}\  =\ -\rho \ R\ A^{2,1}\ R^T \ \rho\ =\  -\rho \ R\ (A^{1,2})^\dagger\ R^T \ \rho\ =\   -\rho \ R\ A^\dagger\ R^T \ \rho
\]
so
\begin{align*}
-\rho \ R\ {A}^\dagger\ R^T \ \rho &= -\begin{pmatrix}
    0 & 1  \\
    1  & 0
\end{pmatrix}
\begin{pmatrix}
    \bar{d} & -\bar{b}  \\
    -\bar{c}  & \bar{a}
\end{pmatrix} \begin{pmatrix}
    0 & 1  \\
    1  & 0
\end{pmatrix}\\
&= -\begin{pmatrix}
    -\bar{c}  & \bar{a} \\
    \bar{d} & -\bar{b}
\end{pmatrix}
\begin{pmatrix}
    0 & 1  \\
    1  & 0
\end{pmatrix}\\
&= -\begin{pmatrix}
    \bar{a} & -\bar{c}  \\
    -\bar{b} & \bar{d}
\end{pmatrix}
= \begin{pmatrix}
    -\bar{a} & \bar{c}  \\
    \bar{b} & -\bar{d}
\end{pmatrix}
=\ \begin{pmatrix} a & b \\ c & d \end{pmatrix}
\end{align*}
In particular, $a=-\bar{a}$ and $d=-\bar{d}$. That is,  
 $a^{1,2}_{1,1} = -\bar{a}^{1,2}_{1,1},$ and $a^{1,2}_{2,2} = -\bar{a}^{1,2}_{2,2}$, so that 
\[
\Re\gamma\ =\ 4\Re(a^{1,2}_{1,1}) = 4\Re(a^{1,2}_{2,2})=0. 
\]
\end{proof} 
   
 Corollary \ref{rho-inv} is now an immediate consequence of Part 2 of Proposition \ref{near-M-sym}. 
 \footnote{{\bf miw:} still need to fix  Corollary \ref{rho-inv}. See previous footnote.}
 }

\section{Spectral band degeneracies for small amplitude potentials}\label{sec:eps-small}
In this section we apply Theorem \ref{quad-disp} to the case of small amplitude potentials. Our analysis follows that 
of Section 6 in \cite{FW:12} in the case of honeycomb potentials.  We consider the Floquet-Bloch eigenvalue problem:
\begin{equation}\label{eps-evp}
H^{\varepsilon} \Phi(\bx)\ \equiv\ \left( - \Delta + \varepsilon V(\bx) \right) \Phi(\bx)\ =\ \mu \Phi(\bx),\ \ \Phi\in L^2_{\bM,\sigma}\ ,
\end{equation}
where  $\sigma \in \{+1,-1,+i,-i\}$,  and  $\eps$ is small and non-zero.

By Proposition \ref{phi_sig}, we may seek an $L^2_{\bM,\sigma}$  eigenstate of the form:
\[
  \Phi(\bx)  = \sum_{\bfm \in \mathcal{S}}c_{\Phi}(\bfm) \left(\sigma^{4} e^{i\bM^{\bfm} \cdot \bx} +\sigma^{3} e^{iR\bM^{\bfm} \cdot \bx} + \sigma^{2} e^{iR^{2}\bM^{\bfm} \cdot \bx} + \sigma e^{iR^{3}\bM^{\bfm} \cdot \bx} \right)\ .
  \]
  We use the notations $c(\bfm)=c_\Phi(\bfm)=c(\bfm;\Phi)$.
Applying $\left( - \Delta -\mu \right)$ to $\Phi$ and using that $R$ is an orthogonal matrix yields
\[
\left( - \Delta -\mu \right)\Phi = \sum_{\bfm \in \mathcal{S}} \left( \abs{ \bM^{\bfm}}^{2} - \mu \right) c(\bfm; \Phi) \left(\sum_{i=0}^{3}\sigma^{4-i} e^{iR^{i}\bM^{\bfm}\cdot \bx} \right)
\]
Since $V(R^*\bx)=V(\bx)$, we have that  $V(\bx) \Phi(\bx) \in L^{2}_{\bM, \sigma}$. 
Therefore, by Proposition \ref{phi_sig}, $V\Phi$ has an expansion
\begin{equation}\label{vphi-expansion}
V(\bx)\Phi(\bx) = \sum_{\bfm \in \mathcal{S}} \left[ \sum_{j=0}^{4}\sigma^{4-j}e^{iR^j\bM^\bfm\cdot \bx}\ \right]\ c(\bfm; V\Phi) ,\\
\end{equation}
where
\begin{equation}\label{vphi-exp-co}
c(\bfm; V\Phi)  = \frac{1}{\mid \Omega \mid} \int_{\Omega} e^{-i\bM^{\bfm}\cdot \by}V(\by)\Phi(\by) d\by.
\end{equation}
Recall that $\bq \vec{\bk} \cdot \bx = (q_{1}\bk_{1} + q_{2} \bk_{2} )\cdot \bx$ and $R\bM^\bfr=\bM^{\mathcal{R}\bfr}$;
see \eqref{RbM}.  Thus,
\begin{align*}
c(\bfm; V\Phi)  &= \frac{1}{\mid \Omega \mid} \int_{\Omega} e^{-i\bM^{\bfm}\cdot \by}V(\by)\Phi(\by) d\by\\
&= \frac{1}{\mid \Omega \mid} \int_{\Omega} e^{-i\bM^{\bfm}\cdot \by} \left( \sum_{\bq \in \mathbb{Z}^{2}}V_{\bq}e^{i\bq\vec{\bk}\cdot \by} \right)  \left( \sum_{\bfr \in \mathcal{S}}c(\bfr; \Phi) \left[\sum_{j=0}^{3}\sigma^{4-j}e^{iR^{j}\bM^{\bfr}\cdot \by} \right]\right) d\by\\
&= \frac{1}{\mid \Omega \mid} \sum_{\bq \in \mathbb{Z}^{2}}\sum_{\bfr \in \mathcal{S}}
\int_{\Omega}   V_{\bq}  c(\bfr; \Phi) \left[\sum_{j=0}^{3}\sigma^{4-j}e^{i(\bM^{\mathcal{R}^j\bfr}-\bM^{\bfm} + \bq \vec{\bk})\cdot \by} \right] d\by\\
&= \frac{1}{\mid \Omega \mid} \sum_{\bq \in \mathbb{Z}^{2},  \bfr \in \mathcal{S}}V_{\bq} c(\bfr; \Phi)\int_{\Omega} \sum_{j=0}^{3}\left( \sigma^{4-j}e^{i(\bq -(\bfm-\mathcal{R}^{j}\bfr))\vec{\bk}\cdot \by} \right) d\by\\
&= \sum_{\bq \in \mathbb{Z}^{2},  \bfr \in \mathcal{S}}V_{\bq} c(\bfr; \Phi)\sum_{j=0}^{3}\left[ \sigma^{4-j}\ \delta\left(\bq -(\bfm-\mathcal{R}^{j}\bfr)\right) \right] \ =\  \sum_{  \bfr \in \mathcal{S}} \ \left[ \sum_{j=0}^{3}\ \sigma^{4-j}\  V_{\bfm-\mathcal{R}^{j}\bfr}\ \right]  c(\bfr; \Phi)\\
& = \sum_{\bfr \in \mathcal{S}}\ \left(\  V_{\bfm-\bfr}\ +\ \sigma^3V_{\bfm-\mathcal{R}\bfr}\ +\ 
 \sigma^2 V_{\bfm-\mathcal{R}^2\bfr}+ \sigma V_{\bfm-\mathcal{R}^3\bfr}\ \right)\ c(\bfr; \Phi)
\end{align*}

Thus,
\begin{align}
c(\bfm; V\Phi) &= \sum_{\bfr \in \mathcal{S}} \mathcal{K}_{\sigma}(\bfm, \bfr) c(\bfr; \Phi). \label{cVPhi}\\
  K_{\sigma}(\bfm, \bfr) &= \sum_{j=0}^{3}\sigma^{4-j} V_{\bfm-\mathcal{R}^{j}\bfr} \label{cK-def}\\
  &=\   V_{\bfm-\bfr}\ +\ \sigma^3\ V_{\bfm-\mathcal{R}\bfr}\ +\ \sigma^2\ V_{\bfm-\mathcal{R}^2\bfr}\ +\ 
   \sigma\ V_{\bfm-\mathcal{R}^3\bfr} \nn\\
   &=\  V_{m_1-r_1,m_2-r_2}\ +\ \sigma^3\ V_{m_1-r_2,m_2+r_1+1}\nn\\
  & \qquad\qquad +\ \sigma^2\ V_{m_1+r_1+1,m_2+r_2+1}\ +\ 
   \sigma\ V_{m_1+r_2+1,m_2-r_1}.\nn
\end{align}

We have the  following analogue of Proposition 6.1 of \cite{FW:12}:
\begin{prop}\label{equiv-evp}
Let $\sigma\in\{+i,-i,+1,-1\}$. The $L^2_{\bM,\sigma}-$spectral problem \eqref{eps-evp}  is equivalent to the following algebraic eigenvalue problem for $\{c(\bfm)\}_{\bfm\in\mathcal{S}}$ and $\mu$: 
\begin{equation}
\left(\ |\bM^\bfm|^2-\mu\ \right)\ c(\bfm)\ +\ \eps\ \sum_{\bfr\in\mathcal{S}}\ \mathcal{K}_\sigma(\bfm,\bfr)\ c(\bfr)\ =\ 0,
\qquad \bfm\in\mathcal{S}.
\label{cm-sig}
\end{equation}
\end{prop}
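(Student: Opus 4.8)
The plan is to establish the equivalence between the PDE eigenvalue problem \eqref{eps-evp} posed in $L^2_{\bM,\sigma}$ and the algebraic system \eqref{cm-sig} by expanding both sides of $H^\eps\Phi=\mu\Phi$ in the Fourier basis adapted to the $\sigma$-eigenspace (as in Proposition \ref{phi_sig}), matching coefficients, and invoking the fact that the functions $\sum_{j=0}^3\sigma^{4-j}e^{iR^j\bM^\bfm\cdot\bx}$, indexed by $\bfm\in\mathcal{S}$, are (up to normalization) an orthogonal basis of $L^2_{\bM,\sigma}$. First I would record, for $\Phi\in L^2_{\bM,\sigma}$ written in the form $\Phi(\bx)=\sum_{\bfm\in\mathcal{S}}c(\bfm)\sum_{j=0}^3\sigma^{4-j}e^{iR^j\bM^\bfm\cdot\bx}$, the action of $-\Delta-\mu$: since $R$ is orthogonal, $|R^j\bM^\bfm|^2=|\bM^\bfm|^2$, so $(-\Delta-\mu)\Phi=\sum_{\bfm\in\mathcal{S}}(|\bM^\bfm|^2-\mu)c(\bfm)\sum_{j=0}^3\sigma^{4-j}e^{iR^j\bM^\bfm\cdot\bx}$, which is again in $L^2_{\bM,\sigma}$ and has Fourier data $(|\bM^\bfm|^2-\mu)c(\bfm)$ over $\mathcal{S}$.

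Next I would treat the potential term. Since $V$ is admissible, multiplication by $V$ preserves $L^2_{\bM,\sigma}$ (this was already used in the discussion preceding the statement: $V(R^*\bx)=V(\bx)$ forces $V\Phi\in L^2_{\bM,\sigma}$ whenever $\Phi\in L^2_{\bM,\sigma}$), so $V\Phi$ admits an expansion of the same type with coefficients $c(\bfm;V\Phi)$. The computation displayed just above the proposition — substituting the Fourier series of $V$ and of $\Phi$, using $R\bM^\bfr=\bM^{\mathcal{R}\bfr}$ from \eqref{RbM}, and the orthogonality relation $|\Omega|^{-1}\int_\Omega e^{i\bn\vec\bk\cdot\by}\,d\by=\delta(\bn)$ — yields precisely $c(\bfm;V\Phi)=\sum_{\bfr\in\mathcal{S}}\mathcal{K}_\sigma(\bfm,\bfr)c(\bfr)$ with $\mathcal{K}_\sigma$ as in \eqref{cK-def}. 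I would present this as the substance of the proof, citing the already-carried-out manipulation rather than redoing it.

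Finally, combining the two expansions, $H^\eps\Phi-\mu\Phi=0$ becomes an identity between two elements of $L^2_{\bM,\sigma}$, both expanded in the basis $\{\sum_{j=0}^3\sigma^{4-j}e^{iR^j\bM^\bfm\cdot\bx}\}_{\bfm\in\mathcal{S}}$; equating Fourier data over $\mathcal{S}$ gives exactly \eqref{cm-sig}. For the converse, given an $\ell^2(\mathcal{S})$ solution $\{c(\bfm)\}$ of \eqref{cm-sig} one reconstructs $\Phi$ by the formula \eqref{phi-sig-gen}; membership in $L^2_{\bM,\sigma}$ and $\Phi\in H^2$ (hence in the domain) follow from the decay $|c(\bfm)|=\mathcal{O}(|\bfm|^{-2})$ forced by \eqref{cm-sig} once $\mu$ avoids the discrete set $\{|\bM^\bfm|^2\}$ near which the problem is anyway trivial, together with admissibility of $V$.

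The one genuinely delicate point — the main obstacle, though a mild one — is the bookkeeping with the index set $\mathcal{S}$: one must be sure that when $\bfr$ ranges over the representatives $\mathcal{S}$ of $\mathcal{R}$-orbits, the terms $V_{\bfm-\mathcal{R}^j\bfr}$ for $j=0,1,2,3$ correctly reassemble the full sum over $\Z^2$ without double-counting, and that the $\sigma$-weights track through the orbit consistently (using $c_{\mathcal{R}^j\Phi}(\bfm)=c_\Phi(\mathcal{R}^{4-j}\bfm)$ and Proposition \ref{Fco-sig}). This is entirely parallel to Proposition 6.1 of \cite{FW:12}, so I would organize the argument to mirror that reference and simply verify that the $\pi/2$-rotation orbit structure of $\mathcal{R}$ on $\Z^2$ (orbits of length four, with the single fixed-point orbit $\{(0,0),(0,-1),(-1,-1),(-1,0)\}$ handled by the choice of representative in Remark \ref{e-class}) plays the role that the $\Z/3$ orbit structure plays there.
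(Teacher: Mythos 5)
Your proof follows essentially the same route as the paper: expand $\Phi$ in the $\sigma$-adapted Fourier basis of Proposition \ref{phi_sig}, use orthogonality of $R$ to diagonalize $-\Delta$, and invoke the computation of $c(\bfm;V\Phi)$ displayed immediately above the statement (itself presented as the analogue of Proposition 6.1 of \cite{FW:12}) to match coefficients over $\mathcal{S}$. One small terminological slip: the orbit $\{(0,0),(0,-1),(-1,-1),(-1,0)\}$ is not a ``fixed-point orbit'' --- the shifted map $\mathcal{R}:(m_1,m_2)\mapsto(m_2,-1-m_1)$ has no fixed points in $\Z^2$ and every orbit has length four --- it is simply the distinguished orbit corresponding to the four vertices of $\brill$, with representative $(-1,0)$ chosen in Remark \ref{e-class}.
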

\nit For each $\sigma$, we next solve \eqref{cm-sig} for $\eps\mapsto \{c^{\eps}(\bfm)\}_{\bfm\in\mathcal{S}},\ \mu^{\eps}$, 
perturbatively in $\eps$.  

We first set $\eps=0$ in \eqref{cm-sig}. Then, \eqref{cm-sig} reduces to eigenvalue problem:
\begin{equation}
\left(\ |\bM+m_1\bk_1+m_2\bk_2|^2-\mu^{(0)}\ \right)\ c^0(\bfm)=0,\qquad \bfm=(m_1,m_2)\in\mathcal{S},
\label{0th-ord}
\end{equation} 
which has a simple eigenpair:
\begin{equation}
  \mu^{(0)}=\vert \bM \vert^{2}=2\pi^2=\mu^{(0)}_S,\qquad  c^{(0)}(\bfm) = \delta_{m_{1}+1, m_{2}},
  \label{0-order-sol}
  \end{equation}
  
In arriving at \eqref{0-order-sol}, recall from Theorem \ref{H0-mult4} that  $\mu_S^{(0)} = \abs{\bM}^{2}=2\pi^2$ is an eigenvalue of multiplicity four with four dimensional eigenspace spanned by the eigenvectors: $c(m_1,m_2)= \delta_{m_{1},m_{2}}$,\ $\delta_{m_{1},m_{2}+1}$,\ $\delta_{m_{1}+1,m_{2}+1}$ and $\delta_{m_{1}+1,m_{2}}$, corresponding to those $(m_1,m_2)$ such that $\bM^\bfm=\bM+m_1\bk_1+m_2\bk_2$ is a vertex of $\brill$ and to the orbit under $\mathcal{R}:\ \{(0,0),(0,-1),(-1,-1),(-1,0)\}\in\Z^2/\sim$\ . Recall from Remark \ref{e-class} that $(m_1,m_2)=(-1,0)$ is the representative from this equivalence class,
and hence the choice of  eigenstate in \eqref{0-order-sol}.

 Note also that the solution \eqref{0-order-sol} of the $\eps=0$ algebraic eigenvalue problem corresponds the simple  $L^{2}_{\bM, \sigma}-$eigenpair
   of $H^{(\eps=0)}$, $\mu^{(0)}=\vert \bM^{(0,-1)} \vert^{2}=\vert \bM \vert^{2}$, with corresponding eigenstate
\begin{align}
\Phi^{(\eps = 0)}(\bx) &= \sum_{j=0}^{3} \sigma^{4-j}e^{i(R^{j}\bM^{0,-1})\cdot \bx}
= \sigma e^{i\bM \cdot \bx} \left(1 +  \sigma e^{-i\bk_{1} \cdot \bx}+ \sigma^{2} e^{-i(\bk_{1} + \bk_{2}) \cdot \bx} + \sigma^{3} e^{-i \bk_{2} \cdot \bx} \right).
\end{align}

We next proceed to solve the system \eqref{cm-sig} for smooth curve of eigenpairs: $\eps\mapsto\ \mu^{\eps},\ \{c^{\eps}(\bfm)\}_{\bfm\in\mathcal{S}}$ for all $\eps$ sufficiently small. We write  $\{c(\bfm)\}_{\bfm\in\mathcal{S}}\in l^2(\mathcal{S})=
 (c_\parallel, c_\perp)\in \C\times l^{2}( \mathcal{S}^{\perp}),$ where 
\[ 
  c_{\parallel} \equiv c(0,-1),\ \text { and } \ c_\perp=\{c_{\perp}(\bfm)\}_{\bfm\in\mathcal{S}^\perp},\ \   \mathcal{S}^\perp\equiv\mathcal{S}\setminus\{(-1,0)\}. 
\]

Then \eqref{cm-sig} is equivalent to the following
 coupled system  for $c_{\parallel}$ and $c_\perp=\{c_{\perp}(\bfm)\}_{\bfm\in\mathcal{S}^\perp}$: 
\begin{align}\label{lsr-coupled1}
&\left[ \vert \bM \vert^{2} - \mu + \eps \mathcal{K}_{\sigma}(0, -1, 0, -1) \right] c_{\parallel} + \eps \sum_{\bfr \in \mathcal{S}^{\perp}} \mathcal{K}_{\sigma}(0, -1, \bfr) c_{\perp}(\bfr) = 0.\\
& \eps \mathcal{K}_{\sigma}(\bfm, 0, -1)c_{\parallel} + \left( \vert \bM^{\bfm} \vert^{2} - \mu \right) c_{\perp}(\bfm) + \eps  \sum_{\bfr \in \mathcal{S}^{\perp}} \mathcal{K}_{\sigma}(\bfm, \bfr) c_{\perp}(\bfr) = 0, \ \ \ \bfm \in \mathcal{S}^{\perp}.
\label{lsr-coupled2}\end{align}
For $\eps$ small, we shall solve \eqref{lsr-coupled1}-\eqref{lsr-coupled2}  in a neighborhood of the solution to the $\eps=0$  problem:  
$c_\parallel^{(0)}=1$, $\mu_S^{(0)}=|\bM|^2$, $c^{(0)}_\perp(\bfr)=0,\ \bfr\in\mathcal{S}^{\perp}$. We proceed by a Lyapunov-Schmidt reduction strategy in which we first solve \eqref{lsr-coupled2} for the mapping $c_\parallel\mapsto c_\perp[c_\parallel,\mu]$
 in a neighborhood of $\mu=\mu_S^{(0)}=|\bM|^2$, 
and then substitute this mapping into \eqref{lsr-coupled1} to obtain a closed equation for $c_\parallel$.
We provided a sketch of the argument. For the details, see  \cite{FW:12}.

Equation \eqref{lsr-coupled2} for $c_\perp$ may be expressed in the form:
\begin{equation}\label{c-perp-compact}
(I + \varepsilon \mathcal{T}_{\mathcal{K}_{\sigma}}(\mu) ) c_{\perp} = \eps \ c_{\parallel} \ F_{\sigma}(\mu) ,\end{equation}
with the definitions: $F_{\sigma}(\mu)=\{F_{\sigma}(\bfm, \mu)\}_{\bfm\in\mathcal{S}^\perp}$,\ $F_{\sigma}(\bfm, \mu)\ \equiv\ \frac{ -\mathcal{K}_{\sigma}(\bfm, 0, -1) }{\mid \bM^{\bfm} \mid^{2} - \mu}$,  and 
\begin{equation}\label{c-perp}
\left[(I + \varepsilon \mathcal{T}_{\mathcal{K}_{\sigma}}(\mu) ) c_{\perp}\right](\bfm)\
 \equiv\ \left[ \delta_{\bfm, \bfr} + \frac{\varepsilon}{\mid \bM^{\bfm} \mid^{2} - \mu} \sum_{\bfr \in \mathcal{S}^{\perp}} \mathcal{K}_{\sigma}(\bfm, \bfr) \right] c_{\perp}(\bfr)\ ,\ \bfm\in\mathcal{S}^\perp. 
\end{equation}
For all $\mu$ in a fixed neighborhood of $\mu^{(0)}=|\bM|^2$, we have for all $\bfm\in\mathcal{S}^\perp$ that $\Big| |\bM^\bfm|^2-\mu \Big|\ge\theta>0$ independent of $\eps$. It follows that for all $|\eps|<\eps^0$ sufficiently small, 
$(I + \varepsilon \mathcal{T}_{\mathcal{K}_{\sigma}}(\mu) )^{-1}$ is well-defined as a bounded operator on $l^2(\mathcal{S}^\perp)$. 
Solving  \eqref{c-perp-compact} for $c^{\eps}_{\perp}[c_\parallel,\mu]$ and substituting into \eqref{lsr-coupled1} we conclude:
\begin{prop}\label{eig-cond}
For all $|\eps|<\eps^0$, $\mu$ is an $L^2_{\bM,\sigma}$ eigenvalue if and only if $\mathcal{M}_{\sigma}(\mu, \varepsilon)=0$,
where 
\begin{equation}\label{bif-eqn}
\mathcal{M}_{\sigma}(\mu, \varepsilon) \equiv \mid \bM \mid ^{2} - \mu + \varepsilon \ \mathcal{K}_{\sigma}(0, -1, 0, -1) + \varepsilon^{2}\sum_{\bfr \in \mathcal{S}^{\perp}}  \mathcal{K}_{\sigma}(0, -1, \bfr) [ (I+  \varepsilon \ \mathcal{T}_{\mathcal{K}_{\sigma}}(\mu))^{-1}F_{\sigma}(\bfm, \mu)](\bfr)
\end{equation}
is analytic in a neighborhood about $(\varepsilon,\mu) = (0,\mu_S^{(0)})=(0,|\bM|^2)$.
\end{prop}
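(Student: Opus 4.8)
The plan is to carry out the Lyapunov--Schmidt reduction indicated in the text just above, following the honeycomb analogue of \cite[Section 6]{FW:12}. By Proposition \ref{equiv-evp}, $\mu$ is an $L^2_{\bM,\sigma}$ eigenvalue of $H^\eps$ if and only if the algebraic system \eqref{cm-sig} has a nontrivial solution $\{c(\bfm)\}_{\bfm\in\mathcal{S}}\in l^2(\mathcal{S})$. Using the splitting $l^2(\mathcal{S}) = \C\oplus l^2(\mathcal{S}^\perp)$ with $c_\parallel \equiv c(0,-1)$ and $c_\perp\equiv\{c(\bfm)\}_{\bfm\in\mathcal{S}^\perp}$, the system \eqref{cm-sig} is equivalent to the coupled pair \eqref{lsr-coupled1}--\eqref{lsr-coupled2}, the second of which is \eqref{c-perp-compact}: $(I+\eps\mathcal{T}_{\mathcal{K}_\sigma}(\mu))c_\perp = \eps\,c_\parallel\,F_\sigma(\mu)$.

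First I would prove that $I+\eps\mathcal{T}_{\mathcal{K}_\sigma}(\mu)$ is boundedly invertible on $l^2(\mathcal{S}^\perp)$ for $(\eps,\mu)$ in a fixed neighborhood of $(0,\mu_S^{(0)})$. The crucial input from Theorem \ref{H0-mult4} is that the representative $(0,-1)$ (see Remark \ref{e-class}) is the \emph{unique} $\bfm\in\mathcal{S}$ with $|\bM^\bfm|^2 = |\bM|^2 = \mu_S^{(0)}$; hence there is $\theta>0$ and a neighborhood of $\mu_S^{(0)}$ on which $\bigl||\bM^\bfm|^2-\mu\bigr|\ge\theta$ for all $\bfm\in\mathcal{S}^\perp$. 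Since $V\in C^\infty(\R^2/\Lambda)$, its Fourier coefficients $V_\bfm$ decay faster than any power of $|\bfm|$, so the kernel $\mathcal{K}_\sigma(\bfm,\bfr)$ from \eqref{cK-def} defines a Hilbert--Schmidt (in particular bounded, compact) operator on $l^2(\mathcal{S}^\perp)$; together with the lower bound $\theta$, the operator $\mathcal{T}_{\mathcal{K}_\sigma}(\mu)$, whose kernel is $\mathcal{K}_\sigma(\bfm,\bfr)/(|\bM^\bfm|^2-\mu)$, satisfies $\|\mathcal{T}_{\mathcal{K}_\sigma}(\mu)\|\le C$ uniformly in $\mu$ near $\mu_S^{(0)}$. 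A Neumann series then gives, for $|\eps|<\eps^0$ small, $(I+\eps\mathcal{T}_{\mathcal{K}_\sigma}(\mu))^{-1} = \sum_{n\ge0}(-\eps)^n\mathcal{T}_{\mathcal{K}_\sigma}(\mu)^n$; since each matrix entry of $\mathcal{T}_{\mathcal{K}_\sigma}(\mu)$ is a rational function of $\mu$ with nonvanishing denominator on this neighborhood, $\mu\mapsto\mathcal{T}_{\mathcal{K}_\sigma}(\mu)$ is norm-analytic and the series converges uniformly, so $(\eps,\mu)\mapsto(I+\eps\mathcal{T}_{\mathcal{K}_\sigma}(\mu))^{-1}$ is jointly analytic. (Equivalently one may invoke the analytic Fredholm theorem, since $I+\eps\mathcal{T}_{\mathcal{K}_\sigma}(\mu)$ is identity-plus-analytic-compact.)

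Next I would solve \eqref{c-perp-compact} for $c_\perp = \eps\,c_\parallel\,(I+\eps\mathcal{T}_{\mathcal{K}_\sigma}(\mu))^{-1}F_\sigma(\mu)$, noting that $F_\sigma(\bfm,\mu) = -\mathcal{K}_\sigma(\bfm,0,-1)/(|\bM^\bfm|^2-\mu)$ lies in $l^2(\mathcal{S}^\perp)$ and depends analytically on $\mu$, again by the decay of the $V_\bfm$ and the lower bound $\theta$. Substituting this into the scalar equation \eqref{lsr-coupled1} and factoring out $c_\parallel$ yields $\mathcal{M}_\sigma(\mu,\eps)\,c_\parallel = 0$, where $\mathcal{M}_\sigma$ is precisely the expression \eqref{bif-eqn}. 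If $c_\parallel=0$ then $c_\perp=0$, so a nontrivial solution of \eqref{cm-sig} forces $c_\parallel\neq0$ and hence $\mathcal{M}_\sigma(\mu,\eps)=0$; conversely, if $\mathcal{M}_\sigma(\mu,\eps)=0$ one takes $c_\parallel=1$, defines $c_\perp$ by the displayed formula, and checks that \eqref{lsr-coupled1}--\eqref{lsr-coupled2} hold, producing a nonzero element of the kernel. Analyticity of $\mathcal{M}_\sigma$ near $(0,\mu_S^{(0)})$ is then immediate: it is the sum of a polynomial in $\mu$, an $\eps$-linear term with constant coefficient, and an $\eps^2$ term equal to the $l^2(\mathcal{S}^\perp)$ inner product of the analytic $l^2$-valued data $\mathcal{K}_\sigma(0,-1,\cdot)$ and $(I+\eps\mathcal{T}_{\mathcal{K}_\sigma}(\mu))^{-1}F_\sigma(\mu)$, hence a uniformly convergent series of analytic functions.

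I expect the main obstacle to be the uniform-in-$\mu$ bookkeeping: verifying that $F_\sigma(\mu)$, the row $\mathcal{K}_\sigma(0,-1,\cdot)$, and the operator $\mathcal{T}_{\mathcal{K}_\sigma}(\mu)$ are genuinely in $l^2(\mathcal{S}^\perp)$ (resp. bounded on it) with norms and analytic dependence controlled solely by the rapid decay of the Fourier coefficients of the smooth potential $V$. Once these estimates are established, the Lyapunov--Schmidt reduction and the analyticity claim follow in a routine manner, exactly as in \cite[Section 6]{FW:12}.
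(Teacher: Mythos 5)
Your proof is correct and follows essentially the same Lyapunov--Schmidt route as the paper's sketch: the equivalence from Proposition \ref{equiv-evp}, the splitting $l^2(\mathcal{S})=\C\oplus l^2(\mathcal{S}^\perp)$, bounded invertibility of $I+\eps\mathcal{T}_{\mathcal{K}_\sigma}(\mu)$ from the uniform gap $\bigl||\bM^\bfm|^2-\mu\bigr|\ge\theta$ on $\mathcal{S}^\perp$ (which, as you note, is really Theorem \ref{H0-mult4}(3)), and back-substitution into the scalar equation. You fill in more detail than the paper does on two points the paper leaves implicit --- that the rapid decay of the $V_\bfm$ makes $\mathcal{T}_{\mathcal{K}_\sigma}(\mu)$ a genuinely bounded (Hilbert--Schmidt) operator on $l^2(\mathcal{S}^\perp)$, and the ``$c_\parallel=0\Rightarrow c_\perp=0$'' step establishing the forward implication of the biconditional --- both of which are sound and welcome additions.
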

Since $\mathcal{M}_{\sigma}(\mu_S^{(0)},0)=0$ and $\D_\mu\mathcal{M}_{\sigma}(\mu_S^{(0)},0)=-1\ne0$, the implicit function theorem, implies that  there is a function $\varepsilon \rightarrow \mu^{\eps}$, defined and analytic for $|\eps|<\eps^1\le\eps^0$,  such that
$ \mathcal{M}_{\sigma}(\mu_S^{\eps}, \varepsilon)  = 0$.
Thus taking $c^{\eps}_{\parallel} \equiv 1$, the solution of the coupled system \eqref{lsr-coupled1}-\eqref{lsr-coupled2}, for  $\vert \eps \vert < \eps^{1}$,  is
\begin{align}
 \mu_S^{\eps} &= \mid \bM \mid^{2} + \varepsilon \mathcal{K}_{\sigma}(0, -1, 0, -1) + \mathcal{O}(\varepsilon^{2}) 
  \label{sig-eig}\\
c_{\parallel}^\varepsilon &= c(0,-1) \equiv 1\nn\\
c_{\perp}^\varepsilon &= \{c^\varepsilon(\bfm)\}_{\bfm \in \mathcal{S}^{\perp}} = \varepsilon (I+  \varepsilon \mathcal{T}_{\mathcal{K}_{\sigma}}(\mu))^{-1}F_{\sigma}(\bfm, \mu^{\eps}),  \; \bfm \in \mathcal{S}^{\perp},\label{cperp-eps}\\
& \text{where }F_{\sigma}(\bfm, \mu^{\eps}) = \frac{ -\mathcal{K}_{\sigma}(\bfm, 0, -1) }{\mid \bM^{\bfm} \mid^{2} - \mu^{\eps}}.
\nn\end{align}

From \eqref{sig-eig} we may now make explicit the splitting of the four-fold degenerate $L^2_\bM$ eigenvalue of 
 $H^{\eps}=-\Delta+\eps V$  for $\eps$ non-zero and small. By \eqref{cK-def} and the relations among the Fourier coefficients
  of $V$ displayed in \eqref{Vcycle1} we have
\begin{equation}
 \mathcal{K}_{\sigma}(0, -1, 0, -1)\ =\ V_{00}\ +\ \left(\ \sigma^3+\sigma\ \right)\ V_{01}\ +\ \sigma^2\ V_{11}
 \label{Ksig0}
 \end{equation}
Therefore, 
\begin{align}
\mathcal{K}_{\pm1}(0, -1, 0, -1) &= V_{0,0} \pm 2 V_{0,1} + V_{1,1} \label{Ksig_pm1}\\
\mathcal{K}_{+i}(0, -1, 0, -1)&= \mathcal{K}_{-i}(0, -1, 0, -1)\ =\  V_{0,0}  - V_{1,1}\label{Ksig-eye} 
\end{align}
This establishes that for typical choices of Fourier coefficients --- specifically 
 $V_{0,0} \pm 2 V_{0,1} + V_{1,1}\ne V_{0,0}  - V_{1,1}$ or equivalently $V_{1,1}\ne V_{0,1}$--- and for  $\eps$ small and non-zero,  the 
multiplicity four $L^2_\bM$ eigenvalue, $\mu_S^{(0)}=|\bM|^2$, splits, at order $\eps$, distinct $L^2_{\bM,+1}$ and $L^2_{\bM,-1}$ eigenvalues
and an $L^2_\bM-$double eigenvalue in the subspace $L^2_{\bM,+i}\oplus L^2_{\bM,-i}$. Note in fact that the latter is an exact (to all orders) double eigenvalue in $L^2_{\bM,+i}\oplus L^2_{\bM,-i}$. Indeed, consider the simple $L^2_{\bM,+i}$ eigenvalue,
whose existence is guaranteed by the above proof for $\sigma=+i$. Applying $\mathcal{P}\circ\mathcal{C}$ to the corresponding eigenfunction we obtain an eigenfunction in the space $L^2_{\bM,+i}$ with the identical eigenvalue. This must coincide with the simple eigenvalue constructed for $\sigma=-i$.
 Summarizing we have:
 

\begin{thm}\label{mu-epsilon-evals}
Consider $H^{\eps}=-\Delta+\eps V$, where $V$ is admissible (Definition \ref{def:sq-pot}) and $0<|\eps|<\eps_1$ is sufficiently small.
Assume  the non-degeneracy condition on distinguished Fourier coefficients:
\begin{equation}
V_{1,1}\ne \pm V_{0,1},\label{nodeF}
\end{equation}
 where $V_{m_1,m_2}$ denotes the $(m_1,m_2)$ Fourier coefficient of $V$.
 Then the $4-$dimensional eigenspace of $H^{0}$ corresponding to the eigenvalue $\mu_S^{(0)}=|\bM|^2$ perturbs to
a $2-$dimensional eigenspace with eigenvalue $\mu_S^{\eps}$ and additionally  two $1-$dimensional eigenspaces with eigenvalues
 $\mu^{\eps}_{(+1)}$ and $\mu^{\eps}_{(-1)}$ as follows:
\begin{enumerate}
\item  $\mu_S^{\eps}$ is of geometric multiplicity 2 eigenvalue, with a $2-$dimensional eigenspace
 $\mathbb{X}_{i}\subset  L_{\bM,i}^{2}$ and $ \mathbb{X}_{-i} \subset L_{\bM,-i}^{2}$, and has the expansion
\begin{equation}\label{muK-eval-expressions-1}
\begin{split}
\mu_S^{\eps}
&=\abs{\bM}^{2} + \eps ( V_{0,0}  - V_{1,1}  ) + \mathcal{O}(\eps^{2}).
\end{split}
\end{equation}
Associated with it are the eigenstates  $\Phi_{1}^{\eps} \in L^{2}_{\bM, i}$ and $\Phi_{2}^{\eps} \in L^{2}_{\bM, -i}$, related by the symmetry:
\[
\Phi_{2}(\bx) = \left(\mathcal{P}\circ\mathcal{C}\right)[\Phi_1](\bx)=\conjugatet{\Phi_{1}(-\bx)},
\]
which we also denote $\Phi_{(+i)}^{\eps}$ and $\Phi_{(-i)}^{\eps}$, respectively. Their
 Fourier expansions are:
\begin{align}
\Phi_{(+i)}^\eps = \Phi_{1}^{\eps}(\bx) \ &= \ \sum_{\bfm\in \mathcal{S}} c^{\eps}_{(+i)}(\bfm) \left( e^{i\bM^{\bfm}\cdot \bx} - i e^{iR\bM^{\bfm}\cdot \bx} - e^{iR^{2}\bM^{\bfm}\cdot \bx} + i e^{iR^{3}\bM^{\bfm}\cdot \bx} \right) , \text{ and }\\
\Phi_{(-i)}^\eps =  \Phi_{2}^{\eps}(\bx) \ &= \ \sum_{\bfm\in \mathcal{S}} \conjugatet{c^{\eps}_{(+i)}(\bfm)} \left( e^{i\bM^{\bfm}\cdot \bx} + i e^{iR\bM^{\bfm}\cdot \bx} - e^{iR^{2}\bM^{\bfm}\cdot \bx} - i e^{iR^{3}\bM^{\bfm}\cdot \bx} \right) .
\end{align}
\item 
The distinct eigenvalues $\mu_{(+1)}^{\eps}$ and  $\mu_{(-1)}^{\eps}$ is each of  geometric multiplicity $1$, with corresponding $1-$dimensional eigenspaces $\mathbb{X}_{\pm1}  \subset L_{\bM,\pm1}^{2}$, and they are given by :
\begin{equation}\label{muK-eval-expressions-2}
\begin{split}
\mu_{(\pm1)}^{\eps}
&= \abs{\bM}^{2} + \eps ( V_{0,0} \pm 2 V_{0,1} + V_{1,1} ) + \mathcal{O}(\eps^{2}),
\end{split}
\end{equation}
with associated eigenstates $\Phi_{(\pm 1)}^{\eps}$:
\begin{equation}
\Phi_{(\pm1)}^{\eps}(\bx) \ = \ \sum_{\bfm\in \mathcal{S}} c_{(\pm 1)}^{\eps}(\bfm) \left( e^{i\bM^{\bfm}\cdot \bx} + e^{iR\bM^{\bfm}\cdot \bx} + e^{iR^{2}\bM^{\bfm}\cdot \bx} +  e^{iR^{3}\bM^{\bfm}\cdot \bx} \right) .
\end{equation}
\end{enumerate}
\end{thm}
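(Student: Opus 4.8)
The plan is to assemble the theorem from the three ingredients already developed in this section: the reduction of the $L^2_{\bM,\sigma}$ eigenvalue problem to the scalar bifurcation equation $\mathcal{M}_\sigma(\mu,\eps)=0$ (Proposition \ref{eig-cond}), the explicit leading-order expansion \eqref{sig-eig} together with the formulas \eqref{Ksig_pm1}--\eqref{Ksig-eye} for $\mathcal{K}_\sigma(0,-1,0,-1)$, and the $\mathcal{P}\circ\mathcal{C}$ symmetry linking $L^2_{\bM,+i}$ and $L^2_{\bM,-i}$. First I would record that, by Theorem \ref{H0-mult4}, $\mu_S^{(0)}=|\bM|^2$ is a four-fold $L^2_\bM$ eigenvalue which splits, under the decomposition \eqref{decompL2}, into one simple eigenvalue in each of $L^2_{\bM,\sigma}$, $\sigma\in\{+1,-1,+i,-i\}$, all equal at $\eps=0$. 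For each fixed $\sigma$, Proposition \ref{eig-cond} and the implicit function theorem (using $\partial_\mu\mathcal{M}_\sigma(\mu_S^{(0)},0)=-1\neq 0$) give an analytic branch $\eps\mapsto\mu^\eps_{(\sigma)}$ with a unique eigenvalue near $\mu_S^{(0)}$, and \eqref{sig-eig} gives $\mu^\eps_{(\sigma)}=|\bM|^2+\eps\,\mathcal{K}_\sigma(0,-1,0,-1)+\mathcal{O}(\eps^2)$, with eigenvector given by \eqref{cperp-eps} assembled into an $L^2_{\bM,\sigma}$ state via Proposition \ref{phi_sig}.

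Next I would feed in the computation \eqref{Ksig0} of $\mathcal{K}_\sigma(0,-1,0,-1)$, which uses \eqref{cK-def} and the Fourier-coefficient relations \eqref{Vcycle1} for admissible $V$: this yields $\mathcal{K}_{\pm1}(0,-1,0,-1)=V_{0,0}\pm 2V_{0,1}+V_{1,1}$ and $\mathcal{K}_{+i}(0,-1,0,-1)=\mathcal{K}_{-i}(0,-1,0,-1)=V_{0,0}-V_{1,1}$. Under the non-degeneracy hypothesis \eqref{nodeF}, $V_{1,1}\neq\pm V_{0,1}$, the three values $V_{0,0}+2V_{0,1}+V_{1,1}$, $V_{0,0}-2V_{0,1}+V_{1,1}$, $V_{0,0}-V_{1,1}$ are pairwise distinct (note $V_{0,1}=V_{1,0}$ by \eqref{Vcycle1}), so for $\eps$ small and nonzero the branches $\mu^\eps_{(+1)}$, $\mu^\eps_{(-1)}$, and $\mu^\eps_{(\pm i)}$ are mutually distinct; this proves items (1) and (2) once the coincidence $\mu^\eps_{(+i)}=\mu^\eps_{(-i)}$ is established.

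To obtain that coincidence I would argue, as sketched after \eqref{Ksig-eye}, that $\mathcal{P}\circ\mathcal{C}$ maps $L^2_{\bM,+i}$ bijectively onto $L^2_{\bM,-i}$ (the last statement of Proposition \ref{phi_sig}) and commutes with $H^\eps=-\Delta+\eps V$ for real admissible $V$ (since $V$ is $\mathcal{P}$- and $\mathcal{C}$-invariant). Hence applying $\mathcal{P}\circ\mathcal{C}$ to the simple $L^2_{\bM,+i}$ eigenfunction $\Phi^\eps_1$ with eigenvalue $\mu^\eps_{(+i)}$ produces an eigenfunction in $L^2_{\bM,-i}$ with the same eigenvalue; by uniqueness of the branch in $L^2_{\bM,-i}$ this equals $\mu^\eps_{(-i)}$, and the eigenstate is $\Phi^\eps_2=\overline{\Phi^\eps_1(-\cdot)}$. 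Since $L^2_{\bM,+i}$ and $L^2_{\bM,-i}$ are orthogonal summands of $L^2_\bM$, $\Phi^\eps_1$ and $\Phi^\eps_2$ are linearly independent, so $\mu_S^\eps:=\mu^\eps_{(+i)}=\mu^\eps_{(-i)}$ has geometric multiplicity exactly two in $L^2_\bM$; combined with \eqref{Ksig-eye} this gives \eqref{muK-eval-expressions-1}. The Fourier expansions of $\Phi^\eps_{(\pm i)}$ and $\Phi^\eps_{(\pm1)}$ are then just the templates of Proposition \ref{phi_sig} with coefficients from \eqref{cperp-eps}, and the relation $c^\eps_{(-i)}(\bfm)=\overline{c^\eps_{(+i)}(\bfm)}$ is the coefficient form of $\Phi^\eps_2=(\mathcal{P}\circ\mathcal{C})[\Phi^\eps_1]$, again from Proposition \ref{phi_sig}. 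The only real subtlety — the ``main obstacle'' — is verifying that the branch $\mu^\eps_{(\sigma)}$ furnished by the implicit function theorem is genuinely the full multiplicity of eigenvalues of $H^\eps$ near $\mu_S^{(0)}$ in each $L^2_{\bM,\sigma}$ (i.e.\ no eigenvalues are lost or spuriously introduced), which follows from the ``if and only if'' in Proposition \ref{eig-cond} together with the fact that $\mathcal{M}_\sigma(\cdot,\eps)$ has, by analyticity and $\partial_\mu\mathcal{M}_\sigma\neq 0$, exactly one zero in a fixed neighborhood of $\mu_S^{(0)}$ for $\eps$ small; the rest is bookkeeping.
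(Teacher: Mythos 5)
Your proposal is correct and follows essentially the same path as the paper: split $L^2_\bM$ into the four $\mathcal{R}$-eigenspaces, apply the Lyapunov--Schmidt reduction and implicit function theorem via Proposition~\ref{eig-cond} to get a simple analytic branch in each, read off the $\mathcal{O}(\eps)$ term from $\mathcal{K}_\sigma(0,-1,0,-1)$, and use $\mathcal{P}\circ\mathcal{C}$ commuting with $H^\eps$ to force $\mu^\eps_{(+i)}=\mu^\eps_{(-i)}$ exactly. One small remark: pairwise distinctness of all three leading-order values also requires $V_{0,1}\neq 0$ (so that $\mu^\eps_{(+1)}\neq\mu^\eps_{(-1)}$ at order $\eps$), but the hypothesis \eqref{nodeF} alone already guarantees what the theorem actually needs, namely that the double eigenvalue $\mu_S^\eps$ is separated from $\mu^\eps_{(\pm 1)}$; the paper glosses over the same point.
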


Theorems \ref{mu-epsilon-evals} and  \ref{quad-disp} imply 
\begin{cor}\label{small-eps-disp}  Consider the setup of Theorem \ref{mu-epsilon-evals}. 
There exists $\eps_1>0$ and $\kappa_1(\eps)>0$, which tends to zero as $\eps\to0$, such that the following holds. 
Fix $\eps\in(-\eps_1,\eps_1)\setminus\{0\}$. Then,
\begin{enumerate}
\item  for all $|\kappa|=\sqrt{\kappa_1^2+\kappa_2^2}<\kappa_1(\eps)$, the two dispersion surfaces which touch at  $\bM$ (and therefore the vertices of $\brill$)  are locally described by:
\begin{equation}
\mu_{\pm}^\eps(\bM + \kappa) - \mu_S^\eps =
(1-\alpha^\eps)|\kappa|^2+\mathscr{Q}^\eps_6(\kappa)\ \pm \sqrt{\Big|\ \gamma^\eps(\kappa_{1}^{2} - \kappa_{2}^{2})+ 2\beta^\eps \kappa_{1}\kappa_{2}\ \Big|^2\ +\ \mathscr{Q}^\eps_8(\kappa)}
\label{mu_eps-diff}\end{equation}

If  $|V_{11}| \ne  |V_{01}|$, then coefficients $\alpha^\eps$, $\beta^\eps$ and $\gamma^\eps$ are expressions which depend on 
$\{\Phi_{(+i)}^\eps,\Phi_{(-i)}^\eps\}$ and have the following expansions for $\eps$ positive and small:
\begin{align}
\alpha^\eps &=  \frac{32\pi^2}{\eps} \left(\frac{V_{11}}{V_{11}^2 - V_{01}^2}\right) + \mathcal{O}(1);\label{alph}\\
\beta^\eps &= \frac{32\pi^2}{\eps}\left( \frac{V_{11}}{V_{11}^2 - V_{01}^2}\right)+ \mathcal{O}(1);\label{beta}\\
\gamma^\eps &= -\frac{32\pi^2}{\eps} \ i  \left(\frac{V_{01}}{V_{11}^2 - V_{01}^2}\right)+ \mathcal{O}(1) . \label{gamma}
\end{align}
The functions $\mathscr{Q}^\eps_6(\kappa)=\mathcal{O}(|\kappa|^6)$ and $\mathscr{Q}^\eps_8(\kappa)=\mathcal{O}(|\kappa|^8)$ are analytic in a complex neighorhood of the origin in $\C^2$;
see Theorem \ref{quad-disp} for more discussion. 
\item Let $V$ be admissible and assume the following (generically satisfied) conditions on Fourier coefficients:
\[ V_{11} \neq \pm V_{01},\quad  V_{11}\ne0\ \ {\rm and}\ V_{01}\ne0.\]
Then,  for all $\eps\in (-\eps_1,\eps_1)\setminus\{0\}$, the coefficients $\alpha^\eps, \beta^\eps $ and $\gamma^\eps $ are all non-zero.
\item In the case where $V$ is admissible and also reflection invariant, by Corollary \ref{rho-inv}, we have $\gamma^\eps=0$ for any $\eps$, and in particular $V_{01}=0$. If $V_{11}\ne0$, then the coefficients 
 for all $\eps\in (-\eps_1,\eps_1)\setminus\{0\}$, the coefficients $\alpha^\eps$ and $ \beta^\eps $ are both non-zero.

\end{enumerate}
\end{cor}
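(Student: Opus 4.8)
The plan is to derive the statement by combining Theorem~\ref{mu-epsilon-evals} and Theorem~\ref{quad-disp} with an explicit evaluation, as $\eps\to0$, of the inner‑product coefficients in \eqref{albega}. First I would check that, for $0<|\eps|<\eps_1$ with $\eps_1$ small, $H^\eps=-\Delta+\eps V$ satisfies hypotheses (H1)--(H3) of Theorem~\ref{quad-disp} at $\bk=\bM$. Hypotheses (H1)--(H2) are precisely part~(1) of Theorem~\ref{mu-epsilon-evals}: $\mu_S^\eps$ is a simple $L^2_{\bM,+i}$ eigenvalue with normalized eigenfunction $\Phi_1^\eps=\Phi_{(+i)}^\eps$, and $\Phi_2^\eps=(\mathcal{P}\circ\mathcal{C})[\Phi_1^\eps]\in L^2_{\bM,-i}$ is an eigenfunction for the same eigenvalue. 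Hypothesis (H3) follows from \eqref{muK-eval-expressions-1}--\eqref{muK-eval-expressions-2}: since $\mu_{(\pm1)}^\eps-\mu_S^\eps=2\eps(V_{11}\pm V_{01})+\mathcal{O}(\eps^2)$, the non‑degeneracy assumption $V_{11}\ne\pm V_{01}$ of \eqref{nodeF} forces $\mu_{(\pm1)}^\eps\ne\mu_S^\eps$ for $0<|\eps|<\eps_1$, i.e.\ $\mu_S^\eps$ is neither an $L^2_{\bM,+1}$ nor an $L^2_{\bM,-1}$ eigenvalue of $H^\eps$. Theorem~\ref{quad-disp} then gives, for each such fixed $\eps$, the normal form \eqref{mu_eps-diff} on a disc of radius $\kappa_1(\eps)$ governed by \eqref{km-choose} (which shrinks to $0$ as $\eps\to0$ because $\beta^\eps,\gamma^\eps$ blow up), with $\alpha^\eps,\beta^\eps,\gamma^\eps$ the expressions \eqref{albega} formed from $\Phi_1^\eps,\Phi_2^\eps$ and the reduced resolvent $\mathscr{R}^\eps(\mu_S^\eps)=(H^\eps-\mu_S^\eps I)^{-1}$ on $\{\Phi_1^\eps,\Phi_2^\eps\}^\perp$, and with analytic $\mathscr{Q}_6^\eps,\mathscr{Q}_8^\eps$.

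The heart of the matter is the asymptotics \eqref{alph}--\eqref{gamma}, and the key point is that $\mathscr{R}^\eps(\mu_S^\eps)$ has operator norm of order $\eps^{-1}$. Indeed, by Theorem~\ref{H0-mult4} the $L^2_\bM$ eigenvalue $|\bM|^2$ of $-\Delta$ has multiplicity four, with eigenspace $\mathrm{span}\{\Phi_{+1}^{(0)},\Phi_{-1}^{(0)},\Phi_{+i}^{(0)},\Phi_{-i}^{(0)}\}$, and is the \emph{lowest} $L^2_\bM$ eigenvalue; turning on $\eps V$ splits this cluster into $\mu_S^\eps$ (multiplicity two, in the $\pm i$ sector) and the two simple eigenvalues $\mu_{(\pm1)}^\eps$, at distance exactly of order $\eps$ from $\mu_S^\eps$ by \eqref{nodeF}, while every other $L^2_\bM$ eigenvalue of $H^\eps$ stays at distance $\ge c>0$ from $\mu_S^\eps$ uniformly for small $\eps$. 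Writing the spectral decomposition
\[
\mathscr{R}^\eps(\mu_S^\eps)\ =\ \frac{P_{(+1)}^\eps}{\mu_{(+1)}^\eps-\mu_S^\eps}\ +\ \frac{P_{(-1)}^\eps}{\mu_{(-1)}^\eps-\mu_S^\eps}\ +\ R_\sharp^\eps,\qquad \bigl\|R_\sharp^\eps\bigr\|_{L^2\to L^2}=\mathcal{O}(1),
\]
with $P_{(\pm1)}^\eps=\langle\Phi_{(\pm1)}^\eps,\,\cdot\,\rangle\,\Phi_{(\pm1)}^\eps$ the rank‑one projections onto the normalized simple eigenstates, and substituting into \eqref{albega} (noting that $R_\sharp^\eps$ contributes only $\mathcal{O}(1)$ since $\|\partial_{x_l}\Phi_j^\eps\|_{L^2}=\mathcal{O}(1)$) yields, e.g.,
\[
\alpha^\eps\ =\ \frac{4\,\bigl|\langle\partial_{x_1}\Phi_1^\eps,\Phi_{(+1)}^\eps\rangle\bigr|^2}{\mu_{(+1)}^\eps-\mu_S^\eps}\ +\ \frac{4\,\bigl|\langle\partial_{x_1}\Phi_1^\eps,\Phi_{(-1)}^\eps\rangle\bigr|^2}{\mu_{(-1)}^\eps-\mu_S^\eps}\ +\ \mathcal{O}(1),
\]
and bilinear analogues for $\beta^\eps,\gamma^\eps$. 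Because $\eps\mapsto\Phi_j^\eps,\Phi_{(\pm1)}^\eps$ are analytic near $\eps=0$ (Section~\ref{sec:eps-small}), each $\eps$‑dependent quantity may be replaced by its $\eps=0$ value with an error $\mathcal{O}(\eps)\cdot\mathcal{O}(\eps^{-1})=\mathcal{O}(1)$; the limiting eigenfunctions $\Phi_1^0,\Phi_2^0,\Phi_{(\pm1)}^0$ are the explicit plane‑wave combinations of Theorem~\ref{H0-mult4}, so the numerators reduce to elementary integrals of products of the plane waves $e^{iR^j\bM\cdot\bx}$ — the cross‑sector inner products involving $\Phi_2^0$ being obtained from those for $\Phi_1^0$ via $\Phi_2^0=(\mathcal{P}\circ\mathcal{C})[\Phi_1^0]$ and $\partial_{x_l}(\mathcal{P}\circ\mathcal{C})=-(\mathcal{P}\circ\mathcal{C})\partial_{y_l}$, just as in the proof that $A^{1,1}=(A^{2,2})^T$ — while the denominators are the expansions above. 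Carrying out this bookkeeping, recorded in Appendix~\ref{mu-eps-coeff}, produces \eqref{alph}--\eqref{gamma}. When $V$ is also reflection invariant, Corollary~\ref{rho-inv} gives $\gamma^\eps\equiv0$; moreover \eqref{Vexpand-refl} shows that only the Fourier coefficients $V_{m,m}$ and their $\tR$‑orbit members are non‑zero, so $V_{0,1}=0$, in agreement with \eqref{gamma}.

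The non‑vanishing statements of parts~(2) and (3) are then immediate. By \eqref{alph}--\eqref{gamma}, each of $\alpha^\eps,\beta^\eps$ equals $\tfrac{32\pi^2}{\eps}\,\tfrac{V_{11}}{V_{11}^2-V_{01}^2}+\mathcal{O}(1)$, while $\gamma^\eps=-\tfrac{32\pi^2 i}{\eps}\,\tfrac{V_{01}}{V_{11}^2-V_{01}^2}+\mathcal{O}(1)$, and the denominator is non‑zero precisely under \eqref{nodeF}. If in addition $V_{11}\ne0$ and $V_{01}\ne0$, the leading coefficients of all three are non‑zero, so after shrinking $\eps_1$ so that the $\mathcal{O}(\eps^{-1})$ term dominates the $\mathcal{O}(1)$ remainder in each, we obtain $\alpha^\eps,\beta^\eps,\gamma^\eps\ne0$ for all $\eps\in(-\eps_1,\eps_1)\setminus\{0\}$, which is part~(2). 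In the reflection‑invariant case $V_{01}=0$ and $\gamma^\eps\equiv0$, whereas $\alpha^\eps$ and $\beta^\eps$ both equal $\tfrac{32\pi^2}{\eps V_{11}}+\mathcal{O}(1)$, non‑zero for $0<|\eps|<\eps_1$ (after shrinking $\eps_1$) whenever $V_{11}\ne0$; this is part~(3).

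I expect the main obstacle to lie in the second paragraph: proving rigorously that the rank‑two singular part of $\mathscr{R}^\eps(\mu_S^\eps)$ accounts for the entire $\eps^{-1}$ growth — i.e.\ that $R_\sharp^\eps$ is bounded uniformly for small $\eps$, which uses the lowest‑eigenvalue statement of Theorem~\ref{H0-mult4} together with analytic (Kato--Rellich) perturbation theory for the isolated four‑eigenvalue cluster near $|\bM|^2$ — and then pushing the elementary but delicate plane‑wave bookkeeping (the choice of representatives in $\mathcal{S}$, the phases of $\Phi_1^0$ and $\Phi_{(\pm1)}^0$, and the $\mathcal{P}\circ\mathcal{C}$ relations for the cross terms) through to the precise constants in \eqref{alph}--\eqref{gamma}. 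None of this is conceptually hard, but it is where sign and factor errors are most likely, which is why it is deferred to Appendix~\ref{mu-eps-coeff}.
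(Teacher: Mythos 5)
Your proposal is correct and takes essentially the same route as the paper: verify (H1)--(H3) from Theorem \ref{mu-epsilon-evals} and the non-degeneracy condition \eqref{nodeF}, apply Theorem \ref{quad-disp} to get the normal form \eqref{mu_eps-diff}, and obtain the asymptotics \eqref{alph}--\eqref{gamma} by spectrally decomposing the reduced resolvent $\mathscr{R}^\eps(\mu_S^\eps)$ into the rank-two singular part coming from $\mu_{(\pm 1)}^\eps$ (which gives the $\mathcal{O}(\eps^{-1})$ contribution) plus a uniformly bounded remainder, and then replacing the $\eps$-dependent eigenstates by the explicit plane-wave combinations of Theorem \ref{H0-mult4}, as carried out in Appendix \ref{mu-eps-coeff}. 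The paper states the same decomposition directly as the sum over the ``nearby'' bands $q\in\{+1,-1\}$ plus the higher bands $b\ge 5$ in \eqref{dphi-exp}; your phrasing in terms of spectral projections and the uniform bound on $R_\sharp^\eps$ supplied by the isolated-cluster argument is the rigorous form of the same step.
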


\nit The proof of   Corollary \ref{small-eps-disp} is completed in Appendix \ref{mu-eps-coeff} with the derivation of expansions
\eqref{alph}, \eqref{beta} and \eqref{gamma}.

\section{ $H^\eps=-\Delta+\eps V$ for $V$ admissible and $\eps$ generic}\label{eps-generic}

Theorem \ref{mu-epsilon-evals} considers degeneracies among dispersion surfaces for all $\eps$ non-zero, real and small. In this section we extend this result to generic real values of $\eps$, with no constraint on its size. Thus, generic 
large ({\it high contrast}) potentials are included.

\begin{thm}\label{generic-eps}
Let $V$ denote an admissible potential (Definition \ref{def:sq-pot}). Let $\eps_1$ be as in Theorem  \ref{mu-epsilon-evals} and Corollary \ref{small-eps-disp}. 
 Consider either of the  two scenarios:
\begin{itemize}
\item[(I)] $V$ admissible with $V_{11} \neq \pm V_{01},\quad  V_{11}\ne0\ \ {\rm and}\ V_{01}\ne0$, or
\item[(II)] $V$ admissible and reflection invariant with $V_{11}\ne0$.
\end{itemize}
Then, there exists a discrete set $\widetilde{\mathcal C}\subset\R\setminus(0,\eps_1)$ such that if $\eps\notin\widetilde{\mathcal C}$,
 the conditions of Theorem \ref{quad-disp} are satisfied and two dispersion surfaces touch at the vertices of $\brill$. 
Moreover,  in scenario (I),  $\alpha^\eps, \beta^\eps $ and $\gamma^\eps $ are all non-zero for all $\eps\notin\widetilde{\mathcal C}$, and $\alpha^\eps$ and $ \beta^\eps $ are both non-zero for all $\eps\notin\widetilde{\mathcal C}$.
\end{thm}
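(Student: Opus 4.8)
The plan, following \cite{FW:12,FLW-MAMS:17}, is to continue the distinguished eigenvalue of Theorem \ref{mu-epsilon-evals} analytically in $\eps$ and to argue that the degeneracy structure (H1)--(H3) of Theorem \ref{quad-disp}, together with the non-vanishing of $\alpha^\eps,\beta^\eps,\gamma^\eps$, can fail only on a discrete set of $\eps$. First I would record the symmetry reduction. Since $V$ is admissible, $\mathcal{R}$ commutes with $H^\eps=-\Delta+\eps V$ on $L^2_\bM$ (Proposition \ref{extra-symm}), so $L^2_\bM=\bigoplus_{\sigma\in\{1,-1,i,-i\}}L^2_{\bM,\sigma}$ reduces $H^\eps$; each $H^\eps|_{L^2_{\bM,\sigma}}$ has compact resolvent, and $\eps\mapsto H^\eps|_{L^2_{\bM,\sigma}}$ is a self-adjoint holomorphic family of type (A) on a complex neighborhood of $\R$. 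Moreover $\mathcal{P}\circ\mathcal{C}$ commutes with $H^\eps$ (as $V$ is real and even) and is an antiunitary bijection $L^2_{\bM,+i}\to L^2_{\bM,-i}$ (Proposition \ref{phi_sig}), so these two spectra coincide with multiplicities; hence (H2) is automatic once (H1) holds, with $\Phi_2^\eps=(\mathcal{P}\circ\mathcal{C})\Phi_1^\eps$.

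By Rellich's theorem the eigenvalues of the self-adjoint analytic family $H^\eps|_{L^2_{\bM,+i}}$ split into globally real-analytic branches on $\R$; let $\eps\mapsto E_\star(\eps)$ be the branch that coincides for $0<\eps<\eps_1$ with the eigenvalue $\mu_S^\eps$ of Theorem \ref{mu-epsilon-evals}. I would then verify (H1)--(H3) with $\mu_S=E_\star(\eps)$ off a discrete set. Two distinct Rellich branches of the same family agree identically or meet only at isolated points; $E_\star$ cannot coincide identically with another branch of $H^\eps|_{L^2_{\bM,+i}}$ because it is simple for $0<\eps<\eps_1$, so the set of $\eps$ where $E_\star(\eps)$ is a non-simple $L^2_{\bM,+i}$-eigenvalue is discrete --- on each compact $\eps$-interval only finitely many branches come within a bounded distance of $E_\star$, by a uniform Weyl-type eigenvalue count (recall $V$ is bounded). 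Likewise, for $s\in\{+1,-1\}$ and each eigenvalue branch $E^s_n(\eps)$ of $H^\eps|_{L^2_{\bM,s}}$, the analytic function $E_\star(\eps)-E^s_n(\eps)$ is not identically zero: for small $\eps>0$ it equals $-2\eps\,(V_{11}+sV_{01})+\mathcal{O}(\eps^2)\neq0$ for the branch emanating from $|\bM|^2$ (by \eqref{nodeF}, using \eqref{muK-eval-expressions-1}--\eqref{muK-eval-expressions-2}) and is bounded away from $0$ for the remaining branches; hence it has discrete zeros, and again only finitely many branches matter on each compact $\eps$-interval. Collecting these (automatically $(0,\eps_1)$-avoiding) crossing and resonance sets together with $\{0\}$ gives a discrete $\widetilde{\mathcal C}_0\subset\R\setminus(0,\eps_1)$ off which $E_\star(\eps)$ is a geometrically double $L^2_\bM$-eigenvalue with the required symmetry; so the hypotheses of Theorem \ref{quad-disp} hold and two dispersion surfaces touch quadratically at every vertex of $\brill$ with normal form \eqref{mu-pert}, reducing to \eqref{mu-rho} in scenario (II) by Corollary \ref{rho-inv}.

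It remains to handle $\alpha^\eps,\beta^\eps,\gamma^\eps$. For $\eps\notin\widetilde{\mathcal C}_0$, the normalized eigenfunction $\Phi_1^\eps$ (with phase fixed locally), $\Phi_2^\eps=(\mathcal{P}\circ\mathcal{C})\Phi_1^\eps$, and the reduced resolvent $\mathscr{R}^\eps(E_\star(\eps))$ on $\mathrm{span}\{\Phi_1^\eps,\Phi_2^\eps\}^\perp$ depend analytically on $\eps$, hence so do the inner-product expressions \eqref{albega}; $\alpha^\eps,\beta^\eps,\gamma^\eps$ therefore extend to meromorphic functions of $\eps$ on a complex neighborhood of $\R$. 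By Corollary \ref{small-eps-disp}, expansions \eqref{alph}--\eqref{gamma}, each of these blows up like $\eps^{-1}$ as $\eps\to0^+$ with a nonzero coefficient under the stated hypotheses on $V_{11},V_{01}$, so none is identically zero; hence $\{\alpha^\eps=0\}$, $\{\beta^\eps=0\}$ and (in scenario (I)) $\{\gamma^\eps=0\}$ are discrete, while in scenario (II) $\gamma^\eps\equiv0$ by Corollary \ref{rho-inv}. Adjoining these discrete sets (also $(0,\eps_1)$-avoiding, by Corollary \ref{small-eps-disp}, parts 2 and 3) to $\widetilde{\mathcal C}_0$ produces the desired $\widetilde{\mathcal C}$.

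\textbf{Main obstacle.} The genuinely delicate point is the bookkeeping that turns the various ``analytic and not identically zero, hence discrete zero set'' statements into a \emph{single} discrete exceptional set: one must ensure that on every compact $\eps$-interval only finitely many eigenvalue branches of the $L^2_{\bM,\pm1}$- and $L^2_{\bM,+i}$-components interact with $E_\star$, so that the countable union of zero sets does not accumulate. This, together with the global real-analyticity of $E_\star$ and the analytic/meromorphic dependence of the reduced resolvent and of $\alpha^\eps,\beta^\eps,\gamma^\eps$ on $\eps$ (with a coherent choice of eigenfunction phase), is precisely the machinery --- analytic Fredholm theory plus uniform Weyl-type bounds --- developed in \cite{FW:12,FLW-MAMS:17}, which is why only a sketch is given.
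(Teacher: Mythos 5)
Your plan is sound and reaches the stated conclusion, but it replaces the paper's central technical device with a different one, so it is worth spelling out the comparison. The paper's sketch follows \cite{FW:12,FLW-MAMS:17} and encodes the $L^2_{\bM,\sigma}$-spectral conditions via a modified Fredholm determinant $\mathcal{E}_\sigma(\mu,\eps)$ that is jointly analytic in $(\mu,\eps)$ with no singularities: eigenvalues are zeros of $\mathcal{E}_\sigma(\cdot,\eps)$ with the correct multiplicity, so the nonvanishing for $\eps\in(0,\eps_1)$ propagates by the identity theorem on a connected domain, with no need to track poles or crossings. You instead invoke Rellich's theorem for the self-adjoint holomorphic family $H^\eps|_{L^2_{\bM,\sigma}}$ to get globally real-analytic eigenvalue branches and analytically chosen eigenprojections, then argue that the reduced resolvent (and hence $\alpha^\eps,\beta^\eps,\gamma^\eps$) is meromorphic along $\R$ with poles only where some other branch meets $E_\star$. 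This buys you directness: the eigenprojection branch $P^\eps$ is exactly what you need to define $\alpha^\eps,\beta^\eps,\gamma^\eps$, and the nonvanishing of their leading $\eps^{-1}$ singularities from Corollary \ref{small-eps-disp} immediately rules out identical vanishing. The price is that your ``not identically zero on one interval $\Rightarrow$ discrete zero set globally'' step requires propagating through poles; this does work (divide out the pole order, apply the identity theorem), but it is a point you should make explicit rather than bury in the word ``meromorphic,'' since Rellich gives real-analyticity through crossings for the eigenvalue and projection but the reduced resolvent genuinely blows up there. You correctly identify the remaining bookkeeping issue --- that only finitely many branches interact with $E_\star$ on any compact $\eps$-interval, so the countable union of exceptional sets stays discrete --- and your uniform Weyl-count argument (legitimate, since $V$ is bounded and $E_\star$ is Lipschitz in $\eps$) closes it. In short: same continuation-in-$\eps$ strategy with the same symmetry reduction and the same use of the small-$\eps$ base case, but Rellich/Kato perturbation theory in place of the Fredholm-determinant machinery; your version is arguably more explicit on how item (3) of the paper's sketch (nonvanishing of $\alpha^\eps,\beta^\eps,\gamma^\eps$) is actually established.
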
 

\begin{remark}\label{which?}
For $\eps\in(-\eps_1,\eps_1)\setminus \{0\}$, Theorem  \ref{mu-epsilon-evals} ensures that a pairs of dispersion surfaces, among the first four, touch at band degeneracies for quasi-momenta at the vertices of $\brill$; for the specific scenarios see Section \ref{numerical}. For general $\eps\notin\widetilde{\mathcal C}$ we make no assertions about which of the infinitely many dispersion surfaces touch at high-symmetry quasi-momenta. 
 But in analogy with the honeycomb setting studied in \cite{FLW-CPAM:17}, we expect for the case of a potential which is a superposition of potential wells, that in the strong binding regime there will exist quadratic degeneracies at the intersection of the first two dispersion surfaces. \end{remark}

We discuss the strategy for the proof of Theorem \ref{generic-eps}, but do not present all details. Arguments 
of this type, rooted in complex analysis, were developed in \cite{FW:12} and Appendix D of \cite{FLW-MAMS:17}.
The strategy is based on a continuation argument in the parameter $\eps$, starting with $\eps$ varying in the open interval $(0,\eps_1)$; analogous arguments apply to negative values of $\eps$. Eigenvalues, $\mu$, of the operator  $H^\eps=-\Delta +\eps V$,  in the spaces $L^2_{\bM,\sigma}$, for $\sigma=\pm1,\pm i$,  are realized as zeros of a modified Fredholm determinant $\mathcal{E}_\sigma(\mu,\eps)$. The mapping $(\mu,\eps)\mapsto \mathcal{E}_\sigma(\mu,\eps)$ is analytic. For $\eps$ real, $\mu$ is an $L^2_{\bM,\sigma}$ eigenvalue of geometric multiplicity $m$ if and only if $\mu$ is a zero of $\mathcal{E}_\sigma(\mu,\eps)$ of multiplicity $m$. The strategy of \cite{FW:12}
 (see also Appendix D of \cite{FLW-MAMS:17}) can be used to establish that there is a discrete set 
 $\widetilde{\mathcal C}\subset \R\setminus (0,\eps_1)$ such that for all $\eps\notin\widetilde{\mathcal C}$, there exists 
  $\mu_S^\eps\in\R$ such that 
  \begin{enumerate}
  \item $\mu=\mu^\eps_S\in\R$ is a simple zero of $\mathcal{E}_{+i}(\mu,\eps)$ and of $\mathcal{E}_{-i}(\mu,\eps)$.
  \item $\mathcal{E}_{1}(\mu^\eps_S,\eps)\ne0$ and $\mathcal{E}_{-1}(\mu^\eps_S,\eps)\ne0$.
  \item For scenario $(I)$, $\alpha^\eps, \beta^\eps$ and $\gamma^\eps$ are all non-zero  and $\alpha^\eps$  and for scenario $(II)$,  $\alpha^\eps$ and  $ \beta^\eps $ are non-zero.
\end{enumerate}
Therefore, by Theorem \ref{quad-disp}, for all $\eps\notin\widetilde{\mathcal C}$ there exist quadratic degeneracies at the  quasi-momentum / energy pairs $(\mu_S^\eps,\bM_\star)$, where $\bM_\star$ varies over the four vertices of $\brill$. These are locally described  by 
\eqref{quad-disp} of Theorem \ref{quad-disp}; see also \eqref{mu-rho} of Corollary \ref{rho-inv}.

\section{Computational Experiments}\label{numerical}

In this section, we describe  numerical computations of the spectrum of periodic Schr\"odinger operators with admissible potentials in the sense of Definition \ref{def:sq-pot}. We discuss these numerical results in the context our analytical results. First, in Section \ref{s:numer1}, we consider examples of 
admissible potentials which exhibit quadratic  intersections of dispersion surfaces at $\bM$; see Theorems \ref{quad-disp}, \ref{mu-epsilon-evals} and \ref{generic-eps}.  In order to observe clear numerical separation of the bands, we work here with $\eps$ generically not so small, meaning many of our results fit more into Theorem \ref{generic-eps} but display many of the effects described in the small $\eps$ setting.  

 In Section \ref{s:numer2}, we consider large amplitude  potentials and revisit the question posed in Section \ref{LiebLattice} regarding Lieb lattice potentials. 

\medskip

To numerically approximate the dispersion surfaces for the Schr\"odinger operator, we use the periodic formulation of the self-adjoint eigenvalue problem \eqref{per-evp}. For the numerical experiments of Section \ref{s:numer1} we discretized the fundamental period cell, $\Omega$, and used a finite difference approximation to $H_V(\bk)$ for a fixed $\bk\in\brill$. 
In the numerical experiments of Section \ref{s:numer2}, we discretized the mapping: $f\mapsto\  [-(\partial_x + i k_x)^2 -(\partial_y + i k_y)^2]f$ in Fourier space, and the operator $f\mapsto V f $ in physical space. 
For both approaches, we used the MATLAB function  \verb+eigs+ using the \verb+'sr'+ flag. 
For each fixed $\bk$ varying over a discretization of an appropriate subset of $\brill$, {\it i.e.} the irreducible Brillouin zone or the circuit ${\bf \Gamma} \to {\bf X} \to {\bf M} \to {\bf \Gamma}$ outlined in Figure \ref{fig:squareBZ}, we compute the five smallest eigenvalues.

\subsection{Computations of quadratic degeneracy of dispersion surfaces near $\bM$ and comparison with 
Theorem \ref{mu-epsilon-evals}} \label{s:numer1}

We consider a class of periodic potentials, which are the $\Z^2-$ periodic extension of the function 
\begin{equation} \label{e:PotForm}
V(\bx) = \sum_i  s_i f(|\bx-\bx_i|),  \qquad \qquad \bx \in\Omega=[0,1]^2. 
\end{equation}
Here $f(x) = \frac{1}{2}\left(1+ \cos(\pi x/r)\right) \chi_{\{x<r\}}$ is a compactly supported, $C^1$ function and $r=0.2$. The points $\{\bx_i\}$ are lattice points within the primitive cell, $[0,1]^2$. The binary variables, $s_i \in \{+1,-1\}$, determine the sign of the potential at the lattice points. We choose $\{\bx_i\}$ and $\{s_i\}$ so that $V(\bx)$ is an admissible potential; see Definition \ref{def:sq-pot}.  By varying $\{\bx_i\}$ and $\{s_i\}$, we show that the combinations of Fourier  coefficients: $V_{00}, V_{01}$ and $V_{11}$ ($V_{m,n} =  (2\pi)^{-2}\int_0^1 \int_0^1 \  V(x,y) \ e^{- 2 \pi i (mx + ny) } \ dx dy$), 
appearing in Theorem \ref{mu-epsilon-evals} can be varied in order to exhibit different local behavior of 
 the first four dispersion surfaces near $\bM$. 
We remark that, while Theorem \ref{mu-epsilon-evals} describes the  dispersion surfaces for the operator $H_V = - \Delta + \eps V$ near the point $\bk = \bM$  for sufficiently small and non-zero and real $\eps$, our computations are performed for $\eps = O(1)$. 

In each of Figures \ref{figure11} -- \ref{figure10} we plot, over one period cell, an admissible potential of the form in \eqref{e:PotForm} and the first five dispersion curves for $H_V = -\Delta + \eps V$, and 
  compare the numerically computed results with the assertions of  Theorem \ref{mu-epsilon-evals}. For each potential, the lattice sites, $\{\bx_i\}$, and signs, $\{s_i\}$, are easily inferred from the plot of the potential. They are also summarized in the following table: 
\begin{center}
\begin{tabular}{|c |  |l|  l |l | }
\hline
Figure & $\eps$ & $\bx_i$ & $s_i$ \\
\hline
\ref{figure11} & $2$ & $ [(0,0), \ (1/2,0), \ (0, 1/2)]$ & $(1,-1,-1)$ \\
\ref{figure13} & 2 & [(0,0), \ (1/2,0), \ (0, 1/2)] & $(-1,1,1)$\\
\ref{figure12} & $2$ & $[(1/2,1/4), \ (1/4,1/2), \ (3/4, 1/2), \ (1/2, 3/4)]$ & $(1,1,1,1)$ \\
\ref{figure9} & $4$ & $ [(0,0), \ (1/2,0), \ (0, 1/2)] $ & $(-1, -1, -1)$  \\
\ref{figure10} & $2$ & $ [(1/2,1/2)] $  & $(-1)$ \\\hline
\end{tabular}
\end{center} 
The potentials in Figures \ref{figure11}, \ref{figure13}, and \ref{figure9} are Lieb lattice potentials (see Example \ref{lieb-pot}) while the potential in Figure \ref{figure12} is a square lattice potential (see Example \ref{square-pot}); see Figure \ref{fig:Lattices}. 
The dispersion curves which pass through $\mu^\eps_i, \mu^\eps_{-i}, \mu^\eps_1$ and $ \mu^\eps_{-1}$ are plotted for quasi-momentum $\bK$ along the cyclic path ${\bf \Gamma} \to {\bf X} \to {\bf M} \to {\bf \Gamma}$ in $\brill$; see Figure \ref{fig:squareBZ}. 
 In all dispersion plots, we observe  saddle-like  touching of the dispersion surfaces as described in Theorem \ref{quad-disp}. 

\medskip

We note that the ordering of the dispersion surfaces, specified by certain Fourier coefficients of the potential, in Theorem \ref{mu-epsilon-evals} (which applies to $\eps$ sufficiently small) persists for larger (order 1) values of $\eps$. These dispersion surface orderings are summarized in the following table. 
\begin{center} 
\begin{tabular}{|c| l | c|}\hline
& Fourier Coefficient Condition & Dispersion Surface Ordering \\
\hline
(1a) & $V_{1,1} > 0, V_{0,1} < 0$ and $|V_{0,1}| < V_{1,1}$ & $\mu_{+ i}^{\eps} = \mu_{- i}^{\eps}< \mu_{+1}^{\eps} <  \mu_{-1}^{\eps}$ \\
(1b) & $0 < V_{0,1} < V_{1,1}$ & $\mu_{+ i}^{\eps}= \mu_{- i}^{\eps} < \mu_{-1}^{\eps}  < \mu_{+1}^{\eps}$ \\
\hline
(2a) &  $V_{1,1}, V_{0,1} < 0 $ and $|V_{0,1}| > | V_{1,1} |$ & $ \mu_{+1}^{\eps}< \mu_{+ i}^{\eps} = \mu_{- i}^{\eps} <  \mu_{-1}^{\eps}$ \\
(2b) &  $V_{1,1} < 0$ and $V_{0,1} > |V_{1,1}|$ & $ \mu_{-1}^{\eps} < \mu_{+ i}^{\eps}= \mu_{- i}^{\eps}< \mu_{+1}^{\eps} $ \\
\hline
(3a) & $V_{1,1},V_{0,1} < 0 $ and $  |V_{0,1} | < |V_{1,1} |$ & $ \mu_{+1}^{\eps} < \mu_{-1}^{\eps} <\mu_{+ i}^{\eps} = \mu_{- i}^{\eps}$ \\
(3b) &$ V_{1,1} < 0,  \  V_{0,1}>0 , $ and $| V_{0,1} | < | V_{1,1} | $ &  $  \mu_{-1}^{\eps}< \mu_{+1}^{\eps}  < \mu_{+ i}^{\eps}= \mu_{- i}^{\eps}$\\ \hline
\end{tabular} 
\end{center} 
Here, we've enumerated the various cases so that, {\it e.g.}, the multiplicity occurs in the first eigenvalue for cases (1a) and (1b). For larger values of $\eps$ this ordering may be violated. However, by Theorem \ref{generic-eps}, quadratic degeneracies in the band structure of $-\Delta+\eps V$ will still occur for all but a discrete set of $\eps-$ values.

The potential in Figure \ref{figure11} satisfies condition (1b) and we observe that the first two surfaces intersect at $\bM$, with the others separated and laying above.  
The potential in Figure \ref{figure13} satisfies condition (3a) and we observe that the third and fourth surfaces intersect at $\bM$, with the others separated and laying below.  

In Figure \ref{figure12} the coefficients satisfy (2a), except that $V_{1,1} = 0$. Here, the first and fourth bands are strongly separated, but the second and third bands not only intersect at $\bM$, but, interestingly, remain touching on the $\bM  \to {\bf \Gamma}$ interval. 

In Figure \ref{figure9}, the potential consists of potential wells centered on the Lieb lattice sites and satisfies  condition (2b). Consequently, we observe an intersection at $\bM$ between the second and third surfaces. The coefficients satisfy $V_{0,1} \approx - V_{1,1} $, so the first dispersion surface is quite close to the second and third surfaces at $\bM$, but does not touch. The first three bands are separating from the fourth and we observe the emergence of the tight-binding dispersion relation, as shown in Figure \ref{fig:lieb-tb}. This will  be further investigated in Section \ref{s:numer2}; see Figure \ref{TB1}.

Finally, in Figure \ref{figure10}, we consider a potential that consists of potential wells centered on the square lattice sites and satisfies condition (2a). We see that the second and third band remaining touching with the fourth band close but laying above. The first band is well separated and lays below. Interestingly, the linear prediction is that 
$\mu_{+1}^{\eps} = 2\pi^2  +  0.0005\, \eps + o(\eps)$, but for this value of $\eps$, we have $\mu_{+1}^{\eps} < 2 \pi^2$ so we are already in a higher-order regime. 
This will  be further investigated in Section \ref{s:numer2}; see Figure \ref{TB2}.

\begin{figure}[t!]
\begin{center}
\caption{Eigenvalue ordering, up to $\mathcal{O(\eps)}$:  $ \mu_{+ i}^{\eps}  = \mu_{- i}^{\eps}  < \mu_{-1}^{\eps}  < \mu_{+1}^{\eps} $.}
\label{figure11}
   \subfloat[Potential]{\includegraphics[width=.45\textwidth]{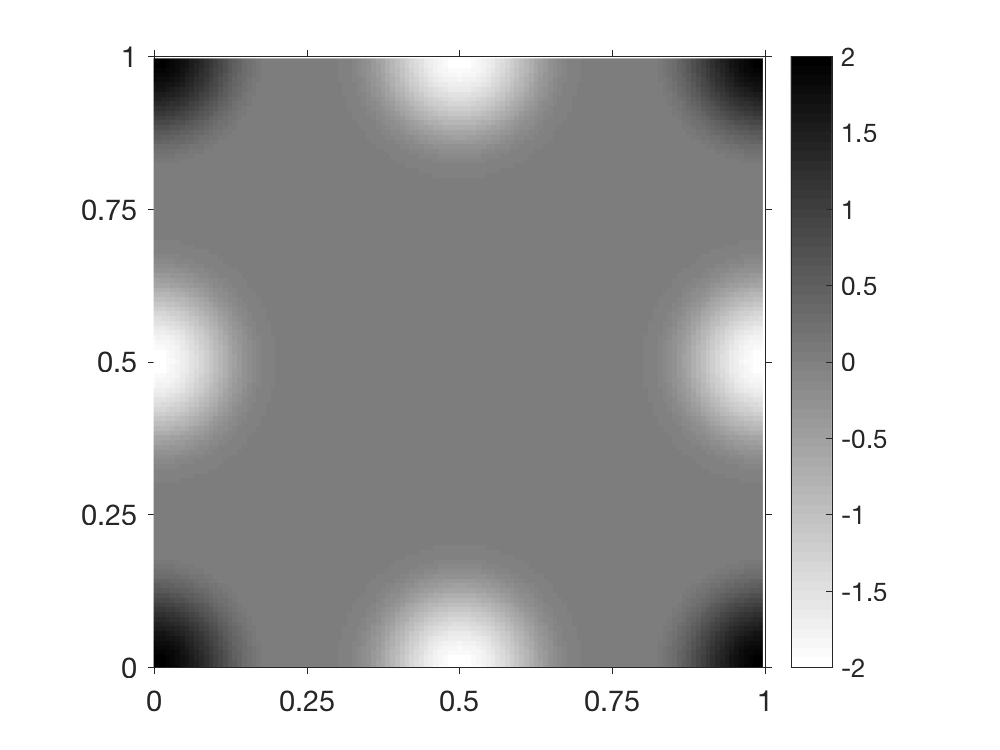}} 
\subfloat[Dispersion curves for $H_V$]{\includegraphics[width=.45\textwidth]{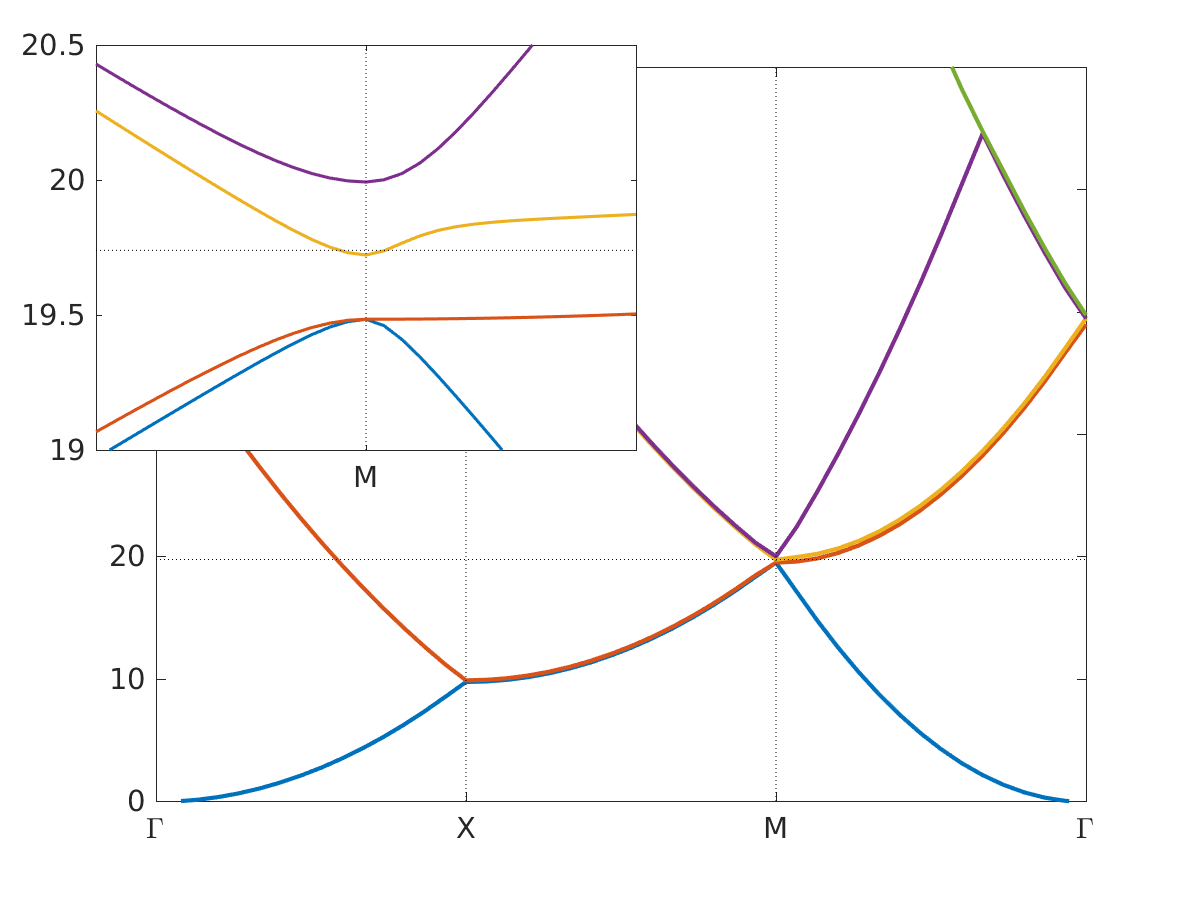}} 
\end{center}
\end{figure}

\begin{figure}[h!]
\begin{center}
\caption{Eigenvalue ordering, up to $\mathcal{O(\eps)}$:  $ \mu_{+1}^{\eps} < \mu_{-1}^{\eps} < \mu_{+ i}^{\eps}  = \mu_{- i}^{\eps}$.}
\label{figure13}
   \subfloat[Potential]{\includegraphics[width=.45\textwidth]{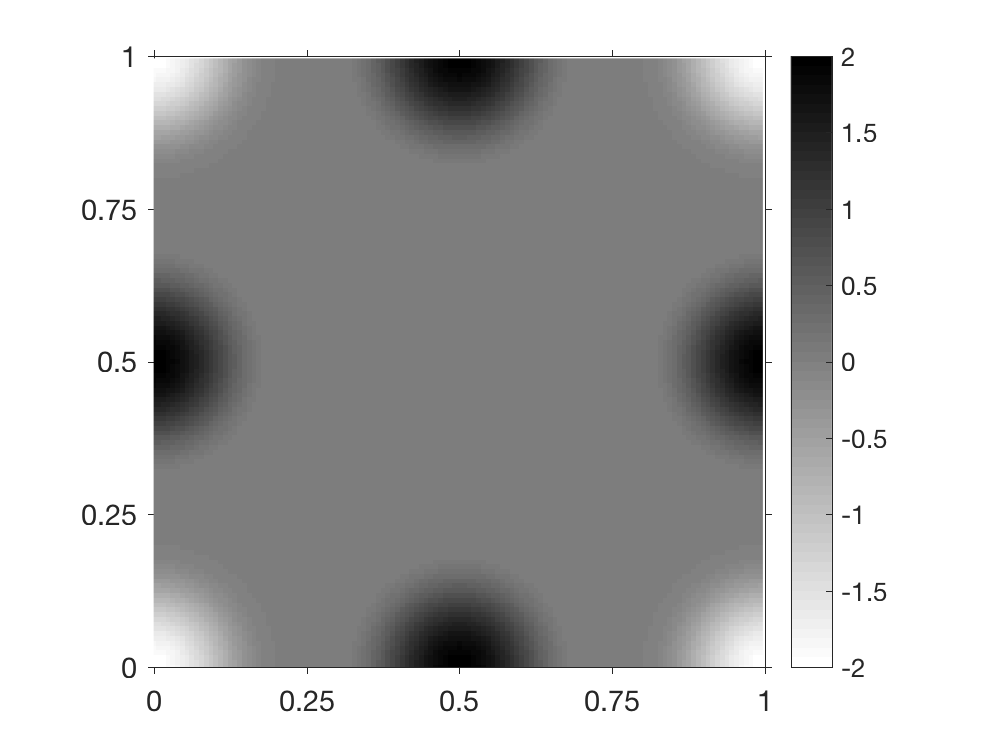}}
\subfloat[Dispersion curves for $H_V$]{\includegraphics[width=.45\textwidth]{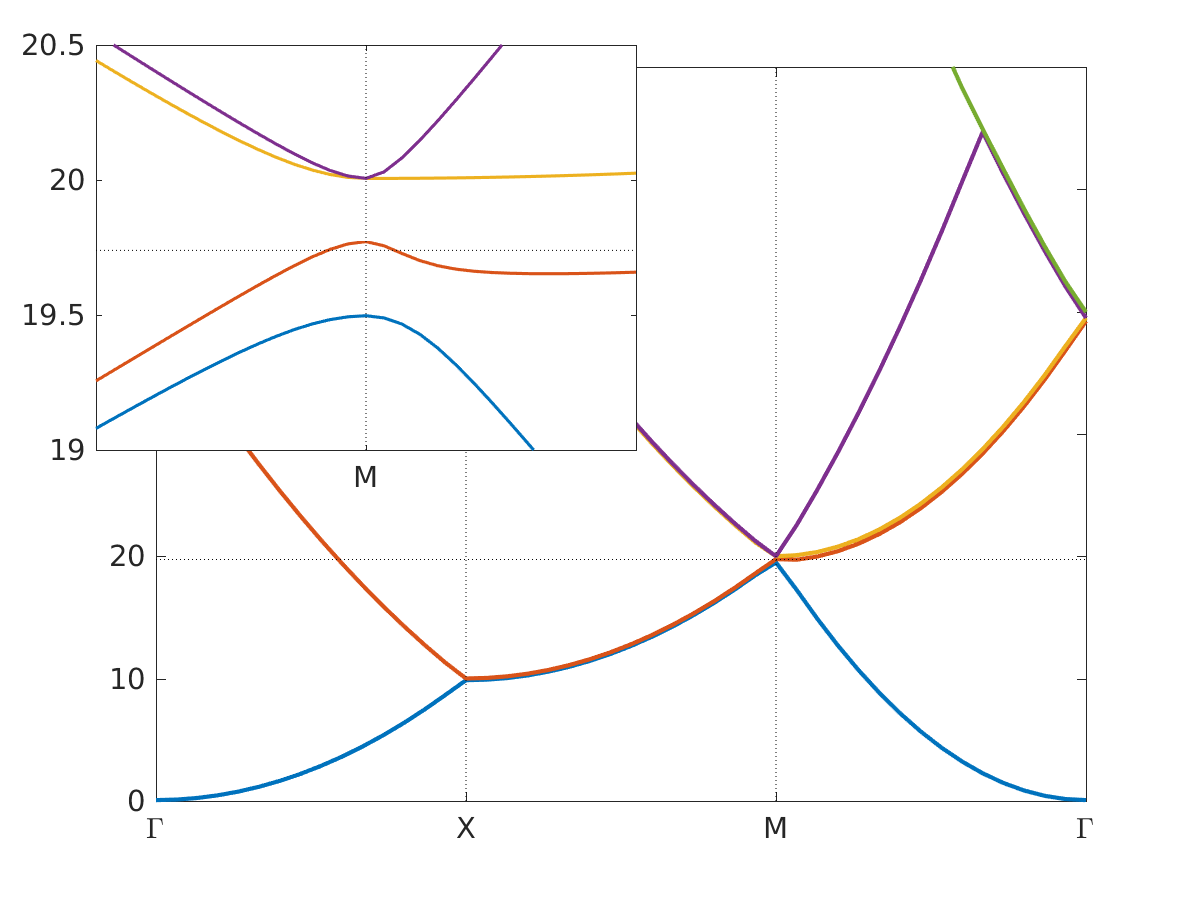}} 
\end{center}
\end{figure}

\begin{figure}[h!]
\begin{center}
\caption{Eigenvalue ordering, up to $\mathcal{O(\eps)}$:  $ \mu_{+1}^{\eps} < \mu_{+ i}^{\eps}  = \mu_{- i}^{\eps}  < \mu_{-1}^{\eps}$.}
\label{figure12}
   \subfloat[Potential]{\includegraphics[width=.45\textwidth]{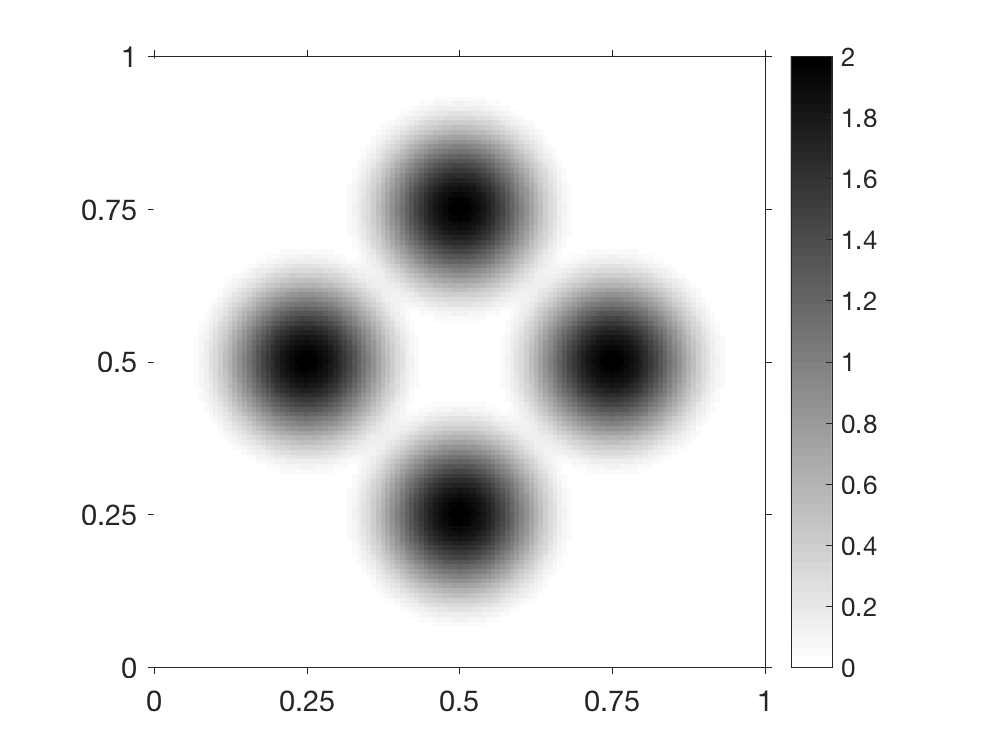}}
\subfloat[Dispersion curves for $H_V$]{\includegraphics[width=.45\textwidth]{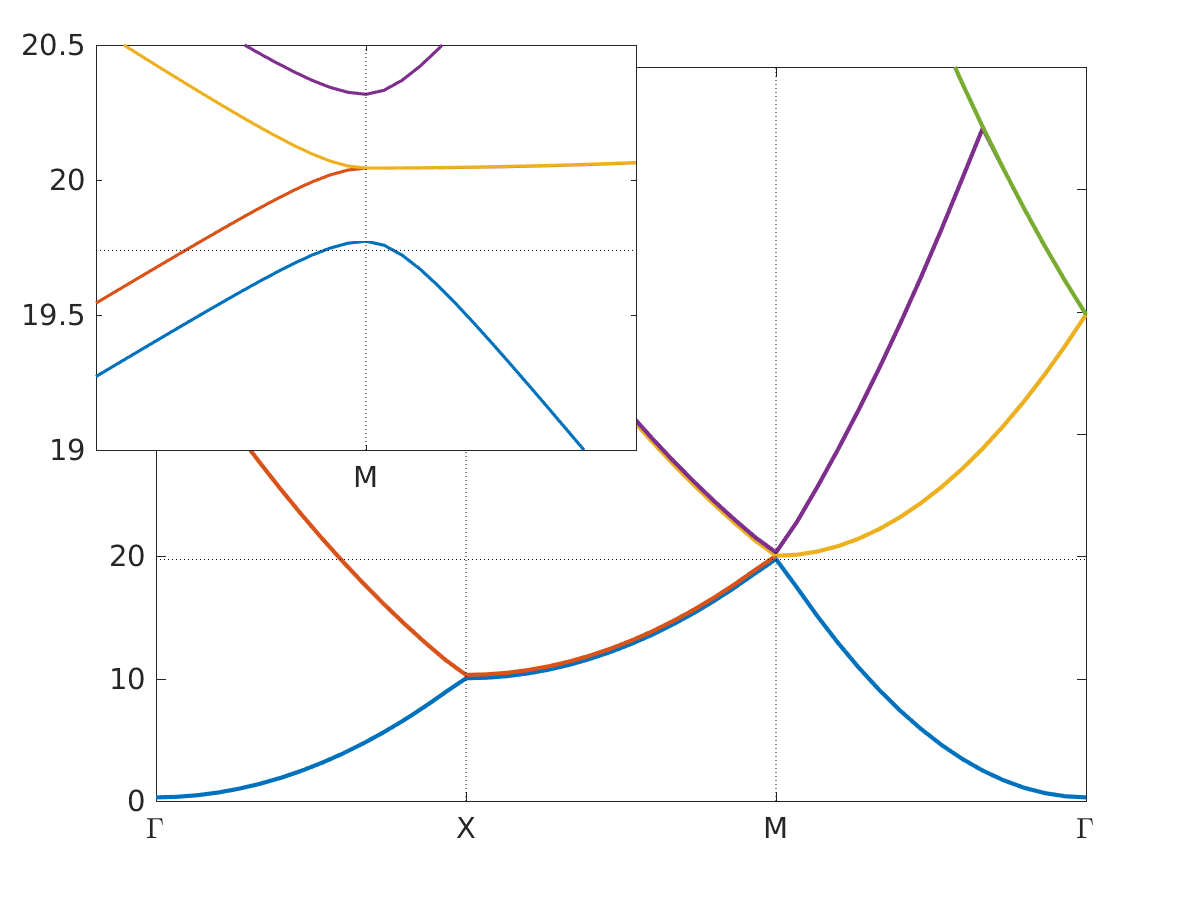}} 
\end{center}
\end{figure}

\begin{figure}[h!]
\caption{Eigenvalue ordering, up to $\mathcal{O(\eps)}$: $\mu_{-1}^{\eps} < \mu_{+ i}^{\eps} = \mu_{- i}^{\eps} < \mu_{1}^{\eps}$. }
\label{figure9}
\begin{center}
   \subfloat[Potential]{\includegraphics[width=.45\textwidth]{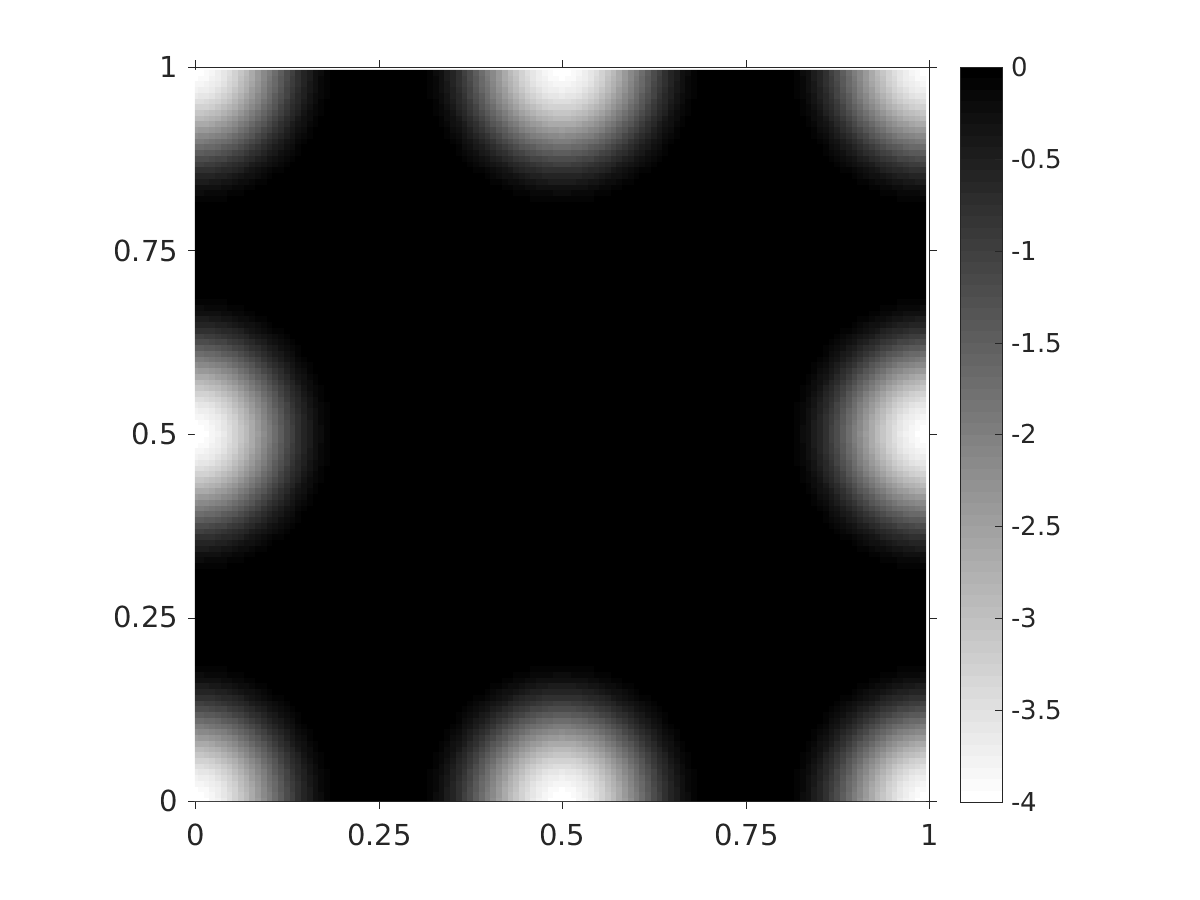}}
\subfloat[Dispersion curves for $H_V$]{\includegraphics[width=.45\textwidth]{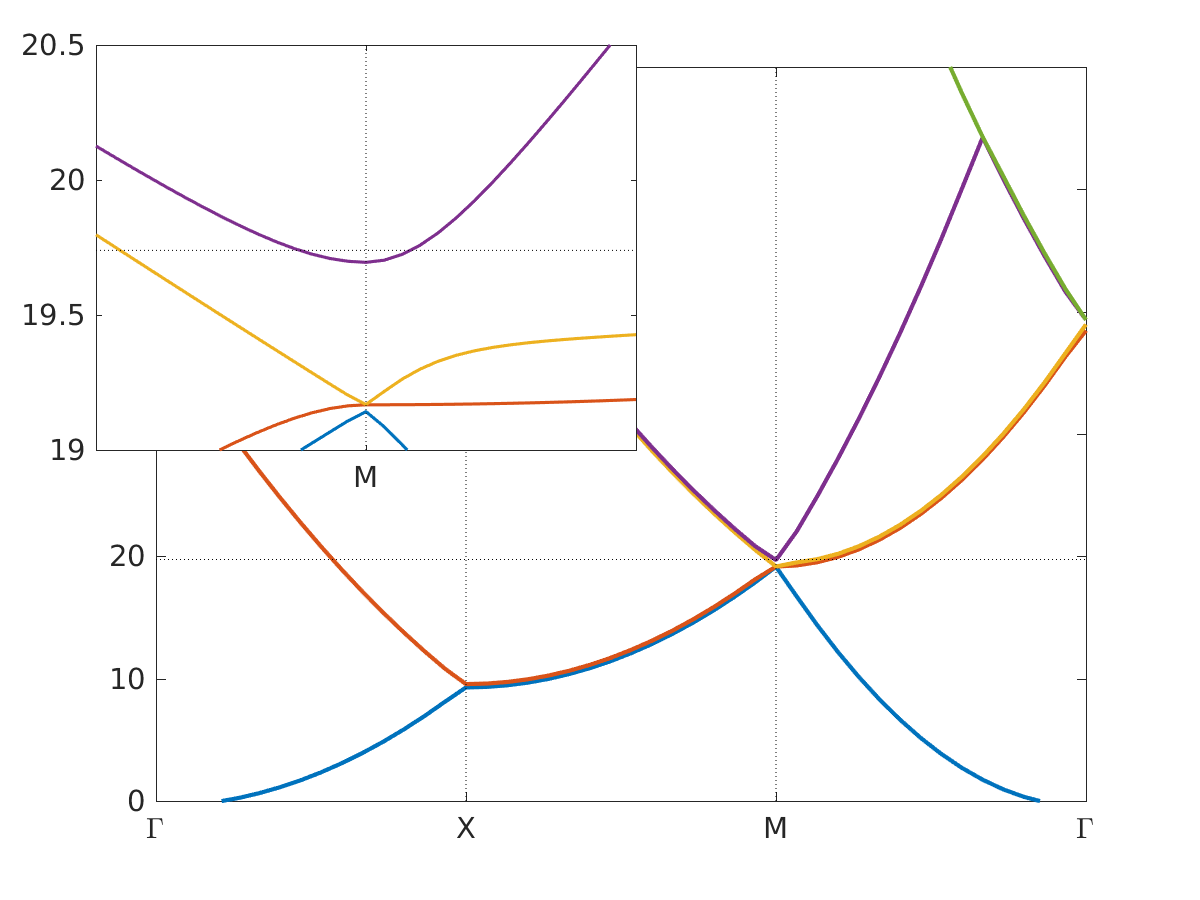}}
\end{center}
\end{figure}

\begin{figure}[h!]
\caption{Eigenvalue ordering, up to $\mathcal{O(\eps)}$:  $\mu_{+1}^{\eps}  < \mu_{+ i}^{\eps}  = \mu_{- i}^{\eps} <  \mu_{-1}^{\eps}  $. }
\label{figure10}
\begin{center}
   \subfloat[Potential]{\includegraphics[width=.45\textwidth]{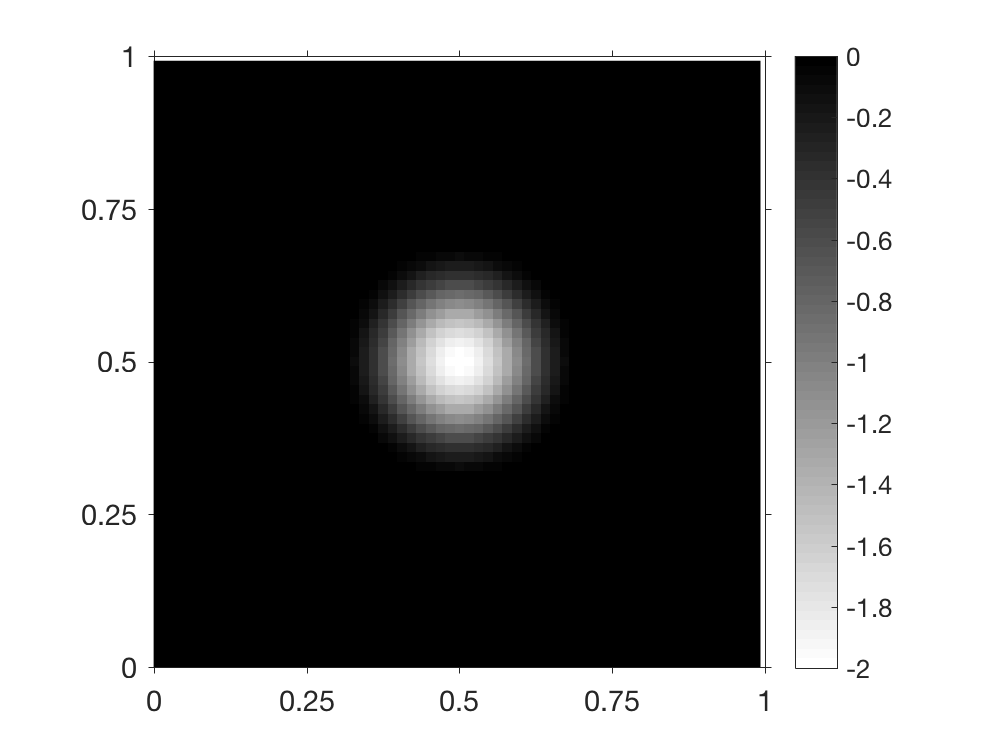}}
\subfloat[Dispersion curves for $H_V$]{\includegraphics[width=.45\textwidth]{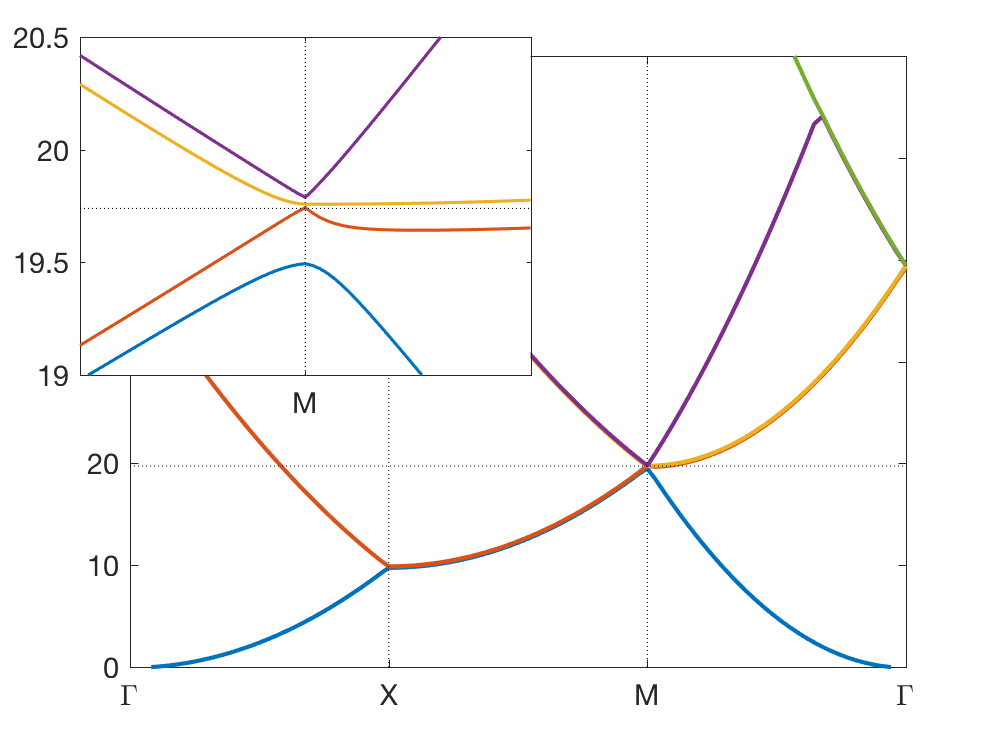}} 
\end{center}
\end{figure}


\subsection{Periodic arrays of deep potential wells; the strong binding regime} \label{s:numer2}
We will consider here potentials which are a sum over translates of a fixed potential  $\tilde V$, which is localized within a unit cell:
\begin{equation*}
V(\bx) = \sum_{\bfm \in \Z^2} \tilde V (\bx- \bfm) .
\end{equation*}
The potential $\tilde V$ is taken to be a finite sum of well-localized identical negative Gaussians (potential wells) with centers located  within the primitive cell $[0,1]^2$,
 {\it i.e.} an arrangement of ``atoms'' within the unit cell.   
The regime where the depth of the atomic potentials is large is  the regime of {\it strong binding}. The spectral properties associated with the ``low-lying'' bands are  expected to be well-approximated, after appropriate rescaling, by a {\it tight-binding} limiting model; see, for example, 
\cite{Ashcroft-Mermin:76}. A rigorous analysis of the low-lying dispersion surfaces and their approximation by those of the tight-binding model was carried out for honeycomb structures in \cite{FLW-CPAM:17}.

Figures \ref{TB1} -- \ref{TB3} of this section display dispersion surfaces for $-\Delta+V$, for different choices of $V$, each  for increasing atomic well-depths. We now discuss these examples.

\begin{example}[Superposition over the Lieb lattice of Gaussian wells]\medskip

{\ }

\nit To address the structural stability Question \ref{tbquest} in the Introduction, we consider a periodic potential, whose restriction to the primitive cell is:
\begin{equation*}
\tilde V_L (\bx) = -V_0 (e^{-|\bx|^2/\sigma} + e^{-|\bx - (1/2, 0)|^2/\sigma} + e^{-|\bx - (0, 1/2)|^2/\sigma} )\ ,\ V_0>0.
\end{equation*}
As a typical value of $\sigma$ we take $\sigma = .001$  and vary the depth of the atomic wells by increasing $V_0$.  In Figure \ref{TB1}, we observe that as $V_0$ is increased the first $3$ (the low-lying) dispersion surfaces come together in a manner approaching that of the tight-binding limit shown in Figure \ref{fig:lieb-tb}; see Appendix \ref{tb-lieb} for a discussion of the tight-binding model. Also related is the discussion around Figure \ref{figure9}. 

 However, as indicated in Theorem \ref{quad-disp} for finite $V_0$, there are only two surfaces touching at $\bM = (\pi,\pi)$ and the $2$nd band has hyperbolic character.  The inset in Figure \ref{figure9} displays this character.

\end{example}
 
 \begin{example}[Superposition of $\Z^2-$translates of Gaussian wells] 
We consider the potential, whose restriction to the primitive cell is given by:
\begin{equation*}
\tilde V(\bx) = -V_0 e^{-|\bx|^2/\sigma} .
\end{equation*}
For $V_0$ positive and large, we expect that the lowest dispersion surfaces will be governed by a tight-binding model with a single band, namely the discrete Laplacian on a square lattice. Indeed, 
Figure \ref{TB2} shows that the first (lowest) dispersion surface separates from the other (higher) dispersion surfaces
and takes on a quadratic character in a neighborhood of $\bk=0$.  The second and third surfaces intersect for all $V_0$ as is consistent with the curves in Figure \ref{figure10}.
\end{example}

 \begin{example}[A potential which is not invariant under reflection about the line $x_1=x_2$]  
 In Figure \ref{TB3} we demonstrate how the absence of reflection symmetry about the line $x_1=x_2$ in the physical domain manifests itself in less symmetry in the dispersion surfaces. In this case, Theorem \ref{quad-disp} anticipates
lesser symmetry, manifested in the non-zero cross-term, $\beta$, in then normal form \eqref{mu-pert}.  Hence, we take a simple potential of the form
\begin{equation*}
\tilde V(\bx) = -V_0(  \cos(2 \pi (x_1 + 2 x_2)) + \cos (2 \pi (2 x_1 - x_2))  ).
\end{equation*}
Thus, in Figure \ref{TB3}, we observe a lack of reflection symmetry about the line $\kappa_1 = \kappa_2$ even as we take $V_0 \to \infty$ to account for the fact that the potential studies here is in reality periodic on a sub-cell of the primitive domain on which we work here.
\end{example}

\begin{figure}[h!]
\begin{center}
   \subfloat[$V_0 = 10.0$]{\includegraphics[width=5.5cm]{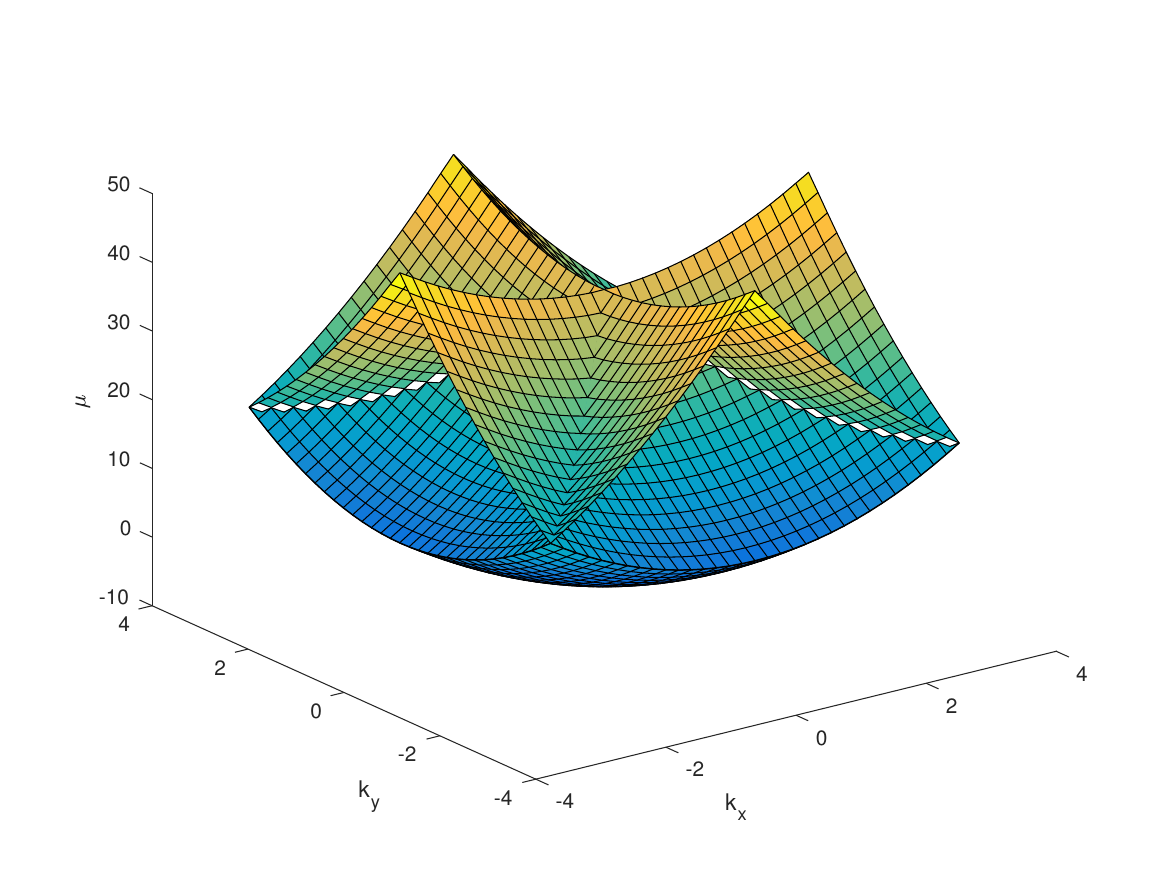} }
   \subfloat[$V_0 = 500.0$]{\includegraphics[width=5.5cm]{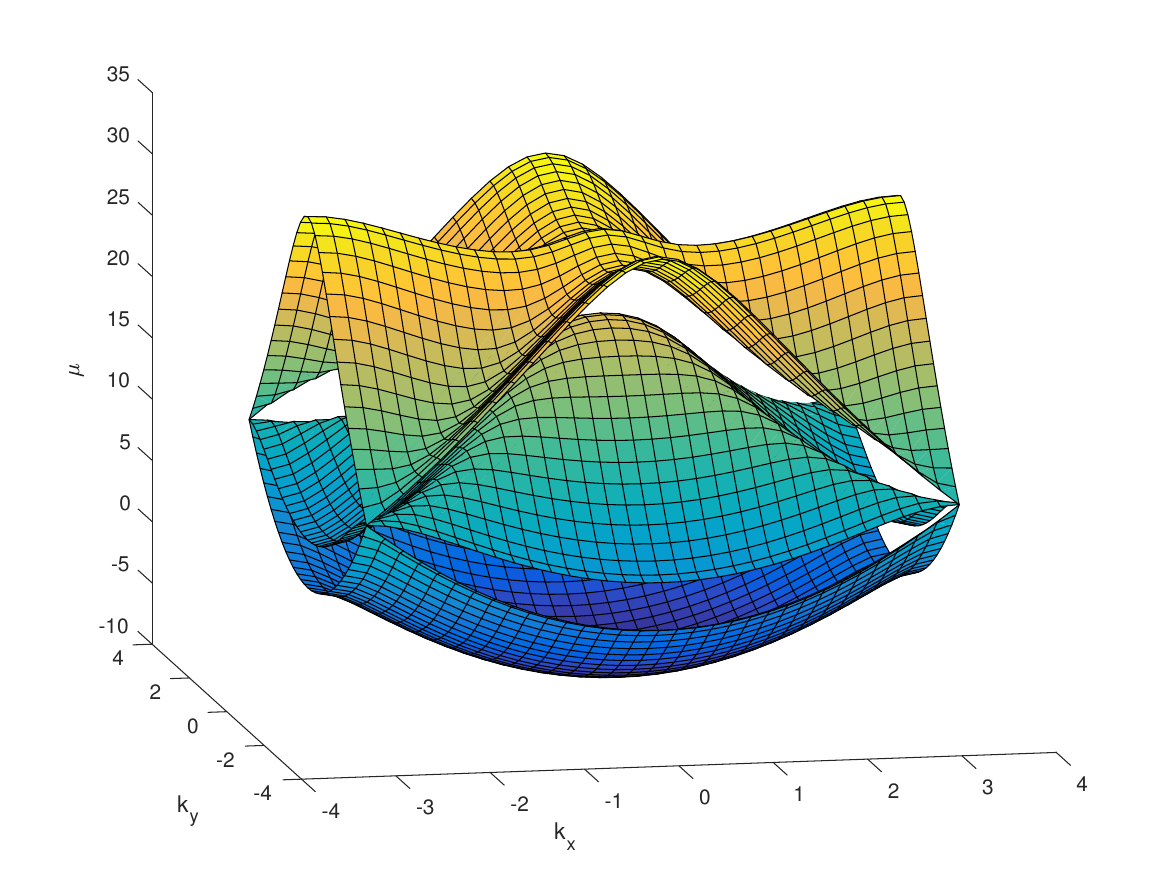} }\\
   \subfloat[$V_0 = 1000.0$]{\includegraphics[width=5.5cm]{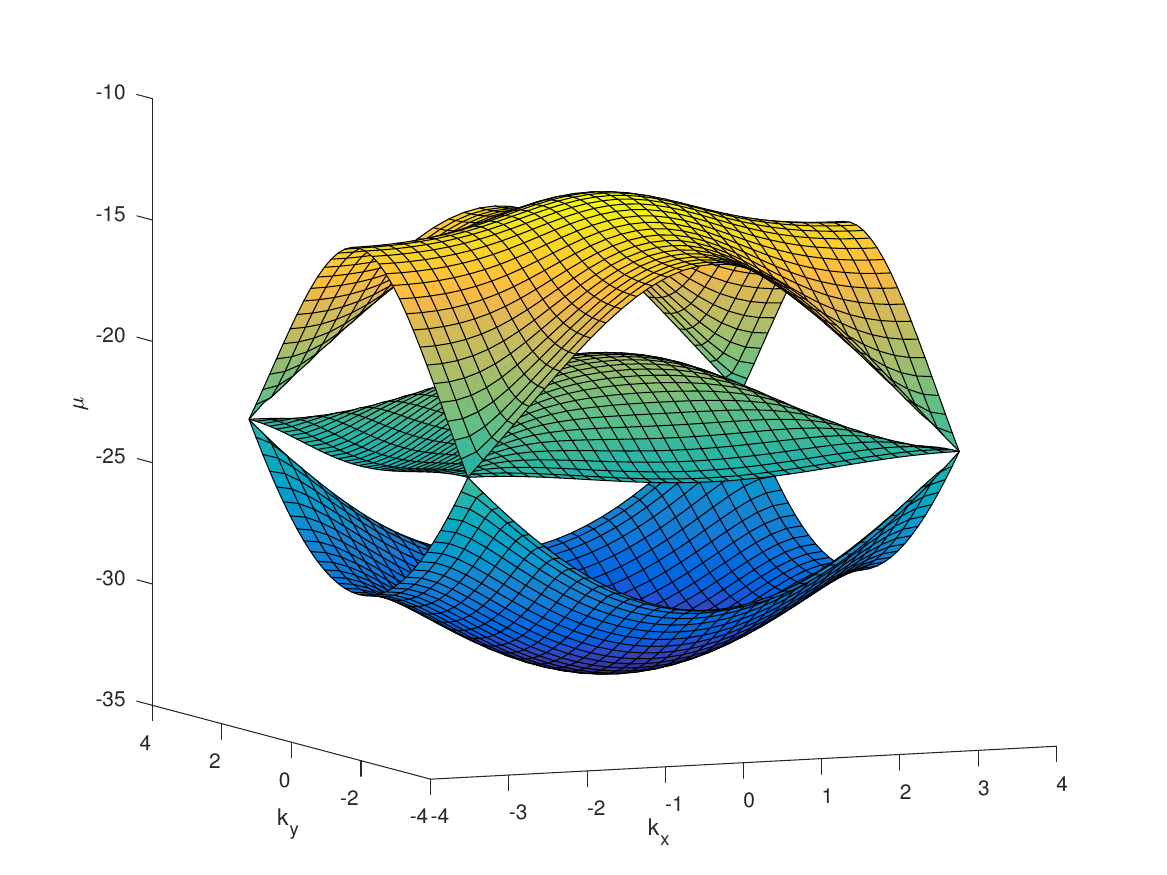}} 
   \subfloat[$V_0 = 2000.0$]{\includegraphics[width=5.5cm]{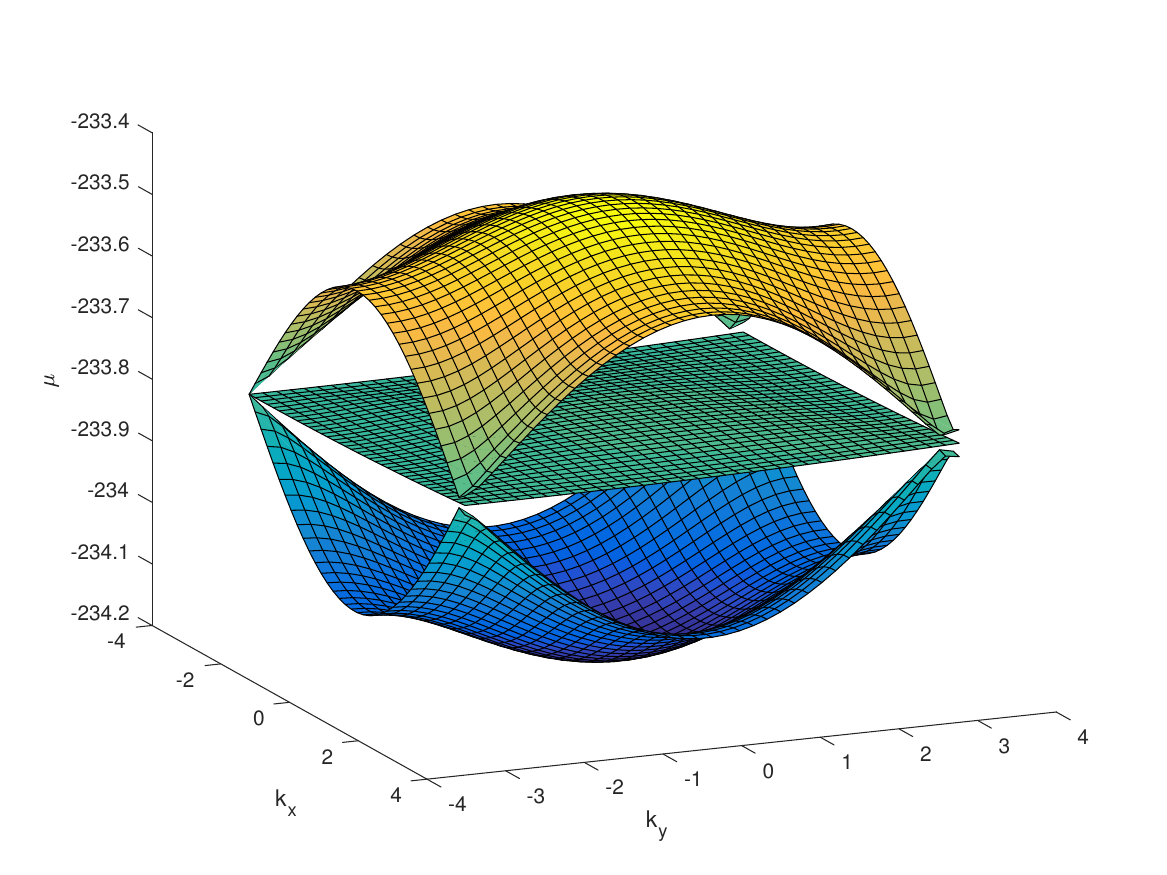}} 
\end{center}
\caption{A plot of the dispersion surfaces for the Gaussian Lieb Lattice potential with depth $V_0 = 10$  {\bf (top left)}, $500$ {\bf (top right)}, $1000$ {\bf (bottom left)} and $2000$ {\bf (bottom right)}.}
\label{TB1}
\end{figure}

\begin{figure}[h!]
\begin{center}
   \subfloat[$V_0 = 10.0$]{\includegraphics[width=.45\textwidth]{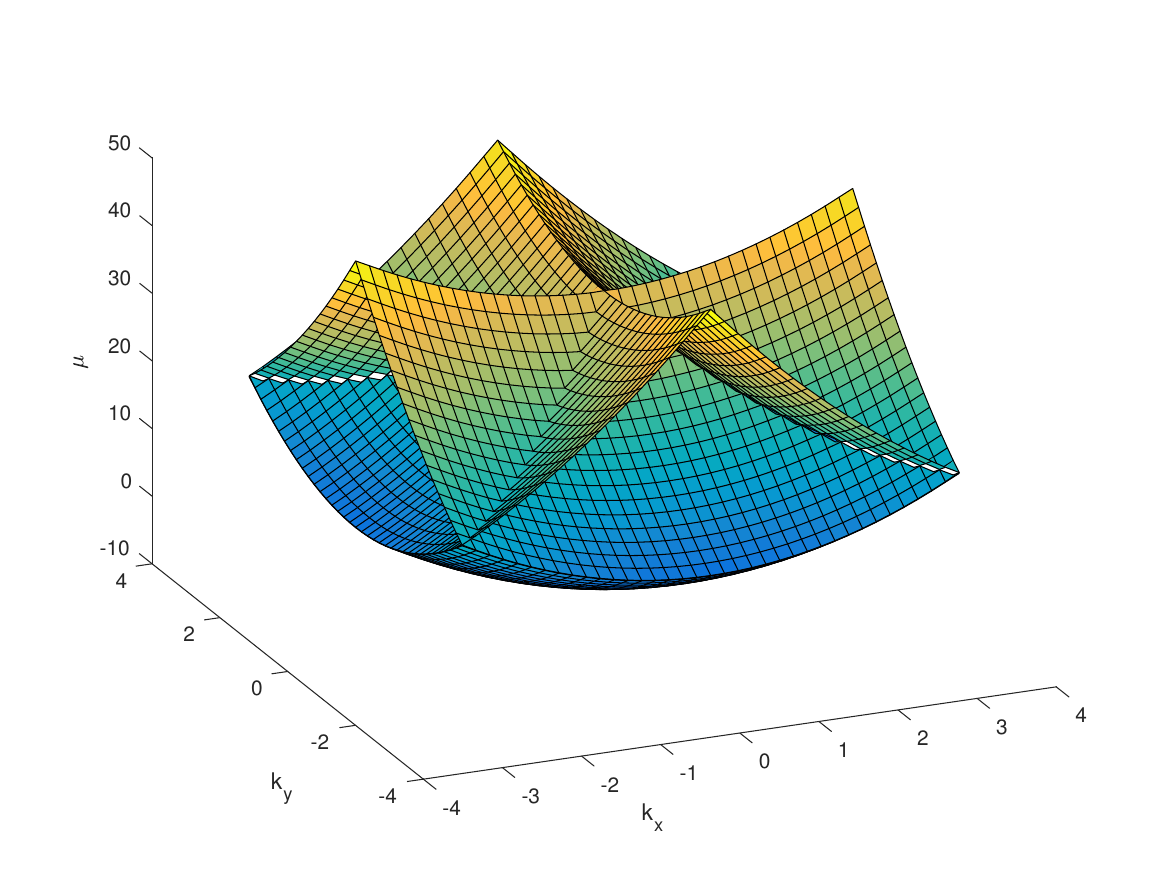}}
\subfloat[$V_0 = 500.0$]{\includegraphics[width=.45\textwidth]{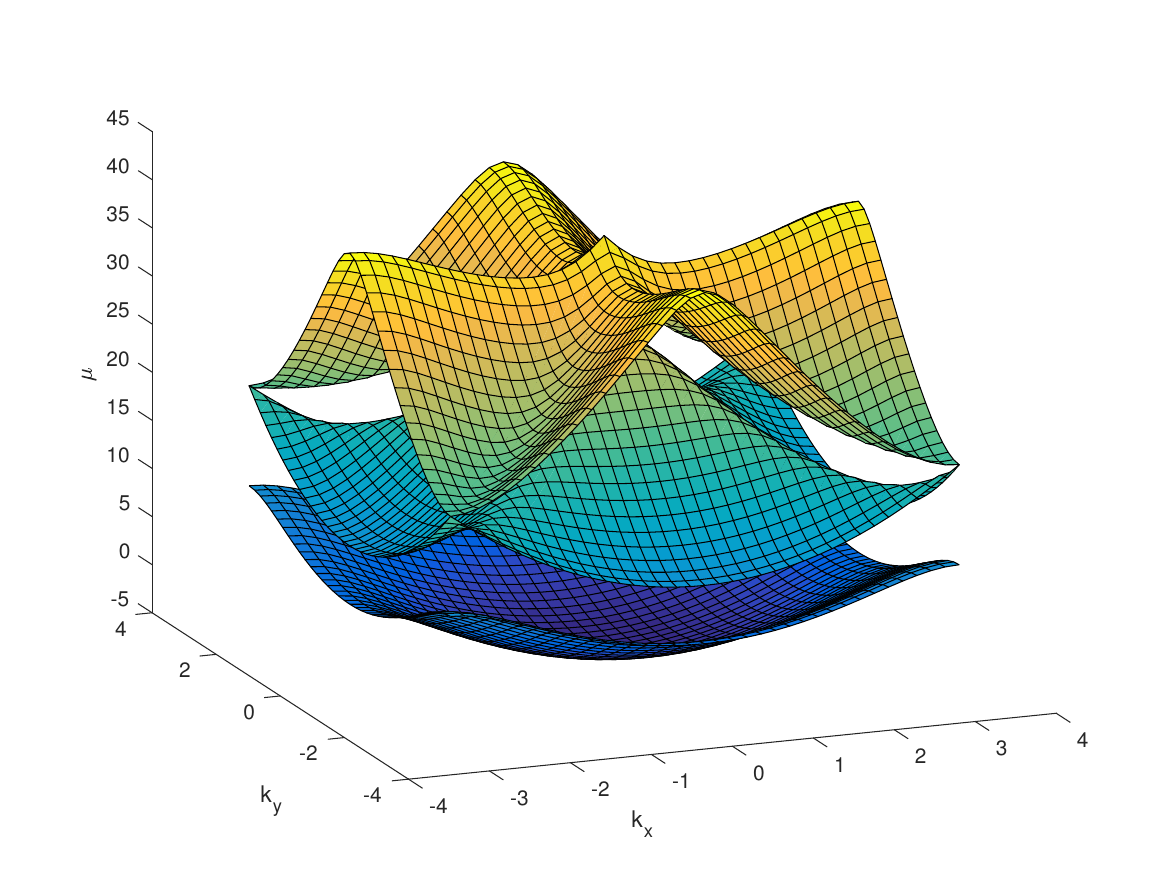}} \\
\subfloat[$V_0 = 1000.0$]{\includegraphics[width=.45\textwidth]{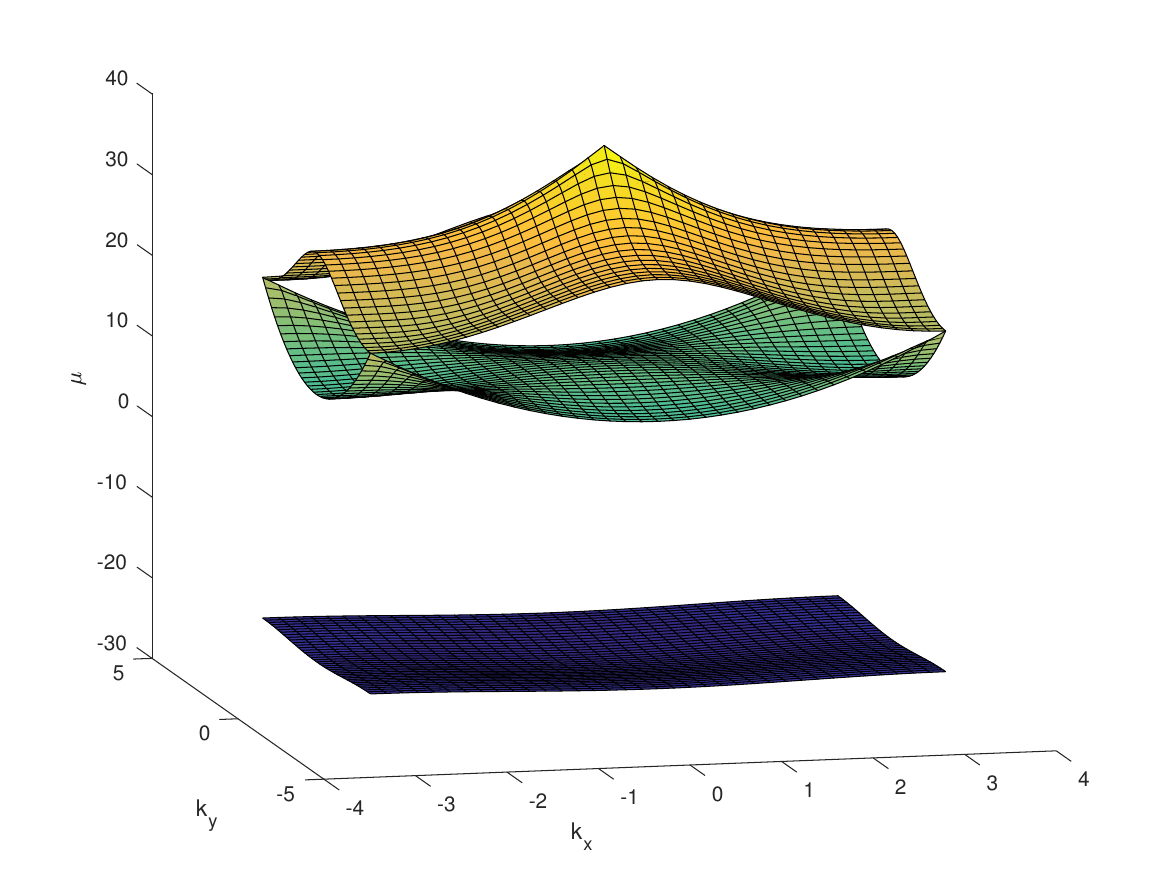}} 
\subfloat[A zoom in of the bottom surface for $V_0 = 1000.0$]{\includegraphics[width=.45\textwidth]{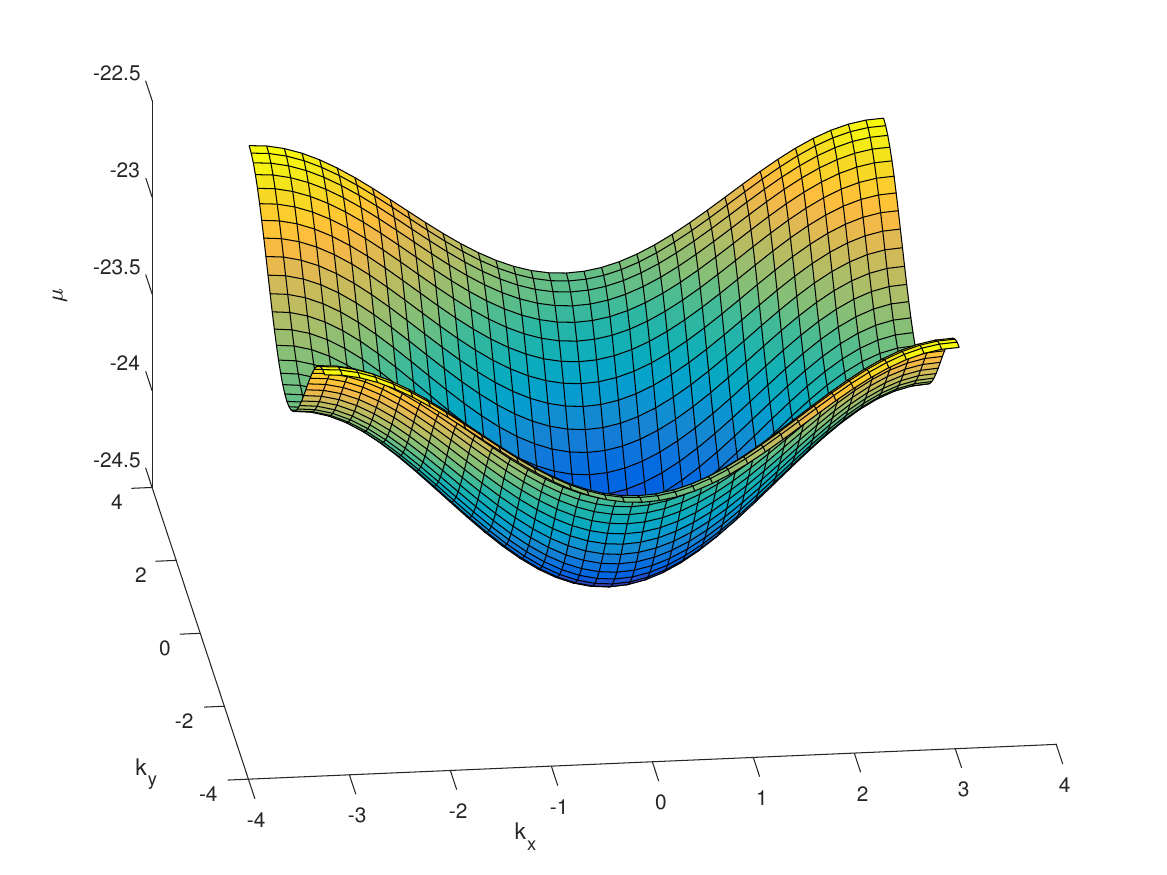}}
\end{center}
\caption{A plot of the dispersion surfaces for the Gaussian Square Lattice potential with depth $V_0 = 10$ {\bf (top left)}, $500$ {\bf (top right)}, $1000$ {\bf (bottom left)} and a close-up of the bottom surface at depth $V_0 = 1000$ {\bf (bottom right)}. }
\label{TB2}
\end{figure}

\begin{figure}[h!]
\begin{center}
 \subfloat[$V_0 = 10.0$]{\includegraphics[width=5.5cm]{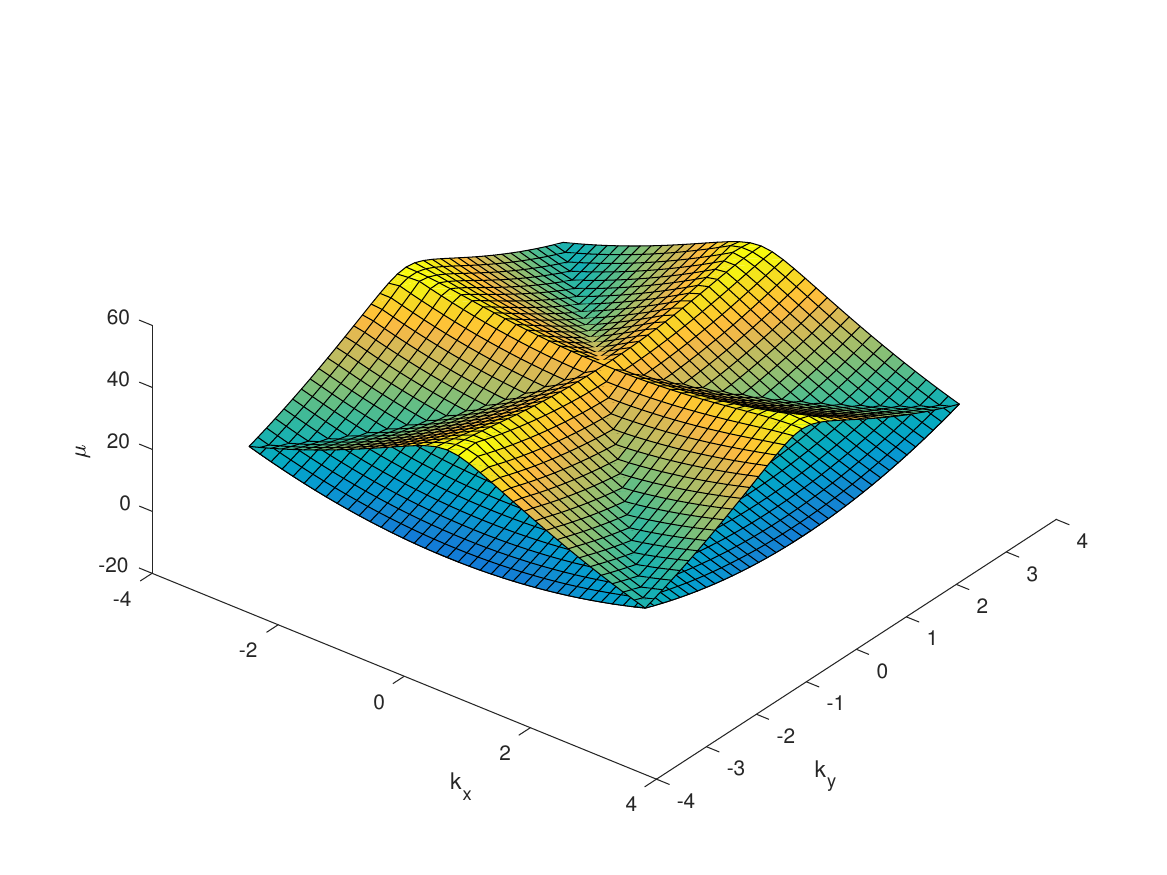} }
  \subfloat[$V_0 = 100.0$]{\includegraphics[width=5.5cm]{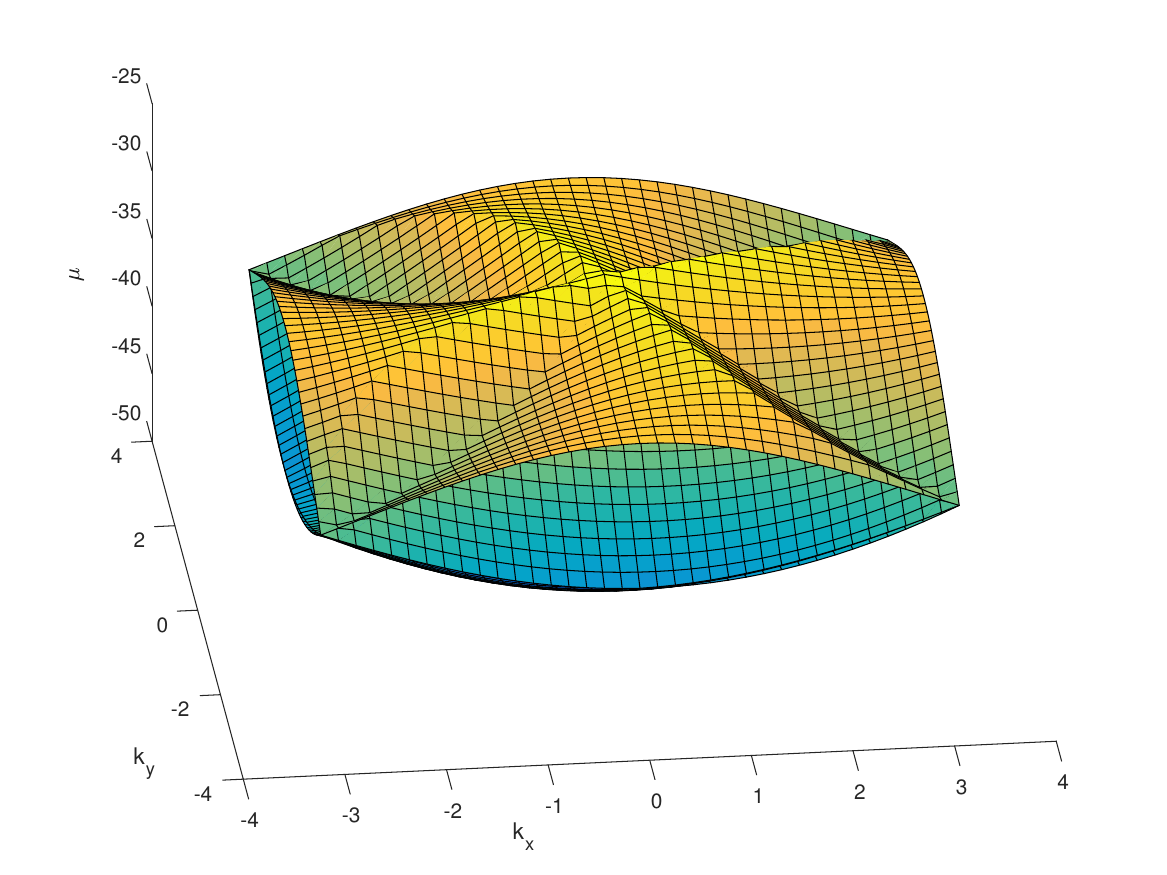} } \\
   \subfloat[$V_0 = 500.0$]{\includegraphics[width=5.5cm]{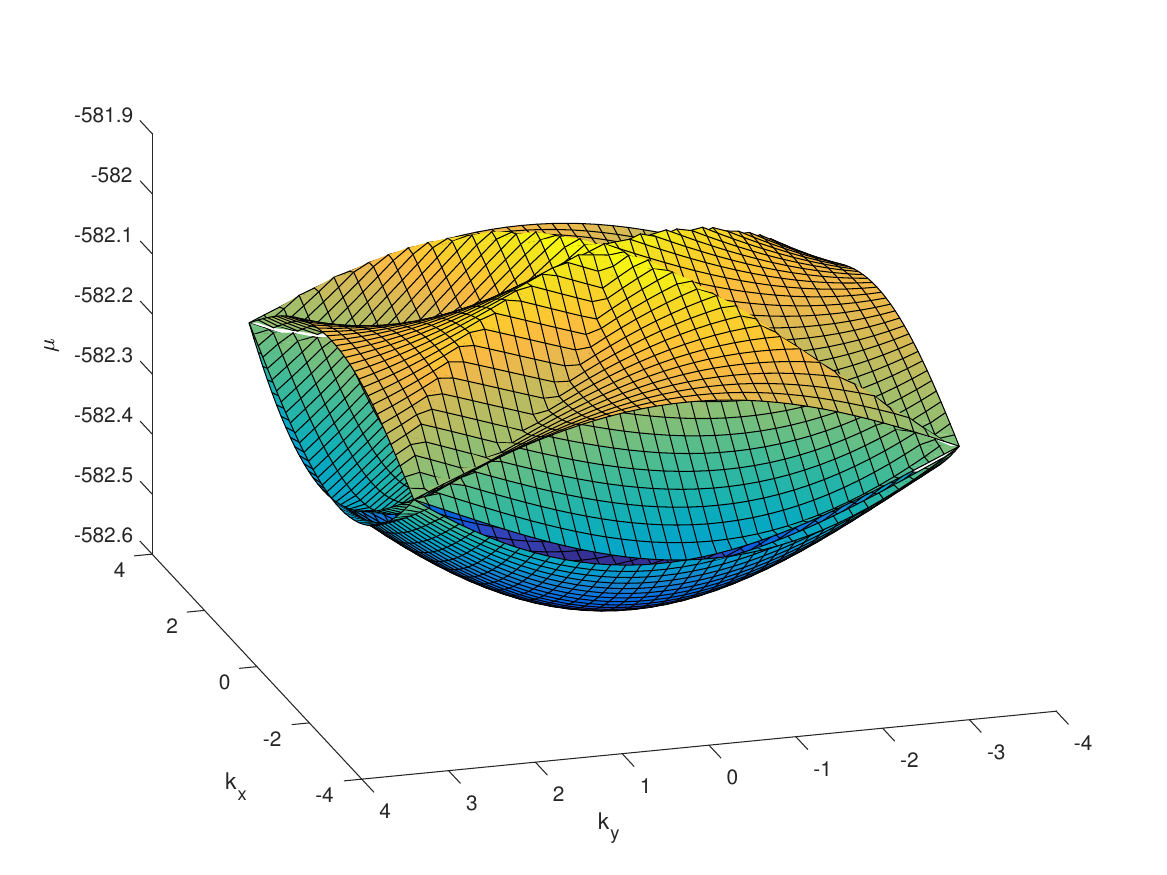} }
 \subfloat[$V_0 = 100.0$, $X \to \rho(X)$ cross-sections]{\includegraphics[width=5.5cm]{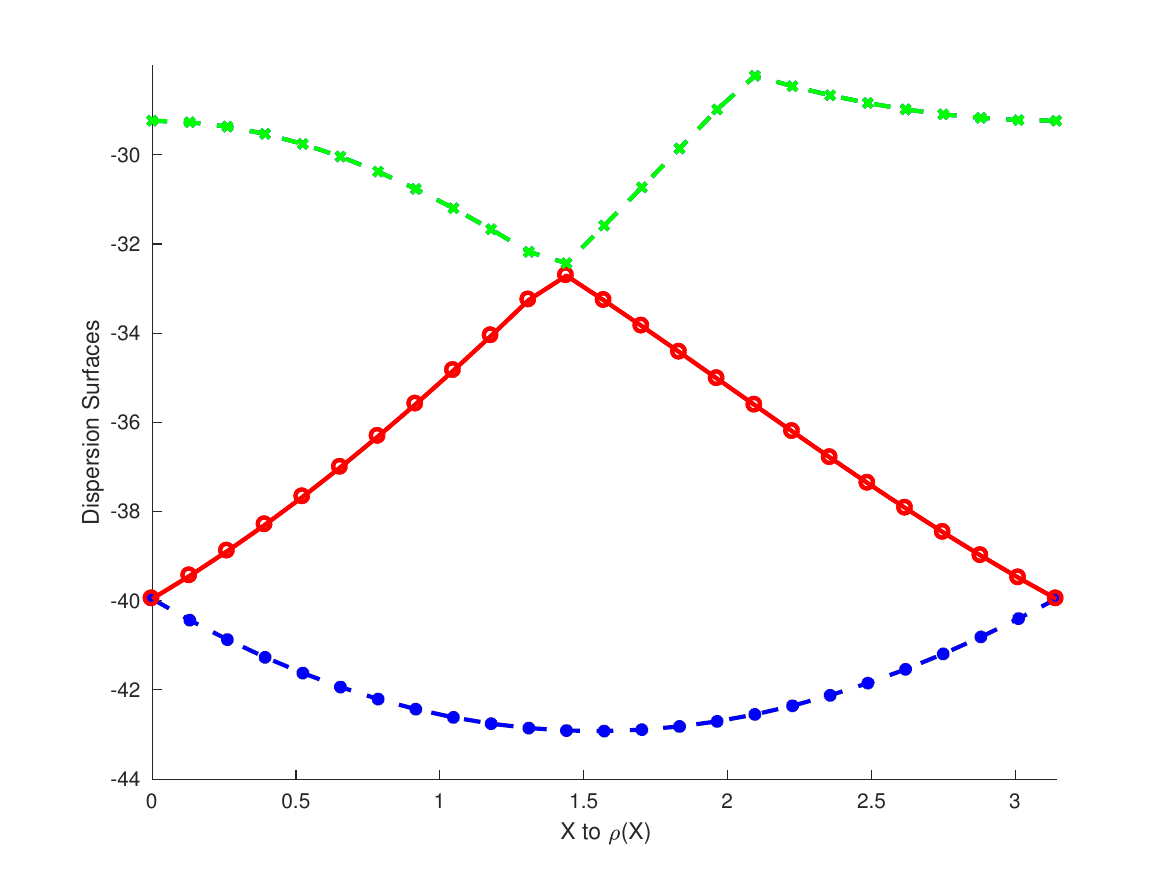} } 
\end{center}
\caption{A plot of the dispersion surfaces for the Non-Reflection Symmetric potential with depth $V_0 = 10$ {\bf (top left)} and $100$ {\bf (top right)}, $V_0 = 500$ {\bf (bottom left)} showing the lack of reflection symmetry in the surfaces.  To highlight the asymmetry, we have included a plot of cross-sections of the first three dispersion surfaces with $V_0 = 100$ along the line from $X = (0,\pi)$ to $\rho(X) = (\pi,0)$. }
\label{TB3}
\end{figure}

\appendix

\section{Tight-binding model for the Lieb lattice}\label{tb-lieb}

Denote by $\Psi^{(m,n)}_{A}, \  \Psi^{(m,n)}_{B}, \ \Psi^{(m,n)}_{C}$  the  amplitudes associated with the sites of the three sublattices comprising the Lieb lattice; see Figure \ref{fig:Lattices}(right).
We consider the nearest neighbor tight-binding model for the Lieb lattice is given by $(H^{TB} \Psi)_{mn} = E \Psi_{mn}$ for $m,n\in\Z$: 
\begin{equation} \label{eq:TBH}
\begin{pmatrix} 
\Psi^{(m,n)}_{B} + \Psi^{(m,n+1)}_{B} \\
\Psi^{(m,n)}_{A} +  \Psi^{(m,n)}_{C}  + \Psi^{(m-1,n)}_{C}+ \Psi^{(m,n-1)}_{A} \\
 \Psi^{(m,n)}_{B} + \Psi^{(m+1,n)}_{B} 
\end{pmatrix}
=E
\begin{pmatrix}
\Psi^{(m,n)}_{A} \\
\Psi^{(m,n)}_{B} \\
\Psi^{(m,n)}_{C}
\end{pmatrix},\quad m,n\in\Z.
\end{equation}
Quasi-periodic (plane-wave) solutions with Bloch momentum $\bk = (k_1,k_2) \in \brill = [-\pi, \pi]^2$ are of the form:
$ \Psi^{(m,n)} = e^{i(m k_1 + n k_2 ) }  \psi$, 
where $\psi \in \mathbb C^3$ is independent of $(m,n)$. Substituting into  \eqref{eq:TBH} we obtain the algebraic eigenvalue problem:
$ \left(\ A(\bk)\ -\  E(\bk) \ I_{_{3\times3}}\ \right) \psi \ =\ 0$, where
\[
A(\bk)  =
\begin{pmatrix}
0 &  1+e^{i k_2}  & 0\\
 1+e^{-i k_2 } & 0 &  1+e^{-i k_1}  \\
0 &  1+e^{i  k_1} & 0\\
\end{pmatrix}. 
\]
The three bands of the tight-binding model are given by the three solution branches of the algebraic equation: $\textrm{det}(A(\bk) - E(\bk) \textrm{I}) = 0$, which are given explicitly:
$$
E_0(\bk) = 0, \ E_\pm(\bk)=\pm \sqrt{  4 + 2 \cos k_1 + 2 \cos k_2 }.  
\qquad \bk \in \brill,
$$ 
These three dispersion surfaces are plotted in Figure \ref{fig:lieb-tb}.

\section{Dynamics of wave-packets spectrally localized near $\bM$ }\label{sec:wave-pkts}

%
 In this section, we study the Schr\"{o}dinger evolution:  $i \partial_{t} \psi =  H_V\psi$  for wave-packet initial conditions which are spectrally located near the $\bM$-point. In particular, denote by  $\mu_S$ a multiplicity two $L^2_\bM$ eigenvalue
  with corresponding eigenspace given by ${\rm span}\{\Phi_1,\Phi_2\}$; see Theorem \ref{quad-disp}. We consider initial conditions of the form $\psi(\bx,t=0)= C_{10}(\delta\bx)\Phi_1(\bx)+C_{20}(\delta\bx)\Phi_2(\bx)$,  where $0<\delta \ll1$ and $C_{10}$ and $C_{20}$ are smooth and localized functions on $\R^2$. The Floquet-Bloch components of such initial conditions are concentrated near $\bM$.

Here we derive a formal multi-scale solution. A proof of the validity of this expansion can be derived along the lines of \cite{FW-CMP:14} in the context of honeycomb structures; see also \cite{allaire}.

We seek a solution depending on multiple spatial and temporal scales:
  $\psi=\psi^\delta(\bx,t;\vec\bX,\vec T)$, where $\vec\bX=(\bX_1,\bX_2,\dots)=(\delta\bx_1,\delta^2\bx_2,\dots)$ and $\vec{T}=(T_1,T_2,\dots)=(\delta t_1,\delta^2 t_2,\dots)$. In terms of this extended set of variables, the time-dependent Schr\"{o}dinger equation becomes:
  \begin{equation}\label{ms-schroed}
  i( \partial_{t} + \delta  \partial_{T_{1}} + \delta^{2}  \partial_{T_{2}}  + \ldots ) \psi^{\delta} = - \left( \nabla_{\bx} + \delta \nabla_{\bX_{1}} + \delta^{2}  \nabla_{\bX_{2}} \right)^{2} \psi^{\delta} + V(\bx) \psi^{\delta} + \ldots.
  \end{equation}
We seek a solution which is time-harmonic with respect to the fast time variable, $t$: 
\[ \psi^{\delta} = e^{-i\mu_S t}\ \sum_{j\ge0}\ \delta^j\ \psi_{j}(\bx;\vec\bX,\vec T).\]
  Substitution into \eqref{ms-schroed} and equating terms of order $\mathcal{O}(\delta^j),\ j\ge0$ we obtain the
 following hierarchy of equations at each order in the small parameter $\delta$. The first several are:
  \begin{align}
&\mathcal{O}(\delta^{0} ):&( \mu_S -H_V)  \psi_{0} &= 0 \ ; \label{order-epsi0}\\
&\mathcal{O}(\, \delta \,\, ):& (\mu_S - H_V) \psi_{1} &= -\left( i \partial_{T_{1}} + 2 \  \nabla_{\bx} \cdot \nabla_{\bX_{1}}  \right) \psi_{0} \ ; \label{order-epsi1}\\
&\mathcal{O}( \delta^{2} ):& (\mu_S - H_V) \psi_{2} &= -( i \partial_{T_{2}}  + 2 \ \nabla_{\bx} \cdot \nabla_{\bX_{2}} + \Delta_{\bX_{1}})\psi_{0}
    	-(  i \partial_{T_{1}} + 2 \nabla_{\bx} \cdot \nabla_{\bX_{1}}) \psi_{1} \ .  \label{order-epsi2}
  \end{align} 
  We regard each equation as a linear elliptic equation (non-homogeneous for $j\ge1$) in the variable $\bx\in\R^2$, depending on 
 slow variables $(\vec\bX,\vec T)$, treated as frozen parameters . The precise dependence of $\psi_{j}$ on these parameters is determined through solvability conditions for the 
 above hierarchy. In fact, to the order in $\delta$ that we solve, $\mathcal{O}(\delta^2)$, we find it necessary to modulate only the 
 scales $\bX_1, \bX_2, T_1$ and $T_2$, and so henceforth we assume $\vec\bX=(\bX_1, \bX_2)$ and  $\vec T=(T_1,T_2)$.
  
Consider the $\mathcal{O}(\delta^0)$ equation \eqref{order-epsi0}. By assumption, the general solution is
   \begin{equation}
  \psi_0(\bx;\vec\bX, \vec T)\ =\  C_{1}(\vec\bX, \vec T) \Phi_{1}(\bx) +C_{2}(\vec\bX, \vec T) \Phi_{2}(\bx)\ ,
\label{O0sol}  \end{equation}
  where $C_{1}(\vec\bX, \vec T)$ and $C_{2}(\vec\bX, \vec T)$  are to be determined. 
The expression \eqref{O0sol} satisfies the $\bM-$pseudo-periodic boundary condition $\psi_0(\bx+\bv;\vec\bX, \vec T)=e^{i\bM\cdot\bv} \psi_0(\bx;\vec\bX, \vec T)$ for all $\bv\in\Z^2$ and all $\bx\in\R^2$ and we shall impose this same pseudo-periodicity at all subsequent orders:  \[ \psi_j(\bx+\bv;\vec\bX, \vec T)=e^{i\bM\cdot\bv} \psi_0(\bx;\vec\bX, \vec T)\ ,\ j\ge1.\]

  Continuing on to $\mathcal{O}(\delta^2)$, we have from  \eqref{order-epsi1} that
    \begin{align}
  (\mu_S - H) \psi_{1} &=  \sum_{q=1}^2\ \left[\  -i\D_{T_1}C_q\ \Phi_q(\bx)\ +\ 2\nabla_{\bX_1}C_q\cdot \nabla_\bx\Phi_q(\bx)\ \right]
  \label{eqn:psi1}\\
  \psi_{1} (\bx+\bv;\vec \bX,\vec T) &= e^{i\bM \cdot \bv}\psi_1(\bx; \vec{\bX}, \vec{T})\nn
  \end{align}

Solvability of \eqref{eqn:psi1} requires that orthogonality of the right hand side of \eqref{eqn:psi1} to the span of $\{\Phi_1,\Phi_2\}$ .
Using the orthonormality relations: $\langle \Phi_{p}, \Phi_{q} \rangle = \delta_{p,q}, \ p,q=1,2$
and Proposition \ref{f-gradf}: $\langle \Phi_{p}, \nabla_{\bx} \Phi_{q} \rangle = {\bf 0}, \ p,q=1,2$,  we obtain that $\partial_{T_{1}} C_p = 0, \ p=1,2$. 
Therefore, in terms of the resolvent operator $\mathscr{R}(\mu_S) = (H - \mu_S I)^{-1}$, we have  
\begin{equation}\label{psi1-exp}
\psi_1(\bx;\bX_1,\bX_2,T_2)\ =\  2 \ \mathscr{R}(\mu_S) \sum_{q=1}^{2}\ \nabla_{\bX_1}C_q(\bX_1,\bX_2,T_2) \cdot \nabla_{\bx}\Phi_q(\bx)\ .
\end{equation}

We proceed finally to the equation and boundary conditions for $\psi_2$ at \eqref{order-epsi2}:
  \begin{align}
(\mu_S - H) \psi_{2} &= -( i \partial_{T_{2}}  + 2 \nabla_{\bx} \cdot \nabla_{\bX_{2}} + \Delta_{\bX_{1}})\psi_{0}
    	- 2 \nabla_{\bx} \cdot \nabla_{\bX_{1}}\ \psi_{1}\ \label{O2eqn}\\
   \psi_{2} (\bx+\bv; \vec{\bX}, \vec{T}) &= e^{i\bM \cdot \bv}\psi_2(\bx; \vec{\bX}, \vec{T}),\nn
  \end{align}
  where the expressions for $\psi_0$ and $\psi_1$ are displayed in \eqref{O0sol} and \eqref{psi1-exp}.

  \begin{prop}\label{env-eqns} 
   A sufficient condition for the solvability of \eqref{O2eqn} is that the pair of amplitudes $C_1=C_1(\bX_1,T_2)$ and $C_2=C_2(\bX_1,T_2)$ 
satisfy the coupled system of constant coefficient {\it homogenized} Schr\"{o}dinger equations:
\begin{equation}\label{env-eqn}
 i\partial_{T_{2}} C_{p} = -\Delta_{\bX_{1}}C_{p} + 4 \ 
 \sum_{q=1}^2\sum_{r,s=1}^2  a_{r,s}^{p,q} \ 
 \frac{\D^2 C_q}{ \D{X_1}_r \D{X_1}_s}  ,\quad p=1,2.
  \end{equation}
Here,  $ a_{s,r}^{p,q}$ are the matrix elements of $(A^{p,q})_{sr}$ displayed in \eqref{A-def}: $
 a_{r,s}^{p,q} = \langle \partial_{r} \Phi_{p}, \mathscr{R}(\mu_S) \partial_{s} \Phi_{q}\rangle
$ 
which were simplified using symmetry arguments in Section \ref{pf-quad-disp}.
\end{prop}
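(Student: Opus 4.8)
The plan is to derive the envelope system \eqref{env-eqn} as the solvability (Fredholm alternative) condition for the $\mathcal{O}(\delta^2)$ equation \eqref{O2eqn}. First I would recall that the homogeneous operator $\mu_S - H_V$ acting on $\bM$-pseudo-periodic functions is self-adjoint with two-dimensional kernel $\mathrm{span}\{\Phi_1,\Phi_2\}$, so \eqref{O2eqn} is solvable for $\psi_2$ (in the orthogonal complement of this kernel) if and only if the right-hand side is orthogonal to both $\Phi_1$ and $\Phi_2$ in $L^2(\R^2/\Lambda)$. Thus the core of the proof is to compute $\langle \Phi_p, \mathrm{RHS}\rangle = 0$ for $p=1,2$, where the right-hand side is
\[
-\left( i \partial_{T_{2}} + 2 \nabla_{\bx}\cdot\nabla_{\bX_2} + \Delta_{\bX_1}\right)\psi_0 \;-\; 2\nabla_{\bx}\cdot\nabla_{\bX_1}\psi_1,
\]
with $\psi_0$ given by \eqref{O0sol} and $\psi_1$ by \eqref{psi1-exp}.

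The key steps, in order: (1) Pair $\Phi_p$ against the $\psi_0$-terms. Using $\langle\Phi_p,\Phi_q\rangle=\delta_{pq}$, the $i\partial_{T_2}$ term contributes $i\partial_{T_2}C_p$ and the $\Delta_{\bX_1}$ term contributes $\Delta_{\bX_1}C_p$. The term $2\nabla_\bx\cdot\nabla_{\bX_2}\psi_0$ produces $2\sum_q \nabla_{\bX_2}C_q\cdot\langle\Phi_p,\nabla_\bx\Phi_q\rangle$, which vanishes by Proposition \ref{f-gradf}; this is exactly what forces the envelope not to depend on $\bX_2$ and $T_1$ but evolve in $\bX_1, T_2$ (consistent with the reduction already noted in the text). (2) Pair $\Phi_p$ against $-2\nabla_\bx\cdot\nabla_{\bX_1}\psi_1$. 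Substituting \eqref{psi1-exp}, this equals $-4\sum_q\sum_{r,s}\partial_{{X_1}_r}\partial_{{X_1}_s}C_q \,\langle \Phi_p,\, \partial_{x_r}\mathscr{R}(\mu_S)\partial_{x_s}\Phi_q\rangle$. Integrating by parts in $\bx$ (moving $\partial_{x_r}$ onto $\Phi_p$, legitimate since everything is $\Lambda$-periodic after stripping the common $e^{i\bM\cdot\bx}$ factor, equivalently working in $L^2_\bM$) gives $+4\sum_q\sum_{r,s}\partial_{{X_1}_r}\partial_{{X_1}_s}C_q\,\langle\partial_{x_r}\Phi_p,\mathscr{R}(\mu_S)\partial_{x_s}\Phi_q\rangle = 4\sum_q\sum_{r,s} a^{p,q}_{r,s}\,\partial_{{X_1}_r}\partial_{{X_1}_s}C_q$, by the definition \eqref{A-def}. (3) Collect: setting the sum of all contributions to zero yields $i\partial_{T_2}C_p + \Delta_{\bX_1}C_p + 4\sum_{q,r,s}a^{p,q}_{r,s}\partial^2_{{X_1}_r {X_1}_s}C_q = 0$, which after rearranging is precisely \eqref{env-eqn}. (4) Finally, observe that $\mathscr{R}(\mu_S)$ commutes with $\mathcal{R}$ and that the symmetry analysis of Section \ref{pf-quad-disp} (Lemmas \ref{claim:M1entries}, \ref{Astructure}, Claims \ref{claim:pwiseH} and the subsequent ones) already reduced the tensor $a^{p,q}_{r,s}$ to the three constants $\alpha,\beta,\gamma$; hence the dispersion relation of the constant-coefficient system \eqref{env-eqn}, obtained by plane-wave ansatz $C_p\sim e^{i(\kappa\cdot\bX_1 - \omega T_2)}$ and diagonalizing the resulting $2\times2$ Hermitian symbol $(\kappa\cdot\kappa)I + \mathcal{M}^{(1)}(\cdot,\kappa)/\,$-type matrix, reproduces the two branches of \eqref{nf} to leading order. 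This last remark identifies the ``indefinite inverse effective mass tensor'' $\Upsilon^{p,q}_{r,s}$ of \eqref{eff-env}.

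I expect the main obstacle to be bookkeeping the integration by parts cleanly: one must consistently work either with $\Phi_j$ on $\R^2/\Lambda$ after factoring $e^{i\bM\cdot\bx}$ or with the $\phi_j$ directly, and track that $\mathscr{R}(\mu_S)$ maps the orthogonal complement $\widetilde Q_\perp L^2_\bM$ to itself and commutes with $\partial_{x_r}$ in the appropriate sense — the same manipulations already performed in \eqref{s-last}--\eqref{Aj12_def} of Section \ref{quad-kap}. The remaining issue, which is genuinely only ``formal'' here (and flagged as such in the text), is that establishing that the multiscale series truncated at this order actually approximates the true Schr\"odinger evolution on timescales $T_2 = \mathcal{O}(1)$ requires an energy-estimate / Duhamel argument in the spirit of \cite{FW-CMP:14} or \cite{allaire}; I would state this as the regime of validity and cite those references rather than reproduce the estimate. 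A minor secondary point is to justify discarding the $\bX_2, T_1$ dependence: this follows because steps (1)--(2) at orders $\delta$ and $\delta^2$ force $\partial_{T_1}C_q=0$ and no $\bX_2$-derivatives of $C_q$ survive the solvability conditions, so one may set $\vec\bX=(\bX_1,\bX_2)\to\bX_1$ and $\vec T=(T_1,T_2)\to T_2$ consistently, as the paragraph preceding Proposition \ref{env-eqns} already indicates.
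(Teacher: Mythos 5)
Your argument is the paper's argument: project the $\mathcal{O}(\delta^2)$ right-hand side onto $\Phi_p$, use Proposition \ref{f-gradf} to kill the $\nabla_{\bX_2}$ term, substitute the $\mathscr{R}(\mu_S)$-expression for $\psi_1$, and integrate by parts in $\bx$ to expose the coefficients $a^{p,q}_{r,s}$. One sign slip to fix in step (3): the $\psi_0$ pairings carry the overall minus sign of the right-hand side of \eqref{O2eqn}, so they contribute $-i\partial_{T_2}C_p$ and $-\Delta_{\bX_1}C_p$, and combining with the $+4\sum a^{p,q}_{r,s}\partial^2 C_q$ from step (2) gives $-i\partial_{T_2}C_p - \Delta_{\bX_1}C_p + 4\sum_{q,r,s}a^{p,q}_{r,s}\partial^2_{(X_1)_r(X_1)_s}C_q = 0$, which rearranges to \eqref{env-eqn}; as written your collected equation has the wrong sign on the $4a$ term.
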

As a consequence of Proposition \ref{env-eqns}, we have item \ref{wave-pack-summ} in our summary of results, Section \ref{sec:summ}. Namely, solutions of the time-dependent Schr\"odinger equation  with initial conditions of the form:
\[ \psi(\bx,0)=\psi^\delta_0(\bx)\ =\ C_1(\bX)\ \Phi_1(\bx)\ +\ C_2(\bX)\ \Phi_2(\bx),\]
evolve on large, finite time scales according to:
\[
 i\frac{\D}{\D  T} C_{p} =  
 -\sum_{q=1}^2\sum_{r,s=1}^2  
 \frac{\D}{\D X_r}\ \Upsilon^{p,q}_{r,s}\ \frac{\D}{\D X_s}\ C_q  ,\quad p=1,2,
\]
 where $T = T_2 = \delta^2 t$ and $ \Upsilon^{p,q}_{r,s} $ depend on $(A^{p,q})_{sr}$  as stated above.

\begin{proof}
Taking the inner product of the right hand side of \eqref{O2eqn} with $\Phi_p(\bx),\ p=1,2$, substituting in the expressions for $\psi_0$ and $\psi_1$, recalling that $\D_{T_1}C_p = 0$ and applying Proposition \ref{f-gradf}, we have:
\begin{align}
(i \partial_{T_{2}} + \Delta_{\bX_1} ) C_p  -\  4 \ \sum_{q=1}^{2} \langle \ \nabla_{\bx}  \Phi_p \cdot  \nabla_{\bX_1} , \  \mathscr{R}(\mu_S) \  \nabla_{\bx}\Phi_q \  \cdot \nabla_{\bX_1} C_q \rangle\ =\ 0,\ \ p=1,2.
\end{align}
Expansion of the dot products yields \eqref{env-eqn}. \end{proof}

\begin{remark} The dispersion relation for system \eqref{env-eqn} is 
\[
\det\tM_{\rm{app}}(\nu,\kappa)=
\nu^2-|q(\kappa)|^2\ =\ \nu^2\ -\ |\gamma(\kappa_1^2-\kappa_2^2)\ +\ 2\beta\kappa_1\kappa_2|^2\ =\ 0,
\]
for $ \kappa=(\kappa_1,\kappa_2)\in\R^2$
yielding two branches given by the leading order expressions in \eqref{nf}.
\end{remark}

 \section{Calculations for $-\Delta+\eps V$, $V$ admissible and $\eps$ small}\label{mu-eps-coeff}

 Theorem \ref{quad-disp} and Corollary \ref{rho-inv}  give a precise description of touching dispersion surfaces at the vertices of $\brill$.
  The local expansions of these dispersion surfaces are given in terms of coefficients  $\alpha$, $\beta$ and $\gamma$. In this section we consider $H^\eps=-\Delta+\eps V$, where $\eps$ is real, non-zero and sufficiently small, and we obtain the leading order expressions of $\alpha^\eps$, $\beta^\eps$ and $\gamma^\eps$ for $|\eps|>0$ small, required to complete the  proof of Theorem \ref{small-eps-disp}.
 %
%
%
%
%
%
%
%
The expressions for  $\alpha^\eps$, $\beta^\eps$ and $\gamma^\eps$ are displayed in  \eqref{albega}  and are given by:
\begin{equation}
a_{\ell,m}^{(r),(s)}=\langle \D_{x_\ell} \Phi_{(r)}^\eps, \mathscr{R}^\eps(\mu_S^\eps) \D_{x_m}\Phi_{(s)}^\eps \rangle, \ \ r,s \in \{ +i, -i\},\quad \ell, m\in \{1,2\}.
\label{a-app}\end{equation}
In \eqref{a-app} we have used  the notation $\Phi_1=\Phi_{(+i)}$ and $\Phi_2=\Phi_{(-i)}$ of Theorem \ref{mu-epsilon-evals}.
We now seek the leading order behavior of $a_{\ell, m}^{(r) ,(s)}$ for $\eps$ small and non-zero.

Recall by Theorem \ref{mu-epsilon-evals} there are four $L^2_{\bM}$ eigenvalues counting multiplicity on an $\mathcal{O}(\eps)$ neighborhood of $\mu^\eps_S$. Two correspond to multiplicity one $L^2_{\bM, +i}$ and $L^2_{\bM, -i}$ eigenstates $\Phi^\eps_{(+i)}$ and $\Phi^\eps_{(-i)}$ with corresponding eigenvalue which we denote $\mu_S^\eps$. The other two eigenpairs consist of two distinct $L^2_{\bM, +1}$ and $L^2_{\bM, -1}$ eigenstates $\Phi^\eps_{(+1)}$ and $\Phi^\eps_{(-1)}$ with corresponding eigenvalues we denote $\mu_{(+1)}^\eps = \mu_S + \mathcal{O}(\eps)$ and $\mu_{(-1)}^\eps = \mu_S + \mathcal{O}(\eps),$ respectively; see equations \eqref{muK-eval-expressions-1} and \eqref{muK-eval-expressions-2}. All other eigenvalues of $H^\eps$ are at a distance of order 1 from $\mu_S^\eps$ for $\eps$ small, and hence we express $a_{\ell, m}^{(r),(s)}$ as dominant parts coming from these ``nearby'' eigenvalues, with a remainder which is smaller for $\eps > 0$ and small.

 We have 
\begin{equation}
\begin{split}
a^{ (r), (s) }_{\ell, m} &=  \langle \D_{x_\ell} \Phi_{(r)}^\eps, \mathscr{R}(\mu_S^\eps) \D_{x_m}\Phi_{(s)}^\eps \rangle 
\\&= \sum_{q \in \{+1, -1\}} \frac{ \overline{ \langle \Phi^\eps_{(q)}, \D_{x_\ell}\Phi_{(r)}^\eps \rangle}\langle \Phi^\eps_{(q)}, \D_{x_m}\Phi_{(s)}^\eps \rangle}{\mu_{(q)}^\eps - \mu_S^\eps}
 + \sum_{b \geq 5} \frac{ \overline{ \langle \Phi^\eps_{b}, \D_{x_\ell}\Phi_{(r)}^\eps \rangle}\langle \Phi^\eps_{b}, \D_{x_m}\Phi_{(s)}^\eps \rangle}{\mu_{b}^\eps - \mu_S^\eps}.
\end{split}\label{dphi-exp}
\end{equation}
Observe the terms of \eqref{dphi-exp}, for  $q \in \{+1, -1\}$, are of order $\mathcal{O}(\frac{1}{\eps})$, while the contributions from the higher-order bands $b \geq 5$ are $\mathcal{O}(1)$. From the expansion of  $a^{ (r), (s) }_{\ell, m}$, we show, in fact:
\begin{prop} \label{appF-albega}
Assume $|V_{11}| \ne|V_{01}|$. Given $\eps$ sufficiently small and non-zero, 
\begin{align}
\alpha^\eps &= 4 a_{1,1}^{1,1}= 4 a_{1,1}^{(+i),(+i)} = \frac{32\pi^2}{\eps} \left(\frac{V_{11}}{V_{11}^2 - V_{01}^2}\right) + \mathcal{O}(1);\label{alph-app}\\
\beta^\eps &= 4 a_{1,2}^{1,2}= 4 a_{1,2}^{(+i),(-i)}= \frac{32\pi^2}{\eps}\left( \frac{V_{11}}{V_{11}^2 - V_{01}^2}\right)+ \mathcal{O}(1);\label{beta-app}\\
\gamma^\eps &= 4a_{1,1}^{1,2}= 4 a_{1,1}^{(+i),(-i)}= -\frac{32\pi^2}{\eps} \ i  \left(\frac{V_{01}}{V_{11}^2 - V_{01}^2}\right)+ \mathcal{O}(1) . \label{gamma-app}
\end{align}

\end{prop}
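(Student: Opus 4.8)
\textbf{Proof proposal for Proposition \ref{appF-albega}.}

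The plan is to extract the leading, $\mathcal{O}(1/\eps)$, contributions to the inner-product expressions \eqref{dphi-exp} coming from the ``nearby'' $L^2_{\bM,\pm1}$ eigenpairs, and to show that the remaining terms (from bands $b\ge 5$) are genuinely $\mathcal{O}(1)$. First I would record the leading-order ($\eps=0$) form of the four relevant Bloch eigenstates: from Theorem \ref{mu-epsilon-evals} and \eqref{0-order-sol}, $\Phi^{\eps}_{(\sigma)}(\bx) = \sum_{j=0}^3 \sigma^{4-j} e^{i R^j\bM^{(0,-1)}\cdot\bx} + \mathcal{O}(\eps)$, so that at $\eps=0$ each $\Phi^{0}_{(\sigma)}$ is an explicit four-term exponential sum supported on the four vertex momenta $\{\bM,R\bM,R^2\bM,R^3\bM\}$; and the corresponding eigenvalue gaps are $\mu^{\eps}_{(\pm1)}-\mu^{\eps}_S = \eps(\pm 2V_{01}+2V_{11}) + \mathcal{O}(\eps^2)$, by subtracting \eqref{muK-eval-expressions-1} from \eqref{muK-eval-expressions-2}. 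This is the place where the combination $V_{11}^2 - V_{01}^2 = (V_{11}+V_{01})(V_{11}-V_{01})$ will enter, after combining the $\sigma=+1$ and $\sigma=-1$ terms over a common denominator.

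Next I would compute the numerators $\langle \Phi^{\eps}_{(q)}, \partial_{x_\ell}\Phi^{\eps}_{(r)}\rangle$ for $q\in\{+1,-1\}$ and $r\in\{+i,-i\}$ to leading order in $\eps$. Since each $\partial_{x_\ell}$ acting on $e^{i\bM^{\bfm}\cdot\bx}$ pulls down a factor $i(\bM^{\bfm})_\ell$, and the four vertex vectors are $\bM=(\pi,\pi)$, $R\bM=(\pi,-\pi)$, $R^2\bM=(-\pi,-\pi)$, $R^3\bM=(-\pi,\pi)$, these inner products become finite sums of the form $i\pi \sum_{j} (\pm1)\,\overline{\sigma_q^{4-j}}\,\sigma_r^{4-j}$, which evaluate to explicit multiples of $\pi$ (each exponential has $L^2(\Omega)$-norm one and distinct vertices are orthogonal). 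Plugging these into \eqref{dphi-exp}, keeping only the $q=\pm1$ sum, and using the gap expansions above, gives
\[
a^{(r),(s)}_{\ell,m} = \frac{1}{\eps}\,\sum_{q=\pm1}\frac{\overline{N^{q,r}_\ell}\,N^{q,s}_m}{\,2V_{11}+2q V_{01}\,} + \mathcal{O}(1),
\]
where $N^{q,r}_\ell = i\pi\sum_{j=0}^3 (R^j\bM)_\ell\,\overline{q^{4-j}}\,(i)^{(2r-1)(4-j)}$ is the explicit leading numerator. Specializing to $(\ell,m)=(1,1)$, $(r,s)=(+i,+i)$ for $\alpha^\eps$; $(\ell,m)=(1,2)$, $(r,s)=(+i,-i)$ for $\beta^\eps$; and $(\ell,m)=(1,1)$, $(r,s)=(+i,-i)$ for $\gamma^\eps$, multiplying by $4$, and combining the two fractions over the denominator $(V_{11}+V_{01})(V_{11}-V_{01})$ should yield exactly \eqref{alph-app}--\eqref{gamma-app}, with the overall constant $32\pi^2$ and the factor of $i$ in $\gamma^\eps$ emerging from the bookkeeping of the $\pi^2$ and the powers of $i$. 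The symmetry reductions of Section \ref{pf-quad-disp} (Lemma \ref{claim:M1entries}, Claim \ref{claim:pwiseH}, and the identities $a^{1,1}_{1,1}=a^{2,2}_{1,1}$ etc.) guarantee that the three coefficients $\alpha,\beta,\gamma$ capture all of $\mathcal{M}^{(1)}$, so only these three computations are needed.

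The main obstacle I anticipate is \emph{not} the leading-order arithmetic but the careful justification that the contribution of the bands $b\ge 5$ is uniformly $\mathcal{O}(1)$ as $\eps\to 0$. This requires (i) showing $\mu^\eps_b - \mu^\eps_S \ge c > 0$ uniformly for $\eps$ small and $b\ge5$ — which follows from the multiplicity-four statement of Theorem \ref{H0-mult4}(3) together with continuity of eigenvalues in $\eps$ — and (ii) controlling $\sum_{b\ge5}\|\partial_{x_m}\Phi^\eps_{(s)}\|$-type sums, i.e. that $\widetilde{Q}_\perp \partial_{x_m}\Phi^\eps_{(s)}$ lies in $H^1$ with $\eps$-uniform bound. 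The latter is exactly the kind of resolvent bound already used in Section \ref{quad-kap}: $\mathscr{R}^\eps(\mu^\eps_S)$ restricted to the complement of $\mathrm{span}\{\Phi^\eps_{(+1)},\Phi^\eps_{(-1)},\Phi^\eps_{(+i)},\Phi^\eps_{(-i)}\}$ is bounded $L^2\to H^2$ uniformly in $\eps$ small, since the relevant spectral gap is order one. I would also need the smoothness of $\eps\mapsto\Phi^\eps_{(\sigma)}$ in $H^2$ (from the Lyapunov–Schmidt construction in Section \ref{sec:eps-small}) so that the $\mathcal{O}(\eps)$ corrections to the eigenstates only perturb the numerators by $\mathcal{O}(\eps)$, which is absorbed into the $\mathcal{O}(1)$ after division by $\eps$. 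Once these uniform-gap and resolvent-bound facts are in hand, the proposition follows by assembling the explicit leading term with the controlled remainder.
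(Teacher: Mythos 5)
Your proposal is correct and follows essentially the same route as the paper's Appendix: decompose $a^{(r),(s)}_{\ell,m}$ into the $q=\pm1$ contributions (explicitly $\mathcal{O}(1/\eps)$, with numerators from the four-term exponential sums and denominators from the $\mathcal{O}(\eps)$ gaps $\mu^\eps_{(\pm1)}-\mu^\eps_S=2\eps(V_{11}\pm V_{01})+\mathcal{O}(\eps^2)$) plus a remainder from bands $b\ge5$, then combine over the common denominator $V_{11}^2-V_{01}^2$. The only thing worth flagging is a minor cyclic-shift bookkeeping issue in your formula for $N^{q,r}_\ell$ — since $\bM^{(0,-1)}=R\bM$, the factor should involve $(R^{j+1}\bM)_\ell$ rather than $(R^j\bM)_\ell$, though after summing over $j=0,\dots,3$ this only introduces a harmless overall phase — and you make the uniform-gap/resolvent justification of the $\mathcal{O}(1)$ remainder more explicit than the paper does, which is a welcome addition.
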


\subsubsection{Derivation of Terms in Summand}
First, note that $\bk_1 = (2\pi, 0)^T$ and $\bk_2 = (0, 2\pi)^T$ are the dual lattice basis vectors for $\Lambda = \mathbb{Z}^2$.  
 
\begin{align}
 &\langle \Phi_{(+1)}^\eps,\D_{x_\ell} \Phi_{(+i)}^{\eps} \rangle = (\bk_1 - \bk_2)_\ell + i(\bk_1 + \bk_2)_\ell + \mathcal{O}(\eps); \label{phi3-phi1}\\
&\langle \Phi_{(-1)}^\eps,\D_{x_\ell} \Phi_{(+i)}^{\eps} \rangle = (\bk_2 - \bk_1)_\ell + i(\bk_1 + \bk_2)_\ell + \mathcal{O}(\eps);\label{phi4-phi1}\\
 &\langle \Phi_{(+1)}^\eps,\D_{x_\ell} \Phi_{(-i)}^{\eps} \rangle  = (\bk_2 - \bk_1)_\ell + i(\bk_1 + \bk_2)_\ell + \mathcal{O}(\eps);\label{phi3-phi2}\\
&\langle \Phi_{(-1)}^\eps,\D_{x_\ell} \Phi_{(-i)}^{\eps} \rangle  = (\bk_1 - \bk_2)_\ell + i(\bk_1 + \bk_2)_\ell + \mathcal{O}(\eps).\label{phi4-phi2}
\end{align}

We have derived the expressions
\begin{equation*}
\Phi_{(+i)}^{\eps}(\bx) = \sum_{\bfm \in \mathcal{S}} c_{(+i)}^\eps(\bfm) ( e^{i \bM^\bfm \cdot \bx}-i e^{i R\bM^\bfm \cdot \bx} - e^{i R^2\bM^\bfm \cdot \bx} + i e^{i R^3\bM^\bfm \cdot \bx})
\end{equation*}
\begin{equation*}
\begin{split}
\D_{x_\ell} \Phi_{(+i)}^{\eps}(\bx) &= \sum_{\bfm \in \mathcal{S}} c^\eps_{(+i)}(\bfm) \bigg[ i(\bM + m_1 \bk_1 + m_2 \bk_2)_\ell e^{i \bM^\bfm \cdot \bx}-i^2(\bM  + m_2 \bk_1 - (m_1+1) \bk_2)_\ell  e^{i R\bM^\bfm \cdot \bx} 
\\&\hspace{0mm}- i(\bM -(m_1+1)\bk_1 - (m_2 +1)\bk_2)_\ell e^{i R^2\bM^\bfm \cdot \bx} + i^2 (\bM -(1+m_2) \bk_1 + m_1 \bk_2)_\ell e^{i R^3\bM^\bfm \cdot \bx}\bigg]
\end{split}
\end{equation*}
\begin{equation*}
\Phi_{(\pm1)}^{\eps}(\bx) = \sum_{\bfm \in \mathcal{S}} c_{(\pm1)}^\eps(\bfm) ( e^{i \bM^\bfm \cdot \bx}\pm  e^{i R\bM^\bfm \cdot \bx} + e^{i R^2\bM^\bfm \cdot \bx} \pm e^{i R^3\bM^\bfm \cdot \bx})
\end{equation*}

Now we proceed to prove the results in \eqref{phi3-phi1}-\eqref{phi4-phi2}.
\begin{equation*}\label{phi-sum1}
\begin{split}
&\langle \Phi_{(+1)}^\eps,\D_{x_\ell} \Phi_{(+i)}^{\eps} \rangle = \\&\int_{\Omega} \sum_{\bn \in \mathcal{S}}\sum_{\bfm \in \mathcal{S}} \overline{c^\eps_{(+1)}(\bn)}c^\eps_{(+i)}(\bfm)
( e^{i \bM^\bn \cdot \bx}+  e^{i R\bM^\bn \cdot \bx} + e^{i R^2\bM^\bn \cdot \bx} + e^{i R^3\bM^\bn \cdot \bx}) 
 \\&\hspace{2mm}\ast
 \{ i (\bM^\bfm)_\ell e^{i \bM^\bfm \cdot \bx} + (R\bM^\bfm)_\ell  e^{i R\bM^\bfm \cdot \bx} -i(R^2\bM^\bfm)_\ell  e^{i R^2\bM^\bfm \cdot \bx} - (R^3\bM^\bfm)_\ell e^{i R^3\bM^\bfm \cdot \bx}\}d\bx\\
 &=\int_{\Omega} \sum_{\bn \in \mathcal{S}}\sum_{\bfm \in \mathcal{S}} \overline{c^\eps_{(+1)}(\bn)}c^\eps_{(+i)}(\bfm)
( e^{i \bM^\bn \cdot \bx}+  e^{i R\bM^\bn \cdot \bx} + e^{i R^2\bM^\bn \cdot \bx} + e^{i R^3\bM^\bn \cdot \bx}) 
 \\&\hspace{2mm}\ast
  \bigg[ i(\bM + m_1 \bk_1 + m_2 \bk_2)_\ell e^{i \bM^\bfm \cdot \bx}-i^2(\bM  + m_2 \bk_1 - (m_1+1) \bk_2)_\ell  e^{i R\bM^\bfm \cdot \bx} 
\\&\hspace{0mm}- i(\bM -(m_1+1)\bk_1 - (m_2 +1)\bk_2)_\ell e^{i R^2\bM^\bfm \cdot \bx} + i^2 (\bM -(1+m_2) \bk_1 + m_1 \bk_2)_\ell e^{i R^3\bM^\bfm \cdot \bx}\bigg].
  \end{split}
 \end{equation*}

\nit The leading-order term is  associated with $\bn = \bfm = (0,0) \in \mathcal{S}$, for which $c^\eps_{(+1)} (\bn) =  c^\eps_{(-1)} (\bn) = c^\eps_{(+i)} (\bfm) = 1.$ Then,
\begin{align*}
\langle \Phi_{(+1)}^\eps,\D_{x_\ell} \Phi_{(+i)}^{\eps} \rangle =& \int_{\Omega} 
( e^{i \bM^\bn \cdot \bx}+  e^{i R\bM^\bn \cdot \bx} + e^{i R^2\bM^\bn \cdot \bx} + e^{i R^3\bM^\bn \cdot \bx}) 
 \\&\hspace{5mm}  
 \big( i \bM_\ell e^{i \bM^\bfm \cdot \bx} + (\bM-\bk_2)_\ell  e^{i R\bM^\bfm \cdot \bx} -i(\bM -\bk_1 - \bk_2)_\ell  e^{i R^2\bM^\bfm \cdot \bx}
 \\&\hspace{15mm} - (\bM -\bk_1)_\ell e^{i R^3\bM^\bfm \cdot \bx}\big)d\bx + \mathcal{O}(\eps) \\
 &= [(\bM -\bk_2) - (\bM -\bk_1)]_\ell + i [ \bM -(\bM -\bk_1 - \bk_2)]_\ell + \mathcal{O}(\eps)\\&  = (\bk_1 - \bk_2)_\ell + i(\bk_1 + \bk_2)_\ell + \mathcal{O}(\eps).
 \end{align*}
 Similarly, we have 
 \begin{align*}
\langle \Phi_{(-1)}^\eps,\D_{x_\ell} \Phi_{(+i)}^{\eps} \rangle = &\int_{\Omega} 
( e^{i \bM^\bn \cdot \bx}-  e^{i R\bM^\bn \cdot \bx} + e^{i R^2\bM^\bn \cdot \bx} - e^{i R^3\bM^\bn \cdot \bx})
\\&\hspace{5mm}  
 \big( i \bM_\ell e^{i \bM^\bfm \cdot \bx}  + (\bM-\bk_2)_\ell  e^{i R\bM^\bfm \cdot \bx} -i(\bM -\bk_1 - \bk_2)_\ell  e^{i R^2\bM^\bfm \cdot \bx} 
 \\&\hspace{15mm}- (\bM -\bk_1)_\ell e^{i R^3\bM^\bfm \cdot \bx}\big)d\bx  \ + \ \mathcal{O}(\eps) \\
 &=- [(\bM -\bk_2) - (\bM -\bk_1)]_\ell + i [ \bM -(\bM -\bk_1 - \bk_2)]_\ell \  + \mathcal{O}(\eps)\\&  = (\bk_2 - \bk_1)_\ell + i(\bk_1 + \bk_2)_\ell + \mathcal{O}(\eps).
 \end{align*}
The inner products of $\Phi_{(+1)}$ and $\Phi_{(-1)}$ with $\D_{x_\ell}\Phi_{(-i)}$ are calculated similarly. Recall that 
\begin{equation*}
\Phi_{(-i)}^{\eps}(\bx) = \sum_{\bfm \in \mathcal{S}} c^\eps_{(-i)}(\bfm) ( e^{i \bM^\bfm \cdot \bx}+i e^{i R\bM^\bfm \cdot \bx} - e^{i R^2\bM^\bfm \cdot \bx} - i e^{i R^3\bM^\bfm \cdot \bx})
\end{equation*}
Then, 
 \begin{equation*}
\begin{split}
\D_{x_\ell} \Phi_{(-i)}^{\eps}(\bx) &= \sum_{\bfm \in \mathcal{S}} c^\eps_{(-i)}(\bfm) \bigg[ i(\bM + m_1 \bk_1 + m_2 \bk_2)_\ell e^{i \bM^\bfm \cdot \bx}+i^2(\bM  + m_2 \bk_1 - (m_1+1) \bk_2)_\ell  e^{i R\bM^\bfm \cdot \bx} 
\\&\hspace{-5mm}- i(\bM -(m_1+1)\bk_1 - (m_2 +1)\bk_2)_\ell e^{i R^2\bM^\bfm \cdot \bx} - i^2 (\bM -(1+m_2) \bk_1 + m_1 \bk_2)_\ell e^{i R^3\bM^\bfm \cdot \bx}\bigg].
\end{split}
\end{equation*}

\nit Expanding  $\langle \Phi_{(+1)}^\eps,\D_{x_\ell} \Phi_{(-i)}^{\eps} \rangle$,   we obtain
\begin{align*}
\langle \Phi_{(+1)}^\eps,\D_{x_\ell} \Phi_{(-i)}^{\eps} \rangle=& \int_{\Omega} 
( e^{i \bM^\bn \cdot \bx}+  e^{i R\bM^\bn \cdot \bx} + e^{i R^2\bM^\bn \cdot \bx} + e^{i R^3\bM^\bn \cdot \bx}) 
\\&\hspace{5mm}  
 \big(  i \bM_\ell e^{i \bM^\bfm \cdot \bx} - (\bM-\bk_2)_\ell  e^{i R\bM^\bfm \cdot \bx} -i(\bM -\bk_1 - \bk_2)_\ell  e^{i R^2\bM^\bfm \cdot \bx}
 \\&\hspace{15mm} + (\bM -\bk_1)_\ell e^{i R^3\bM^\bfm \cdot \bx}\big)d\bx \ + \ \mathcal{O}(\eps)\\
 &=-[(\bM -\bk_2) - (\bM -\bk_1)]_\ell + i [ \bM -(\bM -\bk_1 - \bk_2)]_\ell + \mathcal{O}(\eps)\\&  = (\bk_2 - \bk_1)_\ell + i(\bk_1 + \bk_2)_\ell + \mathcal{O}(\eps).
 \end{align*}
 Similarly,
 \begin{align*}
\langle \Phi_{(-1)}^\eps,\D_{x_\ell} \Phi_{(-i)}^{\eps} \rangle=& \int_{\Omega} 
( e^{i \bM^\bn \cdot \bx}-  e^{i R\bM^\bn \cdot \bx} + e^{i R^2\bM^\bn \cdot \bx} - e^{i R^3\bM^\bn \cdot \bx}) 
\\&\hspace{5mm}  
 \big( i \bM_\ell e^{i \bM^\bfm \cdot \bx} - (\bM-\bk_2)_\ell  e^{i R\bM^\bfm \cdot \bx} -i(\bM -\bk_1 - \bk_2)_\ell  e^{i R^2\bM^\bfm \cdot \bx}
 \\&\hspace{15mm} + (\bM -\bk_1)_\ell e^{i R^3\bM^\bfm \cdot \bx}\big) d\bx \ +\  \mathcal{O}(\eps). \\
 &= [(\bM -\bk_2) - (\bM -\bk_1)]_\ell + i [ \bM -(\bM -\bk_1 - \bk_2)]_\ell + \mathcal{O}(\eps)\\&  = (\bk_1 - \bk_2)_\ell + i(\bk_1 + \bk_2)_\ell + \mathcal{O}(\eps).
 \end{align*}
 
Collecting the results, we have the equations \eqref{phi3-phi1}-\eqref{phi4-phi2}. That is,
\begin{align*}
 &\langle \Phi_{(+1)}^\eps,\D_{x_\ell} \Phi_{(+i)}^{\eps} \rangle = (\bk_1 - \bk_2)_\ell + i(\bk_1 + \bk_2)_\ell + \mathcal{O}(\eps);\\
&\langle \Phi_{(-1)}^\eps,\D_{x_\ell} \Phi_{(+i)}^{\eps} \rangle  = (\bk_2 - \bk_1)_\ell + i(\bk_1 + \bk_2)_\ell+ \mathcal{O}(\eps);\\
 &\langle \Phi_{(+1)}^\eps,\D_{x_\ell} \Phi_{(-i)}^{\eps} \rangle = (\bk_2 - \bk_1)_\ell + i(\bk_1 + \bk_2)_\ell+ \mathcal{O}(\eps);\\
&\langle \Phi_{(-1)}^\eps,\D_{x_\ell} \Phi_{(-i)}^{\eps} \rangle = (\bk_1 - \bk_2)_\ell + i(\bk_1 + \bk_2)_\ell + \mathcal{O}(\eps).
\end{align*}

\subsubsection{Coefficient Calculations: $\alpha^\eps$, $\beta^\eps$, $\gamma^\eps$}

For $\ell, m  \in \{1,2\}$ and $r, s \in \{+i, -i\}$
\begin{align*}
a_{\ell,m}^{(r), (s)}&=\langle \D_{x_\ell} \Phi_{(r)}^\eps, \mathscr{R}(\mu_S^\eps) \D_{x_m}\Phi_{(s)}^\eps \rangle 
\\&= \sum_{q \in \{+1, -1\}} \frac{ \overline{ \langle \Phi^\eps_{(q)}, \D_{x_\ell}\Phi_{(r)}^\eps \rangle}\langle \Phi^\eps_{(q)}, \D_{x_m}\Phi_{(s)}^\eps \rangle}{\mu_{(q)}^\eps - \mu_S^\eps} + \mathcal{O}(1) .
\end{align*}

Further, recall $\alpha^\eps = 4a^{(+i),(+i)}_{1,1} =4\langle \D_{x_1} \Phi_{(+i)}^\eps, \mathscr{R}(\mu_S^\eps) \D_{x_1}\Phi_{(+i)}^\eps \rangle.$  From the analysis above, 
\begin{align}
\frac{ \overline{ \langle \Phi_{(+1)}, \D_{x_1}\Phi_{(+i)}^\eps \rangle}\langle \Phi_{(+1)}, \D_{x_1}\Phi_{(+i)}^\eps \rangle}{\mu_{(+1)}^\eps - \mu_S^\eps} 
&= \frac{ \overline{ (\bk_1 - \bk_2)_1+ i(\bk_1 + \bk_2)_1 }  [(\bk_1 - \bk_2)_1 + i(\bk_1 + \bk_2)_1 ]+ \mathcal{O}(\eps)}{\eps ( V_{00} + 2 V_{01} + V_{11}) - \eps (V_{00} - V_{11}) + \mathcal{O}(\eps^2)} \notag
\\&= \frac{4\pi^2 \ (\overline{ 1+i })  (1 + i )+ \mathcal{O}(\eps)}{2\eps (  V_{01} + V_{11}) + \mathcal{O}(\eps^2)}  \notag
\\&= \frac{2\pi^2}{\eps} \left(\frac{\abs{ 1+i }^2 + \mathcal{O}(\eps)}{ (  V_{01} + V_{11}) + \mathcal{O}(\eps)}\right)  \notag
\\&= \frac{2\pi^2}{\eps}\left(\frac{2+ \mathcal{O}(\eps)}{   V_{01} + V_{11} + \mathcal{O}(\eps)}\right).\label{a-phi1}
\end{align}
We calculate the inner products now with $q = -1$:
\begin{align}
\frac{ \overline{ \langle \Phi_{(-1)}, \D_{x_1}\Phi_{(+i)}^\eps \rangle}\langle \Phi_{(-1)}, \D_{x_1}\Phi_{(+i)}^\eps \rangle}{\mu_{(-1)}^\eps - \mu_S^\eps} 
&= \frac{ \overline{ (\bk_2 - \bk_1)_1+ i(\bk_1 + \bk_2)_1 }  [(\bk_2 - \bk_1)_1 + i(\bk_1 + \bk_2)_1 ]+ \mathcal{O}(\eps)}{\eps ( V_{00} - 2 V_{01} + V_{11}) - \eps (V_{00} - V_{11}) + \mathcal{O}(\eps^2)} \notag
\\&= \frac{4\pi^2 \ (\overline{ -1+i })  (-1 + i )+ \mathcal{O}(\eps)}{2\eps ( V_{11}- V_{01}  ) + \mathcal{O}(\eps^2)} \notag
\\&= \frac{2\pi^2}{\eps} \left(\frac{\abs{- 1+i }^2+ \mathcal{O}(\eps)}{ (  V_{11}-V_{01} ) + \mathcal{O}(\eps)}\right) \notag
\\&= \frac{2\pi^2}{\eps}\left(\frac{2+ \mathcal{O}(\eps)}{  V_{11}- V_{01} + \mathcal{O}(\eps)}\right).\label{a-phi2} 
\end{align}

Substituting \eqref{a-phi1} and \eqref{a-phi2} into the expression for $a_{1,1}^{(+i),(+i)}(\eps)$,
\begin{align*}
\alpha^\eps = 4 a_{1,1}^{(+i),(+i)}(\eps) &= \frac{8\pi^2}{\eps}\left[\left(\frac{2+ \mathcal{O}(\eps)}{   V_{01} + V_{11} + \mathcal{O}(\eps)}\right)+\left(\frac{2+ \mathcal{O}(\eps)}{  V_{11}- V_{01} + \mathcal{O}(\eps)}\right)\right] +\mathcal{O}(1) \notag
\\&=  \frac{8\pi^2}{\eps}\left( \frac{2(V_{11}-V_{01}) + 2(V_{01}+V_{11}) + \mathcal{O}(\eps)}{(V_{11}^2 - V_{01}^2) + \mathcal{O}(\eps)}\right)+\mathcal{O}(1)
\\&=  \frac{32\pi^2}{\eps}\left( \frac{V_{11}+ \mathcal{O}(\eps)}{(V_{11}^2 - V_{01}^2) + \mathcal{O}(\eps)}\right) +\mathcal{O}(1). \notag
\end{align*}
Therefore, we have \eqref{alph-app}:
\begin{align*}
\alpha^\eps =  \frac{32\pi^2}{\eps}\left( \frac{V_{11}+ \mathcal{O}(\eps)}{(V_{11}^2 - V_{01}^2) + \mathcal{O}(\eps)}\right)+\mathcal{O}(1) = \frac{32\pi^2}{\eps} \left(\frac{V_{11}}{V_{11}^2 - V_{01}^2}\right) + \mathcal{O}(1).
\end{align*}

\nit The results for $\beta^\eps$ and $\gamma^\eps$ follow similarly. First, consider $\beta^\eps.$
\begin{equation}
\begin{split}
\beta^\eps &= 4a^{(+i),(-i)}_{1,2}(\eps) = 4 \langle \D_{x_1} \Phi_{1}^\eps, \mathscr{R}(\mu_S^\eps) \D_{x_2}\Phi_{(-i)}^\eps \rangle\\
&= \frac{4 \overline{ \langle \Phi_{(+1)}, \D_{x_1}\Phi_{(+i)}^\eps \rangle}\langle \Phi_{(+1)}, \D_{x_2}\Phi_{(-i)}^\eps \rangle}{\mu_{(+1)}^\eps - \mu_S^\eps}
 +\frac{4\overline{ \langle \Phi_{(-1)}, \D_{x_1}\Phi_{(+i)}^\eps \rangle}\langle \Phi_{(-1)}, \D_{x_2}\Phi_{(-i)}^\eps \rangle}{\mu_{(-1)}^\eps - \mu_S^\eps} + \mathcal{O}(1).
 \end{split}
\end{equation}
Following the analysis as for $\alpha^\eps$, we find the following.
\begin{align}
\frac{ \overline{ \langle \Phi_{(+1)}, \D_{x_1}\Phi_{(+i)}^\eps \rangle}\langle \Phi_{(+1)}, \D_{x_2}\Phi_{(-i)}^\eps \rangle}{\mu_{(+1)}^\eps - \mu_S^\eps} 
&= \frac{2\pi^2}{\eps}\left(\frac{2+ \mathcal{O}(\eps)}{   V_{01} + V_{11} + \mathcal{O}(\eps)}\right);\label{beta-1}
\end{align}
\begin{align}
\frac{ \overline{ \langle \Phi_{(-1)}, \D_{x_1}\Phi_{(+i)}^\eps \rangle}\langle \Phi_{(-1)}, \D_{x_2}\Phi_{(-i)}^\eps \rangle}{\mu_{(-1)}^\eps - \mu_S^\eps} 
&= \frac{2\pi^2}{\eps}\left(\frac{2+ \mathcal{O}(\eps)}{   V_{11}- V_{01}  + \mathcal{O}(\eps)}\right).\label{beta-2}
\end{align}
Combining \eqref{beta-1} and \eqref{beta-2},
\begin{align*}
\beta^\eps = 4 \langle \D_{x_1} \Phi_{(+i)}^\eps, \mathscr{R}(\mu_S^\eps) \D_{x_2}\Phi_{(-i)}^\eps \rangle&= \frac{8\pi^2}{\eps}\left[\left(\frac{2+ \mathcal{O}(\eps)}{   V_{01} + V_{11} + \mathcal{O}(\eps)}\right)+\left(\frac{2+ \mathcal{O}(\eps)}{  V_{11}- V_{01} + \mathcal{O}(\eps)}\right)\right] 
\\&=  \left(\frac{8\pi^2}{\eps}\right) \frac{2(V_{11}-V_{01}) + 2(V_{01}+V_{11}) + \mathcal{O}(\eps)}{(V_{11}^2 - V_{01}^2) + \mathcal{O}(\eps)}
\\&=  \left(\frac{8\pi^2}{\eps}\right) \frac{4V_{11}+ \mathcal{O}(\eps)}{(V_{11}^2 - V_{01}^2) + \mathcal{O}(\eps)}.
\end{align*}

Thus we have \eqref{beta-app}:
\begin{equation*}
\beta^\eps =  \left(\frac{32\pi^2}{\eps}\right)  \frac{V_{11}+ \mathcal{O}(\eps)}{(V_{11}^2 - V_{01}^2) + \mathcal{O}(\eps)} =  \frac{32\pi^2}{\eps} \left(\frac{V_{11}}{V_{11}^2 - V_{01}^2}\right) + \mathcal{O}(1).
\end{equation*}

\noindent Finally, consider the decomposition for $\gamma^\eps$.
\begin{equation}\label{gamma-full}
\begin{split}
\gamma^\eps &= 4a^{(+i),(-i)}_{1,1}(\eps) = 4\langle \D_{x_1} \Phi_{(+i)}^\eps, \mathscr{R}(\mu_S^\eps) \D_{x_1}\Phi_{(-i)}^\eps \rangle
\\& = \frac{4 \overline{ \langle \Phi_{(+1)}, \D_{x_1}\Phi_{(+i)}^\eps \rangle}\langle \Phi_{(+1)}, \D_{x_1}\Phi_{(-i)}^\eps \rangle}{\mu_{(+1)}^\eps - \mu_S^\eps}
 +\frac{4 \overline{ \langle \Phi_{(-1)}, \D_{x_1}\Phi_{(+i)}^\eps \rangle}\langle \Phi_{(-1)}, \D_{x_1}\Phi_{(-i)}^\eps \rangle}{\mu_{(-1)}^\eps - \mu_S^\eps} +\mathcal{O}(1).
 \end{split}
\end{equation}
Again, similarly as for $\alpha^\eps$, the first two terms of \eqref{gamma-full} yield the following.
\begin{align}
\frac{ \overline{ \langle \Phi_{(+1)}, \D_{x_1}\Phi_{(+i)}^\eps \rangle}\langle \Phi_{(+1)}, \D_{x_1}\Phi_{(-i)}^\eps \rangle}{\mu_{(+1)}^\eps - \mu_S^\eps} 
&= \frac{2\pi^2}{\eps}\left(\frac{2i+ \mathcal{O}(\eps)}{   V_{01} + V_{11} + \mathcal{O}(\eps)}\right);\label{gam-1}
\end{align}

\begin{align}
\frac{ \overline{ \langle \Phi_{(-1)}, \D_{x_1}\Phi_{(+i)}^\eps \rangle}\langle \Phi_{(-1)}, \D_{x_1}\Phi_{(-i)}^\eps \rangle}{\mu_{(-1)}^\eps - \mu_S^\eps} 
&= \frac{2\pi^2}{\eps}\left(\frac{-2i+ \mathcal{O}(\eps)}{   V_{11}- V_{01}  + \mathcal{O}(\eps)}\right).\label{gam-2}
\end{align}
Combining \eqref{gam-1} and \eqref{gam-2},
\begin{align*}
 \langle \D_{x_1} \Phi_{(+i)}^\eps, \mathscr{R}(\mu_S^\eps) \D_{x_1}\Phi_{(-i)}^\eps \rangle&= \frac{2\pi^2}{\eps}\left[\left(\frac{2i+ \mathcal{O}(\eps)}{   V_{01} + V_{11} + \mathcal{O}(\eps)}\right)+\left(\frac{-2i+ \mathcal{O}(\eps)}{  V_{11}- V_{01} + \mathcal{O}(\eps)}\right)\right] 
\\&=  \left(\frac{2\pi^2}{\eps}\right) \frac{2i(V_{11}-V_{01}) - 2i(V_{01}+V_{11}) + \mathcal{O}(\eps)}{(V_{11}^2 - V_{01}^2) + \mathcal{O}(\eps)}
\\&=  \left(\frac{2\pi^2}{\eps}\right) \frac{(-4i)V_{01}+ \mathcal{O}(\eps)}{(V_{11}^2 - V_{01}^2) + \mathcal{O}(\eps)}.
\end{align*}

\noindent Therefore, we have \eqref{gamma-app}:
\begin{equation*}
\gamma^\eps = 4  \langle \D_{x_1} \Phi_{(+i)}^\eps, \mathscr{R}(\mu_S^\eps) \D_{x_1}\Phi_{(-i)}^\eps \rangle= -\left(\frac{32\pi^2}{\eps} \ i\right)  \frac{V_{01}+ \mathcal{O}(\eps)}{(V_{11}^2 - V_{01}^2) + \mathcal{O}(\eps)}+\mathcal{O}(1) = - \frac{32\pi^2}{\eps} i\left(\frac{V_{01}}{V_{11}^2 - V_{01}^2}\right) + \mathcal{O}(1).
\end{equation*}
With the coefficients $\alpha^\eps, \beta^\eps,$ and $\gamma^\eps$, expanded, the description for the expression \eqref{mu_eps-diff}:

\begin{equation*}
\mu_{\pm}^\eps(\bM + \kappa) - \mu_S^\eps =
(1-\alpha^\eps)|\kappa|^2+\mathscr{Q}^\eps_6(\kappa)\ \pm \sqrt{\Big|\ \gamma^\eps(\kappa_{1}^{2} - \kappa_{2}^{2})+ 2\beta^\eps \kappa_{1}\kappa_{2}\ \Big|^2\ +\ \mathscr{Q}^\eps_8(\kappa)}
,\end{equation*}
described in Corollary \ref{small-eps-disp} is complete.

\bibliographystyle{alpha}
\bibliography{kmow}

\end{document}